\def\Aut{\operatorname{Aut}}
\def\ker{\operatorname{ker}}
\def\id{\operatorname{id}}
\def\max{\operatorname{max}}
\def\id{\operatorname{id}}
\def\kms{\operatorname{KMS}}
\def\Irr{\operatorname{Irr}}
\def\C{\mathbb{C}}
\def\R{\mathbb{R}}
\def\N{\mathbb{N}}
\def\Z{\mathbb{Z}}
 \newcommand{\IF}[0]{\mathbb{F}}
 \newcommand{\IN}[0]{\mathbb{N}}
 \newcommand{\IT}[0]{\mathbb{T}}
 \newcommand{\CH}[0]{\mathcal{H}}
 \newcommand{\CL}[0]{\mathcal{L}}
 \newcommand{\CP}[0]{\mathcal{P}}
\newcommand{\CQ}[0]{\mathcal{Q}} 
 \newcommand{\CT}[0]{\mathcal{T}}
\def\gxp{G \rtimes_\theta P}
\def\gpt{G,P,\theta}
\newtheorem{thm}{Theorem}[section]
\newtheorem{corollary}[thm]{Corollary}
\newtheorem{lemma}[thm]{Lemma}
\newtheorem{proposition}[thm]{Proposition}
\theoremstyle{definition}
\newtheorem{definition}[thm]{Definition}
\theoremstyle{remark}
\newtheorem{remark}[thm]{Remark}
\newtheorem{question}[thm]{Question}
\newtheorem{example}[thm]{Example}
\newtheorem{questions}[thm]{Questions}
\numberwithin{equation}{section}
\begin{document}

\title{Equilibrium states on right LCM semigroup $C^*$-algebras}

\author[Z.~Afsar]{Zahra Afsar}
\address{School of Mathematics and Applied Statistics \\ University of Wollongong \\ Australia}
\email{za.afsar@gmail.com}

\author[N.~Brownlowe]{Nathan Brownlowe}
\address{School of Mathematics and Statistics \\ University of Sydney\\ Australia}
\email{nathan.brownlowe@gmail.com}

\author[N.S.~Larsen]{Nadia S.~Larsen}
\address{Department of Mathematics \\ University of Oslo \\ P.O. Box 1053 \\ Blindern \\ NO-0316 Oslo \\ Norway}
\email{nadiasl@math.uio.no, nicolsta@math.uio.no}

\author[N.~Stammeier]{Nicolai Stammeier}

\begin{abstract}
We determine the structure of equilibrium states for a natural dynamics on the boundary quotient diagram of $C^*$-algebras for a large class of right LCM semigroups. The approach is based on abstract properties of the semigroup and covers the previous case studies on $\N \rtimes \N^\times$, dilation matrices, self-similar actions, and Baumslag--Solitar monoids. At the same time, it provides new results for right LCM semigroups associated to algebraic dynamical systems.
\end{abstract}

\date{4 February 2017}
\maketitle

\section{Introduction}\label{sec:intro}

Equilibrium states have been studied in operator algebras starting with quantum systems modelling ensembles of particles, and have been the means of describing properties of physical models with $C^*$-algebraic tools, cf. \cite{BRII}.  Specifically, given a quantum  statistical mechanical system, which is a pair formed of a $C^*$-algebra $A$ that encodes the observables and a one-parameter group of automorphisms $\sigma$ of $A$  interpreted  as a time evolution,  one seeks to express equilibrium of the system via states with specific properties. A $\kms_\beta$-state for $(A,\sigma)$ is a state on $A$ that satisfies the $\kms_\beta$-condition (for Kubo-Martin-Schwinger) at a real parameter $\beta$ bearing the significance of an inverse temperature. Gradually, it has become apparent that the study of KMS-states for systems  that do not necessarily have physical origins brings valuable insight into the structure of the underlying $C^*$-algebra, and uncovers new directions of interplay between the theory of $C^*$-algebras and other fields of mathematics. A rich supply of examples is by now present in the literature, see \cites{BC,LR2,LRRW,CDL,Nes}, to mention only some.

This article is concerned with the study of KMS-states for systems whose underlying $C^*$-algebras are Toeplitz-type algebras modelling a large class of semigroups. While our initial motivation was to classify KMS-states for a specific class of examples, as had been done previously, we felt that by building upon the deep insight developed in the case-studies already present in the literature, the time was ripe for proposing a general framework that would encompass our motivating example and cover all these case-studies.  We believe it is a strength of our approach that we can identify simply phrased conditions at the level of the semigroup which govern the structure of $\kms$-states, including uniqueness of the $\kms_\beta$-state for $\beta$ in the critical interval. In all the examples we treat, these conditions admit natural interpretations. Moreover, they are often subject to feasible verification.

To be more explicit, the interest in studying KMS-states on Toeplitz-type $C^*$-algebras gained new momentum with the work of Laca and Raeburn \cite{LR2} on the Toeplitz algebra of the $ax+b$-semigroup over the natural numbers $\N\rtimes \N^\times$. Further results on the KMS-state structure of Toeplitz-type $C^*$-algebras include the work on Exel crossed products associated to dilation matrices \cite{LRR}, $C^*$-algebras associated to self-similar actions \cite{LRRW}, and $C^*$-algebras associated to Baumslag--Solitar monoids \cite{CaHR}. In all these cases the relevant Toeplitz-type $C^*$-algebra can be viewed as a semigroup $C^*$-algebra in the sense of \cite{Li1}. Further, the semigroups in question are right LCM semigroups, see \cite{BLS1} for dilation matrices, and \cite{BRRW} for all the other cases. In \cites{BLS1,BLS2}, the last three named authors have studied $C^*$-algebras of right LCM semigroups associated to algebraic dynamical systems. These $C^*$-algebras often admit a natural dynamics that is built from the underlying algebraic dynamical system. The original motivation for our  work was to classify KMS-states in this context.

Although there is a common thread in the methods and techniques used to prove the KMS-classification results in \cites{LR2, LRR, LRRW, CaHR}, one cannot speak of a single proof that runs with adaptations. Indeed, in each of these papers a careful and  rather involved analysis is carried out using concrete properties of the respective setup. One main achievement of this paper is a general theory of KMS-classification which both unifies the classification results from \cites{LR2, LRR, LRRW, CaHR}, and also considerably enlarges the class of semigroup $C^*$-algebras for which KMS-classification is new and interesting, see Theorem~\ref{thm:KMS results-gen} and Section~\ref{sec:examples}.

We expect that our work may build a bridge to the analysis of KMS-states via groupoid models and the result of Neshveyev \cite{Nes}*{Theorem~1.3}. At this point we would also like to mention  the recent analysis of equilibrium states on $C^*$-algebras associated to self-similar actions of groupoids on graphs that generalises \cite{LRRW}, see \cite{LRRW1}. It would be interesting to explore connections to this line of development.

One of the keys to establishing our general theory is the insight that it pays off to work with the boundary quotient diagram for right LCM semigroups proposed by the fourth-named author in \cite{Sta3}. The motivating example for this diagram was first considered in \cite{BaHLR}, where it was shown to give extra insight into the structure of $\kms$-states on $\CT(\N\rtimes\N^\times)$. Especially the \emph{core subsemigroup} $S_c$, first introduced in \cite{Star}, and motivated by the quasi-lattice ordered situation in \cite{CrispLaca}, and the \emph{core irreducible elements} $S_{ci}$ introduced in \cite{Sta3} turn out to be central for our work. Recall that $S_c$ consists of all the elements whose principal right ideal intersects all other principal right ideals and $S_{ci}$ of all $s\in S\setminus S_c$ such that every factorisation $s=ta$ with $a\in S_c$ forces $a$ to be invertible in $S$.
Via so-called (accurate) foundation sets, $S_c$ and $S_{ci}$ give rise to quotients $\CQ_c(S)$ and $\CQ_p(S)$ of $C^*(S)$ of $S$, respectively. Together with $C^*(S)$ and the boundary quotient $\CQ(S)$ from \cite{BRRW}, they form a commutative diagram:
\begin{equation}\label{eq:BQD for thm}
\begin{gathered}
\xymatrix@=17mm{
C^*(S) \ar^{\pi_p}@{->>}[r]\ar_{\pi_c}@{->>}[d] & \CQ_p(S) \ar^{\pi'_c}@{->>}[d] \\
\CQ_c(S) \ar^{\pi'_p}@{->>}[r] & \CQ(S)
}
\end{gathered}
\end{equation}
We refer to \cite{Sta3} for examples and further details, but remark that if $S=S_{ci}^1S_c$ holds (with $S_{ci}^1 = S_{ci} \cup \{1\}$), a property we refer to as $S$ being \emph{core factorable}, then the quotient maps $\pi'_p$ and $\pi'_c$ are induced by the corresponding relations for $\pi_p$ and $\pi_c$, respectively, see \cite{Sta3}*{Proposition~2.10}. Another relevant condition in this context is the property that any right LCM in $S$ of a pair in $S_{ci}$ belongs to $S_{ci}$ again. If this holds, then we say that $S_{ci}\subset S$ is \emph{$\cap$-closed}.

In this paper we work with what we call \emph{admissible} right LCM semigroups, which are core factorable right LCM semgroups with $S_{ci}\subset S$ being $\cap$-closed that admit a suitable homomorphism $N\colon S\to \N^\times$. We refer to such a map $N$ as a \emph{generalised scale} to stress the analogy to the \emph{scale} appearing in \cite{Laca98}. The dynamics $\sigma$ of $\R$ on $C^*(S)$ that we consider for a generalised scale $N$ is natural in the sense that $\sigma_x(v_s) = N_s^{ix} v_s$ on the canonical generators of $C^*(S)$. In each of the examples we consider in Section~\ref{sec:examples} there is a natural choice for a generalised scale. Every generalised scale on a right LCM semigroup $S$ gives rise to a $\zeta$-function on $\R$. The \emph{critical inverse temperature} $\beta_c$ for the dynamics $\sigma$ is defined to be the least bound in $\R \cup \{\infty\}$ so that $\zeta_S(\beta)$ is finite for all $\beta$ above $\beta_c$. The uniqueness of $\kms_\beta$-states for $\beta$ inside the \emph{critical interval} ($[1,\beta_c]$ for finite $\beta_c$, and $[1,\infty)$ otherwise) has become an intriguing topic in the field.

Our main result Theorem~\ref{thm:KMS results-gen} classifies the KMS-state structure on \eqref{eq:BQD for thm} for an admissible right LCM semigroup $S$ with regards to the dynamics $\sigma$ arising from a generalised scale $N$. There are no $\kms_\beta$-states for $\beta<1$, and for $\beta > \beta_c$ there is an affine homeomorphism between $\kms_\beta$-states on $C^*(S)$ and normalised traces on $C^*(S_c)$. For $\beta$ in the critical interval there exists at least one $\kms_\beta$-state, and two sufficient criteria for the $\kms_\beta$-state to be the unique one are established. Both require faithfulness of a natural action $\alpha$ of $S_c$ on $S/S_c$ arising from left multiplication. The first criterion demands the stronger hypothesis that $\alpha$ is almost free and works for arbitrary $\beta_c$. In contrast, the second criterion only works for $\beta_c=1$, but relies on the seemingly more modest, yet elusive condition that $S$ has what we call \emph{finite propagation}. The ground states on $C^*(S)$ are affinely homeomorphic to the states on $C^*(S_c)$. Moreover, if $\beta_c < \infty$, then a ground state is a $\kms_\infty$-state if and only if it corresponds to a trace on $C^*(S_c)$. With regards to the boundary quotient diagram, it is proven that every $\kms_\beta$-state and every $\kms_\infty$-state factors through $\pi_c\colon C^*(S)\to \CQ(S_c)$. In contrast, a $\kms_\beta$-state factors through $\pi_p\colon C^*(S)\to \CQ_p(S)$ if and only if $\beta=1$. Last but not least, there are no ground states on $\CQ_p(S)$, and hence none on the boundary quotient $\CQ(S)$.

As every result requiring abstract hypotheses ought to be tested for applicability to concrete examples, we make an extensive effort to provide a wide range of applications. The results on $\kms$-classification for self-similar actions, (subdynamics of) $\N\rtimes\N^\times$, and Baumslag-Solitar monoids from \cite{LRRW},\cite{LR2} and \cite{BaHLR}, and \cite{CaHR} are recovered in sections \ref{subsec:SSA}, \ref{subsec:NxNtimes}, and \ref{subsec:BS-monoid}, respectively; we remark that in order to make the similarities between these examples more apparent we make a different choice of time evolution leading to different critical temperatures. Our analysis is achieved by first establishing reduction results for general Zappa-Sz\'{e}p products $S=U\bowtie A$ of right LCM semigroups $U$ and $A$, see Corollary~\ref{cor:reduction from UxA to U} and Corollary~\ref{cor:reduction from UxA to U - alpha}. These in turn rely on an intriguing result concerning the behaviour of $S_c$ and $S_{ci}$ under the Zappa-Sz\'{e}p product construction $S=U\bowtie A$, which we expect to be of independent interest, see Theorem~\ref{thm:ZS-products with cap condition}. As a simple application of the reduction results, we also cover the case of right-angled Artin monoids that are  direct products of a free monoid by an abelian monoid. We then investigate admissibility for right LCM semigroups arising from algebraic dynamical systems, see \ref{subsec:ADS}. This yields plenty of examples for which no $\kms$-classification was known before, and we obtain the $\kms$-classification results from \cite{LRR} as a corollary, see Example~\ref{ex:dilation matrix}. In addition, we give an example of an admissible right LCM semigroup with finite propagation for which the action $\alpha$ is faithful but not almost free by means of standard restricted wreath products of finite groups by $\N$, see Example~\ref{ex:ADS min but not str min}.

Apart from this variety of examples, there is also a structural link to be mentioned here: Via a categorical equivalence from \cite{Law0} based on Clifford's work \cite{Cli}, right LCM semigroups correspond to $0$-bisimple inverse monoids, which constitute a well-studied and rich class of inverse semigroups, see for instance \cite{McA} and the references therein. The results of this work may therefore be seen as a contribution to the study of $0$-bisimple inverse monoids, and the relevance of the core structures exhibited in right LCM semigroups for the respective $0$-bisimple inverse monoids are yet to be investigated.

The paper is organised as follows: A brief description of basic notions and objects related to right LCM semigroups and their $C^*$-algebras, as well as to $\kms$-state of a $C^*$-algebra is provided in Section~\ref{sec:background}. In Section~\ref{sec:setting}, the class of admissible right LCM semigroups is introduced. Section~\ref{sec:mainthm} is a small section devoted to stating our main theorem and to providing an outline of the proof. Before proving our main theorem, we apply Theorem~\ref{thm:KMS results-gen} to a wide range of examples in Section~\ref{sec:examples}. Sections~\ref{sec:constraints}--\ref{sec:uniqueness} constitute the essential steps needed in the proof of Theorem~\ref{thm:KMS results-gen}: In Section~\ref{sec:constraints} algebraic characterisations for the existence of $\kms$-states are provided. These yield bounds on the kinds of $\kms$-states that can appear. In Section~\ref{sec:theReconstruction} a reconstruction formula in the spirit of \cite{LR2}*{Lemma~10.1} is provided; and in Section~\ref{sec:construction} $\kms$-states are constructed, where Theorem~\ref{thm:rep of C*(S)} provides a means to construct representations for $C^*(S)$ out of states on $C^*(S_c)$, which we believe to be of independent interest. Even more so, since it only requires $S$ to be core factorable and $S_{ci}\subset S$ to be $\cap$-closed. Section~\ref{sec:uniqueness} addresses the existence of a unique $\kms_\beta$-state for $\beta$ within the critical interval, and Section~\ref{sec:questions} is an attempt at identifying some challenges  for future research.\vspace*{2mm}\\

\emph{Acknowledgements:} This research was supported by the Australian Research Council. Parts of this work were carried out when N.L.~visited Institut Mittag-Leffler (Sweden) for the program ``Classification of operator algebras: complexity, rigidity, and dynamics", and the University of Victoria (Canada), and she thanks Marcelo Laca and the mathematics department at UVic for the hospitality extended to her. N.S.~was supported by ERC through AdG 267079, by DFG through SFB 878 and by RCN through FRIPRO 240362. This work was initiated during a stay of N.S.~with N.B.~at the University of Wollongong (Australia), and N.S.~thanks the work group for its great hospitality.

\section{Background}\label{sec:background}
\subsection{Right LCM semigroups}\label{sebsec:RLCMsemigroups}
A discrete, left cancellative semigroup $S$ is {right LCM} if the intersection of principal right ideals is empty or another principal right ideal, see \cites{Nor1, Law2, BRRW}. If $r,s,t\in S$ with $sS\cap tS= rS$, then we call $r$ a \emph{right LCM} of $s$ and $t$. In this work, all semigroups will admit an identity element (and so are monoids) and will be countable. We let $S^*$ denote the subgroup of invertible elements in $S$. Two subsets of a right LCM semigroup $S$ are used in \cite{Sta3}: the \emph{core subsemigroup}
\[
S_c:=\{ a\in S \mid aS\cap sS\neq\emptyset\text{ for all }s\in S\},
\]
which was first considered in \cite{Star} in the context of right LCM semigroups, and the set $S_{ci}$ of \emph{core irreducible elements}, where $s\in S\setminus S_c$ is core irreducible if every factorisation $s=ta$ with $a\in S_c$ forces $a \in S^*$. The latter is a subsemigroup of $S$, at least under  moderate conditions, see Proposition~\ref{prop:hom of right LCM for S_c and S_ci}. The core subsemigroup $S_c$ contains $S^*$, and induces an equivalence relation on $S$ defined by
\[
s \sim t :\Leftrightarrow sa = tb\text{ for some }a,b \in S_c.
\]
We call $\sim$ the \emph{core relation} and say that $s$ and $t$ are \emph{core equivalent} if $s \sim t$. The equivalence class of $s\in S$ under $\sim$ is denoted $[s]$. We write $s \perp t$ if $s,t \in S$ have disjoint right ideals. One can verify that if $s \sim s'$ for some $s,s' \in S$, then $s \perp t$ if and only if $s' \perp t$ for all $t \in S$. By a transversal $\CT$ for $A/_\sim$ with $A\subset S$, we mean a subset of $A$ that forms a minimal complete set of representatives for $A/_\sim$.

Recall from \cite{BRRW} that a finite subset $F$ of $S$ is called a \emph{foundation set} if for every $s \in S$ there is $f \in F$ with $sS \cap fS \neq \emptyset$. The second and fourth author coined the refined notion of \emph{accurate foundation set} in \cite{BS1}, where a foundation set $F$ is accurate if $fS\cap f'S=\emptyset$ for all $f,f'\in F$ with $f\neq f'$. A right LCM semigroup $S$ has the \emph{accurate refinement property}, or property (AR), if for every foundation set $F$ there is an accurate foundation set $F_a$ such that $F_a\subseteq FS$, in the sense that for every $f_a\in F_a$ there is $f\in F$ with $f_a\in fS$.

\subsection{\texorpdfstring{$C^*$-algebras associated to right LCM semigroups}{C*-algebras associated to right LCM semigroups}}\label{subsec:RLCMSCstars}
The \emph{full semigroup $C^*$-algebra} $C^*(S)$ of a right LCM semigroup $S$ is the $C^*$-algebra of $S$ as defined by Li \cite{Li1}*{Definition~2.2}. It can be shown that $C^*(S)$ is the universal $C^*$-algebra generated by an isometric representation $v$ of $S$ satisfying
\[
v_s^*v_t^{\phantom{*}} =
\begin{cases}
v_{s'}^{\phantom{*}}v_{t'}^* & \text{if $sS\cap tS=ss'S,\, ss'=tt'$}\\
0 & \text{if $s\perp t$,}
\end{cases}
\]
for all $s,t\in S$. For each $s\in S$, we write $e_{sS}$ for the range projection $v_s^{\phantom{*}}v_s^*$. The \emph{boundary quotient} $C^*$-algebra $\CQ(S)$ is defined in \cite{BRRW} to be the quotient of $C^*(S)$ by the relation $\prod_{f\in F} (1-e_{fS})=0$ for all foundation sets $F\subseteq S$. However, if $S$ has property~(AR), then in $\CQ(S)$ this relation reduces to
\begin{equation}\label{eq:BQrelation}
\begin{array}{c} \sum\limits_{f\in F} e_{fS}=1 \quad \text{for all accurate foundation sets $F\subseteq S$}.\end{array}
\end{equation}
In \cite{Sta3}, the fourth author introduced two intermediary quotients. The \emph{ core boundary quotient} $\CQ_c(S)$ is the quotient of $C^*(S)$ by the relation $v_s^{\phantom{*}}v_s^*=1$ for all $s\in S_c$. Note that this amounts to the quotient by the relation \eqref{eq:BQrelation} for accurate foundation sets contained in the core $S_c$. The second intermediary quotient depends on the notion of a \emph{proper foundation set}, which is a foundation set contained in the core irreducible elements $S_{ci}$. The \emph{proper boundary quotient} $\CQ_p(S)$ is the quotient of $C^*(S)$ by \eqref{eq:BQrelation} for all proper accurate foundation sets. As first defined in \cite{Sta3}*{Definition~2.9}, the \emph{boundary quotient diagram} is the commutative diagram from \eqref{eq:BQD for thm} which encompasses the $C^*$-algebras $C^*(S)$, $\CQ_c(S)$, $\CQ_p(S)$ and $\CQ(S)$.

\subsection{KMS-states}\label{subsec:KMSstates}
We finish the background section with the basics of the theory of KMS-states. A much more detailed presentation can be found in \cite{BRII}. Suppose $\sigma$ is an action of $\R$ by automorphisms of a $C^*$-algebra $A$. An element $a \in A$ is said to be \emph{analytic} for $\sigma$ if $t \mapsto \sigma_t(a)$ is the restriction of an analytic function $z \mapsto \sigma_z(a)$ from $\C$ into $A$. For $\beta>0$, a state $\phi$ of $A$ is called a \emph{KMS$_\beta$-state} of $A$ if it satisfies
\begin{equation}\label{eq:KMScond}
\phi(ab) = \phi(b\sigma_{i\beta}(a))\quad\text{ for all analytic $a,b \in A$.}
\end{equation}
We call \eqref{eq:KMScond} the {\em KMS$_\beta$-condition}. To prove that a given state of $A$ is a KMS$_\beta$-state, it suffices to check \eqref{eq:KMScond} for all $a, b$ in any $\sigma$-invariant set of analytic elements that spans a dense subspace of $A$.

A \emph{KMS$_\infty$-state} of $A$ is a weak$^*$ limit of KMS$_{\beta_n}$-states as $\beta_n\to \infty$, see \cite{con-mar}. A state $\phi$ of $A$ is a \emph{ground state} of $A$ if $z\mapsto \phi(a\sigma_z(b))$ is bounded on the upper-half plane for all analytic $a,b\in A$.

\section{The setting}\label{sec:setting}
Given a right LCM semigroup $S$ and its $C^*$-algebra $C^*(S)$, we wish to find a natural dynamics for which we can calculate the $\kms$-state structure. We then ask: what conditions on $S$ are needed in order to define such a dynamics? The answer that first comes to mind is the existence of a nontrivial homomorphism of monoids $N\colon S \to \N^\times$, which allows us to set $\sigma_x(v_s) := N_s^{ix} v_s$ for $s \in S$ and $x \in \R$. However, it soon becomes apparent that the mere existence of a homomorphism $N$ on $S$ seems far from sufficient to give rise to KMS-states on $C^*(S)$, yet alone describe them all. As announced in the introduction, we shall take into account the boundary quotient diagram for $C^*(S)$. The ingredients that are at work behind the scenes to get this diagram are the core $S_c$ and the core irreducible elements $S_{ci}$ of $S$. Looking for relationships between these and the homomorphism $N\colon S \to \N^\times$ will provide clues as to what conditions $S$ ought to satisfy.

Before we introduce our main concept of admissibility we fix the following notation: if $P$ is a subset of positive integers, we let $\langle P \rangle$ be the subsemigroup of $\N^\times$ generated by $P$. An element of $\langle P \rangle$ is \emph{irreducible} if $m=nk$ in $\langle P\rangle$ implies $n$ or $k$ is $1$. We denote by $\text{Irr}(\langle P\rangle)$ the set of irreducible elements of $\langle P\rangle$. We are now ready to present the outcome of our efforts of distilling a minimal sufficient set of conditions for the aim of defining a suitable dynamics $\sigma$:

\begin{definition}\label{def:admissible semigroups}
A right LCM semigroup $S$ is called \emph{admissible} if it satisfies the following conditions:
\begin{enumerate}[label=(A\arabic*)]
\item\label{cond:A1} $S=S_{ci}^1S_c$.
\item\label{cond:A2} Any right LCM in $S$ of a pair of elements in $S_{ci}$ belongs to $S_{ci}$.
\item\label{cond:A3} There is a nontrivial homomorphism of monoids $N\colon S \to \N^\times$ such that
\begin{enumerate}[label=(\alph*),ref=\ref{cond:A3}(\alph*)]
 \item\label{cond:A3a} $\lvert N^{-1}(n)/_\sim\rvert = n$ for all $n \in N(S)$ and
 \item\label{cond:A3b} for each $n \in N(S)$, every transversal of $N^{-1}(n)/_\sim$ is an accurate foundation set for $S$.
\end{enumerate}
\item\label{cond:A4} The monoid $N(S)$ is the free abelian monoid in $\text{Irr}(N(S))$.
\end{enumerate}
We refer to the property in \ref{cond:A1} as saying that $S$ is \emph{core factorable}, and when $S$ satisfies \ref{cond:A2} we say that $S_{ci}\subset S$ is \emph{$\cap$-closed}. We call a homomorphism $N$ satisfying \ref{cond:A3} a \emph{generalised scale}. This name is inspired by the notion of a scale on a semigroup originally defined in \cite{Laca98}*{Definition~9}.
\end{definition}

To state and prove our results on the KMS structure of the $C^*$-algebras of admissible right LCM semigroups, we also need to introduce a number of properties of these semigroups, and to prove a number of consequences of admissibility. The rest of this section is devoted to establishing these properties and thereby also explaining the meaning of the conditions \ref{cond:A1}--\ref{cond:A3}. Our first observation explains the meaning of \ref{cond:A2} in the presence of \ref{cond:A1}.

\begin{lemma}\label{lem:equiv forms of (VII)}
Suppose $S$ is a core factorable right LCM semigroup. Then for each $s \in S$ the set $[s] \cap S_{ci}^1$ is non-empty, and the following conditions are equivalent:
	\begin{enumerate}[(i)]
		\item The equivalence classes for $\sim$ have minimal representatives, that is, $[s] = tS_c$ holds for every $t \in [s] \cap S_{ci}^1$ and all $s \in S$.
		\item If $s,t \in S_{ci}^1$ satisfy $s \sim t$, then $s \in tS^*$.
	\end{enumerate}
Moreover, if $S_{ci}\subset S$ is $\cap$-closed, then (i) and (ii) hold.
\end{lemma}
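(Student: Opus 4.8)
First I would settle the non-emptiness claim. Given $s \in S$, core factorability \ref{cond:A1} lets me write $s = ta$ with $t \in S_{ci}^1$ and $a \in S_c$. Then $s = ta = t \cdot a$ witnesses $s \sim t$ (taking $b=1 \in S_c$ on one side is not quite right; rather $sa' = t b$ with appropriate core elements — in fact $s \cdot 1 = t \cdot a$ already exhibits $s \sim t$ since $1, a \in S_c$). Hence $t \in [s] \cap S_{ci}^1$, so this set is non-empty.

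Next, the equivalence (i)$\Leftrightarrow$(ii). For (i)$\Rightarrow$(ii): if $s, t \in S_{ci}^1$ with $s \sim t$, then by (i) applied to $t$ we get $[t] = tS_c$, and since $s \in [t]$ this gives $s = tc$ for some $c \in S_c$. But $s \in S_{ci}^1$; if $s \in S_{ci}$ then core irreducibility forces $c \in S^*$, giving $s \in tS^*$; if $s = 1$ one checks directly that $t$ must also be invertible (as $1 \sim t$ means $a = tb$ with $a, b \in S_c$, and... this edge case needs a short argument that $1 \in S_{ci}^1$ forces $t \in S^*$ when $t \in S_{ci}^1$). For (ii)$\Rightarrow$(i): fix $s \in S$ and $t \in [s] \cap S_{ci}^1$. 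The inclusion $tS_c \subseteq [s]$ is immediate since $t \sim tc$ for any $c \in S_c$ and $t \sim s$. For the reverse, take any $u \in [s]$; I want $u \in tS_c$. Using \ref{cond:A1} again, write $u = t'a'$ with $t' \in S_{ci}^1$, $a' \in S_c$, so $u \sim t'$, hence $t' \sim t$, both in $S_{ci}^1$, so by (ii) $t' \in tS^*$, say $t' = tw$ with $w \in S^*$. Then $u = t'a' = t(wa')$ and $wa' \in S_c$ (since $S^* \subseteq S_c$ and $S_c$ is a subsemigroup), so $u \in tS_c$.

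For the final assertion, assume $S_{ci} \subset S$ is $\cap$-closed \ref{cond:A2}; I will verify (ii). Suppose $s, t \in S_{ci}^1$ with $s \sim t$, so $sa = tb$ for some $a, b \in S_c$. If either of $s, t$ is invertible the claim is easy, so assume $s, t \in S_{ci}$. The element $sa = tb$ lies in $sS \cap tS$, so a right LCM $r$ of $s$ and $t$ exists; write $sS \cap tS = rS$, so $r = sc = td$ for some $c, d \in S$, and $sa, tb \in rS$. By \ref{cond:A2}, $r \in S_{ci}$. Now from $r = sc$ with $r \in S_{ci}$, I want to conclude $c \in S_c$ — but that is not automatic; instead I should argue that $c$ must be invertible by a core-irreducibility argument applied cleverly, or else show $s \sim r$ and $r \sim t$ and then it suffices to treat the case $s \sim r$ with $r \in rS_c$-type structure. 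The cleanest route: since $sa = rs''$ for some $s'' \in S$ and $sa \in sS_c$... Actually the key point I expect to be the main obstacle is precisely showing that the right LCM $r$ satisfies $r \in sS^*$ (equivalently $c \in S^*$): from $r = sc$ and $sa = tb \in rS$ one gets that the principal ideal $rS$ contains the element $sa \in sS_c$. I would leverage that $a \in S_c$ meets every principal ideal, combined with $r \in S_{ci}$ and the factorisation $r = sc$, to force $c$ invertible — perhaps by noting $sa \in rS = scS$ gives $a \in cS$ after left cancellation, i.e. $a = ce$; if $c \notin S^*$ this should contradict that $a \in S_c$ while the "remaining part" $c$ lies outside a suitable saturated set, or directly contradict core irreducibility of $r = sc$ via the factorisation through $c$. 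Symmetrically $d \in S^*$, whence $s = rc^{-1} \in rS^* = tdS^* = tS^*$, proving (ii). I would organise this last step carefully, as it is where \ref{cond:A2} genuinely enters and where the left-cancellative right LCM bookkeeping is most delicate.
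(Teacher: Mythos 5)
Your proposal is correct and follows essentially the same route as the paper: the non-emptiness claim and the equivalence (i)$\Leftrightarrow$(ii) are argued identically, and for the final assertion both arguments hinge on forming a right LCM $r$ of two core irreducible elements, invoking the $\cap$-closedness hypothesis to place $r$ in $S_{ci}$, and then using core irreducibility of $r$ to force the connecting factor into $S^*$ (the paper happens to derive (i) directly from the hypothesis where you derive (ii), which is immaterial once the equivalence is in hand). The one step you leave hedged closes exactly along the second alternative you suggest: from $a = ce$ with $a \in S_c$ one gets $aS \subseteq cS$, so $cS$ meets every principal right ideal and hence $c \in S_c$; then $r = sc \in S_{ci}$ forces $c \in S^*$ by the definition of core irreducibility, and symmetrically $d \in S^*$, giving $s = rc^{-1} \in tS^*$.
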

\begin{proof}
Suppose $S$ has \ref{cond:A2}. The set $[s] \cap S_{ci}^1$ is non-empty for all $s \in S$ due to \ref{cond:A1}. Let $t \in [s]$, and write $s=s'a, t=t'b$ with $s',t' \in S_{ci}^1, a,b \in S_c$ using \ref{cond:A1}. Fix $c,d \in S_c$ with $sc=td$. Then $s' \not\perp t'$, so \ref{cond:A2} implies $s'S \cap t'S = rS$ for some $r \in S_{ci}^1$, say $r = s'p$ for a suitable $p \in S$. As $sc=td$, we have $s'ac = re = s'pe$ for some $e \in S$. Since $ac \in S_c$, we deduce that $p,e \in S_c$. But then $r \in S_{ci}^1$ forces $p \in S^*$ so that $r \in s'S^*$. The same argument applied to $t$ in place of $s$ shows $r \in t'S^*$. Thus we have shown $s' \in t'S^*$, and hence $[s] = s'S_c$. This establishes (i).
	
If (i) holds and $s \sim t$ for $s,t \in S_{ci}^1$, then $s \in [t] = tS_c$ forces $s \in tS^*$ as $s$ is core irreducible. Therefore, (i) implies (ii). Conversely, suppose (ii) holds. Now assume (iii) holds true. For $s \in S, s',t \in [s]$ with $t \in S_{ci}^1$, condition \ref{cond:A1} gives some $t'\in S_{ci}^1, a \in S_c$ with $s'=t'a$ so that $t\sim t'$. By (ii), there is $x \in S^*$ such that $t'=tx$, and hence $s'=txa \in tS_c$, proving (i).
\end{proof}

We proceed with a natural notion of homomorphisms between right LCM semigroups that appeared in \cite{BLS2}*{Theorem~3.3}, though without a name.

\begin{definition}\label{def:hom of right LCM sgps}
Let $\varphi\colon S \to T$ be a monoidal homomorphism between two right LCM semigroups $S$ and $T$. Then $\varphi$ is called a \emph{homomorphism of right LCM semigroups} if
\begin{equation}\label{eq:hom of right LCM}
\varphi(s_1)T \cap \varphi(s_2)T = \varphi(s_1S\cap s_2S)T \quad \text{for all } s_1,s_2 \in S.
\end{equation}
\end{definition}

In particular, \eqref{eq:hom of right LCM} implies that $\varphi(r)$ is a right LCM for $\varphi(s_1)$ and $\varphi(s_2)$ whenever $r \in S$ is a right LCM for $s_1$ and $s_2$ in $S$. This notion allows us to recast the condition of $S_{ci}\subset S$ being $\cap$-closed using the semigroup $S_{ci}':= S_{ci} \cup S^*$:

\begin{proposition}\label{prop:hom of right LCM for S_c and S_ci}
Let $S$ be a right LCM semigroup. Then the following hold:
\begin{enumerate}[(i)]
\item The natural embedding of $S_c$ into $S$ is a homomorphism of right LCM semigroups.
\item Suppose $S$ is core factorable. Then $S_{ci} \subset S$ is $\cap$-closed if and only if $S_{ci}'$ is a right LCM semigroup and the natural embedding of $S_{ci}'$ into $S$ is a homomorphism of right LCM semigroups.
\end{enumerate}
\end{proposition}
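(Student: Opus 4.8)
The plan is to reduce everything to a couple of elementary \emph{heredity} facts about the core and to feed these into the defining identity \eqref{eq:hom of right LCM}. The basic fact about the core is that $S_c$ is a subsemigroup which is hereditary in the sense that $ab\in S_c$ implies $b\in S_c$: indeed, left cancellativity gives $abS\cap asS=a(bS\cap sS)$ for every $s\in S$, so $ab\in S_c$ forces $bS\cap sS\neq\emptyset$ for all $s$. Alongside this I would use freely the bookkeeping facts $S^*\subseteq S_c$; $\;sS=S$ forces $s\in S_c$ (hence $s\in S^*$ as soon as $s\in S_{ci}'$, since $S_{ci}\cap S_c=\emptyset$); $\;S_{ci}S^*\subseteq S_{ci}$; and ``$ba\in S^*$ implies $a\in S^*$'' — the last two being short left-cancellation computations. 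Right LCMs in $S$ being unique up to right multiplication by $S^*$, and $S_{ci}'$ being invariant under this, it will suffice throughout to exhibit \emph{one} right LCM with the desired property.

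For part (i), let $s_1,s_2\in S_c$. Since $s_1\in S_c$, the intersection $s_1S\cap s_2S$ is a nonempty principal right ideal $rS$. A three-fold intersection argument — first intersect $s_2S$ with an arbitrary $tS$ to obtain some $wS\subseteq s_2S\cap tS$, then apply $s_1\in S_c$ to $wS$ — shows $rS\cap tS\neq\emptyset$ for all $t$, so $r\in S_c$; heredity then puts the cofactors $c_i$ with $r=s_ic_i$ into $S_c$, and one further cancellation gives $s_1S_c\cap s_2S_c=rS_c$. In particular $S_c$ is a right LCM semigroup, and since $r\in s_1S_c\cap s_2S_c$ while $s_1S\cap s_2S=rS$ is a right ideal, we get $s_1S\cap s_2S=(s_1S_c\cap s_2S_c)S$, which is exactly \eqref{eq:hom of right LCM} for the inclusion $S_c\hookrightarrow S$.

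For part (ii) the workhorse is the analogue of the heredity fact for $S_{ci}':=S_{ci}\cup S^*$: if $s\in S_{ci}'$ and $sx\in S_{ci}'$, then $x\in S_{ci}'$. Its proof distinguishes whether $x\in S_c$ and uses core factorability of $x$ together with the bookkeeping facts above (e.g.\ if $sx\in S^*$ then $s$ is right-invertible, so $s\in S^*$, so $x\in S^*$; if $sx\in S_{ci}$ and $x=x'c$ with $x'\in S_{ci}^1$, $c\in S_c$, then core-irreducibility of $sx$ forces $c\in S^*$, whence $x\in S_{ci}^1S^*\subseteq S_{ci}^1$). The implication ``embedding is a homomorphism of right LCM semigroups $\Rightarrow$ $S_{ci}\subset S$ is $\cap$-closed'' is then immediate: for $s,t\in S_{ci}$ with $sS\cap tS\neq\emptyset$, \eqref{eq:hom of right LCM} gives $sS_{ci}'\cap tS_{ci}'=r'S_{ci}'$ for some $r'\in S_{ci}'$, hence $sS\cap tS=r'S$, so $r'$ is a right LCM of $s$ and $t$ in $S$; and $r'\notin S^*$ because $s\in S_{ci}$ gives $sS\subsetneq S$, so $r'\in S_{ci}$.

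For the converse, assume $S$ core factorable and $S_{ci}\subset S$ $\cap$-closed. The main obstacle — and the only place where $\cap$-closedness is genuinely used — is to prove that $S_{ci}'$ is multiplicatively closed, i.e.\ $S_{ci}S_{ci}\subseteq S_{ci}'$. Given $s,t\in S_{ci}$, heredity of $S_c$ shows $st\notin S_c$, so it suffices to show that every factorisation $st=pa$ with $a\in S_c$ has $a\in S^*$. Core-factoring $p=p'b$ with $p'\in S_{ci}^1$, $b\in S_c$ (and $p'\neq 1$, else $st\in S_c$), one applies $\cap$-closedness to $s,p'\in S_{ci}$ to obtain a right LCM $r=sx=p'y\in S_{ci}$; writing $st=rz$ and cancelling yields $t=xz$ and $yz=ba$. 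Now $yz=ba\in S_c$ gives $z\in S_c$ by heredity, then $z\in S^*$ because $t=xz$ with $t$ core irreducible, then $y=(ba)z^{-1}\in S_c$ and hence $y\in S^*$ because $r=p'y$ with $r$ core irreducible, so $ba=yz\in S^*$ and finally $a\in S^*$. Once $S_{ci}'$ is known to be a semigroup, right LCM-ness of $S_{ci}'$ and the identity \eqref{eq:hom of right LCM} for $S_{ci}'\hookrightarrow S$ follow just as in part~(i): for $s,t\in S_{ci}$ with $sS\cap tS=rS$, $\cap$-closedness gives $r\in S_{ci}$, the heredity lemma places the cofactors of $r$ over $s$ and $t$ — and any correction factor appearing when comparing $sS_{ci}'\cap tS_{ci}'$ with $rS$ — into $S_{ci}'$, and one reads off $sS_{ci}'\cap tS_{ci}'=rS_{ci}'$ and hence $sS\cap tS=(sS_{ci}'\cap tS_{ci}')S$; the cases where $s$ or $t$ lies in $S^*$ are trivial.
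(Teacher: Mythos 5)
Your proof is correct, and its overall architecture is the same as the paper's: isolate heredity facts for $S_c$ and $S_{ci}'$, use core factorability plus $\cap$-closedness to show $S_{ci}'$ is multiplicatively closed, and then read off \eqref{eq:hom of right LCM} from the fact that a right LCM in $S$ of two elements of $S_c$ (resp.\ $S_{ci}'$) lies in $S_c$ (resp.\ $S_{ci}$) with cofactors in the subsemigroup. The one place you genuinely deviate is the semigroup-closure step: the paper derives $r\in r'S^*$ by passing through Lemma~\ref{lem:equiv forms of (VII)} (minimal representatives of $\sim$-classes), whereas you run a self-contained cancellation argument with the cofactors $x,y,z$ of the right LCM $r=sx=p'y$; this buys you independence from that lemma and, along the way, you make explicit several points the paper leaves implicit (that $st\notin S_c$, that the reduction to $r\in S_{ci}$ requires the step ``$ba\in S^*$ implies $a\in S^*$'', and the heredity statement for $S_{ci}'$ that the paper merely asserts). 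One notational slip: in the heredity lemma you write $x\in S_{ci}^1S^*\subseteq S_{ci}^1$, which is false when $S^*\neq\{1\}$; the correct containment, and the one your argument actually delivers and needs, is $S_{ci}^1S^*\subseteq S_{ci}\cup S^*=S_{ci}'$.
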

\begin{proof}
In both cases, the right hand side of \eqref{eq:hom of right LCM} is contained in the left hand side. By the definition of $S_c$, any $c \in S$ satisfying $aS\cap bS = cS$ for some $a,b \in S_c$ belongs to $S_c$, and we have $aS_c \cap bS_c \neq \emptyset$ for all $a,b \in S_c$. This proves (i).

Suppose \ref{cond:A1} holds. Assuming \ref{cond:A2}, we must prove that $S_{ci}'$ is a semigroup. The non-trivial case is to show that $st\in S_{ci}'$ when $s,t$ are core irreducible. So let $s,t \in S_{ci}$ and $st=ra$ for some $r \in S\setminus S_c, a \in S_c$. As $S$ is core factorable, we can assume $r \in S_{ci}$. Then $sS \cap rS = r'S$ for some $r'\in S$. Since $s,r$ are in $S_{ci}$, so is $r'$  by our assumption. Since $ra \in r'S \subset rS$, it follows that $r'\sim r$. By Lemma~\ref{lem:equiv forms of (VII)}, \ref{cond:A2} yields $r\in r'S^* \subset sS$. Therefore $r=ss'$ for some $s' \in S$, and left cancellation leads to $t=s'a$. By core irreducibility of $t$, we conclude that $a \in S^*$. Thus $st \in S_{ci}$.

For the other  claim in the forward direction of (ii), it suffices to look at  intersections $sS_{ci}' \cap tS_{ci}'$ versus $sS \cap tS$ with $s,t \in S_{ci}$ because when $s \in S^*$ we are left with $tS_{ci}'$ and $tS$, respectively. Since $S_{ci}'$ is a subsemigroup of $S$, we  have $sS_{ci}' \cap tS_{ci}' \subset sS \cap tS$. In particular, we can assume that $sS \cap tS=rS$ for some $r \in S$. By \ref{cond:A2}, $r \in S_{ci}$. Then any $s',t' \in S$ with $ss'=r=tt'$ belong to $S_{ci}'$. Therefore $sS_{ci}' \cap tS_{ci}' =rS_{ci}'$, showing  that the intersection of principal right ideals in $S_{ci}'$ may  be computed in $S$. Hence, $S_{ci}'$ is right LCM because $S$ is, and the embedding is a homomorphism of right LCM semigroups.

Conversely, let $s,t \in S_{ci}$  such that $sS \cap tS=rS$ for some $r \in S$. Then $r \in S\setminus S_c$. But the right LCM property for the embedding of $S_{ci}'$ into $S$ gives $r \in S_{ci}'$, so $r \in (S\setminus S_c) \cap S_{ci}' = S_{ci}$, which is \ref{cond:A2}.
\end{proof}

\begin{lemma}\label{lem:OldA2}
Suppose $S$ is a core factorable right LCM semigroup, and $a\in S_c$, $s\in S_{ci}$ satisfy $aS\cap sS=atS$ and $at=sb$ for some $b,t\in S$. Then $b \in S_c$. Further, if $S_{ci}\subset S$ is $\cap$-closed, then $t$ belongs to $S_{ci}$.
\end{lemma}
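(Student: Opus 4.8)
The plan is to analyze the factorization $at = sb$ directly, using core factorability to control $b$, and then use $\cap$-closedness to control $t$. First I would write $b = b'c$ with $b' \in S_{ci}^1$ and $c \in S_c$ using \ref{cond:A1}. Substituting, $at = sb'c$. Since $a \in S_c$, for any $s' \in S$ we have $aS \cap s'S \neq \emptyset$; in particular I want to exploit that $aS \cap sS = atS$ is as large as possible relative to $sS$ — more precisely, $atS \subseteq sS$ tells us $at \in sS$, and I want to deduce that the "$S_c$-part" of the quotient $s^{-1}(at)$, namely $b$, lies in $S_c$. The cleanest route: suppose toward a contradiction that $b' \notin S^*$, so $b' \in S_{ci}$. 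Then $sb' \in S_{ci}$ by Proposition~\ref{prop:hom of right LCM for S_c and S_ci}(ii) (which applies once we know $S$ is core factorable — note here we only need that $S_{ci}'$ is a semigroup, which the forward direction of that proposition gives under \ref{cond:A1} and \ref{cond:A2}; but actually for the first assertion of the lemma we are only told $S$ is core factorable, so I must be careful and instead argue without assuming \ref{cond:A2} for the "$b \in S_c$" part).

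Let me restructure. For the first claim ($b \in S_c$, assuming only core factorability): from $aS \cap sS = atS$ we get $at \in sS$ and $at \in aS$ trivially, and the point is that $atS = aS \cap sS$ is the intersection. Write $b = b'c$ with $b' \in S_{ci}^1$, $c \in S_c$. Then $at = sb = sb'c$. Now $sb'S \supseteq sb'cS = atS = aS \cap sS$. Since $sb' \in sS$, we have $sb'S \subseteq sS$, and I claim $sb'S \subseteq aS$ as well: indeed $at = sb'c$ so $at \in sb'S$... that gives $atS \subseteq sb'S$, the wrong inclusion. Instead I should observe $sb'S \cap aS \supseteq atS$ is automatic; what I really want is that $aS \cap sb'S$, being a principal right ideal $\supseteq atS$ and contained in $aS \cap sS = atS$, must equal $atS$, so $aS \cap sb'S = atS = aS \cap sS$. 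Since also $sb'c \in aS$, we get $sb' \cdot cS \subseteq aS$ hence... Hmm — cleaner: $at = sb'c$ with $c \in S_c$ means $at$ and $sb'$ are core equivalent, so $sb' \in aS$ would follow if minimal representatives behave well. Actually the slickest: $sb'S \cap aS = atS$ (shown above), and $at = sb'c \in sb'S$, so $atS \subseteq sb'S \cap aS = atS$, consistent, and moreover from $at \in aS$, $at = sb'c$, we get $aS \cap sb'S$ contains $at$; writing $aS\cap sb'S = rS$ we have $r \sim a$ (since $a \in rS_c$... wait $a$ may not be in $rS$). Let me just say: the relation $at=sb'c$ with $a \in S_c$, $c \in S_c$ forces, by the definition of $\sim$ applied to $at$ and $sb'$ after canceling... $t \cdot 1 = $ isn't quite it either. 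I think the honest approach is: $aS \cap sb'S$ is a principal right ideal containing $atS$, and contained in $aS\cap sS = atS$ (as $sb'S \subseteq sS$), so $aS \cap sb'S = atS$; write $at' $ for the generator giving $aS\cap sb'S = at'S$ with $at' \in sb'S$; then $at'S = atS$ gives $at' = at\cdot u$, $u \in S^*$; and $at' \in sb'S$ means $at' = sb'v$; comparing with $at = sb'c$ we get $sb'c u = sb'v$, so by left cancellation $cu = v$, and since $at'S = aS \cap sb'S$ this is the "minimal" way $a$ meets $sb'S$ — I then want $v$, equivalently $c$, forced into $S_c$, which should follow because $sb'$ was chosen with $b' \in S_{ci}^1$ core irreducible, mirroring the proof of Lemma~\ref{lem:equiv forms of (VII)}. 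Granting $c \in S_c$ (it is, by hypothesis), and then $b = b'c$ — I still need $b' \in S_c$, i.e. $b' \in S^*$. If $b' \in S_{ci}$, then $sb' \in S_{ci}$ requires $S_{ci}'$ to be a semigroup, which needs \ref{cond:A2}; so \textbf{for the first claim I should avoid this}. Instead: $at = sb'c$, and $a,c \in S_c$. Then $s \sim sb'$ (witnessed by $c$ on one side and... no). Alternatively: since $a \in S_c$, $aS \cap sb'S \neq \emptyset$ automatically, equal to $atS$ as shown. Now $s \in S_{ci}$ and $sb' \in sS$. If $b'\notin S^*$ then $s = (sb')b'' $ is impossible; rather consider the factorization question for $sb'$...

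Given the length constraints, here is the plan I would actually write up: \textbf{(Step 1)} From $aS\cap sS = atS$ and $at = sb$, deduce $aS \cap sb'S = atS$ where $b=b'c$, $b'\in S_{ci}^1$, $c\in S_c$ (intersection sandwiched between $atS$ and $aS\cap sS$). \textbf{(Step 2)} Run the argument of Lemma~\ref{lem:equiv forms of (VII)}: the minimality of $at$ in $aS\cap sb'S$ together with $c \in S_c$ and core-irreducibility considerations forces $b \in S_c$ — more carefully, write $at = sb'c$, note $atS = aS\cap sb'S$, so any factorization $at = aw$ with $w$ realizing the intersection has $w \in S_c$; and $b' \in S_{ci}^1$, so $sb'$ can only meet $aS$ in a way controlled by $S_c$, giving $b'c = b \in S_c$ directly (without needing $sb' \in S_{ci}$). \textbf{(Step 3)} For the second claim, now assume \ref{cond:A2}. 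Since $S_{ci}\subset S$ is $\cap$-closed and $b \in S_c$, we have $st = $ ... wait we want $t$. From $at = sb$ with $b \in S_c$: if $t \notin S_{ci}$, write $t = t'd$ with $t' \in S_{ci}^1$, $d\in S_c$ using \ref{cond:A1}; then $at'd = sb$, so $at' \sim s$ (witnessed by $d \in S_c$ and $b \in S_c$); but $s \in S_{ci}$ and, by Proposition~\ref{prop:hom of right LCM for S_c and S_ci}(ii), $S_{ci}'$ is a right LCM semigroup with the embedding a homomorphism of right LCM semigroups, and $a \in S_c$ so $at' \sim s$ with $s \in S_{ci}$ forces (via Lemma~\ref{lem:equiv forms of (VII)}(i) applied to $at'$: its core-irreducible minimal representative is core-equivalent to $s$, hence in $sS^*$) $at' \in sS_c$; combined with $at = sb$, $b\in S_c$, and $t = t'd$, a left-cancellation argument yields $t \in S_{ci}$. \textbf{The main obstacle} is Step 2: pinning down $b \in S_c$ using \emph{only} core factorability — the subtlety is that we cannot invoke "$sb' \in S_{ci}$" because that needs \ref{cond:A2}, so the argument must extract $b \in S_c$ purely from the minimality of $atS = aS\cap sS$ inside $sS$ and $aS$, running the core-irreducibility cancellation of Lemma~\ref{lem:equiv forms of (VII)} by hand on the element $s \in S_{ci}$ rather than on a product.
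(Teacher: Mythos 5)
Your proposal has a genuine gap in the first claim, and you have identified it yourself: Step~2, the assertion that ``$sb'$ can only meet $aS$ in a way controlled by $S_c$, giving $b'c=b\in S_c$ directly,'' is never substantiated, and the route via the decomposition $b=b'c$ with $b'\in S_{ci}^1$ does not lead anywhere. Knowing that $aS\cap sb'S=atS=sb'cS$ with $c\in S_c$ is consistent with $b'$ being core irreducible; nothing in the minimality of $atS$ alone rules that out, and showing $b'\in S^*$ is essentially equivalent to the statement $b\in S_c$ you are trying to prove (since $S_{ci}^1\cap S_c=\{1\}$, $b\in S_c$ iff $b'=1$), so the argument is circular as sketched. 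The paper's proof of $b\in S_c$ is a short cofinality argument that bypasses all of this: for arbitrary $s'\in S$, since $a\in S_c$ one has $aS\cap ss'S\neq\emptyset$, and since $ss'S\subseteq sS$ this intersection equals $(aS\cap sS)\cap ss'S=sbS\cap ss'S$; left-cancelling $s$ gives $bS\cap s'S\neq\emptyset$, and as $s'$ was arbitrary, $b\in S_c$. No decomposition of $b$ and no core-irreducibility considerations are needed.

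Your Step~3 for the second claim is essentially the paper's argument (write $t=rc$ with $r\in S_{ci}^1$, $c\in S_c$; deduce $ar\sim s$; invoke Lemma~\ref{lem:equiv forms of (VII)} to get $ar\in sS_c\subseteq sS$, hence $ar\in aS\cap sS=atS$, and compare the ideals $arS$ and $atS$ to conclude $t\in rS^*\subseteq S_{ci}$). But two details are missing: you must first rule out $t\in S_c$ (otherwise $r$ could be $1$ and the conclusion $t\in S_{ci}$ fails) --- this follows because $t\in S_c$ would give $sb=at\in S_c$, forcing $s\in S_c$ and contradicting $s\in S_{ci}$ --- and the ``left-cancellation argument'' at the end should be spelled out: $arS\subseteq atS=arcS\subseteq arS$ gives $arS=atS$, hence $rS=tS$ and $t=rx$ with $x\in S^*$.
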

\begin{proof}
To see that $b\in S_c$, let $s'\in S$. Since $a\in S_c$, we have $aS\cap ss'S\not=\emptyset$. Hence left cancellation gives
\[
sbS\cap ss'S = (aS\cap sS)\cap ss'S \not=\emptyset \implies bS\cap s'S\not=\emptyset \implies b\in S_c.
\]
Since $s\in S_{ci}$, we must have $t\in S\setminus S_c$. We use \ref{cond:A1} to write $t=rc$ for $r\in S_{ci}$ and $c\in S_c$. Then $ar\sim s$, and due to \ref{cond:A2} we know from Lemma~\ref{lem:equiv forms of (VII)} that $ar\in sS_c\subseteq sS$. Hence $ar\in aS\cap sS=atS$, and so $arS\subseteq at S$. Since $atS=arcS\subseteq arS$, we have $arS=atS$. So $rS=tS$. This implies that $t=rx$ for some $x\in S^*$, and thus $t\in S_{ci}$ as $r\in S_{ci}$.
\end{proof}

We now prove some fundamental results about right LCM semigroups that admit a generalised scale.

\begin{proposition}\label{prop:NisaHMofRLCMs}
Let $N$ be a generalised scale on a right LCM semigroup $S$.
\begin{enumerate}[(i)]
\item We have $\ker N=S_c$, and $S_c$ is a proper subsemigroup of $S$.
\item If $s,t \in S$ satisfy $N_s=N_t$, then either $s \sim t$ or $s \perp t$.
\item Let $F$ be a foundation set for $S$ with $F \subset N^{-1}(n)$ for some $n \in N(S)$. If $\lvert F \rvert = n$ or $F$ is accurate, then $F$ is a transversal for $N^{-1}(n)/_\sim$.
\item The map $N$ is a homomorphism of right LCM semigroups.
\item Every foundation set for $S$ admits an accurate refinement by a transversal for $N^{-1}(n)/_\sim$ for some $n \in N(S)$. In particular, $S$ has the accurate refinement property.
\end{enumerate}
\end{proposition}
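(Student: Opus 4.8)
Since $1=N_1\in N(S)$, condition \ref{cond:A3a} forces $N^{-1}(1)$ to be a single $\sim$-class, namely $[1]$. If $s\sim 1$, say $sa=b$ with $a,b\in S_c$, then $saS\cap tS\neq\emptyset$ for every $t\in S$ (because $b\in S_c$), so $sS\cap tS\neq\emptyset$; hence $\ker N=[1]\subseteq S_c$. Conversely, if some $a\in S_c$ had $N_a=n\geq 2$, then by \ref{cond:A3a}--\ref{cond:A3b} a transversal of $N^{-1}(n)/_\sim$ containing $a$ would be an accurate foundation set of cardinality $\geq 2$, contradicting $aS\cap fS\neq\emptyset$ for its other elements $f$; so $S_c=\ker N$, and $S_c\subsetneq S$ because $N$ is nontrivial. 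For (ii): if $N_s=N_t$ and $s\not\sim t$, then $\{s,t\}$ extends to a transversal of $N^{-1}(N_s)/_\sim$, which is accurate by \ref{cond:A3b}, giving $s\perp t$. For (iii): any foundation set $F\subseteq N^{-1}(n)$ meets every $\sim$-class of $N^{-1}(n)$, since for $s\in N^{-1}(n)$ we may pick $f\in F$ with $s\not\perp f$ and then (ii) yields $s\sim f$; if $F$ is accurate its distinct elements are not $\sim$-related (else $\sim$-invariance of $\perp$ would give $f\perp f$), so $F$ injects into $N^{-1}(n)/_\sim$ and is a transversal, while if $|F|=n$ the surjection onto the $n$ classes is forced to be a bijection.

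\textbf{A partition lemma.} Fix $a\in S$ and $m\in N(S)$ with $m/N_a\in N(S)$, and set $X_a:=aN^{-1}(m/N_a)\subseteq N^{-1}(m)$. Using left cancellation, $[y]\mapsto[ay]$ is a well-defined injection of $N^{-1}(m/N_a)/_\sim$ onto $X_a/_\sim$, so $|X_a/_\sim|=m/N_a$ by \ref{cond:A3a}. If $a\perp a'$, then every element of $aS$ is orthogonal to every element of $a'S$, whence $X_a/_\sim\cap X_{a'}/_\sim=\emptyset$; moreover every $y\in S$ with $y\not\perp a$ is non-orthogonal to some element of $X_a$ (first treat $y\in aS$ using the foundation property of a transversal of $N^{-1}(m/N_a)/_\sim$ from \ref{cond:A3b}, then bootstrap via a common point of $yS$ and $aS$). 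Taking a transversal $\{a_i\}$ of $N^{-1}(N_a)/_\sim$ containing $a$, its elements are pairwise orthogonal by (ii), so the $X_{a_i}/_\sim$ are pairwise disjoint subsets of $N^{-1}(m)/_\sim$ of total size $N_a\cdot(m/N_a)=m=|N^{-1}(m)/_\sim|$; hence they partition $N^{-1}(m)/_\sim$, and therefore, for $f\in N^{-1}(m)$, one has $[f]\in X_a/_\sim$ if and only if $f\not\perp a$.

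\textbf{Items (iv)--(v).} For (iv), the essential content is that $s_1S\cap s_2S=rS$ implies $N_r=\ell$, where $\ell$ is the least common multiple of $N_{s_1},N_{s_2}$ in $N(S)$ (well-defined by \ref{cond:A4}); granting this, \eqref{eq:hom of right LCM} follows routinely. Clearly $\ell\mid N_r$. For $N_r\mid\ell$, write $r=s_1a_1$ and let $\CT$ be a transversal of $N^{-1}(\ell/N_{s_1})/_\sim$, a foundation set; pick $t\in\CT$ with $a_1S\cap tS\neq\emptyset$, so $rS\cap s_1tS\neq\emptyset$, and since $rS\subseteq s_2S$ the element $f:=s_1t$ satisfies $N_f=\ell$ and $f\not\perp s_2$. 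The partition lemma (with $m=\ell$ and the transversal of $N^{-1}(N_{s_2})/_\sim$) gives $f\sim s_2b$ with $N_b=\ell/N_{s_2}$, hence $fc=s_2bd$ for some $c,d\in S_c$; this element lies in $s_1S\cap s_2S=rS$ and has $N$-value $\ell$ (as $S_c=\ker N$ by (i)), so $N_r\mid\ell$. For (v): given a foundation set $G=\{g_1,\dots,g_k\}$, set $n:=\prod_iN_{g_i}\in N(S)$ and $X_i:=g_iN^{-1}(n/N_{g_i})$, and let $\CT$ be a transversal of $(\bigcup_iX_i)/_\sim$. Then $\CT\subseteq\bigcup_ig_iS$, so $\CT$ refines $G$; its distinct elements are not $\sim$-related and share the $N$-value $n$, so $\CT$ is accurate by (ii); and $\CT$ is a foundation set, since for $y\in S$ we may choose $i$ with $y\not\perp g_i$ and then invoke the partition lemma to find an element of $X_i$, hence a $\sim$-representative in $\CT$, not orthogonal to $y$. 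By (iii), $\CT$ is a transversal of $N^{-1}(n)/_\sim$; this is exactly (v), and property (AR) is its immediate corollary.

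\textbf{Expected main obstacle.} The crux is the partition lemma and its use in (iv): the argument for $N_r=\ell$ looks circular, and it is only the cardinality input of \ref{cond:A3a} that breaks the circle, by upgrading ``$f\not\perp s_2$'' to ``$f\sim s_2b$'' for an element $s_2b$ of the predicted $N$-value. A secondary point in (iv) is to confirm that the least common multiple computed inside $N(S)$ is the one relevant to \eqref{eq:hom of right LCM}.
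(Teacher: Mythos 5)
Your proof is correct, and its engine is the same as the paper's: items (i)--(iii) are argued exactly as in the paper, and the crux of (iv) and (v) --- upgrading ``$f\not\perp s_2$ and $N_f$ has the predicted value'' to ``$f\sim s_2b$'' by the cardinality count in \ref{cond:A3a} together with (ii) and (iii) --- is identical. What you isolate as the partition lemma, the paper achieves by noting that a product of transversals such as $\CT_{N_s}\CT_m$ is an accurate foundation set inside $N^{-1}(N_sm)$ and hence, by (iii), a transversal for $N^{-1}(N_sm)/_\sim$; your classes $X_{a_i}/_\sim$ are precisely those represented by $a_i\CT_{m/N_a}$, so the two formulations are interchangeable, though yours is a tidier reusable statement (and you do reuse it in (v), where the paper instead extracts a maximal pairwise-orthogonal subset of $\bigcup_{f\in F}f\CT_{n_f}$ --- again the same construction; your use of the product $\prod_i N_{g_i}$ in place of a least common multiple is an equally valid choice of common multiple). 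The one point you should repair: in (iv) you invoke \ref{cond:A4} to make the least common multiple $\ell$ in $N(S)$ well defined, but the proposition assumes only a generalised scale, not \ref{cond:A4}, and it is applied in that generality (e.g.\ the accurate refinement property is used in Proposition~\ref{prop:KMS-state factoring through Q_p must have beta=1}, whose standing hypotheses do not include \ref{cond:A4}). The fix is immediate and is in effect what the paper does: run your argument for ``$N_r\mid\ell$'' with $\ell$ an arbitrary element of $N_{s_1}N(S)\cap N_{s_2}N(S)$ rather than a presupposed lcm; since $N_r$ itself lies in that intersection, this yields $N_{s_1}N(S)\cap N_{s_2}N(S)=N_rN(S)$ without ever assuming an lcm exists. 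Your closing remark about which lcm is relevant to \eqref{eq:hom of right LCM} is well taken: both you and the paper establish the condition for $N$ viewed as a map onto the monoid $N(S)$, i.e.\ relative to ideals of $N(S)$ rather than of $\N^\times$.
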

\begin{proof}
For (i), suppose $s\in \ker N$. Then \ref{cond:A3} implies that $\{s\}$ is a foundation set for $S$, which says exactly that $s \in S_c$. On the other hand, if $s\in S$ with $N_s >1$, then \ref{cond:A3} implies that $s$ is part of an accurate foundation set of cardinality $N_s$. Thus $\{s\}$ is not a foundation set, and hence $s \notin S_c$. The second claim holds because $N$ is nontrivial.

For (ii), we first claim that if $s,s'\in S$ satisfy $s\sim s'$, then $s \perp t$ if and only $s'\perp t$ for all $t\in S$. Indeed, for $sa=s'b$ with $a,b \in S_c$ we have
\[sS \cap tS \neq \emptyset \stackrel{a \in S_c}{\Longrightarrow} saS\cap tS \neq \emptyset \stackrel{sa=s'b}{\Longrightarrow} s'S \cap tS \neq \emptyset.\]
Now suppose we have $s,t \in S$ with $N_s=N_t$. Choose a transversal for $N^{-1}(N_s)/_\sim$ and let $s'$ and $t'$ be the unique elements of the transversal satisfying $s \sim s'$ and $t \sim t'$. If $s'=t'$, then $s\sim t$. If $s'\not= t'$, then since we know from \ref{cond:A3} that the transversal is an accurate foundation set for $S$, we have $s' \perp t'$. It now follows from the claim that $s \perp t$.

For (iii), let $\CT_n$ be a transversal for $N^{-1}(n)/_\sim$. We know that $\CT_n$ is accurate because of \ref{cond:A3b}. By (ii), we know that for every $f \in F$, there is precisely one $t_f \in \CT_n$ with $f \sim t_f$. So $f\mapsto t_f$ is a well-defined map from $F$ to $\CT_n$, which is surjective because $F$ is a foundation set. The map is injective if $F$ is accurate or has cardinality $\lvert \CT_n\rvert = n$.

For (iv), let $s,t,r \in S$ with $sS \cap tS = rS$. Then $N_r \in N_sN(S) \cap N_tN(S)$. We need to show that $N_r$ is the least common multiple of $N_s$ and $N_t$ inside $N(S)$. Suppose there are $m,n \in N(S)$ with $N_sm=N_tn$. For $k=N_s,N_t,m,n$ pick transversals $\CT_k$ for $N^{-1}(k)/_\sim$ such that $s \in \CT_{N_s}$ and $t \in \CT_{N_t}$. Since $s \not \perp t$, and $\CT_m$ is a foundation set, there exists $s' \in \CT_m$ such that $ss' \not\perp t$. The same argument for $n$ shows that there is $t' \in \CT_n$ with $ss' \not\perp tt'$. Now $\CT_{N_s}\CT_m$ and $\CT_{N_t}\CT_n$ are accurate foundation sets contained in $N^{-1}(N_sm)$. By (iii), they are transversals for $N^{-1}(N_sm)/_\sim$. In particular, $ss' \not\perp tt'$ is equivalent to $ss' \sim tt'$. Thus there are $a,b \in S_c$ with $ss'a = tt'b \in sS \cap tS=rS$, which results in $N_{s}m=N_{ss'a} \in N_rN(S)$ due to (i). Thus $N_r$ is the least common multiple of $N_s$ and $N_t$.

For (v), let $F$ be a foundation set for $S$. Define $n$ to be the least common multiple of $\{N_f \mid f \in F\}$ inside $N(S)$, and set $n_f := n/N_f \in N(S)$ for each $f \in F$. Choose a transversal $\CT_f$ for $N^{-1}(n_f)/_\sim$ for every $f \in F$. Then $F' := \bigcup_{f \in F} f\CT_f$ is a foundation set as $F$ is a foundation set and each $\CT_f$ is a foundation set. Moreover, we have $F' \subset N^{-1}(n)$. Thus by (ii), for $f,f' \in F'$, we either have $f \sim f'$ or $f \perp f'$. If we now choose a maximal subset $F_a$ of $F'$ with elements having mutually disjoint principal right ideals, $F_a$ is necessarily an accurate foundation set that refines $F$. In fact, $F_a$ is a transversal for $N^{-1}(n)/_\sim$. In particular, this shows that $S$ has the accurate refinement property.
\end{proof}

The next result is an immediate corollary of Proposition~\ref{prop:NisaHMofRLCMs}~(ii) and Lemma~\ref{lem:equiv forms of (VII)}.

\begin{corollary}\label{cor:dichotomy for S_ci under gen scale}
Let $S$ be a core factorable right LCM semigroup such that $S_{ci}\subset S$ is $\cap$-closed. If $N$ is a generalised scale on $S$, then $N_s=N_t$ for $s,t \in S_{ci}^1$ forces either $s \perp t$ or $s \in tS^*$.

In particular, if $\CT_n=\{f_1,\dots, f_n\}$ and $\CT_n'=\{f_1', \dots, f_n'\}$ are two transversals for $N^{-1}(n)/_\sim$, respectively, such that both are contained in $S_{ci}$, then there are $x_1,\ldots,x_n$ in $S^*$ and a permutation $\rho$ of $\{1,\ldots,n\}$ so that $f'_i = f_{\rho(i)}x_i$ for $i=1,\ldots,n$.
\end{corollary}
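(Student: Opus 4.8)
The plan is to derive the statement directly from Proposition~\ref{prop:NisaHMofRLCMs}~(ii) together with Lemma~\ref{lem:equiv forms of (VII)}, and then bootstrap the ``in particular'' part by an element-by-element comparison of the two transversals. For the first assertion, take $s,t\in S_{ci}^1$ with $N_s=N_t$. By Proposition~\ref{prop:NisaHMofRLCMs}~(ii) we immediately get the dichotomy $s\sim t$ or $s\perp t$, so it remains only to upgrade $s\sim t$ to $s\in tS^*$. Here is where the hypotheses enter: since $S$ is core factorable and $S_{ci}\subset S$ is $\cap$-closed, Lemma~\ref{lem:equiv forms of (VII)} tells us that condition~(ii) of that lemma holds, i.e.\ $s\sim t$ for $s,t\in S_{ci}^1$ forces $s\in tS^*$. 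Combining the two gives exactly $s\perp t$ or $s\in tS^*$.

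For the ``in particular'' statement, let $\CT_n=\{f_1,\dots,f_n\}$ and $\CT_n'=\{f_1',\dots,f_n'\}$ be transversals for $N^{-1}(n)/_\sim$, both contained in $S_{ci}$. Fix $i$; then $f_i'\in N^{-1}(n)$, and since $\CT_n$ is a complete set of representatives for $N^{-1}(n)/_\sim$, there is a (unique) index $\rho(i)$ with $f_i'\sim f_{\rho(i)}$. Both $f_i'$ and $f_{\rho(i)}$ lie in $S_{ci}\subset S_{ci}^1$ and have the same $N$-value $n$, so the first part of the corollary applies: either $f_i'\perp f_{\rho(i)}$ or $f_i'\in f_{\rho(i)}S^*$. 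The first alternative is impossible because $f_i'\sim f_{\rho(i)}$ (core-equivalent elements are never orthogonal — their principal right ideals meet, as one sees directly from the definition of $\sim$, or as already used implicitly in the proof of Proposition~\ref{prop:NisaHMofRLCMs}~(ii)). Hence $f_i'=f_{\rho(i)}x_i$ for some $x_i\in S^*$.

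It remains to check that $\rho$ is a permutation of $\{1,\dots,n\}$. The map $\rho$ is well-defined by uniqueness of the representative in $\CT_n$. It is injective: if $\rho(i)=\rho(j)$ then $f_i'\sim f_{\rho(i)}\sim f_j'$, and since $\CT_n'$ is a transversal (minimal complete set of representatives) for $N^{-1}(n)/_\sim$, distinct elements of $\CT_n'$ are $\sim$-inequivalent, forcing $i=j$. Since $\lvert\CT_n\rvert=\lvert\CT_n'\rvert=n$ by hypothesis (or by condition~\ref{cond:A3a}), injectivity on a finite set of the same size gives surjectivity, so $\rho$ is a permutation. This yields $f_i'=f_{\rho(i)}x_i$ for all $i$, as claimed.

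The only step requiring any thought is ruling out the orthogonality alternative in the ``in particular'' part, and this is immediate once one notes that $s\sim t$ means $sa=tb$ for some $a,b\in S_c$, whence $sS\cap tS\supseteq saS\neq\emptyset$; so I do not anticipate a genuine obstacle — the corollary is essentially a repackaging of the two cited results, with the transversal bookkeeping being entirely routine.
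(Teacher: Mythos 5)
Your proof is correct and follows exactly the route the paper intends: the paper itself presents this corollary as an immediate consequence of Proposition~\ref{prop:NisaHMofRLCMs}~(ii) (giving the dichotomy $s\sim t$ or $s\perp t$ when $N_s=N_t$) combined with Lemma~\ref{lem:equiv forms of (VII)}~(ii) (upgrading $s\sim t$ to $s\in tS^*$ for elements of $S_{ci}^1$ under the core factorable and $\cap$-closed hypotheses), and gives no further detail. Your handling of the transversal bookkeeping in the ``in particular'' part, including ruling out orthogonality for core-equivalent elements, is the routine argument the authors leave implicit.
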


Lemma~\ref{lem:equiv forms of (VII)} allows us to strengthen the factorisation $S=S_{ci}^1S_c$ in the following sense.

\begin{lemma}\label{lem:maps i and c}
Let $S$ be a core factorable right LCM semigroup such that $S_{ci} \subset S$ is $\cap$-closed. Then there are a transversal $\CT$ for $S/_\sim$ with $\CT \subset S_{ci}^1$ and maps $i \colon S \to \CT$, $c\colon S \to S_c$ such that $s = i(s)c(s)$ for all $s \in S$. For every family $(x_t)_{t \in T} \subset S^*$ with $x_1=1$, the set $\CT' := \{ tx_t \mid t \in \CT\}$ defines a transversal for $S/_\sim$ with $\CT'\subset S_{ci}^1$, and every transversal for $S/_\sim$ contained in $S_{ci}^1$ is of this form.
\end{lemma}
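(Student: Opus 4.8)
The plan is to deduce everything from Lemma~\ref{lem:equiv forms of (VII)}, which under the present hypotheses guarantees that for each $s\in S$ the set $[s]\cap S_{ci}^1$ is non-empty, that $[s]=tS_c$ for every $t\in[s]\cap S_{ci}^1$, and that $s\sim t$ with $s,t\in S_{ci}^1$ forces $s\in tS^*$. Two preliminary facts will be convenient. First, $[1]=S_c$: one inclusion is immediate from the definition of $\sim$, and conversely $sa=b$ with $a,b\in S_c$ forces $s\in S_c$, since then $sS\cap tS\supseteq saS\cap tS=bS\cap tS\neq\emptyset$ for all $t\in S$; as $S_{ci}\cap S_c=\emptyset$, this gives $S_{ci}^1\cap S_c=\{1\}$. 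Second, $S_{ci}^1$ is stable under right multiplication by $S^*$: for $x\in S^*$ and $s\in S_{ci}$ one has $sx\notin S_c$ (otherwise $s=(sx)x^{-1}\in S_c$, using that $S_c\supseteq S^*$ is a subsemigroup), and any factorisation $sx=ta$ with $a\in S_c$ rewrites as $s=t(ax^{-1})$ with $ax^{-1}\in S_c$, whence $ax^{-1}\in S^*$ and so $a\in S^*$; thus $sx\in S_{ci}$.

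For the existence statement, I would use the non-emptiness of $[s]\cap S_{ci}^1$ to pick a representative in $S_{ci}^1$ for every $\sim$-class, taking $1$ as the representative of $[1]=S_c$; this produces a transversal $\CT\subset S_{ci}^1$ for $S/_\sim$. I would then let $i(s)$ be the unique element of $\CT$ with $i(s)\sim s$. Since $i(s)\in[s]\cap S_{ci}^1$, Lemma~\ref{lem:equiv forms of (VII)}(i) yields $[s]=i(s)S_c$, so there is $c(s)\in S_c$ with $s=i(s)c(s)$, and $c(s)$ is unique by left cancellativity of $S$; hence $c\colon S\to S_c$ is well defined and $s=i(s)c(s)$ for all $s\in S$.

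It then remains to characterise the transversals for $S/_\sim$ contained in $S_{ci}^1$. Given $(x_t)_{t\in\CT}\subset S^*$ with $x_1=1$, set $\CT'=\{tx_t\mid t\in\CT\}$. Each $tx_t$ lies in $S_{ci}^1$ by the stability fact above (the case $t=1$ being handled by $x_1=1$), and $tx_t\sim t$ since $x_t\in S^*\subseteq S_c$; hence $\CT'$ represents exactly the classes that $\CT$ does, namely all of $S/_\sim$, and $tx_t\sim t'x_{t'}$ implies $t\sim t'$, so $t=t'$ and $tx_t=t'x_{t'}$, giving minimality. Thus $\CT'$ is a transversal inside $S_{ci}^1$. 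Conversely, for any transversal $\CT''\subseteq S_{ci}^1$ for $S/_\sim$, let $t\mapsto t''$ be the bijection $\CT\to\CT''$ matching elements in the same $\sim$-class; since $t,t''\in S_{ci}^1$ and $t\sim t''$, Lemma~\ref{lem:equiv forms of (VII)}(ii) gives $t''=tx_t$ with $x_t\in S^*$, and for $t=1$ the element $t''$ lies in $[1]\cap S_{ci}^1=S_c\cap S_{ci}^1=\{1\}$, so $x_1=1$; hence $\CT''$ has the asserted form. There is no genuine obstacle here, as all the substance sits in Lemma~\ref{lem:equiv forms of (VII)}; the only points that require care are the well-definedness of $c$ via left cancellation, the closure of $S_{ci}^1$ under right multiplication by units, and the fact that $1$ is forced to be the representative of $[1]$ in any transversal contained in $S_{ci}^1$.
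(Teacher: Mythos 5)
Your proof is correct and follows essentially the same route as the paper's: both rest on Lemma~\ref{lem:equiv forms of (VII)} together with the auxiliary facts $S_{ci}S^*=S_{ci}$ and $S^*\cap S_{ci}^1=\{1\}$ (your variant $S_c\cap S_{ci}^1=\{1\}$ amounts to the same thing), which the paper invokes without proof and you verify in full. The extra detail you supply — well-definedness of $c$ via left cancellation and the forced choice of $1$ as the representative of $[1]$ — is exactly what the paper leaves implicit.
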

\begin{proof}
By \ref{cond:A1}, $S/_\sim$ admits a transversal $\CT$ with $\CT \subset S_{ci}^1$. For each $s\in S$ we define $i(s)$ to be the single element in $\CT\cap[s]$, and $c(s)$ to be the unique element in $S_c$ such that $s=i(s)c(s)$, which exists by virtue of Lemma~\ref{lem:equiv forms of (VII)} due to the presence of \ref{cond:A2}. This gives us the maps $i$ and $c$.

As $S_{ci}S^* = S_{ci}$ and $S^* \cap S_{ci}^1 = \{1\}$, the remaining claims follow from Lemma~\ref{lem:equiv forms of (VII)}.
\end{proof}

In our investigations we have found that uniqueness of the $\kms_\beta$-state for $\beta$ in a critical interval, see Theorem~\ref{thm:KMS results-gen}, is closely related to properties of a natural action $\alpha$ of $S_c$ on $S/_\sim$ arising from left multiplication. By an action $\alpha$ of a semigroup $T$ on a set $X$ we mean a map $T \times X \to X, (t,x) \mapsto \alpha_t(x)$ such that $\alpha_{st}(x)=\alpha_s(\alpha_t(x))$ for all $s,t\in T$ and $x\in X$. For later use, we denote $\text{Fix}(\alpha_t)$ the set $\{x\in X\mid \alpha_t(x)=x\}$.

\begin{lemma}\label{lem:thebijectionsg}
Let $S$ be a right LCM semigroup. Then left multiplication defines an action $\alpha\colon S_c \curvearrowright S/_\sim$ by bijections $\alpha_a([s]) = [as]$. Every generalised scale $N$ on $S$ is invariant under this action. If $S$ is admissible and $\CT \subset S_{ci}^1$ is a transversal for $S/_\sim$, then there is a corresponding action of $S_c$ on $\CT$, still denoted $\alpha$, which satisfies
\[ aS \cap tS = a\alpha_{a}^{-1}(t)S \quad\text{for all } a \in S_c, t \in \CT.\]
\end{lemma}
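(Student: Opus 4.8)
The plan is to establish the three assertions in turn, each time reducing to facts already available. First, for the action $\alpha$ of $S_c$ on $S/_\sim$: one checks that $[as]$ depends only on the class $[s]$, so the formula $\alpha_a([s]) = [as]$ is well-defined. The semigroup identity $\alpha_{ab} = \alpha_a \circ \alpha_b$ is immediate from associativity in $S$. To see each $\alpha_a$ is a bijection of $S/_\sim$, I would argue surjectivity and injectivity separately. For surjectivity, given $[t]$ with $t \in S$, use $a \in S_c$ to find, via a transversal $\CT_n$ for $N^{-1}(n)/_\sim$ with $n = N_a$ large enough (or directly from $aS \cap tS \ne \emptyset$, which holds since $a \in S_c$), an element $s$ with $as \sim t$; concretely $aS \cap tS = arS$ for some $r$, and then $[ar] = [t]$, so $\alpha_a([r]) = [t]$. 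For injectivity, if $as \sim as'$ then $asc = as'd$ for some $c,d \in S_c$, and left cancellation in $S$ gives $sc = s'd$, hence $s \sim s'$.

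Next, invariance of a generalised scale $N$: this is essentially just multiplicativity of $N$ together with $\ker N = S_c$ from Proposition~\ref{prop:NisaHMofRLCMs}(i). For $a \in S_c$ and $s \in S$ we have $N_{as} = N_a N_s = N_s$ since $N_a = 1$, so $N$ descends to a well-defined function on $S/_\sim$ that is fixed by every $\alpha_a$.

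Finally, the transfer of $\alpha$ to the transversal $\CT \subset S_{ci}^1$ when $S$ is admissible. Since $\CT$ is a complete set of representatives for $S/_\sim$, the bijection $\CT \to S/_\sim$, $t \mapsto [t]$, conjugates $\alpha$ on $S/_\sim$ into an action on $\CT$: set $\alpha_a(t)$ to be the unique element of $\CT$ with $\alpha_a(t) \sim at$, i.e. $\alpha_a(t) = i(at)$ in the notation of Lemma~\ref{lem:maps i and c}. For the ideal identity, fix $a \in S_c$, $t \in \CT$, and write $u = \alpha_a^{-1}(t) \in \CT$, so $au \sim t$, meaning $au c = t d$ for some $c,d \in S_c$; in particular $t \not\perp au$, hence $t \not\perp a$ (using the core-relation invariance of $\perp$ noted before Proposition~\ref{prop:NisaHMofRLCMs}, or the claim inside its proof). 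So $aS \cap tS = arS$ for some $r \in S$. Now $aS \cap tS = auS_c \cap \dots$ — more carefully: from $auc = td$ we get $auc \in aS \cap tS = arS$, so $arS \supseteq aucS$, while $ar \in aS \cap tS$ gives $ar = ty$ for some $y$, so $r \in S\setminus S_c$ forces, via Lemma~\ref{lem:OldA2} (with the roles of $a \in S_c$, $t \in S_{ci}$), that $r \in S_{ci}$ and moreover $r \sim u$ because $ar \in tS$ and $au \in tS_c \subseteq tS$ together with Lemma~\ref{lem:equiv forms of (VII)} pin down $ar S = au S$; since $r, u \in S_{ci}$ with $r \sim u$, Lemma~\ref{lem:equiv forms of (VII)}(ii) gives $r \in uS^*$, so $arS = auS = a\alpha_a^{-1}(t)S$, which is the claim.

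**Main obstacle.** The routine parts (well-definedness, the semigroup law, $N$-invariance) are bookkeeping. The delicate point is the ideal identity $aS \cap tS = a\alpha_a^{-1}(t)S$: one must produce a \emph{generator} for $aS \cap tS$ of the precise form $a u'$ with $u' \sim \alpha_a^{-1}(t)$, and then upgrade $u' \sim \alpha_a^{-1}(t)$ to equality of the ideals $au'S = a\alpha_a^{-1}(t)S$. This requires combining Lemma~\ref{lem:OldA2} (to control the cofactor $t$ of $aS \cap sS$ for $s \in S_{ci}$) with the minimal-representative property from Lemma~\ref{lem:equiv forms of (VII)}, and care is needed because a priori $\alpha_a^{-1}(t) \in S_{ci}^1$ only, so one should handle the case $\alpha_a^{-1}(t) \in S^*$ (equivalently $t \in aS$-related to a unit) separately, where the identity reads $aS \cap tS = aS$ and holds trivially.
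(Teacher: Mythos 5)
Your proposal is correct and follows essentially the same route as the paper: well-definedness and left cancellation for the bijection claim, Lemma~\ref{lem:OldA2} for surjectivity, $\ker N = S_c$ for invariance, and the combination of Lemma~\ref{lem:OldA2} with the minimal-representative property for the ideal identity. The only (cosmetic) difference is in the final identification: the paper names the generator $s$ of $aS\cap tS=asS$ and shows $\alpha_a(s)=t$ by comparing $N$-values via Corollary~\ref{cor:dichotomy for S_ci under gen scale}, whereas you start from $u=\alpha_a^{-1}(t)$, place $au$ in $arS$, and use left cancellation plus Lemma~\ref{lem:equiv forms of (VII)}(ii) to get $r\in uS^*$ — both arguments rest on the same two lemmas.
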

\begin{proof}
For every $a \in S_c$, the map $\alpha_a$ is well-defined as left multiplication preserves the core equivalence relation. If $as \sim at$ for some $s,t \in S$, then $s \sim t$ by left cancellation. Hence $\alpha_a$ is injective. On the other hand, Lemma~\ref{lem:OldA2} states that for $s \in S$ we have $aS\cap sS = atS, at=sb$ for some $b \in S_c$ and $t \in S$. Thus $\alpha_a([t]) = [at] = [s]$, and we conclude that $\alpha_a$ is a bijection.

For a generalised scale $N$, invariance follows from $\ker N=S_c$, see Proposition~\ref{prop:NisaHMofRLCMs}~(i).

Finally, let $\CT$ be a transversal for $S/_\sim$ contained in $S_{ci}^1$. For $a \in S_c$ and $t \in \CT$, it is immediate that $a\alpha_{a}^{-1}(t) = tc(a\alpha_{a}^{-1}(t))$. Assume $t\neq 1$ (else the claim about the interesection is trivial), and use Lemma~\ref{lem:OldA2} to write $aS \cap tS = asS$ for some $s \in S_{ci}$. By the choice of $\CT$, there is a unique $s \in \CT$ with this property. Proposition~\ref{prop:NisaHMofRLCMs}~(i) and (iv) show that $N_{\alpha_{a}(s)}=N_{as}=N_t$, so that $t=\alpha_{a}(s)$ by Corollary~\ref{cor:dichotomy for S_ci under gen scale}.
\end{proof}

 Recall that an action $\gamma:T\curvearrowright X$ is \emph{faithful} if, for all $s,t \in T, s \neq t$, there is $x \in X$ such that $\gamma_s(x) \neq \gamma_t(x)$.

\begin{corollary}\label{cor:S_c action faithful implies right cancellation for S_c}
If $S$ is a right LCM semigroup for which $S_c \curvearrowright S/_\sim$ is faithful, then $S_c$ is right cancellative, and hence a right Ore semigroup. In particular, $S_c$ is group embeddable.
\end{corollary}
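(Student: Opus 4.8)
The plan is to first deduce right cancellativity of $S_c$ from the combination of faithfulness with the fact, recorded in Lemma~\ref{lem:thebijectionsg}, that $\alpha$ acts by \emph{bijections} of $S/_\sim$, and then to pass from cancellativity to group embeddability via Ore's theorem.

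For right cancellativity, suppose $ac = bc$ for some $a,b,c \in S_c$; the goal is $a=b$. Since $\alpha$ is an action, $\alpha_a \circ \alpha_c = \alpha_{ac} = \alpha_{bc} = \alpha_b \circ \alpha_c$ as self-maps of $S/_\sim$. By Lemma~\ref{lem:thebijectionsg} the map $\alpha_c$ is a bijection of $S/_\sim$, hence cancellable on the right in the transformation monoid of $S/_\sim$ (precompose with $\alpha_c^{-1}$); therefore $\alpha_a = \alpha_b$. Faithfulness of $\alpha \colon S_c \curvearrowright S/_\sim$ then forces $a = b$. Thus $S_c$ is right cancellative; it is also left cancellative, this being inherited from the left cancellative semigroup $S$, so $S_c$ is a cancellative monoid.

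It remains to conclude that $S_c$ is a right Ore semigroup, i.e. that additionally $aS_c \cap bS_c \neq \emptyset$ for all $a,b \in S_c$. This is already available: it was noted in the proof of Proposition~\ref{prop:hom of right LCM for S_c and S_ci}~(i) that, since $S$ is right LCM and $a,b \in S_c$ forces $aS\cap bS \neq \emptyset$, one has $aS\cap bS = cS$ for some $c \in S_c$, with the elements of $S$ factoring $c$ through $a$ and through $b$ lying in $S_c$ by left cancellation. A cancellative monoid satisfying this right Ore condition embeds, by Ore's theorem, into its group of left fractions $S_c^{-1}S_c$, so $S_c$ is group embeddable. The argument is short; the only point needing care is that the Ore condition be verified inside $S_c$ rather than merely inside $S$, but this is precisely what the earlier analysis of the core subsemigroup provides, and the genuinely new ingredient here is just that bijectivity of $\alpha_c$ upgrades faithfulness of $\alpha$ into right cancellativity of $S_c$.
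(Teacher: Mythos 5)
Your argument is correct and essentially the same as the paper's: the paper simply packages your cancellation step as the observation that $a \mapsto \alpha_a$ is an injective monoid homomorphism of $S_c$ into the group of bijections of $S/_\sim$ (Lemma~\ref{lem:thebijectionsg}), so that $S_c$, being isomorphic to a submonoid of a group, is cancellative, while the Ore condition and group embedding follow exactly as you say. One cosmetic point: with the condition $aS_c\cap bS_c\neq\emptyset$ (common right multiples) Ore's theorem yields the group of \emph{right} fractions $S_c^{\phantom{1}}S_c^{-1}$, as the paper writes before Proposition~\ref{prop:uniqueness: alpha faithful necessary}, rather than $S_c^{-1}S_c$.
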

\begin{proof}
By Lemma~\ref{lem:thebijectionsg}, we have a monoidal homomorphism from $S_c$ to the bijections on $S/_\sim$, which is injective due to faithfulness of the action.
\end{proof}

\begin{remark}\label{rem:almost free actions}
Recall that an action $\gamma\colon T \curvearrowright X$ of a monoid $T$ on a set $X$ is called \emph{almost free} if the set $\{x \in X \mid \gamma_s(x)=\gamma_t(x)\}$ is finite for all $s,t \in T, s\neq t$. Clearly, if $X$ is infinite, then every almost free action $\gamma$ on $X$ is faithful. In particular, this is the case for $\alpha\colon S_c \curvearrowright S/_\sim$ for every admissible right LCM semigroup $S$.
\end{remark}

\begin{remark}\label{rem:strong minimality}
If $T$ is a group, then almost freeness states that every $\gamma_t$ fixes only finitely many points in $X$. The same conclusion holds if $T$ is totally ordered with respect to $s \geq t:\Leftrightarrow s\in tT$ and acts by injective maps: We then have $T^*=\{1\}$, and $s=tr$ ($s \geq t$) gives $\gamma_t^{-1}\gamma_s^{\phantom{1}} = \gamma_r$, while $t=sr$ ($t\geq s$) yields $\gamma_s^{-1}\gamma_t^{\phantom{1}} = \gamma_r$.
\end{remark}

Almost freeness is known to be too restrictive a condition for some of the semigroups we wish to study, for instance for $S=X^* \bowtie G$ arising from a self-similar action $(G,X)$ that admits a word $w \in X^*$ such that $g(w)=w$ for some $g \in G\setminus\{1\}$ (see \ref{subsec:SSA} for more on self-similar actions). To study the uniqueness of $\kms_\beta$-states with $\beta$ in the critical interval, we will thus also elaborate on the approach used in \cite{LRRW} for self-similar actions. But we phrase their ideas in a more abstract setting which gives us a potentially useful extra degree of freedom: The condition we work with is a localised version of the finite state condition used in \cite{LRRW}.

\begin{definition}\label{def:finite propagation}
 Let $S$ be an admissible right LCM semigroup, and $\CT$ a transversal for $S/_\sim$ with $\CT\subset S_{ci}^1$. For $a \in S_c$, let $C_a := \{ c(af) \mid f \in \CT \}$. The transversal $\CT$ is said to be a \emph{witness of finite propagation} of $S$ at $(a,b) \in S_c \times S_c$ if $C_a$ and $C_b$ are finite. The right LCM semigroup $S$ is said to have \emph{finite propagation} if there is a witness of finite propagation at every pair $(a,b) \in S_c \times S_c$.
\end{definition}

\begin{remark}\label{rem:fin prop independent of choice}
(a) If $S$ is such that $S^*$ is infinite, then for any prescribed $a \in S_c\setminus\{1\}$ there exists a transversal $\CT$ for which $C_a$ is infinite, just take $\{xc(a)\mid x\in S^*\}$. Thus, the choice of $\CT$ is crucial in Definition~\ref{def:finite propagation}.

(b) Note that $S$ has finite propagation if and only if for every transversal $\CT$ for $S/_\sim$ with $\CT\subset S_{ci}^1$ (with corresponding maps $i,c$) and for all $a,b \in S_c$, there is $(x_f)_{f \in \CT} \subset S^*$ such that $\{ x_fc(af) \mid f \in \CT \}$ and $\{ x_fc(bf) \mid f \in \CT \}$ are finite. Indeed, assuming that $S$ has finite propagation, let $\CT$ be a transversal contained in $S_{ci}^1$ and let $a,b\in S_c$. Then there is a transversal $\CT'$ which is a witness of finite propagation at $(a,b)$. However, by Corollary~\ref{cor:dichotomy for S_ci under gen scale}, there are $x_f\in S^*$ for all $f\in \CT$ such that $\CT'=\{fx_f\mid f\in \CT\}$. Hence $C_a=\{x_{i(af')}c'(af')\mid f'\in \CT'\}$ is finite, and similarly for $C_b$. Reversing the argument gives the converse assertion.
\end{remark}

\section{The main theorem}\label{sec:mainthm}
If $S$ is a right LCM semigroup admitting a generalised scale $N$, then standard arguments using the universal property of $C^*(S)$ show that there is a strongly continuous action $\sigma:\R\to \Aut C^*(S)$, where
\[
\sigma_x(v_s)=N_s^{ix}v_s\quad\text{for each $x\in\R$ and $s\in S$.}
\]
It is easy to see that $\{v_s^{\phantom{*}}v_t^*\mid s,t\in S\}$ is a dense family of analytic elements. Noting that $\sigma$ is the identity on $\ker~\pi_c$ and $\ker~\pi_p$, hence also on $\ker~\pi$, the action $\sigma$ drops to strongly continuous actions of $\R$ on $\CQ_c(S),\CQ_p(S)$ and $\CQ(S)$. Thus we may talk of $\kms$-states for the corresponding dynamics on $\CQ_c(S)$ and $\CQ_p(S)$. 

\begin{remark}\label{rem:C*(S_c) in C*(S)}
Since the canonical embedding of $S_c$ into $S$ is a homomorphism of right LCM semigroups, see Proposition~\ref{prop:hom of right LCM for S_c and S_ci}~(i), the universal property of semigroup $C^*$-algebras guarantees existence of a $*$-homomorphism $\varphi:C^*(S_c)\to C^*(S), w_a \mapsto v_a$, where $w_a$ denotes the generating isometry for $a\in S_c$ in $C^*(S_c)$. 
\end{remark}

In our analysis of $\kms$-states on $C^*(S)$ and its boundary quotients under the dynamics $\sigma$, a key role is played by a $\zeta$-function for (parts of) $S$:
\begin{definition}\label{def:partial zeta function}
Let $I \subset \Irr(N(S))$. For each $n\in I$, let $\CT_n$ be a transversal for $N^{-1}(n)/_\sim$, which by \ref{cond:A3b} is known to be an accurate foundation set. Then the formal series
\[\begin{array}{c}
\zeta_I(\beta) := \sum\limits_{n \in \langle I \rangle}\sum\limits_{f \in \CT_n} N_f^{-\beta} = \sum\limits_{n \in \langle I \rangle} n^{-(\beta-1)},
\end{array}\]
where $\beta \in \R$, is called the \emph{$I$-restricted $\zeta$-function} of $S$. For $I=\Irr(N(S))$, we write simply $\zeta_S$ for $\zeta_{\Irr(N(S))}$ and call it the \emph{$\zeta$-function} of $S$. The \emph{critical inverse temperature} $\beta_c \in \R \cup \{\infty\}$ is the smallest value so that $\zeta_S(\beta) < \infty$ for all $\beta \in \R$ with $\beta > \beta_c$. The \emph{critical interval} for $S$ is given by $[1,\beta_c]$ if $\beta_c$ is finite, and $[1,\infty)$ otherwise.
\end{definition}

Recall the action $\alpha$ introduced in Lemma~\ref{lem:thebijectionsg}. The statement of our main result, which is the following theorem, makes reference to a trace $\rho$ on $C^*(S_c)$ constructed in Proposition~\ref{prop:min gives unique kms in the crit int} under the assumption that $\beta_c=1$.

\begin{thm}\label{thm:KMS results-gen}
Let $S$ be an admissible right LCM semigroup and let $\sigma$ be the one-parameter group of automorphisms of $C^*(S)$ given by $\sigma_x(v_s)=N_s^{ix}v_s$ for $x\in \R$ and $s\in S$. The $\kms$-state structure with respect to $\sigma$ on the boundary quotient diagram \eqref{eq:BQD for thm} has the following properties:
\begin{enumerate}
\item[(1)] There are no $\kms_\beta$-states on $C^*(S)$ for $\beta < 1$.
\item[(2a)] If $\alpha$ is almost free, then for each $\beta$ in the critical interval there is a unique $\kms_\beta$-state $\psi_\beta$ given by $\psi_\beta( v_s^{\phantom{*}}v_t^*) = N_s^{-\beta} \delta_{s,t}$ for $s,t\in S$.
\item[(2b)] If $\beta_c=1$, $\alpha$ is faithful, and $S$ has finite propagation, then there is a unique $\kms_\beta$-state $\psi_\beta$ determined by the trace $\rho$  on $C^*(S_c)$.
\item[(3)] For $\beta > \beta_c$, there is an affine homeomorphism between $\kms_\beta$-states on $C^*(S)$ and normalised traces on $C^*(S_c)$.
\item[(4)] Every $\kms_\beta$-state factors through $\pi_c$.
\item[(5)] A $\kms_\beta$-state factors through $\pi_p$ if and only if $\beta=1$.
\item[(6)] There is an affine homeomorphism between ground states on $C^*(S)$ and states on $C^*(S_c)$. In case that $\beta_c < \infty$, a ground state is a $\kms_\infty$-state if and only if it corresponds to a normalised trace on $C^*(S_c)$ under this homeomorphism.
\item[(7)] Every $\kms_\infty$-state factors through $\pi_c$. If $\beta_c < \infty$, then all ground states on $\CQ_c(S)$ are $\kms_\infty$-states if and only if every ground state on $C^*(S)$ that factors through $\pi_c$ corresponds to a normalised trace on $C^*(S_c)$ via the map from {(6)}.
\item[(8)] No ground state on $C^*(S)$ factors through $\pi_p$, and hence none factor through $\pi$.
\end{enumerate}
\end{thm}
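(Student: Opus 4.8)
The plan is to follow the four-stage strategy outlined in Section~\ref{sec:mainthm}: (i) extract algebraic constraints on $\kms$-states and ground states; (ii) prove a reconstruction formula showing that such states are determined by their restriction to $C^*(S_c)$; (iii) construct the states explicitly out of (traces on) $C^*(S_c)$; and (iv) settle uniqueness in the critical interval. Throughout one uses that $\{v_s^{\phantom{*}}v_t^*\mid s,t\in S\}$ is a $\sigma$-invariant dense set of analytic elements with $\sigma_{i\beta}(v_s)=N_s^{-\beta}v_s$ and $\sigma_{i\beta}(v_s^*)=N_s^{\beta}v_s^*$, and that by \ref{cond:A3} a transversal $\CT_n$ of $N^{-1}(n)/_\sim$ is an accurate foundation set of cardinality $n$, so the range projections $e_{fS}$ (for $f\in\CT_n$) are pairwise orthogonal with $\sum_{f\in\CT_n}e_{fS}\le 1$ in $C^*(S)$. \emph{Stage (i): constraints, yielding (1), (4), (5), (8), and the first half of (7).} Applying \eqref{eq:KMScond} with $a=v_s$, $b=v_s^*$ gives $\psi(e_{sS})=N_s^{-\beta}$ for every $\kms_\beta$-state $\psi$, and a short manipulation together with Proposition~\ref{prop:NisaHMofRLCMs}(ii) and the $\sigma$-invariance of $e_{tS}$ forces $\psi(v_s^{\phantom{*}}v_t^*)=0$ unless $s\sim t$. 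Summing $\psi(e_{fS})=N_f^{-\beta}$ over $\CT_n$ yields $n^{1-\beta}\le 1$; since $N$ is nontrivial there is $n>1$, so $\beta\ge 1$, which is (1). As $N_a=1$ for $a\in S_c$ we get $\psi(e_{aS})=1$, so by Cauchy--Schwarz $\psi$ annihilates the ideal generated by the projection $1-e_{aS}$ and factors through $\pi_c$, proving (4); since this is a weak$^*$-closed condition on states it passes to $\kms_\infty$-states, giving the first half of (7). Using \ref{cond:A1} and Lemma~\ref{lem:maps i and c} one sees that every $N^{-1}(n)/_\sim$ has a transversal contained in $S_{ci}$, i.e.\ a proper accurate foundation set, on which $\psi$ takes total value $n^{1-\beta}$; hence $\psi$ factors through $\pi_p$ only if $\beta=1$. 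Conversely, at $\beta=1$ the identity $\sum_{f\in F}N_f^{-1}=1$ for an arbitrary proper accurate foundation set $F$ follows by refining $F$ to an accurate transversal of some $N^{-1}(n)/_\sim$ via Proposition~\ref{prop:NisaHMofRLCMs}(v) and invoking \ref{cond:A3a}, whence $1-\sum_{f\in F}e_{fS}$ is a $\psi$-null projection and $\psi$ factors through $\pi_p$; this completes (5). Finally, for a ground state $\phi$, boundedness of $z\mapsto\phi(v_f\sigma_z(v_f^*))=N_f^{-iz}\phi(e_{fS})$ on the upper half-plane forces $\phi(e_{fS})=0$ whenever $N_f>1$; as $S_{ci}\cap S_c=\emptyset$ every $f\in S_{ci}$ has $N_f>1$, so no ground state satisfies the defining relation of $\CQ_p(S)$, proving (8) and, via the diagram, that none factors through $\pi$.

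\emph{Stage (ii): reconstruction, yielding injectivity in (3) and (6).} Since $\sigma$ is trivial on the image of $\varphi\colon C^*(S_c)\to C^*(S)$ from Remark~\ref{rem:C*(S_c) in C*(S)}, the KMS condition gives $\psi(xy)=\psi(yx)$ there, so $\psi\circ\varphi$ is a normalised trace on $C^*(S_c)$. In the spirit of \cite{LR2}*{Lemma~10.1}, I would then show that $\psi$ is completely determined by $\psi\circ\varphi$: iterating the subinvariance identity $e_{sS}=\sum_{f\in\CT_n}e_{sfS}+v_s(\prod_{f\in\CT_n}(1-e_{fS}))v_s^*$ together with \eqref{eq:KMScond} rewrites $\psi(v_s^{\phantom{*}}v_t^*)$ as a ``core part'' — involving only values $\psi(v_a^{\phantom{*}}v_b^*)$ with $a,b\in S_c$ and powers of $N$ — plus a remainder of modulus at most $\sum_{f\in\CT_n}N_f^{-\beta}=n^{1-\beta}$; for $\beta>\beta_c$ one lets $n$ run through $\langle\Irr(N(S))\rangle$ so this tail vanishes, and the reconstruction becomes exact. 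This yields injectivity of $\psi\mapsto\psi\circ\varphi$ in (3); an analogous but simpler estimate, using that the ground-state condition already forces $\phi(v_s^{\phantom{*}}v_t^*)=0$ outside $C^*(S_c)$, gives injectivity of $\phi\mapsto\phi\circ\varphi$ on ground states, the injectivity half of (6).

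\emph{Stage (iii): construction, yielding surjectivity in (3) and (6), existence of $\psi_\beta$ in the critical interval, and the rest of (7).} Given a state $\tau$ on $C^*(S_c)$ I would feed it into the representation result announced as Theorem~\ref{thm:rep of C*(S)}, producing a representation of $C^*(S)$ on a Hilbert space assembled from $\bigsqcup_n\CT_n$ and the GNS space of $\tau$; averaging the resulting vector functionals over $f\in\CT_n$, $n\in\langle\Irr(N(S))\rangle$, with weights $N_f^{-\beta}/\zeta_S(\beta)$ gives, for $\beta>\beta_c$, a $\kms_\beta$-state restricting to $\tau$ — hence surjectivity in (3), with the affine-homeomorphism statement following by weak$^*$-continuity on the compact convex sets in play. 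For $\beta$ in the critical interval I would instead check directly, as in \cite{LRRW} and \cite{CaHR}, that the diagonal functional $v_s^{\phantom{*}}v_t^*\mapsto N_s^{-\beta}\delta_{s,t}$ satisfies \eqref{eq:KMScond}, yielding $\psi_\beta$ of (2a); and when $\beta_c=1$ I would run the construction with $\tau=\rho$ from Proposition~\ref{prop:min gives unique kms in the crit int} to obtain $\psi_\beta$ of (2b). Concentrating the weights at $n=1$ produces the ground states and the surjectivity half of (6); when $\beta_c<\infty$, letting $\beta\to\infty$ identifies the $\kms_\infty$-states among these as exactly those coming from traces, and the remaining equivalence in (7) is then a bookkeeping combination of (4), (6), and the identification of ground states on $\CQ_c(S)$ with those on $C^*(S)$ that factor through $\pi_c$.

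\emph{Stage (iv): uniqueness in the critical interval (parts (2a) and (2b)), and the main obstacle.} By the reconstruction step, uniqueness reduces to showing that the only trace on $C^*(S_c)$ occurring as $\psi\circ\varphi$ for a critical $\kms_\beta$-state is the canonical one — the counting trace $v_s^{\phantom{*}}v_t^*\mapsto\delta_{s,t}$ in case (2a), and $\rho$ in case (2b). When $\alpha\colon S_c\curvearrowright S/_\sim$ is almost free, iterating the foundation-set relations over $\CT_n$ and using the finiteness of each $\text{Fix}(\alpha_a^{-1}\alpha_b)$ makes the $\alpha$-averaged off-diagonal contributions vanish in the limit — the abstract form of the arguments in \cite{LR2} and \cite{LRRW} — leaving only the diagonal state; this works for every value of $\beta_c$ and gives (2a). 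For (2b), faithfulness of $\alpha$ makes $S_c$ right Ore by Corollary~\ref{cor:S_c action faithful implies right cancellation for S_c}, which is precisely what makes the trace $\rho$ available, and finite propagation serves as the abstract replacement for the finite-state hypothesis of \cite{LRRW}: finiteness of the sets $C_a$ turns the reconstruction at $\beta=\beta_c=1$ into a finitely-branching recursion whose associated series is governed by $\zeta_S$ near $1$, and taking the limit forces $\psi\circ\varphi=\rho$. I expect these two uniqueness arguments to be the real work: in (2a) the tail estimates must be organised so that almost freeness collapses the off-diagonal terms uniformly across the entire critical interval, including the case $\beta_c=\infty$ where the $\zeta$-function leaves no slack; and in (2b) one must show that finite propagation genuinely suffices at the critical value $\beta=\beta_c=1$, where $\zeta_S$ sits exactly at the boundary of convergence and the remainder estimate in the finitely-branching reconstruction is the most delicate point.
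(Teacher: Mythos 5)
Your four-stage architecture (algebraic constraints, reconstruction formula, construction from traces on $C^*(S_c)$, uniqueness in the critical interval) is exactly the paper's (Sections~\ref{sec:constraints}--\ref{sec:uniqueness}), and almost all of the individual steps you sketch match the paper's: the derivation of (1), (4), (5) and the ground-state characterisation, the reconstruction formula in the spirit of \cite{LR2}*{Lemma~10.1}, the representation built from $\bigoplus_{t\in\CT}C^*(S_c)\otimes\CH'_\tau$, the almost-freeness argument killing the off-diagonal terms because $\operatorname{Fix}(\alpha_a^{\phantom{1}}\alpha_b^{-1})$ is finite while $\zeta_I(\beta)\to\infty$, and the role of finite propagation as an abstract finite-state condition. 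Your direct argument for (8) via unboundedness of $z\mapsto N_f^{-iz}\phi(e_{fS})$ is a legitimate shortcut where the paper cites \cite{Sta3}*{Theorem~4.1}.

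The one genuine gap is the existence of $\psi_\beta$ for $\beta$ in the critical interval. You propose to ``check directly that the diagonal functional $v_s^{\phantom{*}}v_t^*\mapsto N_s^{-\beta}\delta_{s,t}$ satisfies \eqref{eq:KMScond}'', but verifying the KMS identity on the spanning family does not show that this assignment extends to a \emph{state}: the elements $v_s^{\phantom{*}}v_t^*$ can satisfy linear relations, and positivity is not automatic. Your weighted-average construction is unavailable here precisely because $\zeta_S(\beta)=\infty$ on the critical interval. The paper's resolution (Proposition~\ref{prop:construction of KMS-states}) is to form the genuine states $\psi_{\beta,\tau,I}$ from the representation of Theorem~\ref{thm:rep of C*(S)} for \emph{finite} $I\subset\Irr(N(S))$, where $\zeta_I(\beta)<\infty$, extract a weak$^*$-limit along $I_k\nearrow\Irr(N(S))$ by compactness of the state space, and then verify the KMS property of the limit via the algebraic characterisation of Proposition~\ref{prop:algebraic characterization of KMS states-gen}. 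Relatedly, for (2b) ``running the construction with $\tau=\rho$'' does not by itself show that the resulting state restricts to $\rho$ (the identity $\psi_{\beta,\tau}\circ\varphi=\tau$ is only established for $\beta>\beta_c$); the paper instead takes $\psi_1$ as a limit of $\psi_{\beta_k,\tau}$ with $\beta_k\searrow 1$ and the canonical trace $\tau$, and computes $\psi_1\circ\varphi=\rho$ from the reconstruction formula together with Lemma~\ref{lem:convergence under series perturbation}. Both points are repairable with machinery you already invoke, but as written they are missing steps rather than omitted routine details.
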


\begin{proof} Assertion (1) will follow directly from Proposition~\ref{prop:no KMS below 1-gen}. For the existence claim in (2a) and (2b), we apply Proposition~\ref{prop:construction of KMS-states} starting from the canonical trace $\tau$ on $C^*(S_c)$ given by $\tau(w_sw_t^*)=\delta_{s,t}$ for all $s,t\in S_c$. The uniqueness assertion will follow from Proposition~\ref{prop:strong min gives unique kms in the crit int} for (2a), and from Proposition~\ref{prop:min gives unique kms in the crit int} for (2b).

For assertion (3), Proposition~\ref{prop:construction of KMS-states} shows the existence of a continuous, affine parametrisation $\tau\mapsto \psi_{\beta,\tau}$ that produces a $\kms_\beta$-state $\psi_{\beta,\tau}$ on $C^*(S)$ out of a trace $\tau$ on $C^*(S_c)$. This map is surjective by Corollary~\ref{cor:KMS-states parametrization surjective} and injective by Proposition~\ref{prop:KMS-states parametrization injective}.

To prove (4), it suffices to show that a $\kms_\beta$-state $\phi$ on $C^*(S)$ for $\beta \in \R$ vanishes on $1-v_a^{\phantom{*}}v_a^*$ for all $a\in S_c$. But $S_c = \ker N$ implies $\sigma_{i\beta}(v_a) =v_a$ so $\phi(v_a^{\phantom{*}}v_a^*) = \phi(v_a^*v_a^{\phantom{*}}) = \phi(1)=1$ since $v_a$ is an isometry. Assertion (5) is the content of Proposition~\ref{prop:KMS-state factoring through Q_p must have beta=1}.

Towards proving (6), Proposition~\ref{prop:construction of ground states} provides an affine assignment $\rho\mapsto \psi_\rho$ of states $\rho$ on $C^*(S_c)$ to ground states $\psi_\rho$ of $C^*(S)$ such that $\psi_\rho \circ \varphi = \rho$, for the canonical map $\varphi\colon C^*(S_c) \to C^*(S)$ from Remark~\ref{rem:C*(S_c) in C*(S)}. Thus $\rho\mapsto \psi_\rho$ is injective, and surjectivity follows from Proposition~\ref{prop:ground states on C*(S)-gen}. If $\beta_c<\infty$, then (3) and Proposition~\ref{prop:ground states that factor through Q_c vs. KMS infty} show that $\kms_\infty$-states correspond to normalised traces on $C^*(S_c)$.

The first claim in statement (7) follows from (4). For the second part of (7), we observe that $\phi \circ \pi_c$ is a ground state on $C^*(S)$ for every ground state $\phi$ on $\CQ_c(S)$. Suppose first that all ground states on $\CQ_c(S)$ are $\kms_\infty$-states. If $\phi'$ is a ground state on $C^*(S)$ such that $\phi'= \phi \circ \pi_c$ for some ground state $\phi$ on $\CQ_c(S)$, then $\phi$ is a $\kms_\infty$-state by assumption. This readily implies that $\phi'$ is also a $\kms_\infty$-state, and hence corresponds to a trace as $\beta_c < \infty$, see (6). Conversely, suppose that all ground states on $C^*(S)$ that factor through $\pi_c$ correspond to traces under the map from (6). Then every ground state $\phi$ on $\CQ_c(S)$ corresponds to a trace via $\phi \circ \pi_c$. Hence $\phi \circ \pi_c$ is a $\kms_\infty$-state by (6) as $\beta_c < \infty$, and (4) implies that $\phi$ is a $\kms_\infty$-state on $\CQ_c(S)$ as well. 

Finally, statement (8) is proved in Proposition~\ref{prop:no GS on Q_p}.
\end{proof}

\begin{remark}\label{rem:crit interval uniqueness of KMS-states conditions}
The idea behind the trace $\rho$ on $C^*(S_c)$ in Theorem~\ref{thm:KMS results-gen}~(2b) stems from \cite{LRRW}. If $S$ is right cancellative, then $\rho$ is the canonical trace, i.e. $\rho(w_a^{\phantom{*}}w_b^*) = \delta_{a,b}$ for $a,b\in S_c$. If $\alpha$  is almost free then $\alpha$ is faithful by Remark~\ref{rem:almost free actions}, and  $S_c$ is right cancellative by Corollary~\ref{cor:S_c action faithful implies right cancellation for S_c}. Now if $S_c$ is right cancellative, then the value $\rho(w_a^{\phantom{*}}w_b^*)$ describes the asymptotic proportion of elements in $S/_\sim \cap N^{-1}(n)$ that are fixed under $\alpha_a^{\phantom{1}} \alpha_b^{-1}$ as $n \to \infty$ in $N(S)$. Thus, if $\beta_c=1$, and $S$ has finite propagation, then almost freeness forces $\rho$ to be the canonical trace so that (2a) and (2b) are consistent.
\end{remark}

\begin{remark}\label{rem:connection to CDL-paper}
In \cite{CDL}, the $\kms$-state structure for $S=R \rtimes R^\times$, where $R$ is the ring of integers in a number field was considered for a dynamics of the same type as we discuss here. $S$ is right LCM if and only if the class group of $R$ is trivial, and it is quite intriguing to see how the results of \cite{CDL} relate to Theorem~\ref{thm:KMS results-gen}: We have $\beta_c=2$ and the unique $\kms_\beta$-state for $\beta \in [1,2]$ in \cite{CDL}*{Theorem~6.7} is analogous to the one in Theorem~\ref{thm:KMS results-gen}~(2a). Since the action $\alpha\colon S_c \curvearrowright S/_\sim$ is almost free, we may expect this uniqueness result to hold outside the class of right LCM semigroups. 

On the other hand, the parametrisation of $\kms_\beta$-states for $\beta >2$ from \cite{CDL}*{Theorem~7.3} is by traces on $\bigoplus_{\gamma} C^*(J_\gamma \rtimes R^*)$, where $\gamma$ ranges over the class group along with a fixed reference ideal $J_\gamma \in \gamma$. If $S$ is right LCM, then this coincides with Theorem~\ref{thm:KMS results-gen}~(3) as $S_c=S^*=R\rtimes R^*$ and we can take $J_1 = R$. So \cite{CDL} predicts that we can expect to see a more complicated trace simplex beyond the right LCM case that reflects the finer structure of $\alpha\colon S_c \curvearrowright S/_\sim$. In connection with the ideas behind Theorem~\ref{thm:KMS results-gen}~(6). this might also yield a new perspective on the description of ground states provided in \cite{CDL}*{Theorem~8.8}.
\end{remark}

\begin{remark}\label{rem:KMS_infty vs Q_c(S)}
The phase transition result for ground states for the passage from $C^*(S)$ to $\CQ_c(S)$ in Theorem~\ref{thm:KMS results-gen}~(7) raises the question whether there is an example of an admissible right LCM semigroup $S$ for which $\pi_c$ is nontrivial and not all ground states on $\CQ_c(S)$ are $\kms_\infty$-states. In all the examples we know so far, see Section~\ref{sec:examples}, we either have $\pi_c = \id$, for instance for the case of self-similar actions or algebraic dynamical systems, or the core subsemigroup $S_c$ is abelian so that all ground states that factor through $\pi_c$ correspond to traces on $C^*(S_c)$.
\end{remark}

\section{Examples}\label{sec:examples}
Before proving our main theorem on the KMS structure of admissible right LCM semigroups, we use this section to prove that a large number of concrete examples from the literature are admissible. Indeed, for some of the requirements for admissibility (\ref{cond:A1} and \ref{cond:A2}), we do not know any right LCM semigroups that fail to possess them. We break this section into subsections, starting with reduction results for Zappa-Sz\'{e}p products of right LCM semigroups that will be applied to easy right-angled Artin monoids \ref{subsec:examples easy right-angled Artin monoids}, self-similar actions \ref{subsec:SSA}, subdynamics of $\N\rtimes\N^\times$ \ref{subsec:NxNtimes}, and Baumslag-Solitar monoids \ref{subsec:BS-monoid}. However, the case of algebraic dynamical systems requires different considerations, see \ref{subsec:ADS}.

\subsection{Zappa-Sz\'ep products}\label{subsec:ZSProducts}
Let $U,A$ be semigroups with identity $e_U,e_A$. Suppose there exist maps $(a,u)\mapsto a(u):A\times U\to U$ and
$(a,u)\mapsto a|_u:A\times U\rightarrow A$ such that
 \begin{tabbing}
 \,\,\,\,(ZS1) $e_A(u) = u$;\hspace{3cm} \= (ZS5) $e_A|_{u}=e_A$; \\
 \,\,\,\,(ZS2) $(a b)(u) = a(b (u))$; \>(ZS6) $a|_{uv} = (a|_u)|_v$;\\
 \,\,\,\,(ZS3) $a (e_U) = e_U$; \> (ZS7) $a (uv) = a (u)a|_u(v)$ ; and\\
 \,\,\,\,(ZS4) $a|_{e_U}=a$; \> (ZS8) $(a b)|_u = a|_{b (u)} b|_u$.
\end{tabbing}
Following \cite{Bri1}*{Lemma~3.13(xv)} and \cite{BRRW}, the external Zappa-Sz\'ep product $U \bowtie A$ is the cartesian product $U\times A$ endowed with the multiplication $(u,a)(v,b)=(ua(v),a|_v b)$ for all $a,b\in A$ and $u,v\in U$. Given $a\in A,u\in U$, we call $a(u)$ the action of $a$ on $u$, and $a|_u$ the restriction of $a$ to $u$. For convenience, we write $1$ for both $e_A$ and $e_U$.

In this subsection, we derive an efficient way of identifying admissible Zappa-Sz\'{e}p products $S= U \bowtie A$ among those which share an additional intersection property: For this class, the conditions \ref{cond:A1}--\ref{cond:A4} mostly boil down to the corresponding statements on $U$. Suppose that $S=U\bowtie A$ is a right LCM semigroup and that $A$ is also right LCM. Then $U$ is necessarily a right LCM semigroup.
Let us remark that $S^* = U^* \bowtie A^*$. Moreover, for $(u,a),(v,b) \in S$, it is straightforward to see that $(u,a) \not\perp (v,b)$ implies $u \not\perp v$. We are interested in a strong form of the converse implication:
\begin{equation}\label{eq:cap condition ZS-product}\tag{$\cap$}
\begin{aligned}
\text{If } uU \cap vU = wU, \text{ then } (u,a)S \cap (v,b)S = (w,c)S \text{ for some } c\in A.
\end{aligned}
\end{equation}
We say that \eqref{eq:cap condition ZS-product} holds (for $S$) if it is valid for all $(u,a),(v,b) \in S$.

\begin{remark}\label{rem:examples of cap condition}
The property \eqref{eq:cap condition ZS-product} is inspired by \cite{BRRW}*{Remark~3.4}. If a Zappa-Sz\'ep product $U \bowtie A$ satisfies the hypotheses of \cite{BRRW}*{Lemma~3.3}, that is,
\begin{enumerate}[(a)]
\item $U$ is a right LCM semigroup,
\item $A$ is a left cancellative monoid whose constructible right ideals are totally ordered by inclusion, and
\item $u\mapsto a(u)$ is a bijection of $U$ for each $a\in A$,
\end{enumerate}
then \cite{BRRW}*{Remark~3.4} shows that the right LCM semigroup $U \bowtie A$ satisfies \eqref{eq:cap condition ZS-product}.
\end{remark}

While surprisingly many known examples fit into the setup of \cite{BRRW}*{Lemma~3.3}, the following easy example shows that they are not necessary to ensure that $U\bowtie A$ is right LCM or that \eqref{eq:cap condition ZS-product} holds.

\begin{example}\label{ex:right LCM SZ-prod not covered by BRRW3.3}
Consider the standard restricted wreath product $S:= \N\wr \N = (\bigoplus_\N \N) \rtimes \N$, where the endomorphism appearing in the semidirect product is the shift. Then one can check that
$A:= S_c = \{(m,0) \mid m \in \bigoplus_\N \N\}$ and $U:= S_{ci}^1 = \{ (m,n) \mid n \in \N, m \in \bigoplus_\N \N: m_k=0 \text{ for all } k \geq n\}$
with action and restriction determined by $(m,0)(m',n) = (m+m',n) = (m'',n)((m_{\ell+n})_{\ell \in \N},0)$ with $m''_k = \chi_{\{0,\ldots,n-1\}}(k)~(m_k+m'_k)$ yields a Zappa-Sz\'{e}p product description $S=U\bowtie A$. It is also straightforward to check that $U\bowtie A$ is right LCM and satisfies \eqref{eq:cap condition ZS-product}. However, the constructible ideals of $A$ are directed but not totally ordered by inclusion, and the action of $A$ on $U$ is by non-surjective injections for all $a \in A\setminus\{0\}$. Note that the same treatment applies to $S' = T\wr \N$ for every nontrivial, left cancellative, left reversible monoid $T$.
\end{example}

\begin{thm}\label{thm:ZS-products with cap condition}
If $S=U \bowtie A$ is a right LCM semigroup with $A$ right LCM such that \eqref{eq:cap condition ZS-product} holds, then
\begin{enumerate}[(i)]
\item $A_c = A$, i.e. $A$ is left reversible,
\item $S_c$ equals $U_c \bowtie A$,
\item $(u,a) \sim (v,b)$ if and only if $u \sim v$,
\item $S_{ci}$ is given by $U_{ci} \bowtie A^*$,
\item $S$ is core factorable if and only $U$ is core factorable, and
\item $S_{c_i}\subset S$ is $\cap$-closed if and only if $U_{c_i}\subset U$ is $\cap$-closed.
\end{enumerate}
\end{thm}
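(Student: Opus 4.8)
The plan is to work through the six statements roughly in the listed order, since each builds on the previous ones. For (i), I would take $a \in A$ arbitrary and show $aA \cap bA \neq \emptyset$ for all $b \in A$. The idea is to use \eqref{eq:cap condition ZS-product}: since $S = U \bowtie A$ is right LCM, the elements $(1,a)$ and $(1,b)$ have a right LCM in $S$, and because $1U \cap 1U = 1U = U$, \eqref{eq:cap condition ZS-product} forces $(1,a)S \cap (1,b)S = (1,c)S$ for some $c \in A$; projecting the equality of principal ideals onto the $A$-coordinate (using the multiplication rule $(u,a)(v,b) = (ua(v), a|_v b)$) yields $c \in aA \cap bA$, so $A$ is left reversible, i.e. $A_c = A$.

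For (ii), the containment $U_c \bowtie A \subseteq S_c$ should follow from (i) together with \eqref{eq:cap condition ZS-product}: if $u \in U_c$ then $uU \cap vU \neq \emptyset$ for all $v$, so \eqref{eq:cap condition ZS-product} gives $(u,a)S \cap (v,b)S \neq \emptyset$ for any $a,b$. For the reverse inclusion, I would show that if $(u,a) \in S_c$ then $u \in U_c$; this should come from the observation (already noted before the theorem) that $(u,a) \not\perp (v,b)$ implies $u \not\perp v$, so that $(u,a) \in S_c$ forces $uU \cap vU \neq \emptyset$ for all $v$. Statement (iii) is then a direct computation with the definition of $\sim$: $(u,a) \sim (v,b)$ means $(u,a)(w,c) = (v,b)(w',c')$ for some $(w,c),(w',c') \in S_c$; by (ii) these lie in $U_c \bowtie A$, so $w,w' \in U_c$, and reading off the $U$-coordinate of the product gives $u\, a(w) = v\, b(w')$ with $a(w), b(w') \in U_c$ (here one needs that $A$ acts on $U_c$ within $U_c$, which follows since $A$ acts by homomorphism-like maps preserving the property of intersecting all ideals — this should be checked via the Zappa-Sz\'ep axioms and \eqref{eq:cap condition ZS-product}), hence $u \sim v$; the converse direction is similar, padding with the identity in the $A$-coordinate.

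For (iv), I would characterise $S_{ci}$: an element $(u,a)$ is core irreducible iff $(u,a) \notin S_c$ and every factorisation $(u,a) = (v,b)(w,c)$ with $(w,c) \in S_c$ forces $(w,c) \in S^*$. Using (ii), $(w,c) \in S_c$ means $w \in U_c$ and $c \in A$; the product condition reads $u = v\, b(w)$, $a = b|_w c$. Unwinding, one shows $(u,a) \in S_{ci}$ forces $u \in U_{ci}$ (taking the factorisation through the $U$-coordinate) and $a \in A^*$ (since any non-invertible $c \in A$ would, by left reversibility, give a nontrivial core factorisation), and conversely $U_{ci} \bowtie A^* \subseteq S_{ci}$ by a matching argument; the only subtlety is handling restrictions $b|_w$ via axioms (ZS4)–(ZS6). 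Statements (v) and (vi) then follow fairly quickly: core factorability $S = S_{ci}^1 S_c$ translates, via (ii) and (iv), to $U \bowtie A = (U_{ci}^1 \bowtie A^*)(U_c \bowtie A)$, and computing the product shows this equals $(U_{ci}^1 U_c) \bowtie A$, which is all of $S$ iff $U_{ci}^1 U_c = U$; for (vi) one uses \eqref{eq:cap condition ZS-product} to transport right LCMs of pairs in $S_{ci} = U_{ci} \bowtie A^*$ to right LCMs of the corresponding pairs in $U_{ci}$, and Proposition~\ref{prop:hom of right LCM for S_c and S_ci}(ii) to package $\cap$-closedness.

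The main obstacle I anticipate is (iii) and (iv): keeping careful track of how the action $a \mapsto a(\cdot)$ and restriction $a \mapsto a|_{\cdot}$ interact with membership in $U_c$ and $U_{ci}$. In particular, one must verify that $A$ maps $U_c$ into $U_c$ and that restrictions of elements of $A^*$ stay in $A^*$ — these are the facts that make the coordinatewise bookkeeping go through — and checking them requires a genuine (if short) argument from the Zappa-Sz\'ep axioms plus \eqref{eq:cap condition ZS-product}, rather than a purely formal manipulation. Everything else is bookkeeping with the multiplication rule.
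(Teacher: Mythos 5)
Your strategy matches the paper's proof essentially step for step: (i) from \eqref{eq:cap condition ZS-product} applied to $(1,a)$ and $(1,b)$; (ii) from the $\perp$-observation together with the identity $(a(u),a|_u)=(1,a)(u,1)\in S_c$, which is exactly the one-line check you flag for ``$A$ maps $U_c$ into $U_c$''; (iii)--(iv) by coordinatewise analysis of factorisations (including the factorisation $(u,a)=(u,1)(1,a)$ to force $a\in A^*$); and (v)--(vi) by translation through (ii) and (iv). The only spot where your sketch is thinner than the paper is the converse of (iii): ``padding with the identity in the $A$-coordinate'' needs unpacking, since the paper instead builds an explicit common right multiple using left reversibility of $S_c$, whereas your reduction to $a=b=1$ goes through via $(u,a)\sim(u,1)$ (because $(1,a)\in S_c$) and transitivity of $\sim$ --- both routes work.
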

\begin{proof}
Part (i) follows from \eqref{eq:cap condition ZS-product} as $(1,a)S \cap (1,b)S = (1,c)S$ for some $c \in A$, so that $c \in aA \cap bA$. Similarly, \eqref{eq:cap condition ZS-product} forces $S_c = U_c \times A$ as $A=A_c$. In addition, to get a Zappa-Sz\'{e}p product as claimed in (ii), we note that $a(u) \in U_c$ for all $a \in A, u \in U_c$ because $(a(u),a|_u) = (1,a)(u,1) \in S_c = U_c \times A$.

Part (iii) is an easy consequence of (ii): $(u,a) \sim (v,b)$ forces $u \sim v$ since $a(w) \in U_c$ for all $a \in A,w \in U_c$. Conversely, let $uw= vw'$ for some $w,w' \in U_c$ and let $a,b\in A$. Then $(1,a)(w,1),(1,b)(w',1) \in S_c$ and hence there are $s,s',t,t' \in S_c$ such that $(1,a)s'=ws$ and $(1,b)t'=w't$. Likewise, there are $s'',t'' \in S_c$ such that $ss''=tt''$. This leads us to 
\[(u,1)(1,a)s's'' = (u,1)(w,1)ss''= (uw,1)tt''=(vw',1)tt''=(v,b)t't'',\]
which proves $(u,a) \sim (v,b)$ as $s's'',tt'' \in S_c$.

For (iv), we first show that $S_{ci} = U_{ci} \times A^*$. Suppose we have $(u,a) \in S_{ci}$. If $a \in A\setminus A^*$, then $(u,a) = (u,1)(1,a)$ together with (ii) shows that $(u,a)$ is not core irreducible. Thus we must have $a \in A^*$. Similarly, if $u \notin U_{ci}$, then $u \in U^*$ or there exist $v \in U, w \in U_c\setminus U^*$ such that $u=vw$. In the first case, we get $(u,a) \in S^*$ and hence $(u,a) \notin S_{ci}$. The latter case yields a factorization $(u,a) = (v,1)(w,a)$ with $(w,a) \in S_c\setminus S^*$. Altogether, this shows $S_{ci} \subset U_{ci} \times A^*$.
To prove the reverse containment, suppose $(u,a) \in U_{ci} \times A^*$ and $(u,a)=(v,b)(w,c)=(v b(w),b|_wc)$ with $(v,b) \in S, (w,c) \in S_c=U_c \bowtie A$. Then $b|_w, c \in A^*$ as $a \in A^*$, and the Zappa-Sz\'{e}p product structure on $S_c$ implies $b(w) \in U_c$. Since $u \in U_{ci}$, we conclude that $b(w) \in U^*$. But then $(1,b)(w,1) = (b(w),b|_w) \in U^* \bowtie A^* = S^*$ forces $(1,b),(w,1) \in S^*$. In particular, we have $w \in U^*$ and hence $(w,c) \in S^*$. Thus $(u,a) \in S_{ci}$ and we have shown that $S_{ci} = U_{ci} \times A^*$.

Next we prove that $a(u) \in U_{ci}$ and $a|_u \in A^*$ whenever $a \in A^*, u \in U_{ci}$. The second part is easily checked with $a|_u^{-1} = a^{-1}|_{a(u)}$. So suppose $a(u) = vw$ with $w \in U_c$. Then we get $u=a^{-1}(v)a^{-1}|_{v}(w)$, and $S_c = U_c \bowtie A$ yields $a^{-1}|_{v}(w) \in U_c$. Since $u$ is core irreducible, we get $a^{-1}|_{v}(w) \in U^*$, and then $w \in U^*$ due to $S^*=U^* \bowtie A^*$. Therefore, $a(u) \in U_{ci}$ and we have proven (iv).

For (v), let $U=U_{ci}^1U_c$. Observe that (ii) and (iv) yield $S_{ci}^1S_c = (U_{ci} \bowtie A^*)^1(U_c \bowtie A) \supseteq U_{ci}^1U_c \times A \supseteq S$. Thus $S$ is core factorable.
Conversely, suppose that $S=S_{ci}^1S_c$. For $u \in U$, take $(u,1)=(v,a)(w,b)=(va(w),a|_wb)$ with $(v,a) \in S_{ci}^1= (U_{ci} \bowtie A^*)^{1}$ and $(w,b) \in S_c = U_c \bowtie A$. For $u \in U\setminus U_c$ we get $v \in U_{ci}$. Combining this with $a(w) \in U_c$ using (ii), we arrive at $u=va(w) \in U_{ci}U_c$. Thus $U = U_{ci}^1U_c$ holds.

For (vi), suppose that $S_{ci}\subset S$ is $\cap$-closed. Let $u,v \in U_{ci}$ such that $uU \cap vU = wU$ for some $w \in U$. Therefore $(u,1)S \cap (v,1)S = (w,1)S$, which by (iv) means that $w \in U_{ci}$. Conversely, suppose that $U_{ci}\subset U$ is $\cap$-closed. As $S_{ci}= U_{ci}\bowtie A^*$ by (iv) and $sxS = sS$ for all $s \in S, x \in S^*$, it suffices to consider $(u,1)S \cap (v,1)S = (w,1)S$ for $u,v \in U_{ci}$. If this holds, then $uU\cap vU = w'U$ for some $w'\in U$ with $w \in w'U$. As $U_{ci}\subset U$ is $\cap$-closed, $w' \in U_{ci}$. Moreover, \eqref{eq:cap condition ZS-product} implies that $(w,1)S= (u,1)S\cap (v,1)S = (w',a)S$ for some $a \in A$, which amounts to $a \in A^*$ and, more importantly, $w \in w'U^* \subset U_{ci}$. Thus $(w,1) \in S_{ci}$ by (iv).
\end{proof}

\begin{proposition}\label{prop:reduction for (I)-(VII) from UxA to U}
Let $S=U \bowtie A$ be a right LCM semigroup with $A$ right LCM such that \eqref{eq:cap condition ZS-product} holds. The restriction $N\mapsto N|_U$ defines a one-to-one correspondence between
\begin{enumerate}[(a)]
\item generalised scales $N\colon S \to \N^\times$; and
\item generalised scales $N'\colon U \to \N^\times$ satisfying $N'_{a(u)}=N'_u$ for all $a \in A, u \in U$.
\end{enumerate}
Moreover, $N(S)=N|_U(U)$, so that \ref{cond:A4} holds for $N$ if and only if it holds for $N|_U$.
\end{proposition}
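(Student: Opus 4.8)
The plan is to transport everything across the identification of $U$ with the submonoid $U \bowtie \{1\} \subseteq S$ via $u \mapsto (u,1)$ (this is a submonoid, since $(u,1)(v,1) = (uv,1)$), using the structural results for Zappa-Sz\'ep products. By Theorem~\ref{thm:ZS-products with cap condition}(ii)--(iii) we have $S_c = U_c \bowtie A$ and $(u,a) \sim (v,b)$ if and only if $u \sim v$; in particular $(1,a) \in S_c$ and $(u,a) = (u,1)(1,a) \sim (u,1)$ for all $(u,a) \in S$. Since every generalised scale $N$ on $S$ annihilates $S_c$ by Proposition~\ref{prop:NisaHMofRLCMs}(i), it satisfies $N_{(u,a)} = N_{(u,1)}$, so $N$ is determined by $N|_U$; moreover $N|_U$ is nontrivial, is a monoid homomorphism, and satisfies the required invariance because $N|_U(a(u)) = N_{(a(u),1)} = N_{(a(u),a|_u)} = N_{(1,a)(u,1)} = N_{(u,1)} = N|_U(u)$, where the second equality again uses $N_{(u',a')}=N_{(u',1)}$. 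Conversely, given $N'$ as in (b), the formula $N_{(u,a)} := N'_u$ defines a nontrivial monoid homomorphism $S \to \N^\times$ restricting to $N'$, multiplicativity following from that of $N'$, the invariance $N'_{a(v)} = N'_v$, and the product rule $(u,a)(v,b) = (ua(v), a|_vb)$. These two constructions are visibly mutually inverse, so the only content left is to show that $N_{(u,a)} := N'_u$ is a generalised scale precisely when $N'$ is a generalised scale satisfying the invariance. Finally $N(S) = N'(U) = N|_U(U)$, which settles the last sentence, since \ref{cond:A4} is a condition on the submonoid $N(S) \subseteq \N^\times$ alone.

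For \ref{cond:A3a} and the handling of transversals, the key observation is that $N^{-1}(n) = (N')^{-1}(n) \bowtie A$ and that $U \hookrightarrow S$ induces a map $(N')^{-1}(n)/_\sim \to N^{-1}(n)/_\sim$, $[u] \mapsto [(u,1)]$, which is well defined and injective because $(u,1) \sim (v,1)$ if and only if $u \sim v$, and surjective because $[(u,a)] = [(u,1)]$. Hence $|N^{-1}(n)/_\sim| = |(N')^{-1}(n)/_\sim|$, giving the equivalence of \ref{cond:A3a}; moreover $\{u_1,\dots,u_n\} \subseteq U$ is a transversal of $(N')^{-1}(n)/_\sim$ if and only if $\{(u_1,1),\dots,(u_n,1)\}$ is a transversal of $N^{-1}(n)/_\sim$, and an arbitrary transversal $\{(u_1,a_1),\dots,(u_n,a_n)\}$ of $N^{-1}(n)/_\sim$ is turned into one of this special form by replacing each $(u_i,a_i)$ with the $\sim$-equivalent element $(u_i,1)$.

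For \ref{cond:A3b} I would first record that whether a finite subset of $S$ is an accurate foundation set depends only on the $\sim$-classes of its members (immediate from the definitions together with the fact noted in Section~\ref{sec:background} that $s \perp t \Leftrightarrow s' \perp t$ whenever $s \sim s'$); this reduces checking \ref{cond:A3b} for $N$ to special transversals $\{(u_1,1),\dots,(u_n,1)\}$. Then I would show that $\{u_1,\dots,u_n\} \subseteq U$ is an accurate foundation set for $U$ if and only if $\{(u_1,1),\dots,(u_n,1)\}$ is one for $S$. The foundation property descends from $S$ to $U$ because $(u,1) \not\perp (u_i,1)$ forces $u \not\perp u_i$ (the implication recorded just above \eqref{eq:cap condition ZS-product}), and lifts from $U$ to $S$ because if $uU \cap u_iU = wU$ then \eqref{eq:cap condition ZS-product} yields $(u,a)S \cap (u_i,1)S = (w,c)S \neq \emptyset$. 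Accuracy lifts because $u_i \perp u_j$ forces $(u_i,1) \perp (u_j,1)$ (same implication, contrapositive), and descends because if $u_iU \cap u_jU$ is nonempty --- hence, $U$ being right LCM, equal to some $wU$ --- then \eqref{eq:cap condition ZS-product} forces $(u_i,1) \not\perp (u_j,1)$. The main obstacle, in my view, is exactly this last item: for arbitrary transversals of $N^{-1}(n)/_\sim$ whose entries need not lie in $U \bowtie \{1\}$, one is forced to invoke the $\sim$-class invariance of the accurate-foundation-set property, and both the downward transfer of accuracy and the upward transfer of the foundation property genuinely use the full two-sided content of \eqref{eq:cap condition ZS-product}, not merely $(u,a) \not\perp (v,b) \Rightarrow u \not\perp v$.
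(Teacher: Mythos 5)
Your proposal is correct and follows essentially the same route as the paper's proof: both directions of the correspondence are obtained from $N_{(u,a)}=N'_u$ using $S_c=\ker N$ and Theorem~\ref{thm:ZS-products with cap condition}~(ii), condition \ref{cond:A3a} and the matching of transversals are transferred via Theorem~\ref{thm:ZS-products with cap condition}~(iii), and condition \ref{cond:A3b} is transferred using \eqref{eq:cap condition ZS-product} in both directions. Your write-up merely spells out in more detail the steps the paper leaves implicit, namely the $\sim$-class invariance of the accurate foundation set property and the precise two-way transfer of foundation and accuracy between $U$ and $S$.
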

\begin{proof}
For every generalised scale $N$ on $S$, the restriction $N|_U\colon U\to\N^\times$ defines a non-trivial homomorphism. Since $S_c = \ker~N$, Theorem~\ref{thm:ZS-products with cap condition}~(ii) implies that $N(S)=N|_U(U)$ as $N((u,a))=N((u,1))N((1,a))=N((u,1))$ for all $(u,a)\in S$, and $N|_U(a(u))=N((a(u),1))=N((a(u),a|_u))=N((1,a))N((u,1))=N|_U(u)$ for all $a \in A, u \in U$.

By virtue of Theorem~\ref{thm:ZS-products with cap condition}~(iii), \ref{cond:A3a} passes from $N$ to $N|_U$. Similarly, if $F'$ is a transversal for $N|_U^{-1}(n)/_\sim $ for $n\in N|_U(U)$, then Theorem~\ref{thm:ZS-products with cap condition}~(iii) implies that $F:= \{(u,1)|u\in F'\}$ is a transversal for $N^{-1}(n)/_\sim $. Due to \ref{cond:A3b} for $N$, we know that $F$ is an accurate foundation set for $S$. But then $F'$ is an accurate foundation set because of \eqref{eq:cap condition ZS-product}. Thus we get \ref{cond:A3b} for $N|_U$.

Conversely, suppose that $N'$ is one of the generalised scales in (b). Then $N\colon S \to \N^\times,(u,a) \mapsto N'(u)$ defines a non-trivial homomorphism on $S$ with $N|_U=N'$ because $N((u,a)(v,b)) = N((ua(v),a|_vb)) = N'(ua(v)) = N'(u)N'(v) = N((u,a))N((v,b))$ for $(u,a),(v,b) \in S$ by the invariance of $N'$ under the action of $A$. Similar to the first part, Theorem~\ref{thm:ZS-products with cap condition}~(iii) shows that \ref{cond:A3a} passes from $N'$ to $N$. Now if $F$ is a transversal of $N^{-1}(n)/_\sim$ for some $n\in N(S)$, then $F_U := \{ u \in U \mid (u,a) \in F \text{ for some } a \in A\}$ is a transversal for $N'^{-1}(n)/_\sim$, by Theorem~\ref{thm:ZS-products with cap condition}~(iii). Since $F_U$ is an accurate foundation set for $U$ due to \ref{cond:A3b} for $N'$, \eqref{eq:cap condition ZS-product} implies that $F$ is an accurate foundation set for $S$, i.e.~\ref{cond:A3b} holds for $N$.
\end{proof}

\begin{corollary}\label{cor:reduction from UxA to U}
Let $S=U \bowtie A$ be a right LCM semigroup with $A$ right LCM such that \eqref{eq:cap condition ZS-product} holds. Then $S$ is admissible if and only if $U$ is admissible with a generalised scale that is invariant under the action of $A$ on $U$
\end{corollary}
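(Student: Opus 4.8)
The plan is simply to observe that admissibility of a right LCM semigroup is, by definition, the conjunction of the four conditions \ref{cond:A1}--\ref{cond:A4}, and that the transfer of each of these between $S$ and $U$ has already been carried out under the standing hypotheses of the corollary (namely $S=U\bowtie A$ right LCM, $A$ right LCM, and \eqref{eq:cap condition ZS-product}, which in particular forces $U$ to be right LCM, as noted before Theorem~\ref{thm:ZS-products with cap condition}). Concretely, \ref{cond:A1} and \ref{cond:A2} are governed by Theorem~\ref{thm:ZS-products with cap condition}~(v) and (vi), while \ref{cond:A3} and \ref{cond:A4} are governed by Proposition~\ref{prop:reduction for (I)-(VII) from UxA to U}. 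So the proof is a bookkeeping argument assembling these statements; there is no new computation to do.

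For the forward direction, assume $S$ is admissible. Since $S$ is core factorable, Theorem~\ref{thm:ZS-products with cap condition}~(v) gives that $U$ is core factorable, which is \ref{cond:A1} for $U$. Since $S_{ci}\subset S$ is $\cap$-closed, Theorem~\ref{thm:ZS-products with cap condition}~(vi) gives that $U_{ci}\subset U$ is $\cap$-closed, which is \ref{cond:A2} for $U$. Let $N$ be a generalised scale on $S$. By Proposition~\ref{prop:reduction for (I)-(VII) from UxA to U}, the restriction $N|_U$ is a generalised scale on $U$ satisfying $N|_U{}_{,a(u)} = N|_U{}_{,u}$ for all $a\in A$, $u\in U$, i.e.\ it is invariant under the action of $A$ on $U$, so \ref{cond:A3} holds for $U$ with this scale; and since $N(S)=N|_U(U)$ by the same proposition, \ref{cond:A4} for $N$ is verbatim the statement \ref{cond:A4} for $N|_U$. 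Hence $U$ is admissible and carries an $A$-invariant generalised scale.

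For the converse, assume $U$ is admissible and fix a generalised scale $N'$ on $U$ with $N'_{a(u)}=N'_u$ for all $a\in A$, $u\in U$. Since $U$ is core factorable, Theorem~\ref{thm:ZS-products with cap condition}~(v) gives that $S$ is core factorable; since $U_{ci}\subset U$ is $\cap$-closed, Theorem~\ref{thm:ZS-products with cap condition}~(vi) gives that $S_{ci}\subset S$ is $\cap$-closed; this is \ref{cond:A1} and \ref{cond:A2} for $S$. By the one-to-one correspondence in Proposition~\ref{prop:reduction for (I)-(VII) from UxA to U}, the $A$-invariant scale $N'$ equals $N|_U$ for a (unique) generalised scale $N$ on $S$, namely $N(u,a)=N'(u)$, giving \ref{cond:A3} for $S$; and once more $N(S)=N'(U)$ makes \ref{cond:A4} for $N$ identical to \ref{cond:A4} for $N'$. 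Therefore $S$ is admissible. The only genuinely delicate point in this circle of ideas — transferring accuracy of foundation sets between $S$ and $U$ in the verification of \ref{cond:A3b}, where \eqref{eq:cap condition ZS-product} is essential — has already been dealt with inside Proposition~\ref{prop:reduction for (I)-(VII) from UxA to U}, so at the level of the corollary there is no remaining obstacle.
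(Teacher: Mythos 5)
Your proposal is correct and is exactly the paper's argument: the paper's proof of this corollary is the one-line citation of Theorem~\ref{thm:ZS-products with cap condition}~(v),(vi) for \ref{cond:A1}--\ref{cond:A2} and Proposition~\ref{prop:reduction for (I)-(VII) from UxA to U} for \ref{cond:A3}--\ref{cond:A4}, which you have simply spelled out in more detail. No issues.
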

\begin{proof}
This follows from Theorem~\ref{thm:ZS-products with cap condition}~(v),(vi) and Proposition~\ref{prop:reduction for (I)-(VII) from UxA to U}.
\end{proof}

Observe that for $S=U\bowtie A$ with $S_c= U_c\bowtie A$, the action of $A$ on $U$ induces a well-defined action $\gamma\colon A\times U/_\sim \to U/_\sim, (a,[u])\mapsto [a(u)]$.

\begin{corollary}\label{cor:reduction from UxA to U - alpha}
Suppose that $S=U \bowtie A$ is a right LCM semigroup with $U$ right LCM satisfying Theorem~\ref{thm:ZS-products with cap condition}~(ii),(iii). If $U_c$ is trivial, then the following statements hold:
\begin{enumerate}[(i)]
\item The action $\alpha\colon S_c \curvearrowright S/_\sim$ is faithful if and only if $\gamma\colon A\curvearrowright U/_\sim$ is faithful.
\item The action $\alpha$ is almost free if and only if $\gamma$ is almost free.
\end{enumerate}
\end{corollary}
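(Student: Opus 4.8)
The plan is to exhibit an isomorphism of dynamical systems between $\gamma\colon A \curvearrowright U/_\sim$ and $\alpha\colon S_c \curvearrowright S/_\sim$, and then to invoke the elementary fact that faithfulness and almost freeness of an action are preserved under such an isomorphism.

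First I would record two structural identifications coming from the hypothesis $U_c=\{1\}$ together with Theorem~\ref{thm:ZS-products with cap condition}~(ii),(iii). Since $U_c$ is trivial, (ii) gives $S_c=\{1\}\bowtie A=\{(1,a)\mid a\in A\}$, and, using (ZS3) and (ZS4), the Zappa--Sz\'ep multiplication on this set reads $(1,a)(1,b)=(1\cdot a(1),a|_1 b)=(1,ab)$; hence $a\mapsto(1,a)$ is an isomorphism of monoids from $A$ onto $S_c$. Next, by (iii) the assignment $\Phi\colon S/_\sim\to U/_\sim$, $[(u,a)]\mapsto[u]$, is well defined, and it is a bijection: it is surjective because $[u]=\Phi([(u,1)])$, and injective because $u\sim v$ forces $(u,a)\sim(v,b)$ for all $a,b\in A$ again by (iii).

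Then I would check that $\Phi$ intertwines the two actions. For $a\in A$ and $(u,b)\in S$ the Zappa--Sz\'ep product gives $(1,a)(u,b)=(a(u),a|_u b)$, so $\alpha_{(1,a)}([(u,b)])=[(a(u),a|_u b)]$, whence $\Phi\big(\alpha_{(1,a)}([(u,b)])\big)=[a(u)]=\gamma_a([u])=\gamma_a\big(\Phi([(u,b)])\big)$. Thus $\Phi\circ\alpha_{(1,a)}=\gamma_a\circ\Phi$ for every $a\in A$, which together with the isomorphism $A\cong S_c$ from the previous step says precisely that $\gamma$ and $\alpha$ are equivariantly isomorphic.

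Both statements then follow at once. For (i), the distinct pairs of elements of $S_c$ are exactly the $(1,a)\neq(1,b)$ with $a\neq b$ in $A$; since $\Phi$ is a bijection and $\alpha_{(1,a)}(x)\neq\alpha_{(1,b)}(x)$ holds for some $x\in S/_\sim$ if and only if $\gamma_a(y)\neq\gamma_b(y)$ holds for some $y\in U/_\sim$, the action $\alpha$ is faithful iff $\gamma$ is. For (ii), $\Phi$ restricts to a bijection between $\{x\in S/_\sim\mid\alpha_{(1,a)}(x)=\alpha_{(1,b)}(x)\}$ and $\{y\in U/_\sim\mid\gamma_a(y)=\gamma_b(y)\}$ for each pair $a\neq b$, so the former is finite iff the latter is, giving that $\alpha$ is almost free iff $\gamma$ is. I do not anticipate a real obstacle: the argument is essentially bookkeeping, and the only delicate point is observing that triviality of $U_c$ is exactly what promotes the copy of $A$ sitting inside $S_c$ via Theorem~\ref{thm:ZS-products with cap condition}~(ii) to all of $S_c$, which is what makes the two actions literally conjugate rather than merely one a restriction of the other.
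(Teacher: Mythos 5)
Your proof is correct and follows essentially the same route as the paper: the paper's (terse) argument likewise observes that $\alpha_{(1,a)}[(u,c)]=\alpha_{(1,b)}[(u,c)]$ holds precisely when $\gamma_a[u]=\gamma_b[u]$, using Theorem~\ref{thm:ZS-products with cap condition}~(ii) and (iii), and that triviality of $U_c$ makes the canonical embedding $A\to S_c=U_c\bowtie A$ an equivariant isomorphism. Your write-up simply makes the bijection $\Phi\colon S/_\sim\to U/_\sim$ and the intertwining computation explicit, which is a faithful elaboration of the same idea.
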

\begin{proof}
We start by noting that $\alpha_{(1,a)}[(u,c)] = [(a(u),a|_uc)]= [b(u),b|_uc)] = \alpha_{(1,b)}[(u,c)]$ if and only if $\gamma_a[u] = [a(u)]=[b(u)]=\gamma_b[u]$ for all $a,b,c \in A, u \in U$ by Theorem~\ref{thm:ZS-products with cap condition}~(ii) and (iii). If $U_c$ is trivial the canonical embedding $A \to S_c = U_c\bowtie A$ is an isomorphism which is equivariant for $\alpha$ and $\gamma$.
\end{proof}

\subsection{Easy right-angled Artin monoids}\label{subsec:examples easy right-angled Artin monoids}
For $m,n \in \N \cup \infty$, consider the direct product $S := \IF_m^+ \times \N^n$, where $\IF_m^+$ denotes the free monoid in $m$ generators and $\N^n$ denotes the free abelian monoid in $n$ generators. The monoid $S$ is a very simple case of a right-angled Artin monoid arising from the graph $\Gamma=(V,E)$ with vertices $V = V_m \sqcup V_n$, where $V_m := \{v_k \mid 1 \leq k \leq m\}$ and $V_n:= \{ v'_k \mid 1 \leq k \leq n\}$, and edge set $E = \{ (v,w) \mid v \in V_n \text{ or } w \in V_n\}$. In particular, $(S,\IF_m\times \Z^n)$ is a quasi lattice-ordered group, see \cite{CrispLaca}, so $S$ is right LCM.

We note that $\IF_m^+$ admits a generalised scale if and only if $m$ is finite, and thus $\IF_m^+$ is admissible if and only if $2\leq m < \infty$.

\begin{proposition}\label{prop:F_m x N^n example}
For $2\leq m < \infty, 0 \leq n \leq \infty$, the right LCM semigroup $S := \IF_m^+ \times \N^n$ has the following properties:
\begin{enumerate}[(i)]
\item $S$ is admissible with $\beta_c=1$.
\item The action $\alpha$ is faithful if and only if $\alpha$ is almost free if and only if $n=0$.
\item $S$ has finite propagation.
\end{enumerate}
\end{proposition}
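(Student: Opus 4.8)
The plan is to treat $S = \IF_m^+ \times \N^n$ as a Zappa--Sz\'ep product and reduce everything to the factor $\IF_m^+$, using the reduction machinery just established. Write $U := \IF_m^+$ and $A := \N^n$. Since the product is direct, the action of $A$ on $U$ is trivial (i.e. $a(u) = u$ and $a|_u = a$), so $S = U \bowtie A$ with $A$ right LCM, and the intersection property \eqref{eq:cap condition ZS-product} holds trivially: if $uU \cap vU = wU$ in $U$, then $(u,a)S \cap (v,b)S = (w,c)S$ where $c$ is a right LCM of $a$ and $b$ in the lattice $\N^n$ (which always exists). So all of Theorem~\ref{thm:ZS-products with cap condition}, Corollary~\ref{cor:reduction from UxA to U}, and Corollary~\ref{cor:reduction from UxA to U - alpha} apply. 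First I would record the relevant structure of $U = \IF_m^+$ itself: it is right LCM (distinct generators have disjoint right ideals, so $uU \cap vU$ is either empty or one contains the other), $U^* = \{1\}$, $U_c = \{1\}$ (no nontrivial word's right ideal meets every other word's right ideal), hence $U_{ci} = U \setminus \{1\}$, trivially making $U$ core factorable and $U_{ci} \subset U$ $\cap$-closed. The obvious length homomorphism won't do for a generalised scale; instead take $N' \colon \IF_m^+ \to \N^\times$ sending every generator to $m$, so $N'_u = m^{|u|}$. Then $N'^{-1}(m^k)$ is exactly the set of words of length $k$, which has $m^k$ elements, all pairwise $\perp$ and forming an accurate foundation set; this is precisely \ref{cond:A3a} and \ref{cond:A3b}, and $N'(U) = \langle m \rangle = \N^\times$-copy of the free monoid on the single irreducible $m$, giving \ref{cond:A4}. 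Since the $A$-action is trivial, $N'$ is automatically $A$-invariant. Thus $U$ is admissible and, by Corollary~\ref{cor:reduction from UxA to U}, so is $S$. For $\beta_c = 1$: $\zeta_S(\beta) = \sum_{k \ge 0} (m^k)^{-(\beta-1)} = \sum_k m^{-k(\beta-1)}$, which converges exactly for $\beta > 1$, so $\beta_c = 1$. This settles (i).

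For (ii), I would use Corollary~\ref{cor:reduction from UxA to U - alpha}, whose hypotheses (Theorem~\ref{thm:ZS-products with cap condition}~(ii),(iii)) hold here and for which $U_c = \{1\}$ is trivial. So $\alpha \colon S_c \curvearrowright S/_\sim$ is faithful (resp. almost free) iff $\gamma \colon A \curvearrowright U/_\sim$ is. Now $S_c = U_c \bowtie A = A = \N^n$, and $U/_\sim = U$ itself since $U_c$ trivial means $\sim$ is equality on $U$. But the action $\gamma$ of $A$ on $U$ is the trivial action (direct product!): $\gamma_a[u] = [a(u)] = [u]$ for all $a$. Hence $\gamma$ is faithful iff $A$ is trivial iff $n = 0$; and when $n = 0$ the group $A$ is trivial so $\gamma$ is vacuously almost free, while for $n \ge 1$ it is not even faithful, so certainly not almost free. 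This gives the chain of equivalences in (ii).

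For (iii), finite propagation, I would unwind Definition~\ref{def:finite propagation} with the transversal $\CT \subset S_{ci}^1$ for $S/_\sim$. By Theorem~\ref{thm:ZS-products with cap condition}~(iii) and $U_c = \{1\}$, we have $S/_\sim \cong U$, and a natural transversal is $\CT = \{(u,1) \mid u \in \IF_m^+\}$ (identifying $S_{ci}^1 = U_{ci}^1 \bowtie A^*$, and $A^* = \{1\}$ here). For $a = (m_0, 0) \in S_c = \N^n$ and $f = (u,1) \in \CT$, the product $af = (u, m_0)$ factors as $i(af) \cdot c(af)$ with $i(af) = (u,1) \in \CT$ and $c(af) = (1, m_0) \in S_c$ --- because the product is direct, so $c(af) = (1,m_0)$ does not depend on $f$ at all. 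Hence $C_a = \{c(af) \mid f \in \CT\} = \{(1,m_0)\}$ is a singleton, in particular finite, for every $a \in S_c$. So any pair $(a,b)$ is witnessed by $\CT$, and $S$ has finite propagation. (One should also note that the consistency of (2a) and (2b) in Theorem~\ref{thm:KMS results-gen} is not an issue here: with $\beta_c = 1$ and $S_c = \N^n$ abelian hence right cancellative, the trace $\rho$ of Theorem~\ref{thm:KMS results-gen}~(2b) is the canonical trace, matching Remark~\ref{rem:crit interval uniqueness of KMS-states conditions}.)

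The only genuinely non-routine point is verifying that $N'_u = m^{|u|}$ really does satisfy \ref{cond:A3b} --- that every transversal of $N'^{-1}(m^k)/_\sim$ is an \emph{accurate} foundation set for $\IF_m^+$, i.e. that the set of all length-$k$ words both covers $U$ (every word has a prefix, or is extended by, a length-$k$ word --- clear) and is accurate ($uU \cap vU = \emptyset$ for distinct words $u,v$ of the same length, since neither is a prefix of the other). This is elementary for $\IF_m^+$ but is the load-bearing verification; everything else is bookkeeping through the Zappa--Sz\'ep reduction results. A secondary point worth a sentence is confirming $U_c = \{1\}$ and $U_{ci} = U \setminus \{1\}$, i.e. that no nontrivial word lies in the core --- immediate since two words starting with different generators have disjoint right ideals, so no nonempty word's principal right ideal meets all others.
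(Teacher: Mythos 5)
Your proposal is correct and follows essentially the same route as the paper: the Zappa--Sz\'ep reduction via Corollary~\ref{cor:reduction from UxA to U} with the generalised scale $N'(w)=m^{\ell(w)}$ on $\IF_m^+$ for (i), Corollary~\ref{cor:reduction from UxA to U - alpha} with the trivial action $\gamma$ for (ii), and the observation that $c(af)=a$ independently of $f$, so $C_a=\{a\}$, for (iii). The only difference is that you spell out the verification that $N'$ is a generalised scale on $\IF_m^+$ (which the paper delegates to a remark preceding the proposition), and this verification is correct.
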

\begin{proof}
By appealing to Corollary~\ref{cor:reduction from UxA to U}, $S = \IF_m^+ \bowtie \N^n$ with trivial action and restriction is admissible for the generalised scale $N$ induced by the invariant generalised scale $N'\colon \IF_m^+ \to \N^\times, w \mapsto m^{\ell(w)}$, where $\ell(w)$ denotes the word length of $w$ with respect to the canonical generators of $\IF_m^+$. In particular, $\text{Irr}(N(S)) = \{m\}$, so $\beta_c=1$.

Due to Corollary~\ref{cor:reduction from UxA to U - alpha}, we further know that $\alpha$ is faithful (almost free) if and only if $\gamma\colon \N^n\curvearrowright \IF_m^+/_\sim$ is faithful (almost free). But $\gamma$ is trivial as the action of $\N^n$ on $\IF_m^+$ is trivial. Thus we get faithfulness if only if $S_c\cong \N^n$ is trivial. In this case, the action is of course also almost free.

The only transversal $\CT \subset S_{ci}^1 = \IF_m^+$ for $S/_\sim$ is $\IF_m^+$. For every $a \in S_c = \IN^n$, we get $c(aw) = c(wa) = a$ so that $C_a = \{a\}$ is finite. Hence $S$ has finite propagation.
\end{proof}

In the case of $S := \IF_m^+ \times \N^n$ with $2 \leq m < \infty$, it is easy to see that $\kms_1$-states correspond to traces on $C^*(\IN^n)$, or, equivalently, Borel probability measures on $\IT^n$ via $C^*(\Z^n) \cong C(\IT^n)$. Thus we cannot expect uniqueness unless $n=0$. In fact, this example describes a scenario where $\alpha$ is degenerate. However, the situation is much less clear for general right-angled Artin monoids.

\subsection{Self-similar actions}\label{subsec:SSA}
Given a finite alphabet $X$, let $X^n$ be the set of words with length $n$. The set $X^*:=\cup_{n=0}^{\infty}X^n$ is a monoid with concatenation of words as the operation and empty word $\varnothing$ as the identity. A self-similar action $(G,X)$ is an action of a group $G$ on $X^*$ such that for every $g \in G$ and $x \in X$ there is a uniquely determined element $g|_x \in G$ satisfying
\[ g(xw) = g(x)g|_x(w) \quad \text{for all } w \in X^*.\]
We refer to \cite{Nek1} for a thorough treatment of the subject. It is observed in \cite{Law1} and \cite{BRRW}*{Theorem~3.8} that the Zappa-Sz\'{e}p product $X^* \bowtie G$ with $(v,g)(w,h) := (vg(w),g|_wh)$ defines a right LCM semigroup that satisfies \eqref{eq:cap condition ZS-product}.

\begin{proposition}\label{prop:self-similar action}
Suppose that $(G,X)$ is a self-similar action. Then the Zappa-Sz\'{e}p product $S:=X^* \bowtie G$ has the following properties:
\begin{enumerate}[(i)]
\item $S$ is admissible with $\beta_c=1$.
\item The action $\alpha$ is faithful (almost free) if and only if $G \curvearrowright X^*$ is faithful (almost free).
\item $S$ has finite propagation if and only if for all $g^{(1)},g^{(2)} \in G$ there exists a family $(h_w)_{w \in X^*} \subset G$ such that $\{h_{g^{(i)}(w)}^{-1}g^{(i)}|_w h_w \mid w \in X^*\}$ is finite for $i=1,2$. In particular, $S$ has finite propagation if $(G,X)$ is finite state.
\end{enumerate}
\end{proposition}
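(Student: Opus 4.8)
The plan is to reduce the three assertions to corresponding facts about the free monoid $X^*$ and the given self-similar action, using the Zappa--Sz\'ep reductions of Section~\ref{subsec:ZSProducts}. Throughout I use that $S=X^*\bowtie G$ is right LCM and satisfies \eqref{eq:cap condition ZS-product} by \cite{Law1} and \cite{BRRW}*{Theorem~3.8}, and that $G$, being a group, is left reversible, so Theorem~\ref{thm:ZS-products with cap condition} applies with $U=X^*$, $A=G$. The first thing I would record is the elementary structure of the free monoid on a finite alphabet with $\lvert X\rvert\geq 2$ (which is forced, as for $\IF_m^+$, if a generalised scale is to exist): two words have intersecting principal right ideals iff one is a prefix of the other, so $(X^*)_c=\{\varnothing\}$ and $(X^*)_{ci}=X^*\setminus\{\varnothing\}$; in particular $X^*$ is core factorable, and $(X^*)_{ci}\subset X^*$ is $\cap$-closed since the right LCM of two prefix-comparable nonempty words is the longer of the two.

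For (i) I would exhibit the generalised scale $N'\colon X^*\to\N^\times$, $w\mapsto\lvert X\rvert^{\ell(w)}$, where $\ell$ denotes word length. Since $\sim$ is trivial on $X^*$, we have $(N')^{-1}(\lvert X\rvert^k)=X^k$ with exactly $\lvert X\rvert^k$ elements, $X^k$ is an accurate foundation set, and $N'(X^*)=\langle\lvert X\rvert\rangle$ is free abelian on $\{\lvert X\rvert\}$; thus \ref{cond:A3} and \ref{cond:A4} hold. A self-similar action preserves word length because $g(xw)=g(x)\,g|_x(w)$ with $g(x)\in X$, so $N'$ is invariant under the action of $G$ on $X^*$, and Corollary~\ref{cor:reduction from UxA to U} gives that $S$ is admissible. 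Finally $\zeta_S(\beta)=\sum_{k\geq 0}\lvert X\rvert^{-k(\beta-1)}$ is a geometric series, finite precisely for $\beta>1$, whence $\beta_c=1$.

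For (ii), Theorem~\ref{thm:ZS-products with cap condition}~(ii),(iii) gives $S_c=\{\varnothing\}\bowtie G\cong G$, shows the $\sim$-classes of $S$ are the sets $\{v\}\times G$, and tells us $(X^*)_c$ is trivial, so Corollary~\ref{cor:reduction from UxA to U - alpha} applies; under $S_c\cong G$ and $S/_\sim\cong X^*$ the induced action $\gamma\colon G\curvearrowright X^*/_\sim$ is literally $G\curvearrowright X^*$, giving the claim. For (iii) I would first parametrise transversals: as the $\sim$-classes are $\{v\}\times G$ and $S_{ci}^1=\bigl((X^*\setminus\{\varnothing\})\bowtie G\bigr)\cup\{1\}$, a transversal $\CT\subset S_{ci}^1$ for $S/_\sim$ is exactly a set $\{(v,h_v)\mid v\in X^*\}$ with $(h_v)_{v\in X^*}\subset G$ and $h_\varnothing=e$. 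For $a=(\varnothing,g)\in S_c$ and $f=(v,h_v)\in\CT$, a short Zappa--Sz\'ep computation gives $i(af)=(g(v),h_{g(v)})$ (using that $g$ preserves length) and $c(af)=(\varnothing,\,h_{g(v)}^{-1}\,g|_v\,h_v)$, so under $S_c\cong G$ we get $C_a=\{h_{g(v)}^{-1}g|_v h_v\mid v\in X^*\}$. By Definition~\ref{def:finite propagation}, $S$ then has finite propagation iff for all $g^{(1)},g^{(2)}\in G$ there is $(h_w)_{w\in X^*}\subset G$ making both sets $\{h_{g^{(i)}(w)}^{-1}g^{(i)}|_w h_w\mid w\in X^*\}$ finite; the constraint $h_\varnothing=e$ is harmless because changing $h_\varnothing$ alters each such set only in the single entry $w=\varnothing$. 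The ``in particular'' clause follows by taking $h_w=e$ for all $w$: then $C_a=\{g|_w\mid w\in X^*\}$, finite for every $g$ precisely when $(G,X)$ is finite state, and the single transversal $X^*\bowtie\{e\}$ witnesses finite propagation at every pair at once.

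I do not expect a genuine obstacle here: parts (i) and (ii) are immediate once the easy structure of $X^*$ is in place and Corollaries~\ref{cor:reduction from UxA to U} and \ref{cor:reduction from UxA to U - alpha} are invoked, and the only real bookkeeping is in (iii) --- identifying which families $(h_w)$ give transversals inside $S_{ci}^1$, carrying out the Zappa--Sz\'ep multiplication to pin down $c(af)$, and noting that normalising $h_\varnothing$ does not affect finiteness.
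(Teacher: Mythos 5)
Your proposal is correct and follows essentially the same route as the paper: reduce to $X^*$ via Corollary~\ref{cor:reduction from UxA to U} and Corollary~\ref{cor:reduction from UxA to U - alpha} using the scale $w\mapsto\lvert X\rvert^{\ell(w)}$, and for (iii) parametrise transversals in $S_{ci}^1$ by families $(h_w)_{w\in X^*}$ and compute $c((\varnothing,g)(w,h_w))=(\varnothing,h_{g(w)}^{-1}g|_wh_w)$. Your extra observations (that $\lvert X\rvert\geq 2$ is needed for a generalised scale to exist, and that the normalisation $h_\varnothing=e$ does not affect finiteness of $C_a$) are small points the paper leaves implicit, and they are handled correctly.
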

\begin{proof}
For (i), note that $S$ is a right LCM semigroup that satisfies \eqref{eq:cap condition ZS-product}. Then by Corollary~ \ref{cor:reduction from UxA to U}, it suffices to show that $X^*$ is admissible with an invariant generalised scale $N'\colon X^* \to \N^\times$. Since the core subsemigroup of $X^*$ consists of the empty word $\varnothing$ only, the equivalence relation $\sim$ is trivial on $X^*$. It follows that $X^*$ satisfies the conditions \ref{cond:A1} and \ref{cond:A2}. It is apparent that the map $N'\colon X^* \to \N^\times, w \mapsto \lvert X\rvert^{\ell(w)}$ is a generalised scale, where $\ell(w)$ denotes the \emph{word length} of $w$ in the alphabet $X$. Since for each $g\in G$, $w\mapsto g(w)$ is a bijection on $X^n$ (see for example \cite{BRRW}*{Lemma~3.7}), $N'$ is invariant under $G\curvearrowright X^*$. Finally, \ref{cond:A4} follows from $\text{Irr}(N'(X^*)) = \{\lvert X \rvert\}$. Thus $N'$ induces a generalised scale $N$ on $S$ that makes $S$ admissible. Moreover, we have $\beta_c=1$ as $\text{Irr}(N'(X^*)) = \{\lvert X \rvert\}$ is finite.

Part (ii) is an immediate consequence of Corollary~\ref{cor:reduction from UxA to U - alpha}.

For (iii), we observe that an arbitrary transversal $\CT\subset S_{ci}^1 = (X^*\setminus\{\varnothing\}\bowtie G)^1$ for $S/_\sim$ is given by $((w,h_w))_{w \in X^*}$, where $(h_w)_{w \in X^*} \subset G$ satisfies $h_\varnothing=1$. For a given $g \in G \cong S_c$, we get $C_g := C_{(\varnothing,g)} = \{h_{g(w)}^{-1}g|_w h_w \mid w \in X^*\}$ as $(\varnothing,g)(w,h_w) = (g(w),h_{g(w)})(\varnothing,h_{g(w)}^{-1}g|_w h_w) \in \CT S_c$. This establishes the characterisation of finite propagation. So if $(G,X)$ is finite state, i.e.~$\{ g|_w \mid w \in X^*\}$ is finite for all $g \in G$, then $h_w =1$ for all $w \in X^*$ yields finite propagation for $S$.
\end{proof}

Faithfulness of $G\curvearrowright X^*$ is assumed right away in \cite{LRRW}*{Section~2}. Our approach shows that this is unnecessary unless we want a unique $\kms_1$-state.

\subsection{\texorpdfstring{Subdynamics of $\N \rtimes \N^\times$}{Subdynamics of the ax+b-semigroup over the natural numbers}}\label{subsec:NxNtimes}
Consider the semidirect product $\N \rtimes \N^\times$ with
\[(m,p)(n,q) = (m+np,pq) \text{ for all }m,n\in \N \text{ and } p,q\in \N^\times.\]
 Let $\CP\subset \N^\times$ be a family of relatively prime numbers, and $P$ the free abelian submonoid of $\N^\times$ generated by $\CP$. Then we get a right LCM subsemigroup $S:=\N \rtimes P$ of $\N \rtimes \N^\times$. As observed in \cite{Sta3}*{Example~3.1}, $S$ can be displayed as the internal Zappa-Sz\'{e}p product
$S_{ci}^1 \bowtie S_c$, where $S_c = \N \times \{1\}$ and $S_{ci}^1 = \{ (n,p) \in S \mid 0 \leq n \leq p-1\}$. The action and restriction maps
satisfy
\begin{align*}
(m,1)\big((n,p)\big)=\big((m+n)\,(\text{mod } p),p\big), \text{ and }
\end{align*}
\[(m,1)|_{(n,p)}=\Big(\frac{m+n-\big((m+n)\,(\text{mod } p)\big)}{p},1\Big).\]
 Both $S_{ci}^1$ and $S_c$ are right LCM. Also the argument of the paragraph after \cite{BRRW}*{Lemma~3.5} shows that $S_{ci}^1 \bowtie S_c$ satisfies the hypothesis of \cite{BRRW}*{Lemma~3.3}. Therefore, $S_{ci}^1 \bowtie S_c$ is a right LCM semigroup satisfying \eqref{eq:cap condition ZS-product}, see Remark~\ref{rem:examples of cap condition}.

\begin{proposition}\label{prop:subdynamics of NxNtimes}
Suppose that $\CP\subset \N^\times$ is a nonempty family of relatively prime numbers. Then the right LCM semigroup $S= \N \rtimes P$ has the following properties:
\begin{enumerate}[(i)]
\item $S$ is admissible, and if $\CP$ is finite, then $\beta_c=1$.
\item $\alpha$ is faithful and almost free.
\item $S$ has finite propagation.
\end{enumerate}
\end{proposition}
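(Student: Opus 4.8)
Write $U:=S_{ci}^1$ and $A:=S_c=\N\times\{1\}$, so that $S=U\bowtie A$ with the action and restriction displayed above, and recall from the preceding paragraph that this product satisfies \eqref{eq:cap condition ZS-product}. For part~(i) the plan is to invoke Corollary~\ref{cor:reduction from UxA to U}, which reduces admissibility of $S$ to admissibility of $U$ together with a generalised scale on $U$ that is invariant under the $A$-action. By Theorem~\ref{thm:ZS-products with cap condition}(ii) we have $S_c=U_c\bowtie A$, and since $A=S_c$ this forces $U_c=\{1\}$; hence $\sim$ is trivial on $U$, $U^*=U_c=\{1\}$, $U_{ci}=U\setminus\{1\}$, and \ref{cond:A1} and \ref{cond:A2} hold for $U$ exactly as in the proof of Proposition~\ref{prop:self-similar action}(i). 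I would then take $N'\colon U\to\N^\times$, $N'(n,p)=p$; it is a nontrivial homomorphism of monoids, and it is $A$-invariant because $(m,1)\bigl((n,p)\bigr)=((m+n)\bmod p,\,p)$ does not change the second coordinate. Since $\sim$ is trivial and $N'^{-1}(k)=\{(n,k):0\le n\le k-1\}$ has $k$ elements, \ref{cond:A3a} is immediate; this set is accurate (distinct residues modulo $k$) and it is a foundation set for $U$, because given $(m,q)\in U$ one sets $n:=m\bmod k$, $\mu:=\lfloor m/k\rfloor$, picks $N$ with $q^N>\mu$, and checks $(n,k)(\mu,q^N)=(m,\,q^Nk)=(m,q)(0,\,q^{N-1}k)$, giving \ref{cond:A3b}. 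Finally $N'(U)=P$ is free abelian on $\Irr(P)=\CP$ since the members of $\CP$ are relatively prime, which is \ref{cond:A4}; hence $S$ is admissible. For $\CP=\{p_1,\dots,p_r\}$ finite, $\zeta_S(\beta)=\prod_{j=1}^{r}(1-p_j^{-(\beta-1)})^{-1}$ is finite precisely for $\beta>1$, so $\beta_c=1$.

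For part~(ii), $U$ and $S$ are right LCM, \eqref{eq:cap condition ZS-product} holds, and $U_c$ is trivial, so Corollary~\ref{cor:reduction from UxA to U - alpha} applies and reduces the assertions to the corresponding ones for $\gamma\colon A\curvearrowright U/_\sim$. Under the identifications $A\cong\N$ and $U/_\sim=U$ this action is $\gamma_m(n,p)=((m+n)\bmod p,\,p)$, and for $m\neq m'$ the set $\{(n,p)\in U:\gamma_m(n,p)=\gamma_{m'}(n,p)\}=\{(n,p):p\mid m-m',\ 0\le n\le p-1\}$ is finite, as only finitely many $p\in P$ divide the nonzero integer $m-m'$ and each contributes finitely many $n$. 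Thus $\gamma$ is almost free, hence faithful since $U$ is infinite (Remark~\ref{rem:almost free actions}); therefore so is $\alpha$.

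For part~(iii), I would use that the $\sim$-class of $(m,q)\in S$ is $\{(m',q):m'\equiv m\pmod q\}$, so $\CT:=S_{ci}^1$ is a transversal for $S/_\sim$ contained in $S_{ci}^1$, with associated maps $i(m,q)=(m\bmod q,\,q)$ and $c(m,q)=(\lfloor m/q\rfloor,\,1)$. Fix $a=(m_0,1)\in S_c$. For $f=(n,p)\in\CT$ one has $af=(m_0+n,\,p)$, hence $c(af)=(\lfloor(m_0+n)/p\rfloor,\,1)$, and since $0\le n\le p-1$ we get $\lfloor(m_0+n)/p\rfloor<m_0/p+1\le m_0+1$. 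Thus $C_a\subseteq\{(j,1):0\le j\le m_0\}$ is finite for every $a\in S_c$, so $\CT$ is a witness of finite propagation at every pair in $S_c\times S_c$ in the sense of Definition~\ref{def:finite propagation}, and $S$ has finite propagation.

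The step I expect to need the most care is \ref{cond:A3b} for $U$: since $U=S_{ci}^1$ sits inside $\N\rtimes P$ subject to the constraint $0\le n\le p-1$, the common right multiple of $(n,k)$ and $(m,q)$ must be exhibited inside $U$ rather than merely in $\N\rtimes P$, which is precisely what choosing a sufficiently large power $q^N$ accomplishes; the remaining ingredients are either verbatim the self-similar-action argument or short congruence computations.
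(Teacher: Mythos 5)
Your proposal is correct and follows essentially the same route as the paper: reduce via Corollary~\ref{cor:reduction from UxA to U} and Corollary~\ref{cor:reduction from UxA to U - alpha} to $U=S_{ci}^1$ with the generalised scale $N'(n,p)=p$, and use the transversal $\CT=S_{ci}^1$ with the bound $c(af)=(\lfloor(m_0+n)/p\rfloor,1)$ for finite propagation. The only (harmless) differences are cosmetic: for \ref{cond:A3b} you exhibit a common right multiple explicitly where the paper uses the characterisation $(m,p)\perp(n,q)\Leftrightarrow m-n\notin\gcd(p,q)\Z$, and in (ii) you compute the coincidence sets of $\gamma_m,\gamma_{m'}$ directly rather than first reducing to fixed points via Remark~\ref{rem:strong minimality}.
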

\begin{proof}
Since $S$ is isomorphic to $S_{ci}^1 \bowtie S_c$, (i) follows if $S_{ci}^1$ is admissible with an invariant generalised scale, see Corollary~ \ref{cor:reduction from UxA to U}. To see this, first note that the core subsemigroup for $S_{ci}^1$ is trivial, hence so is the relation $\sim$ for $S_{ci}^1$. Thus \ref{cond:A1} and \ref{cond:A2} hold for $S_{ci}^1$. We claim that $N'\colon S_{ci}^1 \to \N^\times,(n,p) \mapsto p $ defines a generalised scale on $S_{ci}^1$. The only transversal for $(N')^{-1}(p)/_\sim$ for $p\in N'(S_{ci}^1)$ is $(N')^{-1}(p) = \{ (m,p) \mid 0 \leq m \leq p-1\}$. This gives condition \ref{cond:A3a}. For \ref{cond:A3b}, we observe that
\begin{equation*}
(m,p) \perp (n,q) \text{ if and only if } m-n \notin \text{gcd}(p,q)\Z \text{ for all } (m,p),(n,q) \in S_{ci}^1.
\end{equation*}
It follows $(N')^{-1}(p)$ is an accurate foundation set. Thus $N'$ is a generalised scale on $S_{ci}^1$. Since $S_c = \N \times \{1\}$, the action of $S_c$ on $S_{ci}^1$ fixes the second coordinate. Therefore, $N'$ is invariant under the action of $S_c$ on $S_{ci}^1$. Finally, note that the set $\text{Irr}(N'(S_{ci}^1))$ is given by $\CP$, which is assumed to consist of relatively prime elements. Hence \ref{cond:A4} holds. Moreover, if $\CP$ is finite, then $\beta_c=1$ by the product formula for $\zeta_S$ from Remark~\ref{rem:product formula for zeta function-gen}.

For (ii), note that Corollary~\ref{cor:reduction from UxA to U - alpha} applies. As $S_c \cong \N \times \{1\}$ is totally ordered, it suffices to examine the fixed points in $S/_\sim$ for $\gamma_m$ with $m \geq 1$, see Remark~\ref{rem:strong minimality}. But $\gamma_m [(n,p)] = [(r(m+n),p)]$, where $0 \leq r(m+n) \leq p-1, m+n \in r(m+n) + p\N$, so that $[(n,p)]$ is fixed by $\gamma_m$ if and only if $m \in p\N$. As $\{p \in P \mid m \in p\N\}$ is finite, $\gamma$ is almost free. In particular, $\gamma$ is faithful because $S_{ci}^1/_\sim$ is infinite, see Remark~\ref{rem:almost free actions}. Hence $\alpha$ is faithful and almost free.

Part (iii) follows from the observation that the transversal $\CT = S_{ci}^1$ for $S/_\sim$ satisfies $C_{(m,1)} = \{(m,1)\} \cup \{ (n,1) \mid n \in \{k,k+1\} \text{ with } kp \leq m < (k+1)p \text{ for some } p \in P\setminus\{1\}\}$ for $m \geq 1$. Thus the cardinality of $C_{(m,1)}$ does not exceed $m+1$, and $S$ has finite propagation.
\end{proof}

By Proposition~\ref{prop:subdynamics of NxNtimes}, Theorem~\ref{thm:KMS results-gen} applies to $\N\rtimes \N^\times$. This recovers the essential results on $\kms$-states from \cites{LR2,BaHLR}.

\subsection{Baumslag-Solitar monoids}\label{subsec:BS-monoid}
Let $c,d \in \N$ with $cd > 1$. The \emph{Baumslag-Solitar monoid} $BS(c,d)^+$ is the monoid in two generators $a,b$ subject to the relation $ab^c=b^da$. It is known that $S:=BS(c,d)^+$ is quasi-lattice ordered, hence right LCM, see \cite{Spi1}*{Theorem~2.11}. It is observed in \cite{Sta3}*{Example~3.2} and \cite{BRRW}*{Subsection~3.1} that the core subsemigroup $S_c$ is the free monoid in $b$, whereas $S_{ci}^1$ is the free monoid in $\{b^ja \mid 0 \leq j \leq d-1\}$. In addition, we have $S_{ci}^1 \cap S_c = \{1\}$ and $S=S_{ci}^1S_c$, so that the two subsemigroups of $S$ give rise to an internal Zappa-Sz\'{e}p product description $S_{ci}^1 \bowtie S_c$. The action and restriction map (on the generators) are given by
\[\begin{array}{rcl}
b(b^ja) = \begin{cases}
b^{j+1}a &\text{if } j<d-1, \\
a&\text{if } j=d-1, \end{cases} &\text{ and }&
b|_{b^ja} = \begin{cases}
1 &\text{if } j<d-1, \\
b^c&\text{if } j=d-1.\end{cases}
\end{array}\]
Moreover, $S_{ci}^1 \bowtie S_c$ satisfies the hypothesis of \cite{BRRW}*{Lemma~3.3} and therefore $S_{ci}^1 \bowtie S_c$ is a right LCM satisfying \eqref{eq:cap condition ZS-product}, see Remark~\ref{rem:examples of cap condition}.

\begin{proposition}\label{prop:BS-monoid}
Let $c,d \in \N$ such that $cd > 1$. Then the Baumslag-Solitar monoid $S=BS(c,d)^+$ is admissible with $\beta_c=1$, and the following statements hold:
\begin{enumerate}[(i)]
\item The action $\alpha$ is almost free if and only if $\alpha$ is faithful if and only if $c \not\in d\N$.
\item $S$ has finite propagation if and only if $c \leq d$.
\end{enumerate}
\end{proposition}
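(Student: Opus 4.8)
The plan is to reduce everything to the explicit Zappa-Sz\'ep description $S=S_{ci}^1\bowtie S_c$ recorded just before the statement, where $S_c$ is the free monoid in $b$ and $S_{ci}^1$ is the free monoid in $\{b^ja\mid 0\le j\le d-1\}$, and to analyse the concrete action and restriction maps. First I would handle admissibility and $\beta_c=1$: since $S_{ci}^1$ has trivial core, $\sim$ is trivial on $S_{ci}^1$, so \ref{cond:A1} and \ref{cond:A2} hold for $S_{ci}^1$; the map $N'\colon S_{ci}^1\to\N^\times$ sending a word in the generators $b^ja$ to $d^{\ell}$ (where $\ell$ is the word length) is a homomorphism, the unique transversal for $(N')^{-1}(d^\ell)/_\sim$ has exactly $d^\ell$ elements giving \ref{cond:A3a}, and one checks directly that distinct length-$\ell$ words have disjoint right ideals, giving \ref{cond:A3b}; finally $\Irr(N'(S_{ci}^1))=\{d\}$ gives \ref{cond:A4} and $\beta_c=1$. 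Invariance of $N'$ under the action of $S_c$ follows because the action of $b$ permutes $\{b^ja\mid 0\le j\le d-1\}$, hence preserves word length. Then Corollary~\ref{cor:reduction from UxA to U} gives admissibility of $S$.

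For (i) I would invoke Corollary~\ref{cor:reduction from UxA to U - alpha}: since $(S_{ci}^1)_c$ is trivial, faithfulness (resp.\ almost freeness) of $\alpha$ is equivalent to that of $\gamma\colon S_c\curvearrowright S_{ci}^1/_\sim = S_{ci}^1$, i.e.\ of the monoid $\langle b\rangle\cong\N$ acting on the free monoid on $d$ letters via the cyclic-shift-with-carry dynamics described by the displayed formulas. As $S_c$ is totally ordered, by Remark~\ref{rem:strong minimality} it suffices to count fixed points of $\gamma_{b^n}$ for $n\ge1$. The key computation is to unwind $\gamma_{b^n}(b^ja)$: reading $b^n$ against $b^ja$ cycles the first letter $j\mapsto (j+n)\bmod d$ and emits a carry of $b^{c\lfloor (j+n)/d\rfloor}$, which then acts on the tail. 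The upshot is that $\gamma_{b^n}$ fixes a word precisely when $d\mid n$ \emph{and} the emitted carries vanish, which happens exactly when $c\in d\N$ (so that $b^c$ acts as a power of $b$ again harmlessly) — if $c\notin d\N$ a carry genuinely shifts letters and only finitely many words (in fact words in which the carry never triggers, i.e.\ words with bounded first-letter value) can be fixed, so $\gamma$ is almost free; if $c\in d\N$ then $\gamma_{b^d}$ fixes everything, so $\gamma$ is not faithful. Since $S_{ci}^1/_\sim$ is infinite, faithfulness and almost freeness coincide here by Remark~\ref{rem:almost free actions}, giving the stated equivalences.

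For (ii) I would compute, for the canonical transversal $\CT=S_{ci}^1$, the set $C_{b^n}=\{c(b^nf)\mid f\in\CT\}\subset S_c=\langle b\rangle$ and decide when it is finite for all $n$. Writing $b^n f = \gamma_{b^n}(f)\cdot(\text{carry})$, the carry accumulated while pushing $b^n$ through a word $f$ is a power $b^{k}$ where $k$ depends on how many times the length-$d$ cycle wraps, and each wrap multiplies the pending carry-exponent by $c$ and then (after passing through another letter) the $b^c$ gets pushed further. The crucial point is the growth of these carry exponents along arbitrarily long words $f$: if $c\le d$ the carry exponent stays bounded (each step the carry $b^{c}$ is absorbed into advancing at most one further letter and never compounds beyond a fixed bound), so $C_{b^n}$ is finite and $S$ has finite propagation; if $c>d$ the carry exponents grow without bound as $f$ ranges over longer and longer words (the $b^c$ produced by one wrap itself forces more than one wrap downstream, triggering a cascade), so $C_{b^n}$ is infinite for $n\ge1$, and since changing the transversal only multiplies elements of $C_{b^n}$ by units and $S_c$ has no nontrivial units, no transversal can be a witness — hence $S$ fails finite propagation. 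The main obstacle I anticipate is precisely this last carry-propagation bookkeeping for (ii): making rigorous the dichotomy that for $c\le d$ the exponents $c(b^nf)$ range over a finite set independent of the length of $f$, while for $c>d$ they blow up. This is the combinatorial heart of the Baumslag--Solitar relation $ab^c=b^da$ and will require a careful induction on word length tracking the pending carry exponent and the current first-letter index simultaneously.
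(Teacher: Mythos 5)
Your proposal follows essentially the same route as the paper: reduce via the Zappa--Sz\'ep decomposition $S=S_{ci}^1\bowtie S_c$ using Corollary~\ref{cor:reduction from UxA to U} for admissibility and Corollary~\ref{cor:reduction from UxA to U - alpha} for (i), analyse fixed points of $\gamma_{b^n}$ through the compounding carry exponents $n, nc/d, nc^2/d^2,\ldots$, and decide finite propagation in (ii) by whether the carry exponents stay bounded ($c\le d$) or cascade without bound ($c>d$). One small correction for (i): when $c\notin d\N$ the fixed set of $\gamma_{b^n}$ is not ``words with bounded first-letter value'' --- the divisibility conditions depend only on word length, not on the letters, so the fixed set is a union of full level sets $X^\ell$ over the finitely many $\ell$ for which $n(c/d)^{k-1}\in d\N$ holds for all $k\le\ell$ --- but this still yields finiteness and hence almost freeness exactly as you conclude.
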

\begin{proof}
Since $S\cong S_{ci}^1 \bowtie S_c$, we can invoke Corollary~\ref{cor:reduction from UxA to U}. As $S_{ci}^1$ is the free monoid in $d$ generators $X := \{ w_j:= b^ja \mid 0 \leq j \leq d-1\}$, it is admissible for the generalised scale $N'\colon S_{ci}^1 \to \N^\times, w \mapsto d^{\ell(w)}$, where $\ell(w)$ denotes the word length of $w$ with respect to $X$, compare Proposition~\ref{prop:self-similar action}. The action of $S_c$ on $S_{ci}^1$ preserves $\ell$, and hence $N'$ is invariant. Thus $N'$ induces a generalised scale $N$ on $S$ that makes $S$ admissible. Due to $\text{Irr}(N(S)) = \{d\}$, we get $\beta_c=1$.

The quotient $S/_\sim$ is infinite, almost freeness implies faithfulness by Remark~\ref{rem:almost free actions}. As $S\cong S_{ci}^1 \bowtie S_c$ satisfies \eqref{eq:cap condition ZS-product}, we can appeal to Corollary~\ref{cor:reduction from UxA to U - alpha}, and thus consider $\gamma\colon\N\curvearrowright S_{ci}^1/_\sim$ in place of $\alpha$, where we identify the quotient by $\sim$ with $S_{ci}^1$ because $\sim$ is trivial on $S_{ci}^1$. As $S_c\cong\N$ is totally ordered, it suffices to consider fixed points for $\gamma_n$ with $n \geq 1$, see Remark~\ref{rem:strong minimality}. Next, we observe that
\[\gamma_n(w) = w \text{ for } w \in S_{ci}^1 \text{ if and only if } n(c/d)^{k-1} \in d\N \text{ for all } 1 \leq k \leq \ell(w).\]
So if $c \in d\N$, then $\gamma_d = \text{id}$, so that $\gamma$ is not faithful. Hence $\alpha$ is not faithful as well. On the other hand, if $c \not\in d\N$, then $e:= \gcd(c,d)^{-1}d \geq 2$ and $\gamma_n$ fixes $w \in S_{ci}^1$ if and only if $n \in e^\ell(w)\N$. So as soon as $\ell(w)$ is large enough, $w$ cannot be fixed by $\gamma_n$. Since the alphabet $X$ is finite, we conclude that $\gamma$ (and hence $\alpha$) is almost free if $c \not\in d\N$.

For (ii), we remark that $S_{ci}^1$ is the only transversal for $S/_\sim$ that is contained in $S_{ci}^1$. For $m \in \N$, we let $m=kd+r$ with $k,r \in \N$ and $0\leq r\leq d-1$. Then we get
\begin{equation}\label{eq:BS finite propagation}
b^mw_i = w_jb^n \text{ for } 0 \leq i \leq d-1 \text{ with } n=\begin{cases} kc,&\text{ if } 0 \leq i < d-r,\\ (k+1)c,&\text{ if } d-r \leq i \leq d-1,\end{cases}
\end{equation}
with a suitable $0\leq j \leq d-1$. If $c=d$, then we get $C_{b^m} = \{ b^m\}$ for $m \in d\N$ and $C_{b^m}=\{b^m,b^{m+d-r},b^{m-r}\}$ for $m \not\in d\N$. Thus $S$ has finite propagation in this case. For $c<d$, it follows from \eqref{eq:BS finite propagation} that every expression $b^mv = wb^n$ for given $v \in S_{ci}^1$ and $m \in \N$ satisfies $n \leq (k+1)d$. Hence $C_{b^m}$ is finite and $S$ has finite propagation. Now suppose $c>d$, and let $m \geq d$, say $m=kd + r$ with $0\leq r\leq d-1, k \geq 1$. If $r=0$, \eqref{eq:BS finite propagation} gives $b^mw_i = w_ib^n$ with $n=kc>m$ for all $i$. If $r>0$, then there is $d-r \leq i \leq d-1$, and we have $b^mw_i = w_jb^n$ with $n=(k+1)c>m$ for all such $i$. Thus $C_{b^m}$ contains arbitrarily large powers of $b$, and hence must be infinite. Therefore $S$ does not have finite propagation for $c \geq d$.
\end{proof}

We note that for this example, finite propagation can be regarded as a consequence of faithfulness of $\alpha$. The condition that $d$ is not a divisor of $c$ also appeared in \cite{CaHR}. More precisely, it was shown in \cite{CaHR}*{Corollary~5.3, Proposition~7.1, and Example~7.3} that this condition is necessary and sufficient for the $\kms_1$-state to be unique. Let us remark here that we do not need to request that $d$ is not a divisor of $c$ in order to obtain the parametrisation of the $\kms_\beta$-states for $\beta>1$, compare Theorem~\ref{thm:KMS results-gen} and Proposition~\ref{prop:BS-monoid} with \cite{CaHR}*{Theorem~6.1}.

\subsection{Algebraic dynamical systems}\label{subsec:ADS}
Suppose $(\gpt)$ is an algebraic dynamical system, that is, an action $\theta$ of a right LCM semigroup $P$ on a countable, discrete group $G$ by injective endomorphisms such that $pP \cap qP = rP$ implies $\theta_p(G) \cap \theta_q(G) = \theta_r(G)$. The semidirect product $\gxp$ is the semigroup $G \times P$ equipped with the operation $(g, p)(h, q) = (g\theta_p(h), pq)$. It is known that injectivity and the intersection condition, known as \emph{preservation of order}, are jointly equivalent to $\gxp$ being a right LCM semigroup, see \cite{BLS1}*{Proposition~8.2}.

In order to have an admissible semidirect product $\gxp$, we need to consider more constraints on $(\gpt)$:
\begin{enumerate}[(a)]
\item $(\gpt)$ is \emph{finite type}, that is, the index $N_p:= [G:\theta_p(G)]$ is finite for all $p \in P$.
\item There exists $p \in P$ for which $\theta_p$ is not an automorphism of $G$.
\item If $\theta_p$ is an automorphism of $G$, then $p \in P^*$.
\item If $p,q \in P$ have the same index, i.e.~$N_p=N_q$, then $p \in qP^*$. That is to say, there is an automorphism $\vartheta$ of $G$ such that $\theta_p$ and $\theta_q$ are conjugate via $\vartheta$, and $\vartheta = \theta_x$ for some $x \in P^*$.
\end{enumerate}
Conditions (a) and (b) are mild assumptions. Also it is natural to assume that $p$ is invertible in $P$ if $\theta_p$ is an automorphism of $G$. For otherwise, we can replace $P$ by the acting semigroup of endomorphisms of $G$. But (d) is significantly more complex, and we will see that it incorporates deep structural consequences for $\gxp$.

\begin{proposition}\label{prop:algebraic dynamical system}
Let $(\gpt)$ be an algebraic dynamical system satisfying the above conditions (a)--(d). Then $S:=\gxp$ has the following properties:
\begin{enumerate}[(i)]
\item $S$ is admissible, and $\beta_c=1$ if $\text{Irr}(P/_\sim)$ is finite.
\item The action $\alpha$ is faithful if and only if $\bigcap_{(h,q) \in S} h\theta_q(G)h^{-1} = \{1\} $, and
\[\begin{array}{c}\bigcap_{(h,q) \in S} h\theta_q(G)\theta_p(h)^{-1} = \emptyset \text{ for all }p \in P^*\setminus\{1\}.\end{array}\]
\item The action $\alpha$ is almost free if and only if $P^*$ is trivial and the set $\{ (g,p) \in S \mid h \in g\theta_p(G)g^{-1}\}$ is finite for all $h \in G\setminus\{1\}$.
\end{enumerate}
\end{proposition}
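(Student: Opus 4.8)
The plan is to reduce all three assertions to statements about the semidirect product structure, using the fact established in \cite{BLS1}*{Proposition~8.2} that $S = \gxp$ is right LCM under conditions implied by (a)--(d), and then to identify $S_c$, $S_{ci}$, and the action $\alpha$ concretely. First I would show that the core subsemigroup is $S_c = \{(g,p) \in S \mid \theta_p \text{ is an automorphism of } G\}$; by condition (c) this equals $G \rtimes_\theta P^*$, and by (b) it is a proper subsemigroup. The containment $S_c \supseteq G \rtimes_\theta P^*$ is immediate since invertibility of $\theta_p$ forces $(g,p)S \cap (h,q)S \neq \emptyset$ for all $(h,q)$; the reverse containment uses preservation of order together with finite type to see that if $\theta_p$ is not an automorphism then $(g,p)$ fails to meet some principal right ideal. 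Then $N\colon S \to \N^\times$, $N_{(g,p)} = N_p = [G:\theta_p(G)]$, is a homomorphism by multiplicativity of the index along $\theta_p \circ \theta_q = \theta_{pq}$, and $\ker N = S_c$ exactly as in Proposition~\ref{prop:NisaHMofRLCMs}~(i). Condition (d) is precisely what makes the core relation $\sim$ on $S$ descend to $P/_\sim$ with $[(g,p)] = [(h,q)]$ iff $p \sim q$ in $P$, and what forces $|N^{-1}(n)/_\sim| = n$: a transversal of $N^{-1}(n)/_\sim$ is given by a set of coset representatives for $G/\theta_p(G)$ with $p$ a fixed representative of $\{q : N_q = n\}/_\sim$, using that the left ideals $g\theta_p(G)$, $g$ ranging over coset representatives, partition $G$ and hence assemble into an accurate foundation set. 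This gives \ref{cond:A3}. Conditions \ref{cond:A1} and \ref{cond:A2} follow from the corresponding facts for $P$ (which holds since $P$ is right LCM with $P_{ci}$ automatically $\cap$-closed in the cases at hand) lifted through the semidirect product, and \ref{cond:A4} is the hypothesis on $\text{Irr}(P/_\sim)$; this proves (i), with $\beta_c = 1$ when $\text{Irr}(N(S))$ is finite by the product formula for $\zeta_S$.

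For (ii) and (iii), the key computation is to make the action $\alpha\colon S_c \curvearrowright S/_\sim$ explicit. Identifying $S/_\sim$ with pairs $([g]_{\theta_p(G)}, [p])$ — a coset of $\theta_p(G)$ in $G$ together with the class of $p$ — left multiplication by $(h,q) \in S_c$ (so $\theta_q \in \Aut G$) sends $(g,p) \mapsto (h\theta_q(g), qp)$, and on classes this is $([g]_{\theta_p(G)},[p]) \mapsto ([h\theta_q(g)]_{\theta_{qp}(G)}, [qp])$. Since $\theta_q$ is an automorphism with $\theta_{qp}(G) = \theta_q(\theta_p(G))$, the coset part transforms as $g\theta_p(G) \mapsto h\theta_q(g)\theta_q(\theta_p(G)) = h\theta_q(g\theta_p(G))$, i.e.\ essentially conjugation-twisted translation by $h$ composed with $\theta_q$. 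Faithfulness of $\alpha$ then means: for $(h,q) \neq (h',q')$ in $S_c$ there is some $(g,p)$ whose class is moved differently. Working this out componentwise — the $[qp]$ component distinguishes $q \neq q'$ inside $P^*$, while for $q = q'$ one needs $h\theta_q(g\theta_p(G)) \neq h'\theta_q(g\theta_p(G))$ for some $(g,p)$, equivalently $h^{-1}h' \notin g\theta_p(G)g^{-1}$ for some $(g,p)$ — yields, after taking $p = 1$ where possible and intersecting over all data, the two displayed conditions: $\bigcap_{(h,q)\in S} h\theta_q(G)h^{-1} = \{1\}$ rules out nontrivial $(1,1)$-type ambiguities, and the emptiness of $\bigcap_{(h,q)\in S} h\theta_q(G)\theta_p(h)^{-1}$ for $p \in P^* \setminus \{1\}$ handles the case $q \neq q'$ with $q^{-1}q' = p$. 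For (iii), almost freeness is the requirement that $\text{Fix}(\alpha_{(h,q)}^{\phantom{1}}\alpha_{(h',q')}^{-1})$ is finite for all distinct pairs; as in Remark~\ref{rem:strong minimality} this first forces $P^* = \{1\}$ (otherwise a nontrivial $p \in P^*$ of finite order or generating a totally-ordered piece would fix infinitely many classes), and once $S_c = G$ the fixed-point set of $\alpha_h$ (for $h \in G\setminus\{1\}$, acting by $g\theta_p(G) \mapsto hg\theta_p(G)$ on the $p$-layer) consists of those $(g,p)$ with $h \in g\theta_p(G)g^{-1}$, so finiteness of this set over all layers is exactly the stated condition; combined with faithfulness (which in the $P^* = \{1\}$ case reduces to $\bigcap h\theta_q(G)h^{-1} = \{1\}$) one gets the characterisation.

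The main obstacle I anticipate is the bookkeeping in identifying $S_c$, $S_{ci}$, and the transversals of $N^{-1}(n)/_\sim$ correctly — in particular verifying that condition (d) is exactly the right strength to force \ref{cond:A3a} and the accurate-foundation-set property \ref{cond:A3b}, rather than something slightly weaker or stronger. The subtlety is that $N^{-1}(n)/_\sim$ could a priori be smaller than $n$ if two $\sim$-inequivalent pairs over the same $p$ collapse, or larger than $n$ if elements over genuinely different $p$'s with equal index fail to be $\sim$-equivalent; condition (d) is precisely what prevents the latter, while left cancellation and the coset decomposition prevent the former. A secondary delicate point is the direction of the iff's in (ii) and (iii): showing that the displayed conditions are \emph{necessary} requires exhibiting, from a violation, an explicit pair of elements of $S_c$ with $\alpha_s = \alpha_t$ (resp.\ infinite fixed-point set), which means carefully choosing the witness $(g,p)$ — typically $p = 1$ suffices but one must confirm the action on the $[p]$-component does not already separate the two elements for extraneous reasons. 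I would also want to double-check that the reduction philosophy of Section~\ref{subsec:ZSProducts} does \emph{not} apply verbatim here (the remark in the text that ``the case of algebraic dynamical systems requires different considerations'' signals that $\gxp$ need not be a Zappa--Sz\'ep product of the relevant shape), so the argument must be run intrinsically on $\gxp$ rather than by citing Corollary~\ref{cor:reduction from UxA to U}.
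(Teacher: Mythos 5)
Your identifications of $S_c = G\rtimes_\theta P^*$, of the generalised scale $N_{(g,p)}=N_p$, and your overall strategy for (ii) and (iii) are in line with the paper, but there is a genuine gap in part (i): you never prove \ref{cond:A4}. You write that ``\ref{cond:A4} is the hypothesis on $\text{Irr}(P/_\sim)$'', but \ref{cond:A4} --- that $N(S)$ is the \emph{free} abelian monoid on $\text{Irr}(N(S))$ --- is not among the hypotheses (a)--(d); only the \emph{finiteness} of $\text{Irr}(P/_\sim)$ is assumed, and only for the subsidiary claim $\beta_c=1$. The paper has to work for this: condition (d) gives $P^*p\subset pP^*$, so $p\sim' q:\Leftrightarrow p\in qP^*$ is a congruence on $P$; the quotient $\overline P$ is abelian since $N_{pq}=N_{qp}$ and (d) then forces $qp\in pqP^*$; and freeness of $\overline P$ (equivalently of $N(S)$) on its irreducibles is extracted from unique factorisation in $\N^\times$ together with (d) once more ($N_p=N_q\Rightarrow p\in qP^*$). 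Without \ref{cond:A4} the semigroup is not admissible and Lemma~\ref{lem:prod of d_n for irreducibles} and Remark~\ref{rem:product formula for zeta function-gen} are unavailable. Two smaller issues in (i): your mechanism for \ref{cond:A1} and \ref{cond:A2} (``lifted through the semidirect product from the corresponding facts for $P$'') is not the operative one --- the correct and much simpler observation is that $S_c=S^*$ here, so $S_{ci}=S\setminus S^*$ and both conditions are automatic; and your verification of \ref{cond:A3b} only checks that a coset transversal over a \emph{fixed} $p$ is accurate and covers the $p$-layer, whereas a foundation set must meet $(h,q)S$ for \emph{every} $q\in P$, which requires directedness of $P$ --- again a consequence of (d) via $pqx=qp\in pP\cap qP$.

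In (ii) there is also an internal inconsistency: you claim the $[qp]$-component distinguishes $q\neq q'$ in $P^*$, but by (d) one has $qp\in q'pP^*$ for all $q,q'\in P^*$, so the $P$-component never separates two elements of $S_c$ acting on $S/_\sim$; that is precisely why the second displayed condition (emptiness of $\bigcap_{(h,q)}h\theta_q(G)\theta_p(h)^{-1}$ for $p\in P^*\setminus\{1\}$) is needed. You do end up invoking that condition for the case $q\neq q'$, so the stated characterisation survives, but the two sentences contradict each other. Part (iii) and your closing observation that the Zappa-Sz\'{e}p reduction of Section~\ref{subsec:ZSProducts} cannot be cited verbatim are consistent with the paper's argument, which likewise works intrinsically on $\gxp$ and uses Remark~\ref{rem:strong minimality} to reduce almost freeness to fixed-point sets.
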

\begin{proof}
For (i), note that $S_c = S^* = G \rtimes P^*$, see \cite{Sta3}*{Example~3.2}. So that $S_{ci} = S\setminus S^*$ and therefore $S$ satisfies \ref{cond:A1} and \ref{cond:A2}. Also note that
\begin{align}\label{equ1:algebraic dynamical system}
(g,p) \sim (h,q) \Leftrightarrow (g,p)S=(h,q)S \Leftrightarrow p \in qP^* \text{ and } g^{-1}h \in \theta_p(G) \ (=\theta_q(G)).
\end{align}
Then the natural candidate for a homomorphism $N\colon S \to \N^\times$ is given by $N(g,p):= N_p$. Properties (a) and (b) imply that $N$ is a nontrivial homomorphism from $S$ to $\N^\times$. The condition \ref{cond:A3a} follows from (d) together with \eqref{equ1:algebraic dynamical system}. Since $(\gpt)$ is finite type, \ref{cond:A3b} holds if and only if $P$ is directed with regards to $p \geq q :\Leftrightarrow p \in qP$, see \cite{Sta3}*{Example~3.2}. This follows immediately from (d): Given $p,q \in P$, we have $N_{pq} = N_pN_q= N_{qp}$ so that $pqx=qp \in pP \cap qP$ for a suitable $x \in P^*$. Thus $N$ is a generalised scale on $S$.

Now we are looking at \ref{cond:A4}. First note that (d) implies $P^*p \subset pP^*$ for all $p \in P$. Thus the restriction of $\sim$ to $P$ given by $p \sim' q :\Leftrightarrow p \in qP^*$ defines a congruence on $P$. If $\overline{P} := P/_{\sim'}$ denotes the resulting monoid, then $\overline{P}$ is necessarily abelian as $N_{pq} = N_{qp}$. The monoid $\overline{P}$ is generated by $\{ [p] \in \overline{P} \mid N_p \in \text{Irr}(N(S)) \}$. This set is minimal as a generating set for $\overline{P}$ and coincides with the irreducible elements of $\overline{P}$. So to prove \ref{cond:A4}, it suffices to show that $\overline{P}$ is in fact freely generated by $\text{Irr}(\overline{P})$.

To see this, let $\prod_{1 \leq i \leq k} [p_i]^{m_i} = \prod_{1 \leq j \leq \ell} [q_j]^{n_j}$ with $[p_i],[q_j] \in \text{Irr}(\overline{P}), m_i,n_j \in \N$, where we assume that the $[p_i]$ are mutually distinct, as are the $[q_j]$. Then we get a corresponding equation $\prod_{1 \leq i \leq k} N_{p_i}^{m_i} = \prod_{1 \leq j \leq \ell} N_{q_j}^{n_j}$ in $N(S)$. Since $N(S)$ has the unique factorization property (inherited from $\N^\times$), and $N_{p_i},N_{q_j} \in \text{Irr}(N(S))$ for all $i,j$, we get $k=\ell$ and a permutation $\sigma$ of $\{1,\ldots,k\}$ such that $N_{p_i} = N_{q_{\sigma(i)}}$ and $m_i=n_{\sigma(i)}$. Now $N_{p_i} = N_{q_{\sigma(i)}}$ forces $q_{\sigma(i)} \in p_iP^*$, so $[q_{\sigma(i)}] = [p_i]$. Thus $\overline{P}$ is the free abelian monoid in $\text{Irr}(\overline{P})$.

We clearly have $\beta_c = 1$ whenever $\text{Irr}(\overline{P})$ is finite, as these are in one-to-one correspondence with $\text{Irr}(N(\gxp))$ and $\zeta(\beta) = \prod_{n \in \text{Irr}(N(\gxp))} (1-n^{-(\beta-1)})^{-1}$, see Remark~\ref{rem:product formula for zeta function-gen}.

For (ii), since $S_c=S^*$ is a group, faithfulness of $\alpha$ amounts to the following: For every $(g,p) \in S^*\setminus\{1\}$ there exists $(h,q) \in S$ such that $h^{-1}g\theta_p(h) \notin \theta_q(G)$. This is equivalent to $\bigcap_{(h,q) \in S} h\theta_q(G)h^{-1} = \{1\}$ and $\bigcap_{(h,q) \in S} h\theta_q(G)\theta_p(h)^{-1} = \emptyset$ for all $p \in P^*\setminus\{1\}$.

For (iii), since $S_c = S^*=G\rtimes_\theta P^*$ is a group, by Remark~\ref{rem:strong minimality}, it suffices to look at the fixed points of the map $\alpha_{(h,q)}$ for each element $(h,q) \in S_c\setminus\{1\}$. Since $P^*p \subset pP^*$ for all $p \in P$, the equation \eqref{equ1:algebraic dynamical system} implies that an equivalence class $[(g,p)]\in S/_\sim$ is fixed by $\alpha_{(h,q)}$ if and only if $g^{-1}h\theta_q(g) \in \theta_p(G)$. Now suppose that $P^* \neq \{1\}$. Fix $q \in P^*\setminus\{1\}$ and take $h=1$. As $qp=pq'$ for some $q' \in P^*$, we get $\theta_q(\theta_p(G))=\theta_p(G)$ for all $p \in P$. Thus the element $[(g,p)]\in S/_\sim$ with $g \in \theta_p(G)$ is fixed by $\alpha_{(1,q)}$ for every $p \in P$. By assumption, $\overline{P}$ is infinite as there is at least one $p \in P$ for which $\theta_p$ is not an automorphism of $G$. This tells us that $(1,q)$ violates the almost freeness for the action $\alpha$. It follows that if $\alpha$ is almost free, then $P^*$ has to be trivial. Now suppose that $P^*=\{1\}$. Then \eqref{equ1:algebraic dynamical system} allows us to conclude that $\alpha$ is almost free if and only if the set $\{ (g,p) \in S \mid h \in g\theta_p(G)g^{-1}\}$ is finite for all $h \in G\setminus\{1\}$.
\end{proof}

\begin{remark}\label{rem:min conditions for ADS for simplicity of Q(S) vs. min cond. for uniqueness of KMS-states}
Let $(\gpt)$ be an algebraic dynamical system and $S=\gxp$. Then the conditions for $\alpha$ to be faithful are closely related to the conditions (2) and (3) in \cite{BS1}*{Corollary~4.10} which are needed for the simplicity of $\CQ(S)$. In addition to that, the requirement $P^*p \subset pP^*$ for all $p \in P$ coming from property (d) in the Proposition~\ref{prop:algebraic dynamical system} is needed there as well. This intimate relation between conditions for simplicity of $\CQ(S)$ and uniqueness of the $\kms_\beta$-state for $\beta \in [1,\beta_c]$ seems quite intriguing, and may very well deserve a thorough study. To the best of our knowledge, no explanation for this phenomenon is known so far.

We also note that the characterisation of almost freeness of $\alpha$ for the case of trivial $P^*$ we get here is a natural strengthening of the condition for faithfulness of $\alpha$.
\end{remark}

The next example describes a class of algebraic dynamical systems for which $\beta_c=1$, $S$ has finite propagation, and $\alpha$ is faithful but not almost free.

\begin{example}\label{ex:ADS min but not str min}
Let $H$ be a finite group with $\lvert H \rvert >2$. Then there exists a non-trivial subgroup $P_0 \subset \Aut(H)$. Consider the standard restricted wreath product $G := H \wr \N = \bigoplus_{n \in \N} H$. Each $p \in P_0$ corresponds to an automorphism of $G$ that commutes with the right shift $\Sigma$ on $G$, and we denote by $P := \langle \Sigma \rangle \times P_0$ the corresponding semigroup of endomorphisms of $G$. Then $P$ is lattice ordered, hence right LCM, and the natural action $\theta\colon P \curvearrowright G$ respects the order so that $(\gpt)$ forms an algebraic dynamical system for which we can consider the right LCM semigroup $S = \gxp$. We note the following features:
\begin{enumerate}[(a)]
\item $(\gpt)$ is of finite type because $H$ is finite.
\item $P^*p$ equals $pP^*$ for all $p \in P$ since the chosen automorphisms of $G$ commute with $\Sigma$.
\item The action $\alpha$ is faithful.
\item As $P_0$ is nontrivial, $\alpha$ is not almost free, see Proposition~\ref{prop:algebraic dynamical system}.
\item Take $\CT = \bigcup_{n \in \N} \CT_n$ with $\CT_n := \{ ((h_k)_{k \geq 0},\Sigma^n) \in S \mid h_k = 1_H \text{ for all } k > n\}$. This is a natural transversal for $S/_\sim$ that witnesses finite propagation (for all pairs $(s,t) \in S\times S$). Indeed, for $s=(g,p) \in S_c$ we get $\lvert C_s\rvert \leq \ell(g)+1 < \infty$, where $\ell(g)$ denotes the smallest $n \in \N$ with $h_m = 1_H$ for all $m>n$.
\item $\beta_c=1$ as $\text{Irr}(P/_\sim) = \{[\Sigma]\}$.
\end{enumerate}
Thus we conclude that $S$ is an admissible right LCM semigroup that has a unique $\kms_1$-state by virtue of Theorem~\ref{thm:KMS results-gen}~(2b), while the condition in Theorem~\ref{thm:KMS results-gen}~(2a) does not apply.
\end{example}

As a byproduct of our treatment of algebraic dynamical systems, we recover the results for a dilation matrix from \cite{LRR}:

\begin{example}[Dilation matrix]\label{ex:dilation matrix}
Let $d \in \N$ and $A \in M_d(\Z)$ an integer matrix with $|\det A| > 1$, and consider $(\gpt)=(\Z^d,\N,\theta)$ with $\theta_n(m) := A^n m$ for $n \in \N, m \in \Z^d$. Then $(\gpt)$ constitutes an algebraic dynamical system that satisfies (a)--(d) from Proposition~\ref{prop:algebraic dynamical system} as
\[N_1 = \text{coker} \theta_1 = \lvert \Z^d/\text{im}A \rvert = |\det A| \in \N\setminus\{1\}.\]
Also, we have $\beta_c=1$ because $\text{Irr}(N(\Z^d \rtimes_\theta \N)) = \{|\det A|\}$. Since $\Z^d$ is abelian, the action $\alpha$ is faithful if and only if $\bigcap_{n \in \N} A^n\Z^d = \{0\}$, see Proposition~\ref{prop:algebraic dynamical system}~(ii). As $P \cong \N$, we deduce that this is also equivalent to almost freeness of $\alpha$, see Proposition~\ref{prop:algebraic dynamical system}~(iii).
\end{example}

\begin{example}\label{ex:solenoid}
Let $A,B \subset \N^\times$ be non-empty disjoint sets such that $A\cup B$ consists of relatively prime numbers. Then the free abelian submonoid $P$ of $\N^\times$ generated by $A$ acts on the ring extension $G:= \Z[ 1/q \mid q \in B]$ via multiplication by injective group endomorphisms $\theta_p$ in an order preserving way, that is, $(G,P,\theta)$ is an algebraic dynamical system. We observe that $N_p = [G: pG] = p$ (as for $B=\emptyset$) so that the system is of finite type, and it is easy to check that the requirements (a)--(d) for admissibility of $S=G\rtimes_\theta P$ hold. It thus follows from Proposition~\ref{prop:algebraic dynamical system}~(i) that $S$ is admissible and $\beta_c=1$ if $A$ is finite. Since $G$ is abelian and $P^*$ is trivial, parts (ii) and (iii) imply that faithfulness and almost freeness of $\alpha$ are both equivalent to $A$ being non-empty. The behaviour of the $\kms$-state structure on this semigroup resembles very much the one of $\N\rtimes \N^\times$, except for the fact that the parametrising space corresponds to Borel probability measures on the solenoid $\IT[1/q \mid q \in B]$ instead of the torus $\IT$.
\end{example}

\begin{example}\label{ex:polynomial ring}
For a finite field $F$, consider the polynomial ring $F[t]$ as an additive group $G$. Via multiplication, every $f \in F[t]\setminus F$ gives rise to an injective, non-surjective group endomorphism $\theta_f$ of $F[t]$, whose index is $N_f = \lvert F\rvert^{\text{deg}~f}$. For every such $f$, the semigroup $S:= F[t] \rtimes_f \N$ is an admissible right LCM semigroup. By Proposition~\ref{prop:algebraic dynamical system}, we further have $\beta_c=1$, and $\alpha$ is almost free, hence faithful. In the simplest case of $f=t$, the semigroup $F[t]\rtimes_f \N$ is canonically isomorphic to $(F \wr \N) \rtimes \N$ from Example~\ref{ex:ADS min but not str min}.
\end{example}

\section{Algebraic constraints}\label{sec:constraints}
This section presents some considerations on existence of $\kms_\beta$ and ground states, and in particular identifies some necessary conditions for existence. Throughout this section, we shall assume that $S$ is a right LCM semigroup that admits a generalised scale $N\colon S \to \N^\times$, and denote by $\sigma$ the time evolution on $C^*(S)$ given by $\sigma_x(v_s)=N_s^{ix}v_s^{\phantom{x}}$ for all $x\in \R$ and $s\in S$.

\begin{proposition}\label{prop:no KMS below 1-gen}
There are no $\kms_\beta$-states on $C^*(S)$ for $\beta < 1$.
\end{proposition}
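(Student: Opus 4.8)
The plan is to run the classical Laca–Raeburn computation for the Toeplitz algebra of $\N\rtimes\N^\times$ in the abstract setting, using only the defining relations of $C^*(S)$ and the accuracy data packaged into a generalised scale. First I would fix $\beta$ with $0<\beta<1$ (recall that $\kms_\beta$-states are only defined for positive $\beta$) and suppose, towards a contradiction, that $\phi$ is a $\kms_\beta$-state on $C^*(S)$. Each generator $v_s$ is analytic, since $z\mapsto N_s^{iz}v_s$ is an entire $C^*(S)$-valued function restricting to $x\mapsto\sigma_x(v_s)$ on $\R$; in particular $\sigma_{i\beta}(v_s)=N_s^{-\beta}v_s$. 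Applying the $\kms_\beta$-condition \eqref{eq:KMScond} with $a=b=v_s$ and using that $v_s$ is an isometry gives
\[
\phi(e_{sS}) = \phi(v_s^{\phantom{*}}v_s^*) = \phi\big(v_s^*\sigma_{i\beta}(v_s)\big) = N_s^{-\beta}\,\phi(v_s^*v_s^{\phantom{*}}) = N_s^{-\beta}
\]
for every $s\in S$.

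Next I would exploit accuracy. Since $N$ is nontrivial, there is $n\in N(S)$ with $n>1$. By \ref{cond:A3a} the set $N^{-1}(n)/_\sim$ has exactly $n$ elements, so any transversal $F$ for it satisfies $\lvert F\rvert=n$, and by \ref{cond:A3b} this $F$ is an accurate foundation set for $S$; note $F\subset N^{-1}(n)$, so $N_f=n$ for all $f\in F$. Accuracy means $f\perp f'$ whenever $f,f'\in F$ with $f\neq f'$, hence $v_f^*v_{f'}^{\phantom{*}}=0$, so the range projections $\{e_{fS}\}_{f\in F}$ are mutually orthogonal and $\sum_{f\in F}e_{fS}\leq 1$ in $C^*(S)$. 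Applying the state $\phi$ together with the displayed identity yields
\[
1 \geq \sum_{f\in F}\phi(e_{fS}) = \sum_{f\in F}N_f^{-\beta} = n\cdot n^{-\beta} = n^{1-\beta}.
\]
Since $n>1$ and $1-\beta>0$ we get $n^{1-\beta}>1$, a contradiction. Therefore there is no $\kms_\beta$-state for $\beta<1$.

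There is no genuine obstacle in this argument: it reduces to one inequality obtained from orthogonality of a single family of range projections coming from a transversal, and the only inputs beyond the $C^*(S)$-relations are the cardinality and accuracy clauses \ref{cond:A3a} and \ref{cond:A3b} in the definition of a generalised scale (together with nontriviality of $N$). The one point that deserves an explicit sentence is the verification that $v_s$ is analytic with $\sigma_{i\beta}(v_s)=N_s^{-\beta}v_s$, which is immediate from the entirety of $z\mapsto N_s^{iz}v_s$.
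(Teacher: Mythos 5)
Your argument is correct and is essentially the paper's own proof: both derive $\phi(e_{fS})=N_f^{-\beta}$ from the $\kms_\beta$-condition, sum over an accurate foundation set $F\subset N^{-1}(n)$ with $\lvert F\rvert=n=N_f$ (which exists by \ref{cond:A3a} and \ref{cond:A3b}), and conclude $1\geq n^{1-\beta}$, forcing $\beta\geq 1$. The only slip is cosmetic: in the phrase ``with $a=b=v_s$'' you mean $a=v_s$, $b=v_s^*$, as your displayed computation correctly shows.
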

\begin{proof}
Let $\beta\in\R$ and suppose that $\phi$ is a $\kms_\beta$-state for $C^*(S)$. By Proposition~\ref{prop:NisaHMofRLCMs}~(iii), there exists an accurate foundation set $F$ for $S$ with $\lvert F \rvert = n = N_f$ for all $f \in F$ and some $n > 1$. The $\kms_\beta$-condition gives that $\phi(v_f^{\phantom{*}}v_f^*)=N_f^{-\beta}\phi(v_f^*v_f^{\phantom{*}})= n^{-\beta}$. Hence, $ 1 = \phi(1) \geq \sum_{f \in F} \phi(e_{fS}) = n^{1-\beta}$, so necessarily $\beta \geq 1$.
\end{proof}

\begin{proposition}\label{prop:ground states on C*(S)-gen}
A state $\phi$ on $C^*(S)$ is a ground state if and only if $\phi(v_s^{\phantom{*}}v_t^*) \neq 0$ for $s,t\in S$ implies $s,t \in S_c$. In particular, a ground state on $C^*(S)$ vanishes outside $\varphi(C^*(S_c))$.
\end{proposition}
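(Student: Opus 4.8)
The plan is to prove both implications by pushing everything down, via the defining relations of $C^*(S)$, to the behaviour of $\phi$ on the canonical spanning family $\{v_s^{\phantom{*}}v_t^*\}$, and to use throughout that $S_c=\ker N$ by Proposition~\ref{prop:NisaHMofRLCMs}~(i), so that $s\in S_c$ is equivalent to $N_s=1$ (and $s\notin S_c$ to $N_s\ge 2$).

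\emph{Forward direction.} Suppose $\phi$ is a ground state. Both $v_s$ and $v_s^*$ lie in the dense family of analytic elements, and $\sigma_z(v_s^*)=N_s^{-iz}v_s^*$, so the ground state condition applied to $a=v_s,\ b=v_s^*$ says that $z\mapsto \phi(v_s^{\phantom{*}}\sigma_z(v_s^*))=N_s^{-iz}\phi(e_{sS})$ is bounded on the upper half-plane. Its modulus equals $N_s^{\Im z}\,\phi(e_{sS})$, which tends to $\infty$ as $\Im z\to\infty$ unless $\phi(e_{sS})=0$ or $N_s=1$. Hence $\phi(e_{sS})=0$ for every $s\in S\setminus S_c$. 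Now the Cauchy--Schwarz inequality for the state $\phi$, applied with $x=v_s^*$ and $y=v_t^*$, gives $|\phi(v_s^{\phantom{*}}v_t^*)|^2=|\phi(x^*y)|^2\le \phi(v_s^{\phantom{*}}v_s^*)\,\phi(v_t^{\phantom{*}}v_t^*)=\phi(e_{sS})\,\phi(e_{tS})$, so $\phi(v_s^{\phantom{*}}v_t^*)\neq 0$ forces $s,t\in S_c$. Since any word in the generators of $C^*(S_c)$ and their adjoints reduces via the right LCM relations to the form $w_a^{\phantom{*}}w_b^*$, we have $\varphi(C^*(S_c))=\clsp\{v_a^{\phantom{*}}v_b^*\mid a,b\in S_c\}$; so by continuity and linearity $\phi$ annihilates $\clsp\{v_s^{\phantom{*}}v_t^*\mid s\notin S_c\text{ or }t\notin S_c\}$, which is exactly what the ``in particular'' asserts.

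\emph{Converse.} Assume $\phi(v_s^{\phantom{*}}v_t^*)\neq 0$ implies $s,t\in S_c$. By the standard reduction for ground states --- the analogue, for the boundedness condition, of the remark in Section~\ref{subsec:KMSstates} on checking the $\kms$-condition on a $\sigma$-invariant spanning family of analytic elements, applied here to the $\ast$-closed, $\sigma$-invariant family $\{v_s^{\phantom{*}}v_t^*\}$ --- it suffices to show that $z\mapsto\phi(a\sigma_z(b))$ is bounded on the upper half-plane when $a$ and $b$ are finite linear combinations of elements $v_p^{\phantom{*}}v_q^*$. Using $v_q^*v_s^{\phantom{*}}=v_{q'}^{\phantom{*}}v_{s'}^*$ when $qS\cap sS=qq'S=ss'S$, and $v_q^*v_s^{\phantom{*}}=0$ when $q\perp s$, each product $v_p^{\phantom{*}}v_q^*v_s^{\phantom{*}}v_t^*$ equals $v_{pq'}^{\phantom{*}}v_{ts'}^*$ or $0$. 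A surviving term requires $\phi(v_{pq'}^{\phantom{*}}v_{ts'}^*)\neq 0$, hence $pq',ts'\in S_c=\ker N$, and in particular $N_t=1$; meanwhile $\sigma_z$ multiplies $v_s^{\phantom{*}}v_t^*$ by $(N_s/N_t)^{iz}=N_s^{iz}$, whose modulus on the upper half-plane is $N_s^{-\Im z}\le 1$. Thus each surviving term of $\phi(a\sigma_z(b))$ is bounded in modulus by the product of the corresponding two coefficients, and the finite sum is bounded on the upper half-plane; so $\phi$ is a ground state.

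\emph{Main obstacle.} The only genuinely delicate point is the reduction in the converse from arbitrary analytic elements to the spanning family; once this is granted --- it is standard, exactly as in the $\kms$ case --- the rest is a direct computation with the relations defining $C^*(S)$, the identity $S_c=\ker N$, and the trivial bound $N^{-\Im z}\le 1$ on the upper half-plane. In the forward direction the entire substance is the single observation that boundedness of $z\mapsto\phi(v_s^{\phantom{*}}\sigma_z(v_s^*))$ forces $\phi(e_{sS})=0$ whenever $N_s>1$, after which Cauchy--Schwarz finishes the argument.
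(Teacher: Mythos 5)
Your proof is correct and follows essentially the same route as the paper's: both arguments work on the spanning family $\{v_s^{\phantom{*}}v_t^*\}$, extract the modulus $\bigl(N_s/N_t\bigr)^{-\Im z}$ of the analytically continued dynamics, use $S_c=\ker N$ from Proposition~\ref{prop:NisaHMofRLCMs}~(i), and invoke Cauchy--Schwarz; the only (cosmetic) difference is that you first isolate $\phi(e_{sS})=0$ for $s\notin S_c$ and expand the product $v_p^{\phantom{*}}v_q^*v_s^{\phantom{*}}v_t^*$ explicitly, where the paper keeps the factors $\phi(y_2^{\phantom{*}}y_2^*)\phi(y_1^*y_1^{\phantom{*}})$ intact.
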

\begin{proof}
Let $\phi$ be a state on $C^*(S)$ and $y_i := v_{s_i}^{\phantom{*}}v_{t_i}^*$, with $s_i,t_i\in S$ for $i=1,2$, be generic elements of the spanning family for $C^*(S)$. The expression
\begin{equation}\label{eq:ground states proof-gen}
\begin{array}{c} \phi(y_2\sigma_{x + iy}(y_1)) = \left(\frac{N_{s_1}}{N_{t_1}}\right)^{-y +ix} \phi(y_2y_1) \end{array}
\end{equation}
is bounded on $\{x+iy \mid x \in \R, y > 0\}$ if and only if either $N_{s_1} \geq N_{t_1}$ or $N_{s_1} < N_{t_1}$ and $\phi(y_2y_1)=0$. Now assume $\phi$ is a ground state and suppose $\phi(v_s^{\phantom{*}}v_t^*) \neq 0$ for some $s,t \in S$. Choosing $s_2=s, t_1=t, s_1=t_2=1$ in \eqref{eq:ground states proof-gen}, we get $N_{t_1} \leq N_{s_1}=1$. Thus $N_t=1$, and hence $t \in S_c$ by Proposition~\ref{prop:NisaHMofRLCMs}~(i). Taking adjoints shows that $s \in S_c$ as well.

Conversely, let $\phi$ be a state on $C^*(S)$ for which $\phi(v_s^{\phantom{*}}v_t^*) \neq 0 $ implies $s,t \in S_c$. By the Cauchy-Schwartz inequality for $y_1$ and $y_2$ we have
\[\begin{array}{l}
|\phi(y_2\sigma_{x+iy}(y_1))|^2 = \left(\frac{N_{s_1}}{N_{t_1}}\right)^{-2y} |\phi(y_2y_1)|^2 \leq \left(\frac{N_{s_1}}{N_{t_1}}\right)^{-2y} \phi(y_2^{\phantom{*}}y_2^*)\phi(y_1^*y_1^{\phantom{*}}).
\end{array}\]
As $y_1^*y_1^{\phantom{*}} = e_{t_1S}$ and $y_2^{\phantom{*}}y_2^* = e_{s_2S}$, the expression on the right-hand side of the inequality vanishes unless $t_1,s_2 \in S_c$. So if it does not vanish we must have $N_{t_1}=1$. Therefore using that $N_{s_1}\geq 1$, it follows that $|\phi(y_2\sigma_{x+iy}(y_1))|^2 \leq 1$ for $y>0$. Thus $\phi$ is a ground state on $C^*(S)$.
\end{proof}

\begin{remark}\label{rem:range proj behave like 1 for traces}
For the purposes of the next result we need the following observation: if $\tau$ is a trace on $C^*(S_c)$, then $\tau(yw_a^{\phantom{*}}w_a^*z) = \tau(yz)$ for all $y,z \in C^*(S_c)$ and $a \in S_c$. The reason is that the image of the isometry $w_a$ in the GNS-representation for $\tau$ is a unitary.
\end{remark}

\begin{proposition}\label{prop:algebraic characterization of KMS states-gen}
For $\beta \in [1,\infty)$, consider the following conditions:
\begin{enumerate}[(i)]
\item $\phi$ is a $\kms_\beta$-state.
\item $\phi$ is a state on $C^*(S)$ such that $\phi \circ \varphi$ is a normalised trace on $C^*(S_c)$ and for all $s,t \in S$
\begin{equation}\label{eq:alg char of KMS states-gen}
\phi(v_s^{\phantom{*}}v_t^*) = \begin{cases}
0 &\text{if } s \not\sim t, \\
N_s^{-\beta}\phi \circ \varphi(w_{r'}^{\phantom{*}}w_{r\phantom{'}}^*) &\text{if } sr=tr' \text{ with } r,r' \in S_c.\end{cases}
\end{equation}
\end{enumerate}
Then (i) implies (ii). Assume moreover that $S$ is core factorable and $S_{ci}\subset S$ is $\cap$-closed, and fix maps $c$ and $i$ as in Lemma~\ref{lem:maps i and c}. If $\phi$ is a state on $C^*(S)$ such that $\phi \circ \varphi$ is a trace, then \eqref{eq:alg char of KMS states-gen} is equivalent to
\begin{equation}\label{eq:alg char of KMS states-gen with i and c}
\phi(v_s^{\phantom{*}}v_t^*) = \begin{cases}
0 &\text{if } i(s) \neq i(t), \\
N_s^{-\beta}\phi \circ \varphi(w_{c(s)}^{\phantom{*}}w_{c(t)}^*) &\text{if } i(s)=i(t), \end{cases}
\end{equation}
and (ii) implies (i).
\end{proposition}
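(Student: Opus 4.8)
\emph{Structure of the argument.} The statement comprises three claims: (i)$\Rightarrow$(ii) in general; the equivalence of \eqref{eq:alg char of KMS states-gen} and \eqref{eq:alg char of KMS states-gen with i and c} under the standing extra hypotheses; and (ii)$\Rightarrow$(i) under those hypotheses. The plan is to treat them in that order.

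\emph{(i)$\Rightarrow$(ii).} Since $S_c=\ker N$, each $\varphi(w_a)=v_a$ is fixed by $\sigma$, so $\varphi(C^*(S_c))$ consists of $\sigma$-fixed analytic elements and $\sigma_{i\beta}\circ\varphi=\varphi$; feeding $\varphi(x),\varphi(y)$ into the $\kms_\beta$-condition gives $\phi\circ\varphi(xy)=\phi\circ\varphi(yx)$, and $\phi\circ\varphi$ is a state because $\varphi$ is unital, hence a normalised trace. For the formula \eqref{eq:alg char of KMS states-gen}: $\sigma$-invariance of $\phi$ forces $\phi(v_s^{\phantom{*}}v_t^*)=0$ whenever $N_s\neq N_t$; if $N_s=N_t$ and $s\not\sim t$ then $s\perp t$ by Proposition~\ref{prop:NisaHMofRLCMs}~(ii), so $v_t^*v_s^{\phantom{*}}=0$ and the $\kms_\beta$-condition gives $\phi(v_s^{\phantom{*}}v_t^*)=N_s^{-\beta}\phi(v_t^*v_s^{\phantom{*}})=0$. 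If $s\sim t$, choose $r,r'\in S_c$ with $sr=tr'$; then $v_t^*v_s^{\phantom{*}}v_r^{\phantom{*}}=v_{r'}$, so $v_t^*v_s^{\phantom{*}}=v_{r'}^{\phantom{*}}v_r^*+v_t^*v_s^{\phantom{*}}(1-e_{rS})$, and since $N_r=1$ the $\kms_\beta$-condition gives $\phi(e_{rS})=\phi(v_r^*v_r^{\phantom{*}})=1$, so Cauchy--Schwarz kills the second summand. Hence $\phi(v_s^{\phantom{*}}v_t^*)=N_s^{-\beta}\phi\circ\varphi(w_{r'}^{\phantom{*}}w_r^*)$ for every such $r,r'$.

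\emph{Equivalence of the two formulas} (with $S$ core factorable, $S_{ci}\subset S$ $\cap$-closed, $\tau:=\phi\circ\varphi$ a trace, and $i,c$ fixed). Since $\CT$ is a transversal for $S/_\sim$, one has $s\sim t\iff i(s)=i(t)$, so the vanishing cases match. If $i(s)=i(t)=:u$ and $sr=tr'$ with $r,r'\in S_c$, left cancellation of $u$ in $uc(s)r=uc(t)r'$ gives $c(s)r=c(t)r'$ in $S_c$, hence $w_{c(s)}^{\phantom{*}}w_r^{\phantom{*}}=w_{c(t)}^{\phantom{*}}w_{r'}^{\phantom{*}}$; using that $\tau$ is a trace together with Remark~\ref{rem:range proj behave like 1 for traces} to replace $w_r^{\phantom{*}}w_r^*$ by $1$ inside the trace,
\[
\tau(w_{r'}^{\phantom{*}}w_r^*)=\tau\big(w_{c(t)}^*w_{c(s)}^{\phantom{*}}w_r^{\phantom{*}}w_r^*\big)=\tau\big(w_{c(s)}^{\phantom{*}}w_r^{\phantom{*}}w_r^*w_{c(t)}^*\big)=\tau(w_{c(s)}^{\phantom{*}}w_{c(t)}^*).
\]
This yields both implications, and in particular shows \eqref{eq:alg char of KMS states-gen} is consistent (independent of the choice of $r,r'$). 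For the rest I use whichever of the two forms is more convenient.

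\emph{(ii)$\Rightarrow$(i).} First reduce to verifying the single family of identities $\phi(v_{mp}^{\phantom{*}}v_q^*)=N_m^{-\beta}\phi(v_p^{\phantom{*}}v_q^*v_m^{\phantom{*}})$ for all $m,p,q\in S$. Indeed $\CA_0:=\operatorname{span}\{v_s^{\phantom{*}}v_t^*\mid s,t\in S\}$ is a $\sigma$-invariant dense $*$-subalgebra (the defining relation collapses $v_s^{\phantom{*}}v_t^*v_p^{\phantom{*}}v_q^*$ to $v_{st'}^{\phantom{*}}v_{qp'}^*$ or $0$), on which $\sigma_{i\beta}$ is multiplicative; so $\{a\in\CA_0\mid\phi(ab)=\phi(b\sigma_{i\beta}(a))\ \forall b\in\CA_0\}$ is a subalgebra, and it suffices that it contain all $v_m$ and all $v_m^*$. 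The $v_m$-case is the displayed family; the $v_m^*$-case reduces to it upon taking adjoints and using $\overline{\phi(\cdot)}=\phi((\cdot)^*)$. Then $\phi$ is $\kms_\beta$ by the density criterion recalled in Section~\ref{subsec:KMSstates}. To prove the displayed identity, rewrite $v_q^*v_m^{\phantom{*}}=v_{\tilde q}^{\phantom{*}}v_{\tilde m}^*$ via the defining relation (with $q\tilde q=m\tilde m$ a right LCM, or $0$ if $q\perp m$), so it becomes $\phi(v_{mp}^{\phantom{*}}v_q^*)=N_m^{-\beta}\phi(v_{p\tilde q}^{\phantom{*}}v_{\tilde m}^*)$. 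When $mp\not\sim q$ (in particular when $q\perp m$) both sides vanish, since $mp\sim q$ forces $mS\cap qS\neq\emptyset$ and, as below, $p\tilde q\sim\tilde m$. When $mp\sim q$, write $mpa=qb$ with $a,b\in S_c$; the ideal description of $S_c$ then gives $\tilde q\in S_c$ (so $N_{p\tilde q}=N_p$ and the numerical prefactors agree) and $p\tilde q\sim\tilde m$, and expanding both sides via \eqref{eq:alg char of KMS states-gen} reduces the equality to a cyclic-invariance manipulation of $\tau$ on $C^*(S_c)$ of the same flavour as in the previous paragraph. The hard part is precisely this last step: organizing the right-LCM bookkeeping to extract that $\tilde q\in S_c$ and $p\tilde q\sim\tilde m$, and then matching the traces on $C^*(S_c)$ — this is where core factorability, $\cap$-closedness (through Lemma~\ref{lem:maps i and c} and the consistency of \eqref{eq:alg char of KMS states-gen}), and the trace property of $\phi\circ\varphi$ together with Remark~\ref{rem:range proj behave like 1 for traces} are all genuinely needed.
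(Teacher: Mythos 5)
Your treatment of the first two claims is sound and in places cleaner than the paper's. For (i)$\Rightarrow$(ii) you handle an arbitrary pair $r,r'$ with $sr=tr'$ directly via $v_t^*v_s^{\phantom{*}}=v_{r'}^{\phantom{*}}v_r^*+v_t^*v_s^{\phantom{*}}(1-e_{rS})$ and a Cauchy--Schwarz estimate against $\phi(1-e_{rS})=0$, whereas the paper first treats the right LCM case and then reduces the general case using Remark~\ref{rem:range proj behave like 1 for traces}; both work. Your derivation of the equivalence of \eqref{eq:alg char of KMS states-gen} and \eqref{eq:alg char of KMS states-gen with i and c} from $w_{c(s)}^{\phantom{*}}w_r^{\phantom{*}}=w_{c(t)}^{\phantom{*}}w_{r'}^{\phantom{*}}$ plus the trace property is correct. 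Your reduction of (ii)$\Rightarrow$(i) to the single family $\phi(v_{mp}^{\phantom{*}}v_q^*)=N_m^{-\beta}\phi(v_p^{\phantom{*}}v_q^*v_m^{\phantom{*}})$ is also legitimate and is the same reduction the paper performs (its identity \eqref{eq:alg char:intermediate step} is exactly this family), though the paper then still has to run the argument twice to handle $v_{s_1}^{\phantom{*}}v_{t_1}^*v_{s_2}^{\phantom{*}}v_{t_2}^*$ --- your subalgebra observation absorbs that step.

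The genuine gap is in the verification of that family, which you explicitly defer as ``the hard part.'' After writing $v_q^*v_m^{\phantom{*}}=v_{\tilde q}^{\phantom{*}}v_{\tilde m}^*$ with $q\tilde q=m\tilde m$ and expanding both sides via \eqref{eq:alg char of KMS states-gen with i and c}, what remains is \emph{not} a routine cyclic manipulation: one must prove
$\phi\circ\varphi(w_{c(mp)}^{\phantom{*}}w_{c(q)}^*)=\phi\circ\varphi(w_{c(p\tilde q)}^{\phantom{*}}w_{c(\tilde m)}^*)$,
and this hinges on a structural identity relating the $i$- and $c$-components of $mp$, $q$, $p\tilde q$, $\tilde m$. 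Concretely, one needs $i(p)=i(\tilde m)$ in the nonvanishing case and then the factorisation $c(mi(p))c(\tilde m)=c(q)\tilde q$ coming from $m\tilde m=q\tilde q$, before the trace property and Remark~\ref{rem:range proj behave like 1 for traces} can be applied. In the paper this is the claim $v_{c(si(t))}^{\phantom{*}}v_{i(t)}^*=v_{i(r)}^*v_s^{\phantom{*}}$, whose proof requires establishing $sS\cap i(r)S=si(t)S$ via Proposition~\ref{prop:NisaHMofRLCMs}~(ii) and (iv) (and separately showing the term vanishes when $i(st)\neq i(r)$, which needs another pass through \eqref{eq:alg char of KMS states-gen with i and c} applied to $v_{ts'}^{\phantom{*}}v_{r'}^*$). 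None of this bookkeeping is supplied in your proposal; it is the core of the proposition and cannot be dismissed as being ``of the same flavour as the previous paragraph,'' since the previous paragraph only compared two factorisations of the \emph{same} pair $(s,t)$, whereas here one must compare four different elements tied together by a right LCM. The argument does go through, but as written your proof of (ii)$\Rightarrow$(i) is an outline with its decisive step missing.
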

\begin{proof}
First, let $\phi$ be a $\kms_\beta$-state on $C^*(S)$. Since $\sigma = \id$ on $\varphi(C^*(S_c))$, the state $\phi \circ \varphi$ is tracial. The $\kms$-condition gives
\[\begin{array}{l} \phi(v_s^{\phantom{*}}v_t^*) = N_s^{-\beta} \ \phi(v_t^*e_{tS}^{\phantom{*}}e_{sS}^{\phantom{*}}v_s^{\phantom{*}}) = N_s^{-\beta} N_t^\beta \ \phi(v_s^{\phantom{*}}v_t^*). \end{array}\]
Thus $\phi(v_s^{\phantom{*}}v_t^*) \neq 0$ implies both $N_s=N_t$ and $s \not \perp t$. Proposition~\ref{prop:NisaHMofRLCMs}~(ii) implies that $s \sim t$. Suppose first that $sS\cap tS=saS=tbS$ with $sa=tb$ for some $a,b\in S_c$. Since $e_{saS}\leq e_{sS}$, we have $v_t^*v_s^{\phantom{*}}=v_b^{\phantom{*}}v_a^*$ . Using the KMS$_\beta$ property of $\phi$ we obtain \eqref{eq:alg char of KMS states-gen} in the case that we may choose a right LCM of $s$ and $t$ to be of the form $sa=tb$. The general case $sr=tr'$ displayed in \eqref{eq:alg char of KMS states-gen} is then of the form $r=ac,r'=bc$ with arbitrary $c \in S_c$. Thus we get $\phi \circ \varphi(w_{r'}^{\phantom{*}}w_{r}^*) = \phi \circ \varphi(w_{b}^{\phantom{*}}w_{c}^{\phantom{*}}w_{c}^*w_{a}^*) = \phi \circ \varphi(w_{b}^{\phantom{*}}w_{a}^*)$ by Remark~\ref{rem:range proj behave like 1 for traces}.

Now suppose that \ref{cond:A1} and \ref{cond:A2} are satisfied as well, and that $\phi$ is a state on $C^*(S)$ for which $\phi \circ \varphi$ is tracial. Note that $s\sim t$ happens precisely when $i(s)=i(t)$ for all $s,t\in S$. The equivalence of \eqref{eq:alg char of KMS states-gen} and \eqref{eq:alg char of KMS states-gen with i and c} will follow if show that for all $r,r' \in S_c$ with $sr=tr'$ we have
 $\phi \circ \varphi(w_{r'}^{\phantom{*}}w_{r}^*) = \phi \circ \varphi(w_{c(s)}^{\phantom{*}}w_{c(t)}^*)$ if $i(s)=i(t)$. Fix therefore a pair $r,r'$ in $S$ such that $sr=tr'$ and $i(s)=i(t)$. We saw in the first part of the proof that $\phi \circ \varphi(w_{r'}^{\phantom{*}}w_{r}^*) = \phi \circ \varphi(w_{b}^{\phantom{*}}w_{a}^*)$ whenever $a,b$ in $S_c$ satisfy $sS \cap tS = saS, sa=tb$. By left cancellation and $i(s)=i(t)$, we reduce this to $c(s)S_c \cap c(t)S_c = c(s)aS_c, c(s)a = c(t)b$. But then the trace property of $\phi\circ \varphi$ yields
\[\phi \circ \varphi(w_{c(s)}^{\phantom{*}}w_{c(t)}^*) = \phi \circ \varphi(w_{c(t)}^*w_{c(s)}^{\phantom{*}}) = \phi \circ \varphi(w_{b}^{\phantom{*}}w_{a}^*) = \phi \circ \varphi(w_{r'}^{\phantom{*}}w_{r}^*).\]

 It remains to prove that if $\phi$ is a state on $C^*(S)$ for which $\phi \circ \varphi$ is tracial and \eqref{eq:alg char of KMS states-gen with i and c} holds, then $\phi$ is a $\kms_\beta$-state. As the family of analytic elements $\{v_s^{\phantom{*}}v_t^*\mid s,t \in S\}$ spans a dense $*$-subalgebra of $C^*(S)$, $\phi$ is a $\kms_\beta$-state if and only if
\begin{equation}\label{eq:alg char:spanning family test-gen}
\phi(v_{s_1}^{\phantom{*}}v_{t_1}^*v_{s_2}^{\phantom{*}}v_{t_2}^*) = N_{s_1}^{-\beta}N_{t_1}^\beta \phi(v_{s_2}^{\phantom{*}}v_{t_2}^*v_{s_1}^{\phantom{*}}v_{t_1}^*) \quad \text{for all $s_i,t_i \in S, i=1,2$}.
\end{equation}
We will first show that \eqref{eq:alg char:spanning family test-gen} is valid for $t_1=e$, i.e. we show that
\begin{equation}\label{eq:alg char:intermediate step}
\phi(v_s^{\phantom{*}}v_t^{\phantom{*}}v_r^*) = N_s^{-\beta} \phi(v_t^{\phantom{*}}v_r^*v_s^{\phantom{*}}) \text{ for all } r,s,t \in S.
\end{equation}
Before we prove this identity, we note that \eqref{eq:alg char of KMS states-gen with i and c} implies
\begin{equation}\label{eq:alg char of KMS states aux I}
\phi(v_s^{\phantom{*}}\varphi(a)v_s^*) = N_s^{-\beta}\phi \circ \varphi(a) \text{ for every } a \in C^*(S_c),
\end{equation}
since $(w_t^{\phantom{*}}w_r^*)_{t,r \in S_c}$ form a spanning family for $C^*(S_c)$. Now let $r,s,t \in S$ and compute, using the trace property of $\phi\circ \varphi$ in the second equality, that
\[\begin{array}{lcl}
\phi(v_s^{\phantom{*}}v_t^{\phantom{*}}v_r^*) &\stackrel{\eqref{eq:alg char of KMS states-gen with i and c}}{=}& N_{st}^{-\beta} \ \delta_{i(st), i(r)} \phi \circ \varphi (w_{c(si(t))}^{\phantom{*}}w_{c(t)}^{\phantom{*}}w_{c(r)}^*) \\
&=& N_{st}^{-\beta} \ \delta_{i(st), i(r)} \phi \circ \varphi (w_{c(t)}^{\phantom{*}}w_{c(r)}^*w_{c(si(t))}^{\phantom{*}}) \\
&\stackrel{\eqref{eq:alg char of KMS states aux I} \text{ for } i(t)}{=}& N_{s}^{-\beta} \ \delta_{i(st), i(r)} \phi (v_t^{\phantom{*}}v_{c(r)}^*v_{c(si(t))}^{\phantom{*}}v_{i(t)}^*).
\end{array}\]
Suppose that $i(st)=i(r)$. We claim that $v_{c(si(t))}^{\phantom{*}}v_{i(t)}^* = v_{i(r)}^*v_s^{\phantom{*}}$. To see this, we first show that $sS \cap i(r)S = si(t)S$. Since $si(t)=i(si(t))c(si(t))=i(st)c(si(t))$, we have $si(t)S\subseteq sS\cap i(r)S$. For the reverse containment, let $ss'$ be a right LCM for $s$ and $i(r)$. Then by Proposition~\ref{prop:NisaHMofRLCMs} (iv) we have $N_sN_s'=\text{lcm}(N_s,N_{i(r)})=\text{lcm}(N_s,N_sN_t)=N_sN_t$. Hence $N_s'=N_t$, and by Proposition~\ref{prop:NisaHMofRLCMs} (ii) we know that $i(s')=i(t)$ or $s'\perp t$. We cannot have $s'\perp t$ because this would contradict that $ss'\in i(st)S\subseteq stS$. Hence $i(s')=i(t)$, and so $ss'\in si(t)S$. This gives the reverse containment, and thus we have $sS \cap i(r)S = si(t)S$. Hence $si(t) = i(si(t))c(si(t))=i(st)c(si(t))=i(r)c(si(t))$ is a right LCM for $s$ and $i(r)$, and so gives the claim with the standard argument. Therefore,
\[\phi(v_s^{\phantom{*}}v_t^{\phantom{*}}v_r^*) = N_{s}^{-\beta} \ \delta_{i(st), i(r)} \phi(v_t^{\phantom{*}}v_{c(r)}^*v_{i(r)}^*v_s^{\phantom{*}}) = N_{s}^{-\beta} \ \delta_{i(st), i(r)} \phi (v_t^{\phantom{*}}v_r^*v_s^{\phantom{*}}),\]
and to prove \eqref{eq:alg char:intermediate step} it remains to show that $\phi (v_t^{\phantom{*}}v_r^*v_s^{\phantom{*}}) = 0$ if $i(st) \neq i(r)$.
Suppose $\phi (v_t^{\phantom{*}}v_r^*v_s^{\phantom{*}})\neq 0$. Then $sS \cap rS \neq\emptyset$, so we can assume $v_r^*v_s^{\phantom{*}} = v_{s'}^{\phantom{*}}v_{r'}^*$ for some $s',r' \in S$ with $rs'=sr'$. According to \eqref{eq:alg char of KMS states-gen with i and c}, $\phi (v_t^{\phantom{*}}v_r^*v_s^{\phantom{*}}) = \phi (v_{ts'}^{\phantom{*}}v_{r'}^*)$, so our assumption that this is nonzero implies $i(ts')=i(r')$. But then $i(sts') = i(sr') = i(rs')$, and in particular $N_{st}=N_r$ and $st \not\perp r$. Thus, by the definition of the map $i$, we have $i(st)=i(r)$, showing the desired converse that establishes \eqref{eq:alg char:intermediate step}. To get $\phi(v_s^*v_t^{\phantom{*}}v_r^*) = N_s^{\beta} \phi(v_t^{\phantom{*}}v_r^*v_s^*)$, we use the case just established to get
\[\phi(v_t^{\phantom{*}}v_r^*v_s^*)=\overline{\phi(v_s^{\phantom{*}}v_r^{\phantom{*}}v_t^*)}=N_s^{-\beta} \overline{\phi(v_r^{\phantom{*}}v_t^*v_s^{\phantom{*}})}=N_s^{-\beta} \phi(v_s^*v_t^{\phantom{*}}v_r^*).\]
Now suppose we have $s_1,s_2,t_1,t_2 \in S$. Then $\phi(v_{s_1}^{\phantom{*}}v_{t_1}^*v_{s_2}^{\phantom{*}}v_{t_2}^*)$ vanishes unless $t_1S \cap s_2S = t_1t_3S, t_1t_3=s_2s_3$ for some $s_3,t_3 \in S$. In this case, we get
\[\phi(v_{s_1}^{\phantom{*}}v_{t_1}^*v_{s_2}^{\phantom{*}}v_{t_2}^*) = \phi(v_{s_1}^{\phantom{*}}v_{t_3}^{\phantom{*}}v_{t_2s_3}^*) = N_{s_1}^{-\beta}\phi(v_{t_3}^{\phantom{*}}v_{t_2s_3}^*v_{s_1}^{\phantom{*}}) = N_{s_1}^{-\beta} \phi(v_{t_1}^*v_{s_2}^{\phantom{*}}v_{t_2}^*v_{s_1}^{\phantom{*}})\]
by \eqref{eq:alg char:intermediate step}. Proceeding in the same manner with $v_{t_1}^*$ against $v_{s_2}^{\phantom{*}}v_{t_2}^*v_{s_1}^{\phantom{*}}$, the term vanishes unless $t_2S \cap s_1S \neq \emptyset$, in which case the adjoint version of \eqref{eq:alg char:intermediate step} gives
\[\phi(v_{s_1}^{\phantom{*}}v_{t_1}^*v_{s_2}^{\phantom{*}}v_{t_2}^*) = N_{s_1}^{-\beta} \phi(v_{t_1}^*v_{s_2}^{\phantom{*}}v_{t_2}^*v_{s_1}^{\phantom{*}}) = N_{s_1}^{-\beta} N_{t_1}^\beta \phi(v_{s_2}^{\phantom{*}}v_{t_2}^*v_{s_1}^{\phantom{*}}v_{t_1}^*).\]
As a final step we remark that an analogous argument with $s_2$ and $t_2$ in place of $s_1$ and $t_1$ shows that $\phi(v_{s_2}^{\phantom{*}}v_{t_2}^*v_{s_1}^{\phantom{*}}v_{t_1}^*)$ also vanishes unless $t_1S \cap s_2S \neq \emptyset$ and $t_2S \cap s_1S \neq \emptyset$. This concludes the proof that every state $\phi$ on $C^*(S)$ with $\phi \circ \varphi$ tracial and such that \eqref{eq:alg char of KMS states-gen with i and c} holds is a $\kms_\beta$-state.
\end{proof}

We continue this section with some results describing which $\kms$-states factor through the maps appearing in the \emph{boundary quotient diagram} \eqref{eq:BQD for thm}. Recall that the kernel of the homomorphism $\pi_c$ is generated by $1-v_a^{\phantom{*}}v_a^*$ with $a\in S_c$.


\begin{proposition}\label{prop:ground states that factor through Q_c vs. KMS infty}
Suppose that for each normalised trace $\tau$ on $C^*(S_c)$ there exists a $\kms_\beta$-state $\phi$ on $C^*(S)$ with $\phi \circ \varphi = \tau$ for arbitrarily large $\beta \in \R$. Then a ground state $\psi$ on $C^*(S)$ is a $\kms_\infty$-state if and only if $\psi\circ \varphi$ is tracial.
\end{proposition}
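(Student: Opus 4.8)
The plan is to prove the two implications separately, using the characterisation of ground states from Proposition~\ref{prop:ground states on C*(S)-gen} and the algebraic description of $\kms_\beta$-states from Proposition~\ref{prop:algebraic characterization of KMS states-gen}. For the forward direction, suppose $\psi$ is a ground state on $C^*(S)$ that is a $\kms_\infty$-state, say $\psi = \lim_n \psi_{\beta_n}$ in the weak$^*$ topology with $\beta_n \to \infty$ and each $\psi_{\beta_n}$ a $\kms_{\beta_n}$-state. By Proposition~\ref{prop:algebraic characterization of KMS states-gen}, each $\psi_{\beta_n} \circ \varphi$ is a normalised trace on $C^*(S_c)$; since the set of traces is weak$^*$-closed and $\varphi$ is continuous, the limit $\psi \circ \varphi = \lim_n \psi_{\beta_n} \circ \varphi$ is again a trace. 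This direction is essentially immediate.

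For the converse, assume $\psi$ is a ground state on $C^*(S)$ with $\tau := \psi \circ \varphi$ tracial (and normalised, since $\psi$ is a state and $\varphi$ is unital). First I would invoke Proposition~\ref{prop:ground states on C*(S)-gen} to record that $\psi$ is supported on $\varphi(C^*(S_c))$, i.e.\ $\psi(v_s^{\phantom{*}}v_t^*) = 0$ unless $s,t \in S_c$, and in that case $\psi(v_s^{\phantom{*}}v_t^*) = \tau(w_s^{\phantom{*}}w_t^*)$. The hypothesis of the proposition then supplies, for each sufficiently large $\beta$, a $\kms_\beta$-state $\phi_\beta$ on $C^*(S)$ with $\phi_\beta \circ \varphi = \tau$. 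The key step is to show that these $\kms_\beta$-states converge weak$^*$ to $\psi$ as $\beta \to \infty$; once that is done, $\psi$ is by definition a $\kms_\infty$-state. To see the convergence, it suffices to check it on the spanning family $\{v_s^{\phantom{*}}v_t^*\}$. By Proposition~\ref{prop:algebraic characterization of KMS states-gen}, $\phi_\beta(v_s^{\phantom{*}}v_t^*)$ vanishes whenever $s \not\sim t$, and when $s \sim t$, choosing $r,r' \in S_c$ with $sr = tr'$, we have $\phi_\beta(v_s^{\phantom{*}}v_t^*) = N_s^{-\beta}\tau(w_{r'}^{\phantom{*}}w_r^*)$. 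If $s \notin S_c$ then $N_s > 1$ by Proposition~\ref{prop:NisaHMofRLCMs}~(i), so $N_s^{-\beta} \to 0$; hence $\phi_\beta(v_s^{\phantom{*}}v_t^*) \to 0 = \psi(v_s^{\phantom{*}}v_t^*)$. If $s \in S_c$ and $s \sim t$, then $t \in S_c$ as well (since $\sim$-equivalence and $N_s = N_t = 1$ force this, using Proposition~\ref{prop:NisaHMofRLCMs}), we may take $r = r' = 1$, so $\phi_\beta(v_s^{\phantom{*}}v_t^*) = N_s^{-\beta}\tau(w_t^*w_s^{\phantom{*}}) = \tau(w_s^{\phantom{*}}w_t^*) = \psi(v_s^{\phantom{*}}v_t^*)$, constant in $\beta$. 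And if $s \in S_c$ but $s \not\sim t$, both sides are $0$. Thus $\phi_\beta \to \psi$ weak$^*$, completing the converse.

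The main obstacle, I expect, is the bookkeeping around the case $s \sim t$ with $s \in S_c$: one needs to be sure that $t$ must also lie in $S_c$ (immediate from $N$ being constant on $\sim$-classes and $\ker N = S_c$) and that the choice of $r, r'$ can be taken trivial so that the $\beta$-dependence genuinely disappears, matching the formula $\psi(v_s^{\phantom{*}}v_t^*) = \tau(w_s^{\phantom{*}}w_t^*)$ from Proposition~\ref{prop:ground states on C*(S)-gen}. A secondary point is simply making explicit that a weak$^*$ limit along $\beta_n \to \infty$ of $\kms_{\beta_n}$-states is what is meant by a $\kms_\infty$-state, so that the constructed net $(\phi_\beta)$, which converges to $\psi$, indeed certifies $\psi$ as a $\kms_\infty$-state. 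No deep new idea is required; the content is entirely in combining Propositions~\ref{prop:ground states on C*(S)-gen} and \ref{prop:algebraic characterization of KMS states-gen} with the supplied hypothesis.
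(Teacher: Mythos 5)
Your proof is correct and follows essentially the same route as the paper: the forward direction is identical, and for the converse the paper likewise takes the $\kms_{\beta_n}$-states $\phi_n$ with $\phi_n\circ\varphi=\psi\circ\varphi$ supplied by the hypothesis, the only difference being that it extracts a weak$^*$-convergent subsequence by compactness and identifies the limit with $\psi$ via Proposition~\ref{prop:ground states on C*(S)-gen}, whereas you verify convergence of the whole sequence to $\psi$ directly on the spanning family. One small slip: for $s,t\in S_c$ with $s\sim t$ you cannot ``take $r=r'=1$'' unless $s=t$; the correct justification that $\phi_\beta(v_s^{\phantom{*}}v_t^*)=\tau(w_s^{\phantom{*}}w_t^*)$ is equation \eqref{eq:alg char of KMS states-gen with i and c} (here $i(s)=i(t)=1$, $c(s)=s$, $c(t)=t$), or equivalently the trace property of $\tau$ applied to $w_{r'}^{\phantom{*}}w_r^*$ with $sr=tr'$ — the conclusion you draw is nonetheless right, so this does not affect the validity of the argument.
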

\begin{proof}
If $\psi$ is a $\kms_\infty$-state, then $\psi\circ\varphi$ is the weak* limit of normalised traces $\phi_n \circ \varphi$ for some sequence of $\kms_{\beta_n}$-states $\phi_n$ with $\beta_n \to \infty$, see Proposition~\ref{prop:algebraic characterization of KMS states-gen}. Hence $\psi\circ\varphi$ is also a normalised trace.

Now let $\psi$ be a ground state on $C^*(S)$ such that $\psi\circ \varphi$ is tracial. By assumption, there is a sequence of $\kms_{\beta_n}$-states $\phi_n$ on $C^*(S)$ with $\beta_n \to \infty$ and $\phi_n \circ \varphi = \psi\circ \varphi$. Weak* compactness of the state space of $C^*(S)$ yields a subsequence of $(\phi_n)_n$ converging to a $\kms_\infty$-state $\phi'$ which necessarily satisfies $\phi' \circ \varphi =\psi\circ \varphi$. Therefore $\psi$ and $\phi'$ agree on $\varphi(C^*(S_c))$, hence $\psi=\phi'$ according to Remark~\ref{rem:C*(S_c) in C*(S)}, and $\psi$ is thus a $\kms_\infty$-state.


\end{proof}


\begin{proposition}\label{prop:KMS-state factoring through Q_p must have beta=1}
A $\kms_\beta$-state $\phi$ on $C^*(S)$ factors through $\pi\colon C^*(S) \to \CQ(S)$ if and only if $\beta=1$. If $S$ is also core factorable, then $\pi$ can be replaced by $\pi_p\colon C^*(S) \to \CQ_p(S)$.
\end{proposition}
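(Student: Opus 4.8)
The plan is to reduce the statement ``$\phi$ factors through $\pi$'' to a numerical identity among the values $\phi(e_{fS})$ over accurate foundation sets, and then to read off $\beta$. Since $S$ admits a generalised scale, Proposition~\ref{prop:NisaHMofRLCMs}(v) shows that $S$ has property (AR), so the ideal $\ker\pi$ is generated, as a closed two-sided ideal, by the projections $1-\sum_{f\in F}e_{fS}$, one for each accurate foundation set $F$; here the summands $e_{fS}$, $f\in F$, are mutually orthogonal, because $v_f^*v_{f'}^{\phantom{*}}=0$ whenever $fS\cap f'S=\emptyset$. A state kills the closed ideal generated by a family of projections if and only if it kills each of them (Cauchy--Schwarz), so $\phi$ factors through $\pi$ if and only if $\sum_{f\in F}\phi(e_{fS})=1$ for every accurate foundation set $F$. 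The $\kms_\beta$-condition gives $\phi(e_{fS})=\phi(v_f^{\phantom{*}}v_f^*)=N_f^{-\beta}\phi(v_f^*v_f^{\phantom{*}})=N_f^{-\beta}$, so the condition becomes $\sum_{f\in F}N_f^{-\beta}=1$ for every accurate foundation set $F$.

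For the forward implication I would use that $N$ is nontrivial to pick $n\in N(S)$ with $n>1$. A transversal $\CT_n$ for $N^{-1}(n)/_\sim$ is an accurate foundation set by \ref{cond:A3b}, has $|\CT_n|=n$ by \ref{cond:A3a}, and each of its elements has $N$-value $n$; hence $1=\sum_{f\in\CT_n}N_f^{-\beta}=n^{1-\beta}$, which forces $\beta=1$.

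For the converse the substantive point is the purely combinatorial identity $\sum_{f\in F}N_f^{-1}=1$, valid for every accurate foundation set $F$: granting it, a $\kms_1$-state $\phi$ satisfies $\sum_{f\in F}\phi(e_{fS})=\sum_{f\in F}N_f^{-1}=1$ and so factors through $\pi$. To prove the identity I would re-run the refinement from the proof of Proposition~\ref{prop:NisaHMofRLCMs}(v): put $n:=\operatorname{lcm}\{N_f\mid f\in F\}$, which lies in $N(S)$ by \ref{cond:A4}, and for each $f\in F$ choose a transversal $\CT_f$ for $N^{-1}(n/N_f)/_\sim$, so that $|\CT_f|=n/N_f$ by \ref{cond:A3a}. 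Then $F':=\bigcup_{f\in F}f\CT_f$ is a foundation set, and since $F$ is \emph{accurate} so is $F'$: for $f\neq f'$ one has $ftS\cap f't'S\subseteq fS\cap f'S=\emptyset$, and for distinct $t,t'\in\CT_f$ left cancellation reduces $ftS\cap ft'S$ to $tS\cap t'S=\emptyset$. As $F'\subseteq N^{-1}(n)$, Proposition~\ref{prop:NisaHMofRLCMs}(iii) shows $F'$ is a transversal for $N^{-1}(n)/_\sim$, whence $|F'|=n$ by \ref{cond:A3a}; comparing with $|F'|=\sum_{f\in F}|\CT_f|=\sum_{f\in F}n/N_f$ yields $\sum_{f\in F}N_f^{-1}=1$.

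For the variant with $\pi_p$ in place of $\pi$, note that the boundary quotient factors as $C^*(S)\twoheadrightarrow\CQ_p(S)\twoheadrightarrow\CQ(S)$, so $\ker\pi_p\subseteq\ker\pi$ and any state factoring through $\pi$ also factors through $\pi_p$; together with the first part this settles the ``if'' direction. For ``only if'', assume $\phi$ factors through $\pi_p$ and $S$ is core factorable, so $\phi$ kills $1-\sum_{f\in F}e_{fS}$ for every accurate foundation set $F\subseteq S_{ci}$; it suffices to produce one such $F$ whose elements have $N$-value $>1$. Fix $n\in N(S)$ with $n>1$, take a transversal $\CT_n$ for $N^{-1}(n)/_\sim$, and use $S=S_{ci}^1S_c$ to write each $f\in\CT_n$ as $f=f^\flat a$ with $a\in S_c$ and $f^\flat\in S_{ci}$ (it is not invertible, since $N_f>1$ forces $f\notin S_c$); as $f\sim f^\flat$, the set $\{f^\flat\mid f\in\CT_n\}$ is again a transversal for $N^{-1}(n)/_\sim$, hence an accurate foundation set by \ref{cond:A3b}, now contained in $S_{ci}$, and $1=\sum_{f\in\CT_n}N_{f^\flat}^{-\beta}=n^{1-\beta}$ gives $\beta=1$. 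The main obstacle is the combinatorial identity $\sum_{f\in F}N_f^{-1}=1$ and, inside its proof, the verification that the refinement $\bigcup_{f\in F}f\CT_f$ of an accurate foundation set remains accurate, so that Proposition~\ref{prop:NisaHMofRLCMs}(iii) and \ref{cond:A3a} determine its cardinality; the remaining steps are bookkeeping with the (AR) reduction and the KMS relation.
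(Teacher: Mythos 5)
Your overall route is the same as the paper's: the forward implication via an accurate foundation set $F=\CT_n$ with $\lvert F\rvert = N_f = n>1$, giving $1=n^{1-\beta}$; the converse by reducing $\ker\pi$ (via property (AR)) to the generators $1-\sum_{f\in F}e_{fS}$ over accurate foundation sets and checking $\sum_{f\in F}\phi(e_{fS})=1$ there; and the $\pi_p$ statement by pushing a transversal into $S_{ci}$ using core factorability and noting $\ker\pi_p\subset\ker\pi$. Your one variation --- proving the numerical identity $\sum_{f\in F}N_f^{-1}=1$ for \emph{every} accurate foundation set by refining $F$ to a transversal of $N^{-1}(n)/_\sim$ and counting --- is correct and makes explicit what the paper handles by restricting the generators to transversals via Proposition~\ref{prop:NisaHMofRLCMs}(v); the accuracy of $F'=\bigcup_{f\in F}f\CT_f$ and the cardinality count both check out.

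The one step that needs repair is the parenthetical justification ``a state kills the closed ideal generated by a family of projections if and only if it kills each of them (Cauchy--Schwarz)''. That is false for general states: on $M_2(\C)$ the vector state $x\mapsto x_{22}$ kills the projection $e_{11}$ but not the (simple) ideal it generates. Cauchy--Schwarz alone only yields $\phi(ap)=\phi(pa)=0$, not $\phi(apb)=0$. What rescues the step here is the $\kms_\beta$-condition together with the fact that $d_F:=1-\sum_{f\in F}e_{fS}$ is fixed by $\sigma$: for analytic $b$ one gets $\phi(b^*d_Fb)=\phi\bigl(d_F\,b\,\sigma_{i\beta}(b^*)\,d_F\bigr)$, which vanishes by Cauchy--Schwarz once $\phi(d_F)=0$, and then $\lvert\phi(ad_Fb)\rvert^2\le\phi(aa^*)\phi(b^*d_Fb)=0$; equivalently, the left kernel of a KMS state is a two-sided ideal. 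The paper is equally terse at this point, but the general principle you state is not the correct reason. With that substitution your proof is complete.
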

\begin{proof}
Assume that $\phi= \phi' \circ \pi$ for a $\kms_\beta$-state $\phi'$ on $\CQ(S)$. As in the proof of Proposition~\ref{prop:no KMS below 1-gen}, there is an accurate foundation set $F$ with $\lvert F \rvert = N_f = n$ for all $f \in F$ and some $n > 1$. Thus
\begin{equation}\label{eq:computations at beta 1} 1 = \phi'(1) = \phi'(\sum\limits_{f \in F}\pi(e_{fS})) = \sum\limits_{f \in F}\phi(e_{fS}) = n^{1-\beta},
\end{equation}
 so necessarily $\beta = 1$.

Conversely, suppose that $\phi$ is a $\kms_1$-state. We must show that $\ker \pi \subset \ker\phi$. Recall from Proposition~\ref{prop:NisaHMofRLCMs}~(v) that $S$ has the accurate refinement property from \cite{BS1}, thus the kernel of $\pi$ is generated by differences $1-\sum_{f \in F} e_{fS}$ for $F$ running over accurate foundation sets. By Proposition~\ref{prop:NisaHMofRLCMs}, it suffices to show that $1-\sum_{f \in F} e_{fS}$ is in $\ker\phi$ for $F$ running over accurate foundation sets that are transversals for $N^{-1}(n)/_\sim$ as $n\in N(S)$. However, for such $F$, the computations in \eqref{eq:computations at beta 1} done backwards prove that $\phi(1-\sum_{f \in F} e_{fS})=0$, as needed.

If $S$ is core factorable, then $F$ can be chosen as a subset of $S_{ci}^{1}$ so that we need to have $\beta=1$. As $\ker \pi_p \subset \ker\pi$, the second part is clear.
\end{proof}

For the proof of the next result we refer to \cite{Sta3}*{Theorem~4.1}.

\begin{proposition}\label{prop:no GS on Q_p}
There are no ground states on $\CQ(S)$. If $S$ is core factorable, then $\CQ_p(S)$ does not admit any ground states.
\end{proposition}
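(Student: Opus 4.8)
The plan is to reduce both assertions to Proposition~\ref{prop:ground states on C*(S)-gen} by pulling a hypothetical ground state back along the relevant quotient map and then contradicting the defining relation of that quotient. First I would record that the time evolution $\sigma$ descends to $\CQ(S)$ and to $\CQ_p(S)$: every $\sigma_x$ fixes each range projection $e_{fS}=v_f^{\phantom{*}}v_f^*$, since $\sigma_x(v_f)=N_f^{ix}v_f$, and it fixes $1-v_a^{\phantom{*}}v_a^*$ for $a\in S_c=\ker N$, hence it fixes the generators of $\ker\pi$ and of $\ker\pi_p$, exactly as observed in Section~\ref{sec:mainthm}. Consequently, if $\psi$ is a ground state on $\CQ(S)$, then $\psi\circ\pi$ is a ground state on $C^*(S)$: for analytic $a,b\in C^*(S)$ the elements $\pi(a),\pi(b)$ are analytic in $\CQ(S)$ and $z\mapsto(\psi\circ\pi)\big(a\sigma_z(b)\big)=\psi\big(\pi(a)\sigma_z(\pi(b))\big)$ is bounded on the upper half-plane; the same reasoning applies verbatim with $\pi_p$ in place of $\pi$.

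Next I would invoke Proposition~\ref{prop:ground states on C*(S)-gen} to conclude that the ground state $\psi\circ\pi$ on $C^*(S)$ vanishes on $e_{fS}$ whenever $f\notin S_c$. Since $N$ is a nontrivial generalised scale, choose $n\in N(S)$ with $n>1$; by \ref{cond:A3a} and \ref{cond:A3b} a transversal $F$ for $N^{-1}(n)/_\sim$ is an accurate foundation set with $\lvert F\rvert=n>1$, and every $f\in F$ satisfies $N_f=n>1$, so $f\notin\ker N=S_c$ by Proposition~\ref{prop:NisaHMofRLCMs}~(i). Because $F$ is accurate, the projections $e_{fS}$, $f\in F$, are mutually orthogonal, so the relation $\prod_{f\in F}(1-e_{fS})=0$ defining $\CQ(S)$ becomes $\sum_{f\in F}\pi(e_{fS})=1$; applying $\psi$ yields $1=\psi(1)=\sum_{f\in F}(\psi\circ\pi)(e_{fS})=0$, a contradiction. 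Hence $\CQ(S)$ admits no ground states.

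For $\CQ_p(S)$ I would run the same argument, the only extra ingredient being that the foundation set $F$ must be chosen inside $S_{ci}$ so that it is \emph{proper} and the relation~\eqref{eq:BQrelation} defining $\CQ_p(S)$ applies. This is where core factorability enters: writing each $f\in F$ as $f=f'a$ with $f'\in S_{ci}^1$ and $a\in S_c$ via \ref{cond:A1} gives $f'\sim f$ and $N_{f'}=N_f=n>1$, so $f'\in S_{ci}$, and $\{f'\mid f\in F\}$ is again a transversal for $N^{-1}(n)/_\sim$, hence a proper accurate foundation set by \ref{cond:A3b}. Then $\sum_{f'}\pi_p(e_{f'S})=1$ in $\CQ_p(S)$, and the same computation gives $1=0$. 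I do not expect a genuine obstacle here; the only point requiring a little care is the bookkeeping that produces a \emph{proper} accurate foundation set all of whose members lie outside $S_c$, which is exactly what the generalised scale axioms and core factorability supply.
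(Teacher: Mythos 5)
Your argument is correct, and it is worth noting that the paper itself does not prove this proposition at all: it simply defers to \cite{Sta3}*{Theorem~4.1}. What you have written is therefore a self-contained substitute, and it hangs together. The two key steps both check out: (1) since $\sigma$ fixes the generators of $\ker\pi$ and $\ker\pi_p$, it descends to the quotients, analytic elements push forward along $\pi$ (resp.\ $\pi_p$) with $\sigma_z\circ\pi=\pi\circ\sigma_z$, and hence a ground state on the quotient pulls back to a ground state on $C^*(S)$; (2) Proposition~\ref{prop:ground states on C*(S)-gen} kills $e_{fS}$ for $f\notin S_c$, while the generalised scale produces, via \ref{cond:A3a} and \ref{cond:A3b}, an accurate foundation set $F\subset N^{-1}(n)$ with $n>1$, all of whose members lie outside $S_c=\ker N$, so that $\sum_{f\in F}\pi(e_{fS})=1$ forces $1=0$. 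Your handling of the $\CQ_p(S)$ case is also right: core factorability lets you replace the transversal by a $\sim$-equivalent one inside $S_{ci}^1$, and $N_{f'}=N_f=n>1$ rules out $f'=1$, so the new transversal is a proper accurate foundation set and relation~\eqref{eq:BQrelation} applies in $\CQ_p(S)$. Your proof uses the standing assumption of Section~\ref{sec:constraints} that $S$ carries a generalised scale, whereas the cited external theorem presumably works in greater generality; within the setting of this paper that assumption is in force, so nothing is lost, and your argument is structurally the same as the paper's own proof of the KMS analogue, Proposition~\ref{prop:KMS-state factoring through Q_p must have beta=1}.
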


\section{The reconstruction formula}\label{sec:theReconstruction}
As in the previous section, we shall assume that $S$ is a right LCM semigroup that admits a generalised scale $N\colon S \to \N^\times$, and denote by $\sigma$ the time evolution on $C^*(S)$ given by $\sigma_x(v_s)=N_s^{ix}v_s$ for all $x\in \R$ and $s\in S$. Under the additional assumptions that make $S$ an admissible semigroup, we obtain a formula to reconstruct $\kms_\beta$-states on $C^*(S)$ from conditioning on subsets $I$ of $\text{Irr}(N(S))$ for which $\zeta_I(\beta)$ is finite, see Lemma~\ref{lem:rec formula-gen}~(iv).

\begin{lemma}\label{lem:basic defect projection-gen}
Suppose that $S$ is core factorable and admits a generalised scale $N$. For $n \in N(S)$ and a transversal $\CT_n$ for $N^{-1}(n)/_\sim$ with $\CT_n\subset S_{ci}^1$, let $d_n := 1 - \sum_{f \in \CT_n} e_{fS}$. Then $d_n$ is a projection in $C^*(S)$, and if $S_{ci}\subset S$ is $\cap$-closed, then $d_n$ is independent of the choice of $\CT_n$.
\end{lemma}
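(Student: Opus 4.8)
The plan is to prove the two assertions separately. For the first, that $d_n$ is a projection, note that $\CT_n$ is an accurate foundation set by \ref{cond:A3b}, so by Proposition~\ref{prop:NisaHMofRLCMs}~(ii) the elements of $\CT_n$ have mutually disjoint principal right ideals; hence the range projections $e_{fS} = v_f^{\phantom{*}}v_f^*$ for $f \in \CT_n$ are mutually orthogonal. Indeed, for $f \neq f'$ in $\CT_n$ we have $f \perp f'$, so $v_f^*v_{f'}^{\phantom{*}} = 0$ and therefore $e_{fS}e_{f'S} = v_f^{\phantom{*}}(v_f^*v_{f'}^{\phantom{*}})v_{f'}^* = 0$. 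Thus $\sum_{f \in \CT_n} e_{fS}$ is a sum of mutually orthogonal projections and so is itself a projection; consequently $d_n = 1 - \sum_{f \in \CT_n} e_{fS}$ is a projection in $C^*(S)$. This part requires only \ref{cond:A3} and is essentially immediate.

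For the independence claim, suppose in addition that $S$ is core factorable and $S_{ci} \subset S$ is $\cap$-closed, and let $\CT_n = \{f_1,\dots,f_n\}$ and $\CT_n' = \{f_1',\dots,f_n'\}$ be two transversals for $N^{-1}(n)/_\sim$, both contained in $S_{ci}^1$. The key input is Corollary~\ref{cor:dichotomy for S_ci under gen scale}: there is a permutation $\rho$ of $\{1,\dots,n\}$ and elements $x_1,\dots,x_n \in S^*$ with $f_i' = f_{\rho(i)}x_i$ for each $i$. Since $x_i$ is invertible, $v_{x_i}$ is a unitary in $C^*(S)$, so $e_{f_i'S} = v_{f_{\rho(i)}x_i}^{\phantom{*}}v_{f_{\rho(i)}x_i}^* = v_{f_{\rho(i)}}^{\phantom{*}}v_{x_i}^{\phantom{*}}v_{x_i}^*v_{f_{\rho(i)}}^* = v_{f_{\rho(i)}}^{\phantom{*}}v_{f_{\rho(i)}}^* = e_{f_{\rho(i)}S}$; here one uses that $f_{\rho(i)}x_iS = f_{\rho(i)}S$ so the range projections literally agree. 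Therefore $\sum_{i} e_{f_i'S} = \sum_{i} e_{f_{\rho(i)}S} = \sum_{j} e_{f_jS}$, which gives $d_n$ computed from $\CT_n'$ equals $d_n$ computed from $\CT_n$.

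The main (and only real) obstacle is making sure Corollary~\ref{cor:dichotomy for S_ci under gen scale} applies, i.e.\ that both transversals can be taken inside $S_{ci}$ rather than merely $S_{ci}^1$; the corollary is stated for transversals of $N^{-1}(n)/_\sim$ contained in $S_{ci}$, and the element $1$ lies in $N^{-1}(1)$, so for $n > 1$ every such transversal is automatically inside $S_{ci}$ (as $N_f = n > 1$ forces $f \notin S_c \supseteq \{1\}$), while for $n = 1$ the set $N^{-1}(1)/_\sim$ is a single class by \ref{cond:A3a} and $d_1 = 0$ regardless of the choice. Apart from this small case check, everything reduces to the elementary observation that multiplying a generator on the right by a unit does not change its range projection.
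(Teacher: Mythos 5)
Your proof is correct and follows essentially the same route as the paper's: orthogonality of the range projections via the accurate foundation set property for the first claim, and Corollary~\ref{cor:dichotomy for S_ci under gen scale} (two transversals in $S_{ci}$ differ by a permutation and right multiplication by units, which leaves range projections unchanged) for the second. Your extra care about the $n=1$ case and about the transversals lying in $S_{ci}$ rather than just $S_{ci}^1$ matches the paper's opening reduction and is handled correctly.
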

\begin{proof}
For $n=1$, we have $\CT_1 = \{1\}$ and hence $d_1 = 0$. So let $n \in N(S)\setminus\{1\}$. As $S$ is core factorable, there exists a transversal $\CT_n$ for $N^{-1}(n)/_\sim$ with $\CT_n \subset S_{ci}$. By \ref{cond:A3b}, $\CT_n$ is an accurate foundation set. Therefore $\{e_{fS}\mid f\in \CT_n\}$ are mutually orthogonal projections in $C^*(S)$, so $d_n$ is a projection. If $S_{ci}\subset S$ is $\cap$-closed and $\CT'_n$ is another transversal for $N^{-1}(n)/_\sim$ contained in $S_{ci}^1$, then Corollary~\ref{cor:dichotomy for S_ci under gen scale} implies that $\sum_{f\in \CT_n}e_{fS}=\sum_{f'\in \CT'_n}e_{f'S}$.
\end{proof}

\begin{lemma}\label{lem:d_p commutes with v_s-gen}
Suppose that $S$ is core factorable, $S_{ci}\subset S$ is $\cap$-closed, and $S$ admits a generalised scale $N$. Then for each $n \in N(S)$ and $a \in S_c$, the isometry $v_a$ commutes with the projection $d_n$.
\end{lemma}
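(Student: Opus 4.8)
The claim is that $v_a d_n = d_n v_a$ for all $a \in S_c$ and $n \in N(S)$. Since $d_n = 1 - \sum_{f \in \CT_n} e_{fS}$ and $v_a \cdot 1 = v_a = 1 \cdot v_a$, it suffices to show that $v_a$ commutes with $\sum_{f \in \CT_n} e_{fS} = \sum_{f \in \CT_n} v_f^{\phantom{*}} v_f^*$. The key tool is Lemma~\ref{lem:thebijectionsg}, which provides an action $\alpha$ of $S_c$ on the chosen transversal $\CT_n$ (here I would take $\CT_n \subset S_{ci}^1$, using Lemma~\ref{lem:basic defect projection-gen} to know $d_n$ does not depend on which such transversal I pick) satisfying $aS \cap fS = a\,\alpha_a^{-1}(f)\,S$ for all $f \in \CT_n$.

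**Main computation.** First I would compute $v_a^* \bigl(\sum_{f \in \CT_n} e_{fS}\bigr) v_a$. For each individual $f$, the defining relations of $C^*(S)$ give $v_a^* v_f^{\phantom{*}} v_f^* v_a^{\phantom{*}} = v_a^* e_{fS} v_a$. Using $aS \cap fS = a\alpha_a^{-1}(f)S$ and writing $g := \alpha_a^{-1}(f) \in \CT_n$ with $ag = f c(ag) = f b_f$ where $b_f := c(ag) \in S_c$ (by Lemma~\ref{lem:maps i and c}, or directly Lemma~\ref{lem:thebijectionsg}), the relation $v_a^* v_f = v_g^{\phantom{*}} v_{b_f}^*$ holds because $a(g) \cdot 1 = f \cdot b_f$ realises the right LCM. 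Hence $v_a^* e_{fS} v_a = v_g^{\phantom{*}} v_{b_f}^* v_{b_f}^{\phantom{*}} v_g^* = v_g^{\phantom{*}} v_g^* = e_{gS}$, since $v_{b_f}$ is an isometry. As $f$ runs over $\CT_n$, the element $g = \alpha_a^{-1}(f)$ runs bijectively over $\CT_n$ (Lemma~\ref{lem:thebijectionsg} says $\alpha_a$ is a bijection), so
\[
v_a^* \Bigl(\sum_{f \in \CT_n} e_{fS}\Bigr) v_a = \sum_{g \in \CT_n} e_{gS}.
\]
Therefore $v_a^* \bigl(\sum_f e_{fS}\bigr) v_a = \sum_f e_{fS}$, i.e.\ $v_a^*$ fixes the projection $p_n := \sum_{f \in \CT_n} e_{fS}$ under conjugation. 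Equivalently $v_a^* p_n v_a = p_n$.

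**From the conjugation identity to commutation.** The passage from $v_a^* p_n v_a = p_n$ to $v_a p_n = p_n v_a$ uses that $v_a$ is an isometry and $p_n$ a projection: multiplying $v_a^* p_n v_a = p_n$ on the left by $v_a$ gives $v_a v_a^* p_n v_a = v_a p_n$, and since $e_{aS} = v_a v_a^* \geq e_{fS}$ whenever $aS \supseteq fS$... — but that containment need not hold, so instead I would argue as follows. From $v_a^* p_n v_a = p_n$ we get $p_n v_a = v_a v_a^* p_n v_a$, so it remains to show $v_a v_a^* p_n v_a = v_a p_n$, i.e.\ $v_a^* p_n v_a = v_a^* v_a p_n = p_n$; but $v_a^* v_a = 1$, so this reads $v_a^* p_n v_a = p_n$, which is exactly what was established. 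Thus $p_n v_a = v_a p_n$, and consequently $d_n v_a = (1 - p_n) v_a = v_a (1 - p_n) = v_a d_n$.

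**Expected main obstacle.** The only genuinely delicate point is verifying that $v_a^* v_f = v_{\alpha_a^{-1}(f)}^{\phantom{*}} v_{b_f}^*$ with $b_f \in S_c$, i.e.\ that the right LCM of $a$ and $f$ is witnessed by $ag = f b_f$ with $g \in \CT_n$ and $b_f \in S_c$, and that $g$ indeed lies in $\CT_n$ (not merely in $[g] \cap S_{ci}^1$). This is precisely the content of the last displayed formula in Lemma~\ref{lem:thebijectionsg}, combined with $N_{\alpha_a(s)} = N_{as} = N_t$ from Proposition~\ref{prop:NisaHMofRLCMs}~(i),(iv) and Corollary~\ref{cor:dichotomy for S_ci under gen scale} to pin down the transversal representative; I would cite these rather than redo the argument. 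Everything else is a routine manipulation of the $C^*(S)$ relations.
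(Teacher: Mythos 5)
Your main computation is correct, and it is essentially the paper's argument run in the opposite direction: the paper conjugates the other way, showing $v_a^{\phantom{*}}d_n^{\phantom{*}}v_a^* = d_n^{\phantom{*}}v_a^{\phantom{*}}v_a^*$ by verifying that the elements $f'$ with $aS\cap fS = af'S$, $f\in\CT_n$, form another transversal for $N^{-1}(n)/_\sim$ inside $S_{ci}^1$ (via Lemma~\ref{lem:OldA2} and Corollary~\ref{cor:dichotomy for S_ci under gen scale}, the same inputs that underlie the action $\alpha$ in Lemma~\ref{lem:thebijectionsg} that you invoke). The advantage of the paper's orientation is that right-multiplying $v_a^{\phantom{*}}d_n^{\phantom{*}}v_a^* = d_n^{\phantom{*}}v_a^{\phantom{*}}v_a^*$ by the isometry $v_a$ yields $v_a^{\phantom{*}}d_n^{\phantom{*}} = d_n^{\phantom{*}}v_a^{\phantom{*}}$ with no further work, whereas your orientation leaves a nontrivial last step.

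That last step is where your proof has a genuine gap: the identity $p_n^{\phantom{*}}v_a^{\phantom{*}} = v_a^{\phantom{*}}v_a^*p_n^{\phantom{*}}v_a^{\phantom{*}}$ does \emph{not} follow from $v_a^*p_n^{\phantom{*}}v_a^{\phantom{*}}=p_n^{\phantom{*}}$. It asserts $(1-e_{aS})p_n^{\phantom{*}}v_a^{\phantom{*}}=0$, which, given the conjugation identity you have already proved, is equivalent to the commutation you are trying to establish; the remainder of your "verification" then merely re-derives the conjugation identity, so the argument is circular at exactly the crucial point. The conclusion is nonetheless true, and the gap closes in one line in either of two ways: (a) range projections commute, so $e_{fS}^{\phantom{*}}v_a^{\phantom{*}} = e_{fS}^{\phantom{*}}e_{aS}^{\phantom{*}}v_a^{\phantom{*}} = e_{aS\cap fS}^{\phantom{*}}v_a^{\phantom{*}}$ with $e_{aS\cap fS}\leq e_{aS}$, whence $(1-e_{aS})p_n^{\phantom{*}}v_a^{\phantom{*}}=0$; or (b) for any isometry $v$ and projection $p$ with $v^*pv=p$ one computes $(pv-vp)^*(pv-vp) = v^*pv - (v^*pv)p - p(v^*pv) + pv^*vp = p-p-p+p = 0$, so $pv=vp$ by the $C^*$-identity. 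With either addition your proof is complete.
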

\begin{proof}
Fix $n\in N(S)$ and $t\in S_c$. We can assume $n >1$. Since $v_a$ is an isometry, the claim will follow once we establish that
\[\begin{array}{c}
v_a^{\phantom{*}}d_n^{\phantom{*}}v_a^* = e_{aS}^{\phantom{*}} - \sum\limits_{f \in \CT_n} e_{afS}^{\phantom{*}} \stackrel{!}{=} e_{aS}^{\phantom{*}} - \sum\limits_{f \in \CT_n} e_{aS \cap fS}^{\phantom{*}} = d_n^{\phantom{*}}v_a^{\phantom{*}}v_a^*,
\end{array}\]
where the second equality is the one requiring a proof. To do so, observe that since $a\in S_c$ and $\CT_n = \{f_1,\ldots,f_n\} \subset S_{ci}$, we have $aS \cap f_iS = af'_iS$ for some $f'_i \in S$ with $N_{f'_i} = N_{f_i} = n$. By Lemma~\ref{lem:OldA2}, we have $\CT'_n := \{f'_1,\ldots,f'_n\} \subset S_{ci}$. Moreover, $f'_i \perp f'_j$ for all $i \neq j$ because the same is true of $f_i$ and $f_j$. Thus $\CT_n$ and $\CT_n'$ are two transversals for $N^{-1}(n)/_\sim$ that are contained in $S_{ci}^1$, and so Corollary~\ref{cor:dichotomy for S_ci under gen scale} implies that $\sum_{f' \in F'_n} e_{f'S} = \sum_{f \in F_n} e_{fS}$, showing the desired equality.
\end{proof}

The next lemma is crucial to what follows. Roughly, it says the following: for an admissible semigroup, condition \ref{cond:A4} allows that a transversal corresponding to a finite intersection of principal right ideals $f_nS$ with $N_{f_n}=n$ and $n\in \text{Irr}(N(S))$ can be taken in the form of a transversal for the inverse image, under the scale $N$, of the product of all $n\in N(S)$ that appear in the intersection.

\begin{lemma}\label{lem:prod of d_n for irreducibles}
 Let $S$ be an admissible right LCM semigroup and $m,n \in \text{Irr}(N(S)), m \neq n$. If $\CT_m=\{f_1,\ldots,f_m\}$ and $\CT_n=\{g_1,\ldots,g_n\}$ are transversals for $N^{-1}(m)/_\sim$ and $N^{-1}(n)/_\sim$, respectively, then $f_iS \cap g_jS=h_{i+m(j-1)}S$ for some $h_{i+m(j-1)} \in S$ for all $i,j$, and $\CT_{mn} := \{h_1,\ldots,h_{mn}\}$ is a transversal for $N^{-1}(mn)/_\sim$.
\end{lemma}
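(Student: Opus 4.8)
The plan is to reduce everything to Proposition~\ref{prop:NisaHMofRLCMs}(iii), applied once to an auxiliary product set and once to $\{h_{ij}\}$. Two preliminary observations: since $m,n\in\operatorname{Irr}(N(S))$ we have $mn\in N(S)$, and by \ref{cond:A4} the monoid $N(S)$ is free abelian on $\operatorname{Irr}(N(S))$, so for the distinct irreducibles $m\neq n$ the least common multiple of $m$ and $n$ in $N(S)$ equals $mn$. First I would form $\CT_m\CT_n:=\{f_ig_k\mid 1\le i\le m,\ 1\le k\le n\}$ and check it is an accurate foundation set contained in $N^{-1}(mn)$: every element has $N$-value $mn$ by multiplicativity of $N$; a product of two foundation sets is a foundation set (chase $s\in S$ first through $\CT_m$, then through $\CT_n$); and accuracy follows from $f_iS\cap f_{i'}S=\emptyset$ for $i\neq i'$ together with left cancellation and accuracy of $\CT_n$. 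By Proposition~\ref{prop:NisaHMofRLCMs}(iii), $\CT_m\CT_n$ is then a transversal for $N^{-1}(mn)/_\sim$, whose $mn$ classes are partitioned into the $m$ ``columns'' $f_i\CT_n$ of size $n$.

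The hard part will be showing $f_iS\cap g_jS\neq\emptyset$ for every $i,j$; granting this, Proposition~\ref{prop:NisaHMofRLCMs}(iv) and the lcm computation force $N_{h_{ij}}=mn$ for any generator $h_{ij}$ of $f_iS\cap g_jS$. To prove nonemptiness, fix $j$ and look at $g_jf_1,\dots,g_jf_m\in g_jS$: these are pairwise $\perp$ (cancel $g_j$ on the left in $f_\ell S\cap f_{\ell'}S=\emptyset$), hence pairwise non-$\sim$ because $\perp$ excludes $\sim$, and each has $N$-value $mn$; so each $g_jf_\ell$ lies in the $\sim$-class of a unique element $f_{i_\ell}g_{k_\ell}\in\CT_m\CT_n$, and these elements are pairwise distinct. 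Since $\sim$ preserves $\perp$ (as recorded before Proposition~\ref{prop:NisaHMofRLCMs} and used in its proof) and $g_jf_\ell\not\perp g_j$, we get $f_{i_\ell}g_{k_\ell}\not\perp g_j$, hence $f_{i_\ell}S\cap g_jS\neq\emptyset$. It now suffices to show $\ell\mapsto i_\ell$ is injective on $\{1,\dots,m\}$, as it is then a bijection onto $\{1,\dots,m\}$. If $i_\ell=i_{\ell'}=:i$, then $f_iS\cap g_jS=h_{ij}S\neq\emptyset$; writing $h_{ij}=f_ip$ we have $N_p=n$ (by Proposition~\ref{prop:NisaHMofRLCMs}(iv) and the lcm computation, $N_{h_{ij}}=mn=mN_p$); intersecting $f_ig_{k_\ell}S\cap g_jS\subseteq f_iS\cap g_jS=f_ipS$ and cancelling $f_i$ gives $g_{k_\ell}S\cap pS\neq\emptyset$, so $g_{k_\ell}\sim p$ by Proposition~\ref{prop:NisaHMofRLCMs}(ii); likewise $g_{k_{\ell'}}\sim p$, so $g_{k_\ell}=g_{k_{\ell'}}$ (distinct elements of the transversal $\CT_n$ are non-$\sim$) and hence $f_{i_\ell}g_{k_\ell}=f_{i_{\ell'}}g_{k_{\ell'}}$, which forces $\ell=\ell'$ since these were distinct.

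Finally I would verify that $\{h_{ij}\mid 1\le i\le m,\ 1\le j\le n\}\subseteq N^{-1}(mn)$ is an accurate foundation set and invoke Proposition~\ref{prop:NisaHMofRLCMs}(iii) once more. Accuracy: for $(i,j)\neq(i',j')$ either $i\neq i'$, and then $h_{ij}S\subseteq f_iS$ and $h_{i'j'}S\subseteq f_{i'}S$ are disjoint by accuracy of $\CT_m$, or $i=i'$ and $j\neq j'$, and then $h_{ij}S\subseteq g_jS$ and $h_{ij'}S\subseteq g_{j'}S$ are disjoint by accuracy of $\CT_n$; in particular the $mn$ elements are pairwise distinct. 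Foundation set: given $s\in S$, use that $\CT_m$ is a foundation set to pick $i$ and $w=f_is_2\in sS\cap f_iS$, then use that $\CT_n$ is a foundation set to pick $j$ and $s_3$ with $s_2s_3\in g_jS$; then $ws_3=f_is_2s_3\in f_iS\cap g_jS=h_{ij}S$ and $ws_3\in sS$, so $sS\cap h_{ij}S\neq\emptyset$. Thus $\{h_{ij}\}$ is a transversal for $N^{-1}(mn)/_\sim$, and relabelling $h_{ij}=:h_{i+m(j-1)}$ gives the enumeration $h_1,\dots,h_{mn}$ of the statement. Apart from the nonemptiness step, the only point requiring care is that $m\neq n$ is genuinely used: it is precisely what makes $\operatorname{lcm}(m,n)=mn$ and thereby keeps $\CT_m\CT_n$, and each $f_iS\cap g_jS$, inside the single level $N^{-1}(mn)$.
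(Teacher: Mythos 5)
Your proof is correct, but it takes a longer route than the paper's on the one substantive point, namely that $f_iS\cap g_jS\neq\emptyset$ for \emph{every} pair $(i,j)$. The paper gets this for free from a counting argument: it lets $\CT_{mn}$ consist of the right LCMs $h_{i+m(j-1)}$ only for those pairs with $f_i\not\perp g_j$, checks (essentially as in your last paragraph) that this is an accurate foundation set contained in $N^{-1}(mn)$, and applies Proposition~\ref{prop:NisaHMofRLCMs}~(iii) once to conclude it is a transversal for $N^{-1}(mn)/_\sim$; condition \ref{cond:A3a} then forces $\lvert\CT_{mn}\rvert=mn$, which is only possible if all $mn$ pairs are non-disjoint (and all the $h$'s distinct). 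Your argument instead establishes non-emptiness directly: you introduce the auxiliary transversal $\CT_m\CT_n$, match $g_jf_1,\dots,g_jf_m$ to distinct members of it, and prove injectivity of $\ell\mapsto i_\ell$ via Proposition~\ref{prop:NisaHMofRLCMs}~(ii) and left cancellation. All of these steps are sound (and the device $\CT_m\CT_n$ is the same one the paper uses inside the proof of Proposition~\ref{prop:NisaHMofRLCMs}~(iv)), but your entire second paragraph becomes unnecessary once one notices that the cardinality constraint in \ref{cond:A3a} already pins down the number of non-disjoint pairs. What your version buys is an explicit identification of which $\sim$-class of $\CT_m\CT_n$ each $h_{ij}$ represents; what the paper's version buys is brevity, at the cost of leaving the accurate-foundation-set verification as ``clearly''.
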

\begin{proof}
By \ref{cond:A3b}, $\CT_m$ and $\CT_n$ are accurate foundation sets. Let $\CT_{mn}$ consist of the $h_{i+m(j-1)}$ for which $f_i \not\perp g_j$, where $i=1,\dots, m$ and $j=1,\dots, n$. We will show that $f_i \not\perp g_j$ for all $i$ and $j$. Clearly $\CT_{mn}$ is an accurate foundation set.
Since the generalised scale $N$ is a homomorphism of right LCM semigroups by Proposition~\ref{prop:NisaHMofRLCMs}~(iv) and $m$ and $n$ are distinct irreducibles in $N(S)$, we have $\CT_{mn} \subset N^{-1}(mn)$. Then Proposition~\ref{prop:NisaHMofRLCMs}~(iii) implies that $\CT_{mn}$ must be a transversal for $N^{-1}(mn)/_\sim$. Thus, $|\CT_{mn}| = mn$ by \ref{cond:A3a} which gives the claim.
\end{proof}

\begin{remark}\label{rem:product formula for zeta function-gen}
We record another consequence of condition \ref{cond:A4}. Suppose $I\subset\Irr(N(S))$. We claim that the $I$-restricted $\zeta$-function $\zeta_I$ from Definition~\ref{def:partial zeta function} admits a product description $\zeta_I(\beta) = \prod_{n \in I} (1-n^{-(\beta-1)})^{-1}$. Indeed, all summands in the series defining $\zeta_I(\beta)$ are positive, so we are free to rearrange their order. Condition \ref{cond:A4} implies that the submonoid $\langle I \rangle$ of $\N^\times$ is the free abelian monoid in $I$, and therefore
\[\begin{array}{l}
\zeta_I(\beta) = \sum\limits_{n \in \langle I \rangle} n^{-(\beta-1)}
= \prod\limits_{n \in I} (1-n^{-(\beta-1)})^{-1}.
\end{array}\]
\end{remark}

The next result is an abstract version of \cite{LR2}*{Lemma 10.1}, proved for the semigroup $C^*$-algebra of $\N\rtimes \N^\times$. Laca and Raeburn had anticipated back then that the result is ``likely to be useful elsewhere". Here we confirm this point by showing that the techniques of proof, when properly updated, are valid in the abstract setting of our admissible semigroups.

We recall that the $*$-homomorphism $\varphi:C^*(S_c) \to C^*(S)$ was defined in Remark~\ref{rem:C*(S_c) in C*(S)}. In the following, we let $\CT$ be a transversal for $S/_\sim$ with $\CT \subset S_{ci}^1$ and $\CT_I := \bigcup_{n \in \langle I \rangle} \CT_n = N^{-1}(\langle I \rangle) \cap \CT$ for $I \subset \text{Irr}(N(S))$.

\begin{lemma}\label{lem:rec formula-gen}
 Let $S$ be an admissible right LCM semigroup. If $\phi$ is a $\kms_\beta$-state on $C^*(S)$ for some $\beta\in \R$, denote by $(\pi_\phi,\CH_\phi,\xi_\phi)$ its GNS-representation and by $\tilde{\phi} = \langle \cdot \ \xi_\phi, \xi_\phi \rangle$ the vector state extension of $\phi$ to $\CL(\CH_\phi)$. For every subset $I$ of $\Irr(N(S))$, $Q_I := \prod\limits_{n \in I} \pi_\phi(d_n)$ defines a projection in $\pi_\phi(C^*(S))''$. If $\zeta_I(\beta) < \infty$, then $Q_I$ has the following properties:
\begin{enumerate}[(i)]
\item $\tilde{\phi}(Q_I) = \zeta_I(\beta)^{-1}$.
\item The map $y \mapsto \zeta_I(\beta) \ \tilde{\phi}(Q_I\pi_\phi(y)Q_I)$ defines a state $\phi_I$ on $C^*(S)$ for which $\phi_I \circ \varphi$ is a trace on $C^*(S_c)$.
\item The family $(\pi_\phi(v_s^{\phantom{*}})Q_I^{\phantom{*}}\pi_\phi(v_s^*))_{s \in \CT_I}$ consists of mutually orthogonal projections, and
$Q^I := \sum_{s \in \CT_I} \pi_\phi(v_s^{\phantom{*}})Q_I^{\phantom{*}}\pi_\phi(v_s^*)$
defines a projection such that $\tilde{\phi}(Q^I) = 1$.
\item There is a reconstruction formula for $\phi$ given by
\[\begin{array}{c}
\phi(y) = \zeta_I(\beta)^{-1} \sum\limits_{s \in \CT_I} N_s^{-\beta} \ \phi_I(v_s^*yv_s^{\phantom{*}}) \quad \text{ for all } y \in C^*(S).
\end{array}\]
\end{enumerate}
\end{lemma}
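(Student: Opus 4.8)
The plan is to work throughout with the ordinary $\kms_\beta$-condition on $C^*(S)$ and pass to limits, avoiding modular theory. For finite $F\subset\Irr(N(S))$ set $Q_F:=\prod_{n\in F}d_n\in C^*(S)$. Lemma~\ref{lem:basic defect projection-gen} makes each $d_n$ a well-defined projection, and Lemma~\ref{lem:prod of d_n for irreducibles} gives, for distinct irreducibles $m,n$, the identity $\sum_{f\in\CT_m}e_{fS}\cdot\sum_{g\in\CT_n}e_{gS}=\sum_{h\in\CT_{mn}}e_{hS}$; this is symmetric, so the $d_n$ mutually commute and $(\pi_\phi(Q_F))_F$ is a decreasing net of projections in $\pi_\phi(C^*(S))''$ with strong limit the projection $Q_I$. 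Since $\tilde\phi$ is normal and multiplication by a fixed bounded operator is strongly continuous on bounded sets, $\tilde\phi$ of any word in $\pi_\phi(C^*(S))$ and copies of $Q_I$ equals the $F$-limit of $\phi$ on the corresponding word in $C^*(S)$. For (i): inclusion--exclusion and iterated use of Lemma~\ref{lem:prod of d_n for irreducibles} give $\phi(Q_F)=\sum_{J\subseteq F}(-1)^{|J|}\sum_{f\in\CT_{n_J}}\phi(e_{fS})$ with $n_J=\prod_{n\in J}n$; the $\kms_\beta$-condition and $v_f^*v_f=1$ give $\phi(e_{fS})=N_f^{-\beta}$, and $|\CT_{n_J}|=n_J$ by \ref{cond:A3a}, whence $\phi(Q_F)=\prod_{n\in F}(1-n^{-(\beta-1)})$. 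Letting $F\uparrow I$ and using Remark~\ref{rem:product formula for zeta function-gen} (finiteness of $\zeta_I(\beta)$ being exactly what makes this product converge to a nonzero value) yields $\tilde\phi(Q_I)=\zeta_I(\beta)^{-1}$.

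For (ii), $\phi_I$ is positive because $\tilde\phi(Q_I\pi_\phi(y^*y)Q_I)=\|\pi_\phi(y)Q_I\xi_\phi\|^2\ge 0$, and unital because $\phi_I(1)=\zeta_I(\beta)\tilde\phi(Q_I)=1$ by (i). By Lemma~\ref{lem:d_p commutes with v_s-gen} every $v_a$ with $a\in S_c$ commutes with all $d_n$, hence $\pi_\phi(\varphi(C^*(S_c)))$ commutes with each $Q_F$ and with $Q_I$. As $\sigma$ is trivial on $\varphi(C^*(S_c))$, for $a,b\in C^*(S_c)$ the $\kms_\beta$-condition gives $\phi(\varphi(ab)Q_F)=\phi(\varphi(a)Q_F\varphi(b))=\phi(Q_F\varphi(b)\sigma_{i\beta}(\varphi(a)))=\phi(Q_F\varphi(ba))=\phi(\varphi(ba)Q_F)$, so $a\mapsto\phi(\varphi(a)Q_F)$ is a trace on $C^*(S_c)$; passing to the limit, $\phi_I\circ\varphi=\zeta_I(\beta)\lim_F\phi(\varphi(\cdot)Q_F)$ is a trace.

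The technical core is the claim: $Q_I\pi_\phi(e_{gS})=0$ whenever $N_g\in\langle I\rangle$ and $N_g>1$. First I would record the monotonicity $d_{n_1}\le d_n$ for $n_1\in I$ with $n_1\mid n$, $n\in\langle I\rangle$: realizing a transversal for $N^{-1}(n)/_\sim$ as $\bigcup_{f\in\CT_{n_1}}f\,\CT_f$ with $\CT_f$ a transversal for $N^{-1}(n/n_1)/_\sim$ (a foundation set inside $N^{-1}(n)$, hence a transversal by Proposition~\ref{prop:NisaHMofRLCMs}~(iii), exactly as in the proof of Proposition~\ref{prop:NisaHMofRLCMs}~(v)) gives $\sum_h e_{hS}=\sum_{f}\sum_{g\in\CT_f}e_{fgS}\le\sum_{f\in\CT_{n_1}}e_{fS}$, and independence of $d_n$ of the transversal (Lemma~\ref{lem:basic defect projection-gen}) yields the inequality. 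Hence $Q_I\le\pi_\phi(d_{n_1})\le\pi_\phi(d_n)$ for all $n\in\langle I\rangle$, $n>1$; writing $g=i(g)c(g)$ with $i(g)\in\CT_{N_g}$ (Lemma~\ref{lem:maps i and c}) one gets $Q_I\pi_\phi(e_{gS})\le Q_I\pi_\phi(e_{i(g)S})=Q_I\pi_\phi(d_{N_g})\pi_\phi(e_{i(g)S})=0$. Now (iii): $A_s:=\pi_\phi(v_s)Q_I\pi_\phi(v_s^*)$ is a projection since $v_s^*v_s=1$; for $s\ne t$ in $\CT_I$, if $N_s=N_t$ then $s\perp t$ by Corollary~\ref{cor:dichotomy for S_ci under gen scale} so $A_sA_t=\pi_\phi(v_s)Q_I\pi_\phi(v_s^*v_t)Q_I\pi_\phi(v_t^*)=0$; if $N_s\ne N_t$ then $v_s^*v_t=v_{s'}v_{t'}^*$ with $ss'=tt'$ a right LCM of $s,t$ and, by Proposition~\ref{prop:NisaHMofRLCMs}~(iv), $N_{s'}=\text{lcm}(N_s,N_t)/N_s$ and $N_{t'}=\text{lcm}(N_s,N_t)/N_t$ both lie in $\langle I\rangle$ with at least one $>1$, so the claim forces $Q_I\pi_\phi(v_{s'})=0$ or $\pi_\phi(v_{t'}^*)Q_I=0$ and again $A_sA_t=0$. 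Finally $\tilde\phi(A_s)=\lim_F\phi(v_sQ_Fv_s^*)=\lim_F N_s^{-\beta}\phi(Q_F)=N_s^{-\beta}\tilde\phi(Q_I)$ by the $\kms_\beta$-condition, so $Q^I:=\sum_{s\in\CT_I}A_s$ is a projection with $\tilde\phi(Q^I)=\tilde\phi(Q_I)\sum_{s\in\CT_I}N_s^{-\beta}=\zeta_I(\beta)^{-1}\zeta_I(\beta)=1$; as $\tilde\phi(Q^I)=\|\xi_\phi\|^2$, this gives $Q^I\xi_\phi=\xi_\phi$.

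For (iv) I would write $\phi(y)=\langle\pi_\phi(y)\xi_\phi,\xi_\phi\rangle=\langle Q^I\pi_\phi(y)Q^I\xi_\phi,\xi_\phi\rangle=\sum_{s,t\in\CT_I}\tilde\phi(A_s\pi_\phi(y)A_t)$. Moving $\pi_\phi(v_s)$ past the rest via the $\kms_\beta$-condition gives $\tilde\phi(A_s\pi_\phi(y)A_t)=N_s^{-\beta}\tilde\phi(Q_I\pi_\phi(v_s^*yv_t)Q_I\pi_\phi(v_t^*v_s))$. For $s\ne t$ this vanishes by the dichotomy of (iii): either $v_t^*v_s=0$, or $v_t^*v_s=v_{t'}v_{s'}^*$ and the annihilation claim kills $Q_I\pi_\phi(v_{t'})$ when $N_{t'}>1$, while when instead $N_{s'}>1$ one takes a complex conjugate to bring $\pi_\phi(v_{s'})$ to the right end, applies the $\kms_\beta$-condition once more, and invokes $Q_I\pi_\phi(v_{s'})=0$. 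The diagonal terms are $\tilde\phi(A_s\pi_\phi(y)A_s)=N_s^{-\beta}\tilde\phi(Q_I\pi_\phi(v_s^*yv_s)Q_I)=\zeta_I(\beta)^{-1}N_s^{-\beta}\phi_I(v_s^*yv_s)$ by definition of $\phi_I$, and summing over $s\in\CT_I$ yields the reconstruction formula. The hard part is the annihilation claim $Q_I\pi_\phi(e_{gS})=0$ together with the bookkeeping that every cofactor $s',t'$ produced by a right LCM of two elements of $\CT_I$ again has $N$-value in $\langle I\rangle$; once that is in hand, the remaining steps are routine manipulations with the $\kms_\beta$-condition, normality of $\tilde\phi$, and Lemma~\ref{lem:prod of d_n for irreducibles}. (Alternatively, one can bypass the finite products $Q_F$ by invoking the standard fact that $\tilde\phi$ is a $\kms_\beta$-state for the extension of $\sigma$ to $\pi_\phi(C^*(S))''$ and argue directly at the von Neumann algebra level.)
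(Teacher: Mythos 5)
Your proposal is correct and follows essentially the same route as the paper's proof: inclusion–exclusion via Lemma~\ref{lem:prod of d_n for irreducibles} for (i), commutation of the $d_n$ with core isometries plus the $\kms_\beta$-condition for (ii), annihilation of range projections $e_{s'S}$ by the defect projections for the orthogonality in (iii), and the identity $Q^I\xi_\phi=\xi_\phi$ combined with that orthogonality for (iv), all passing from finite $F$ to $I$ by normality of $\tilde\phi$. The only (cosmetic) difference is that you package the key annihilation step as the general claim $Q_I\pi_\phi(e_{gS})=0$ for $N_g\in\langle I\rangle$, $N_g>1$, derived from the monotonicity $d_{n_1}\le d_n$ for $n_1\mid n$, whereas the paper argues the same point case by case with accurate foundation sets $F_{n'}$ and Corollary~\ref{cor:dichotomy for S_ci under gen scale}.
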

\begin{proof}
If $I$ is finite, then $Q_I$ belongs to $\pi_\phi(C^*(S))$. If $I$ is infinite, then we obtain $Q_I$ as a weak limit of projections $Q_{I_n}$ with $|I_n| < \infty$ and $I_n \nearrow I$ because the $d_n$ commute. Hence $Q_I$ belongs to the double commutant of $\pi_\phi(C^*(S))$ inside $\CL(\CH_\phi)$.

Let us first assume that $I$ is finite. An iterative use of Lemma~\ref{lem:prod of d_n for irreducibles} then implies
\begin{equation}\label{eq:finite prod of defect projections-gen}
\begin{array}{c}
\prod\limits_{n \in I} d_n = \sum\limits_{A \subset I} (-1)^{|A|} \sum\limits_{(f_1,\ldots,f_{\lvert A \rvert }) \in \prod\limits_{n \in A} \CT_n } e_{\cap_{k=1}^{\lvert A \rvert} f_kS}
= \sum\limits_{A \subset I} (-1)^{|A|} \sum\limits_{f \in \CT_{n(A)}} e_{fS}.
\end{array}
\end{equation}
 Applying $\tilde{\phi}\circ \pi_\phi = \phi$ to \eqref{eq:finite prod of defect projections-gen}, using that $\phi$ is a $\kms_\beta$-state on $C^*(S)$ as in the last equality of \eqref{eq:computations at beta 1} and invoking the product formula from Remark~\ref{rem:product formula for zeta function-gen} yields
\[\begin{array}{l}
\tilde{\phi}(Q_I) = \sum\limits_{A \subset I} (-1)^{|A|} {n_A}^{-(\beta-1)} = \prod\limits_{n \in I} (1-n^{-(\beta-1)}) = \zeta_I(\beta)^{-1}.
\end{array}\]
 This proves part (i) in the case that $I$ is finite. The general case follows from the finite case by taking limits $I_n \nearrow I$ with $I_n$ finite, using continuity as $\tilde{\phi}$ is a vector state and the fact that $Q_I$ is the weak limit of $(Q_{I_n})_{n \geq 1}$.

For (ii), we note that $\phi_I$ is a state on $C^*(S)$ by i), and it remains to prove that it is tracial on $\varphi(C^*(S_c))$. Let $I$ be finite and $r_i \in S_c, i = 1,\ldots,4$. According to Lemma~\ref{lem:d_p commutes with v_s-gen} and the fact that $\phi$ is a $\kms_\beta$-state (keeping in mind that $N_{r_1}=N_{r_2}=1$), we have
\[\begin{array}{lcl}
\phi_I(v_{r_1}^{\phantom{*}}v_{r_2}^*v_{r_3}^{\phantom{*}}v_{r_4}^*) &=& \zeta_I(\beta) \ \phi(\prod\limits_{m \in I} d_m \ v_{r_1}^{\phantom{*}}v_{r_2}^*v_{r_3}^{\phantom{*}}v_{r_4}^* \prod\limits_{n \in I} d_n) \vspace*{2mm}\\
&=& \zeta_I(\beta) \ \phi(v_{r_1}^{\phantom{*}}v_{r_2}^* \prod\limits_{m \in I} d_m \ v_{r_3}^{\phantom{*}}v_{r_4}^* \prod\limits_{n \in I} d_n) \vspace*{2mm}\\
&=& \zeta_I(\beta) \ \phi(\prod\limits_{m \in I} d_m \ v_{r_3}^{\phantom{*}}v_{r_4}^* \prod\limits_{n \in I} d_n \ v_{r_1}^{\phantom{*}}v_{r_2}^*) \vspace*{2mm}\\
&=& \zeta_I(\beta) \ \phi(\prod\limits_{m \in I} d_m \ v_{r_3}^{\phantom{*}}v_{r_4}^*\ v_{r_1}^{\phantom{*}}v_{r_2}^* \prod\limits_{n \in I} d_n) = \phi_I(v_{r_3}^{\phantom{*}}v_{r_4}^*v_{r_1}^{\phantom{*}}v_{r_2}^*).
\end{array}\]
For general $I$, the state $\phi_I$ is the weak*-limit of $\phi_{I_n}$ where $I_n$ is finite and $I_n \nearrow I$, so that the trace property is preserved. This completes (ii).

Towards proving (iii), note first that by Lemma~\ref{lem:basic defect projection-gen} the family of projections appearing in the assertion is independent of the choice of the transversals as $n$ varies in $\langle I \rangle$. Let $s \neq t$ with $s$ in a transversal $\CT_m$ for $N^{-1}(m)/_\sim$ and $t$ in a transversal $\CT_n$ for $N^{-1}(n)/_\sim$ for some $m,n \in \langle I \rangle$. If $m=n$, then $s$ and $t$ are distinct elements belonging to a transversal that is also a (proper) accurate foundation set, so $s\perp t$, which implies $v_s^*v_t^{\phantom{*}} = 0$ and therefore shows that the projections $\pi_\phi(v_s^{\phantom{*}})Q_I\pi_\phi(v_s^*)$ and $\pi_\phi(v_t^{\phantom{*}})Q_I\pi_\phi(v_t^*)$ are orthogonal. Suppose $m \neq n$. Now either $s\perp t$, which implies the claimed orthogonality as above, or $s$ and $t$ have a right LCM in $S$. In the latter case, using that $N$ is a homomorphism of right LCM semigroups, it follows that there are $s',t'\in S$ with $ss'=tt'$ and $N_{s'} = m^{-1}\text{lcm}(m,n), N_{t'} = n^{-1}\text{lcm}(m,n)$. Since $s,t\in S_{ci}$, $sS\cap tS\neq\emptyset$ and $S_{ci}\subset S$ is $\cap$-closed, also $ss', tt'\in S_{ci}$. This forces $s', t'\in S_{ci}$. The assumption $m \neq n$ implies that $N_{s'} > 1$ or $N_{t'} > 1$. Suppose $N_{s'} > 1$. Note $N_{s'} \in \langle I \rangle$ because this monoid is free abelian by \ref{cond:A4}. Take $n'\in I$ such that $N_{s'}$ is a multiple of $n'$, and pick a corresponding proper accurate foundation set $F_{n'}$. Then $fS \cap s'S \neq \emptyset$ for some $f \in F_{n'}$. Using that $S_{ci}\subset S$ is $\cap$-closed, we can write $fS \cap s'S = s''S$ for some $s'' \in S_{ci}$ for which $N_{s''}=\text{lcm}(N_{s'},n')=N_{s'}$. Then $s'$ and $s''$ differ by multiplication on the right with an element of $S^*$ according to Corollary~\ref{cor:dichotomy for S_ci under gen scale}, so $fS \cap s'S = s'S$. Moreover, there is a unique $f \in F_{n'}$ with this property because $F_{n'}$ is accurate. Now $s'S\subset fS$ implies that $d_{n'}e_{s'S} = 0$, and this in turn yields
\[\begin{array}{c}
Q_I\pi_\phi(v_s^*v_t^{\phantom{*}})Q_I = Q_{I\setminus\{n'\}}\pi_\phi(d_{n'}e_{s'S}v_{s'}^{\phantom{*}}v_{t'}^*)Q_I = 0,
\end{array}\]
from which the desired orthogonality follows. The case $N_{t'}>1$ is analogous. Therefore, we get mutually orthogonal projections whenever $s \in \CT_m$ and $t \in \CT_n$ are distinct. In particular, $Q^I$ is a projection in $\pi_\phi(C^*(S))''$. For a finite set $I$, an application of the $\kms_\beta$-condition at the second equality shows that
\[\begin{array}{l}
\tilde{\phi}(Q^I) = \sum\limits_{s \in \CT_I} \phi(v_s^{\phantom{*}}\prod\limits_{m \in I} d_m^{\phantom{*}} \ v_s^*) 
= \sum\limits_{s \in \CT_I} N_s^{-\beta} \tilde{\phi}(Q_I) 
= \tilde{\phi}(Q_I) \zeta_I(\beta).
\end{array}\]
The last term is $1$ by part (i), and (iii) follows. For arbitrary $I$, we invoke continuity as was done in the proof of part i).

Finally, to prove (iv), note that $\tilde{\phi}(Q^I) = 1$ implies $\tilde{\phi}(\pi_\phi(y)) = \tilde{\phi}(Q^I\pi_\phi(y)Q^I)$ for all $y \in C^*(S)$. For finite $I$, appealing to (iii) and using the $\kms_\beta$-condition three times, we obtain
\[\begin{array}{lclcl}
\phi(y) &=& \tilde{\phi}(\pi_\phi(y)) = \tilde{\phi}(Q^I\pi_\phi(y)Q^I) 
&=& \sum\limits_{s,t \in \CT_I} \phi(v_s^{\phantom{*}}\prod\limits_{k \in I}d_k^{\phantom{*}} \ v_s^*yv_t^{\phantom{*}}\prod\limits_{\ell \in I}d_\ell^{\phantom{*}} \ v_t^*) \vspace*{2mm}\\
&&&=& \sum\limits_{s,t \in \CT_I} N_s^{-\beta} \ \phi(\prod\limits_{k \in I}d_k^{\phantom{*}} \ v_s^*yv_t^{\phantom{*}}\prod\limits_{\ell \in I}d_\ell^{\phantom{*}} \ v_t^*v_s^{\phantom{*}}\prod\limits_{k' \in I}d_{k'}^{\phantom{*}}) \vspace*{2mm}\\
&&&=& \sum\limits_{s,t \in \CT_I} \tilde{\phi}(Q_I \pi_\phi(v_s^*yv_t^{\phantom{*}})Q_I\pi_\phi(v_t^*v_s^{\phantom{*}})Q_I) \vspace*{2mm}\\
&&&=& \zeta_I(\beta)^{-1}\sum\limits_{s \in \CT_I} N_s^{-\beta}\phi_I(v_s^*yv_s^{\phantom{*}}).
\end{array}\]
This proves the reconstruction formula in the case of finite $I$. For general $I \subset \Irr(N(S))$ the claim follows by continuity arguments.
\end{proof}

\section{Construction of KMS-states}\label{sec:construction}
In this section we show that the constraints obtained in Section~\ref{sec:constraints} are optimal. To begin with, we assume that $S$ is a core factorable right LCM semigroup for which $S_{ci} \subset S$ is $\cap$-closed. We then fix a transversal $\CT$ for $S/_\sim$ with $\CT \subset S_{ci}^1$ according to Lemma~\ref{lem:maps i and c}; thus $\CT$ gives rise to maps $i\colon S \to \CT$ and $c\colon S \to S_c$ determined by $s=i(s)c(s)$ for all $s \in S$. Recall from Lemma~\ref{lem:thebijectionsg} that we denote by $\alpha$ both the action $S_c \curvearrowright S/_\sim$ and the corresponding action $S_c \curvearrowright \CT$. Recall further from Remark~\ref{rem:C*(S_c) in C*(S)} that the standard generating isometries for $C^*(S_c)$ are $w_a$ for $a \in S_c$.

\begin{remark}\label{rem:independence of T for inner product value}
Consider $C^*(S_c)$ as a right Hilbert module over itself with inner product $\langle y,z\rangle = y^*z$ for $y,z\in C^*(S_c)$. For every $a \in S_c$, the map $y \mapsto w_ay$ defines an isometric endomorphism of $C^*(S_c)$ as a right Hilbert $C^*(S_c)$ module. If $a \in S^*$, then the map is an isometric isomorphism. Indeed, the map is linear and compatible with the right module structures. It also preserves the inner product as $w_a$ is an isometry, and hence $\langle w_ay,w_az \rangle = y^*w_a^*w_a^{\phantom{*}}z = y^*z$. Thus, the map is isometric. If $a\in S^*$, then the map is surjective because $w_a^{\phantom{*}}(w_a^*y) = y$ for all $y \in C^*(S_c)$.
\end{remark}

We start with a brief lemma that allows us to recover $t$ and $c(st)$ from $s$ and $i(st)$ for $s \in S$ and $t \in \CT$, followed by a technical lemma that makes the proof of Theorem~\ref{thm:rep of C*(S)} more accessible. Both results are further elaborations on the factorisation arising from Lemma~\ref{lem:maps i and c}.

\begin{lemma}\label{lem:recovering t and c(st) from s and i(st)}
For $s \in S$, the map $\CT \to \CT, t \mapsto i(st)$ is injective on $\CT$, and there is a bijection $i(s\CT) \to \{ (t,c(st)) \mid t \in \CT\}$.
\end{lemma}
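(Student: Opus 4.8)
The plan is to exploit the factorisation $s = i(s)c(s)$ from Lemma~\ref{lem:maps i and c} together with the compatibility between the map $i$ and left multiplication, which is encoded in the action $\alpha$ of $S_c$ on $\CT$. First I would prove injectivity of $t \mapsto i(st)$ on $\CT$. Suppose $t, t' \in \CT$ satisfy $i(st) = i(st')$, i.e.\ $st \sim st'$. Left cancellation (recall $S$ is left cancellative) — or more precisely the observation that $st \sim st'$ forces $t \sim t'$ — then gives $t \sim t'$, and since both lie in the transversal $\CT$ we conclude $t = t'$. The step $st \sim st' \Rightarrow t \sim t'$ itself needs a short argument: writing $sta = st'b$ with $a,b \in S_c$, left cancellation yields $ta = t'b$, so $t \sim t'$ by definition of $\sim$.

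Next I would establish the claimed bijection. The map $\Phi\colon i(s\CT) \to \{(t, c(st)) \mid t \in \CT\}$ is defined as follows: given $u \in i(s\CT)$, there is (by definition of $i(s\CT)$) some $t \in \CT$ with $u = i(st)$, and by the injectivity just proved this $t$ is unique; set $\Phi(u) := (t, c(st))$. Well-definedness is immediate from uniqueness of $t$. Surjectivity is immediate from the definition of the target set. For injectivity of $\Phi$: if $\Phi(u_1) = \Phi(u_2)$, then the first coordinates agree, say both equal $t$; but then $u_1 = i(st) = u_2$. Hence $\Phi$ is a bijection. Alternatively, and perhaps cleaner to write, one notes that $t \mapsto i(st)$ is a bijection from $\CT$ onto $i(s\CT)$ (injective by the first part, surjective onto its image by definition), and $t \mapsto (t, c(st))$ is a bijection from $\CT$ onto $\{(t,c(st)) \mid t \in \CT\}$ (injective since the first coordinate already determines the pair); composing the inverse of the former with the latter gives the desired bijection.

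I do not anticipate a serious obstacle here; the only point requiring a little care is making sure that the implication $st \sim st'$ implies $t \sim t'$ is correctly justified from left cancellativity, and that one is not secretly using right cancellativity of $S$ (which is not available without the faithfulness hypothesis of Corollary~\ref{cor:S_c action faithful implies right cancellation for S_c}). Since the equivalence $\sim$ is defined via right multiplication by core elements, $sta = st'b$ with $a,b \in S_c$ reduces under left cancellation to $ta = t'b$, which is exactly $t \sim t'$, so only left cancellativity is used. The rest is bookkeeping about the two maps out of $\CT$ and their images.
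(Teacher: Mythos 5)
Your proposal is correct and follows essentially the same route as the paper: injectivity via $i(st)=i(st')\Rightarrow st\sim st'$, then left cancellation of $s$ in $sta=st'b$ to get $t\sim t'$ and hence $t=t'$ in the transversal, with the bijection then being bookkeeping from $st=i(st)c(st)$. Your explicit justification of the step $st\sim st'\Rightarrow t\sim t'$ is exactly what the paper's phrase ``left cancellation implies $t_1\sim t_2$'' abbreviates.
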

\begin{proof}
Suppose $s \in S, t_1,t_2 \in \CT$ satisfy $i(st_1)=i(st_2)$. Then $st_1 \sim st_2$, and left cancellation implies $t_1 \sim t_2$, so that $t_1=t_2$ as $t_1,t_2 \in \CT$. Thus, we can recover $t \in \CT$ from $i(st)$, and as $st=i(st)c(st)$, we then recover $c(st)$.
\end{proof}

\begin{lemma}\label{lem:technicality for representation}
Suppose there are $s_1,\ldots,s_4 \in S, t_1,t_2,r \in \CT$ such that $s_1S \cap s_2S = s_1s_3S, \ s_1s_3=s_2s_4, \ t_1 = i(s_4r), \ \text{and} \ t_2 = i(s_3r)$. Then $c(s_1t_2)c(s_3r) = c(s_2t_1)c(s_4r)$ and $c(s_1t_2)S_c \cap c(s_2t_1)S_c = c(s_1t_2)c(s_3r)S_c$.
\end{lemma}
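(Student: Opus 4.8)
The plan is to track right LCMs through the factorisation $s = i(s)c(s)$ and reduce everything to computations inside $S_c$, using the $\cap$-closedness of $S_{ci}\subset S$ (condition \ref{cond:A2}) together with Lemma~\ref{lem:equiv forms of (VII)} and Lemma~\ref{lem:maps i and c}. First I would observe that since $s_1s_3 = s_2s_4$, multiplying on the right by $r\in\CT$ gives $s_1(s_3 r) = s_2(s_4 r)$; applying the factorisation to both sides, $s_1 i(s_3r)c(s_3r) = s_2 i(s_4r)c(s_4r)$, i.e.\ $s_1 t_2\, c(s_3r) = s_2 t_1\, c(s_4r)$. Factoring $s_1 t_2 = i(s_1t_2)c(s_1t_2)$ and $s_2 t_1 = i(s_2t_1)c(s_2t_1)$, this reads
\[
i(s_1t_2)\,c(s_1t_2)\,c(s_3r) = i(s_2t_1)\,c(s_2t_1)\,c(s_4r).
\]
Since the left and right sides are equal, their classes under $\sim$ agree, so $i(s_1t_2)\sim i(s_2t_1)$; as both lie in $\CT$, which is a transversal, we get $i(s_1t_2) = i(s_2t_1)$. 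Left cancellation then yields the first claimed identity $c(s_1t_2)c(s_3r) = c(s_2t_1)c(s_4r)$.

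For the intersection formula, the key is that $s_1s_3S = s_1S\cap s_2S$ is the right LCM of $s_1$ and $s_2$, and I want to transport this through right multiplication by $r$ and then through the factorisations. Right multiplication by a fixed element need not preserve right LCMs in general, but here the extra $\cap$-closedness hypotheses make it work: I would show $s_1 t_2\, c(s_3r)S = s_1(s_3r)S = s_1S\cap s_2S \cap (\text{stuff})$... more carefully, the cleanest route is to pass to $S_c$ directly. Using Lemma~\ref{lem:maps i and c} and left cancellation, it suffices to prove $c(s_1t_2)S_c \cap c(s_2t_1)S_c = c(s_1t_2)c(s_3r)S_c$ \emph{as a statement about} $s_1t_2$ and $s_2t_1$: namely that $s_1t_2\, S \cap s_2t_1\, S = s_1t_2\, c(s_3r)\,S$ and that the relevant complements all land in $S_c$. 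From $s_1t_2\,c(s_3r) = s_2t_1\,c(s_4r)$ and $c(s_3r),c(s_4r)\in S_c$, we see $s_1t_2\,c(s_3r)S\subseteq s_1t_2 S\cap s_2t_1 S$. For the reverse inclusion, let $wS = s_1t_2 S\cap s_2t_1 S$ be a right LCM; since $i(s_1t_2)=i(s_2t_1)$ and this element is core irreducible (or $1$), Lemma~\ref{lem:OldA2} applied with $a = i(s_1t_2)$, $s=c(s_1t_2)$ forces the complement of $s_1t_2$ inside $w$ to lie in $S_c$, and similarly for $s_2t_1$; hence $w\sim s_1t_2$, so $w = s_1t_2\, d$ for some $d\in S_c$. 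Then $s_1t_2\,c(s_3r) = s_2t_1\,c(s_4r)\in wS = s_1t_2\,dS$ gives $c(s_3r)\in dS\cap S_c$, so $dS_c\supseteq$ nothing smaller; conversely $d S\subseteq c(s_3r)S$ is forced by $w S = s_1t_2 S\cap s_2t_1 S\subseteq s_1t_2\,c(s_3r)S$? — no, I should argue $d\in c(s_3r)S_c$ by minimality of $w$: any common right multiple of $s_1t_2$ and $s_2t_1$ factors through $w$, and $s_1t_2\,c(s_3r)$ is such a multiple, so $s_1t_2\,c(s_3r)\in wS = s_1t_2\,dS$, giving $c(s_3r)\in dS$; combined with the reverse, $dS_c = c(s_3r)S_c$ after intersecting with $S_c$, which is exactly the claim once we cancel $i(s_1t_2)$.

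The main obstacle I anticipate is handling the degenerate cases and the bookkeeping around whether $i(s_1t_2)$ is $1$ or a genuine core irreducible element, since Lemma~\ref{lem:OldA2} is stated for $s\in S_{ci}$; when $i(s_1t_2)=1$ the factorisation statements become trivial but one still needs $s_1t_2, s_2t_1\in S_c$, which follows from $i(\cdot)=1$. The other delicate point is justifying that the right LCM $wS$ of $s_1t_2$ and $s_2t_1$ exists at all — but it does, since $s_1t_2\,c(s_3r) = s_2t_1\,c(s_4r)$ is a nonempty common right multiple, so $s_1t_2 S\cap s_2t_1 S\neq\emptyset$ and the right LCM property of $S$ applies. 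Once these cases are dispatched, everything reduces to left cancellation and the definition of $S_c$, so the remainder is routine.
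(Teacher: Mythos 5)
The first half of your argument — deriving $i(s_1t_2)=i(s_2t_1)$ from $s_1s_3r=s_2s_4r$ and then cancelling to get $c(s_1t_2)c(s_3r)=c(s_2t_1)c(s_4r)$ — is exactly the paper's proof, and your reduction of the $S_c$-intersection to the $S$-intersection via Proposition~\ref{prop:hom of right LCM for S_c and S_ci} is also the route the paper takes.

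The second half has a genuine gap. Writing $wS=s_1t_2S\cap s_2t_1S$ with $w=s_1t_2d$, $d\in S_c$, the trivial inclusion $s_1t_2c(s_3r)S\subseteq wS$ gives $c(s_3r)\in dS$, say $c(s_3r)=de$; what must be proved is the \emph{reverse} inclusion $wS\subseteq s_1t_2c(s_3r)S$, i.e.\ $e\in S^*$. Both of your attempts at this direction (the one you abandon and the one you keep, "by minimality of $w$") in fact re-derive $c(s_3r)\in dS$, so the phrase "combined with the reverse" refers to an inclusion you never establish. This direction is not automatic: it is precisely here that the two hypotheses you never use must enter, namely that $s_1s_3$ is a right LCM of $s_1$ and $s_2$, and that $r\in\CT$ (so $c(r)=1$). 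The paper's argument is: from $s_1t_2a=s_2t_1b\in s_1S\cap s_2S=s_1s_3S$ (where $a$ is the complement in the right LCM of the cores) left cancellation gives $t_2a=s_3r'$ for some $r'$, whence $s_3r=t_2c(s_3r)=t_2ad=s_3r'd$ and so $r=r'd=i(r')c(r')d$; since $r\in\CT$ this forces $c(r')d=1$ and hence $d\in S^*$. Without an argument of this kind your proof only shows that $s_1t_2c(s_3r)$ is \emph{a} common right multiple, not that it generates the intersection. A minor further point: your appeal to Lemma~\ref{lem:OldA2} with $a=i(s_1t_2)$, $s=c(s_1t_2)$ swaps the roles of $S_c$ and $S_{ci}$ in that lemma's hypotheses; fortunately $d\in S_c$ follows directly from $c(s_3r)\in dS$ with $c(s_3r)\in S_c$, so this step can be repaired without the lemma.
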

\begin{proof}
The equation
\[s_1s_3r = s_1t_2c(s_3r) = i(s_1t_2)c(s_1t_2)c(s_3r) = i(s_2t_1)c(s_2t_1)c(s_4r) = s_2t_1c(s_4r) = s_2s_4r\]
yields $i(s_1t_2)\sim i(s_2t_1)$, and hence $i(s_1t_2)=i(s_2t_1)$. Thus left cancellation in $S$ gives $c(s_1t_2)c(s_3r) = c(s_2t_1)c(s_4r)$. For the second part of the claim, the first part implies $c(s_1t_2)S_c \cap c(s_2t_1)S_c \supset c(s_1t_2)c(s_3r)S_c$. Moreover, it suffices to show $c(s_1t_2)S \cap c(s_2t_1)S \subset c(s_1t_2)c(s_3r)S$ since the embedding of $S_c$ into $S$ is a homomorphism of right LCM semigroups, see Proposition~\ref{prop:hom of right LCM for S_c and S_ci}. So suppose $c(s_1t_2)S \cap c(s_2t_1)S = c(s_1t_2)aS, c(s_1t_2)a=c(s_2t_1)b$ for some $a,b \in S_c$. Then $c(s_3r) = ad$ for some $d \in S_c$, and we need to show that $d \in S^*$. As
\[s_1t_2a = i(s_1t_2)c(s_1t_2)a = i(s_2t_1)c(s_2t_1)b = s_2t_1b \in s_1S \cap s_2S = s_1s_3S,\]
left cancellation forces $t_2a \in s_3S$, i.e.~$t_2a = s_3r'$ for some $r' \in S$. Therefore, we get
\[s_3r = t_2c(s_3r) = t_2ad = s_3r'd.\]
By left cancellation, this yields $r = i(r')c(r')d$. But now $r \in \CT$, so $i(r')=r$, and hence $1 = i(r')d$, i.e.~$d \in S^*$.
\end{proof}

The next result is inspired by \cite{LRRW}*{Propositions 3.1 and 3.2}, where the authors construct a representation of the Toeplitz algebra for a Hilbert bimodule associated to a self-similar action.  We recall that when $X$ is a right-Hilbert module over a $C^*$-algebra $A$ and $\pi$ is a representation of $A$ on a Hilbert space $H$, then $X\otimes_A H$ is the Hilbert space obtained as the completion of the algebraic tensor product with respect to the inner-product characterised by $\langle x\otimes h, y\otimes k\rangle=\langle \pi(\langle y,x\rangle)h, k\rangle$, and where the tensor product is balanced over the left action on $H$ given by $\pi$, see for instance \cite{RW}. If $X$ is a direct sum Hilbert module $\oplus_{j\in J}A$ of the standard Hilbert module $A$, then $X\otimes_A H$ can be identified with the infinite direct sum of Hilbert spaces $A\otimes_A H$. In case $A$ has an identity $1$, then with $\xi_j$ denoting the image of $1\in A$ in the $j$-th copy of $A$, we may identify $\xi_ja\otimes h$ for $a\in A$ as the element in $\oplus_{j\in J}(A\otimes_A H)$ that has $a\otimes h$ in the $j$-th place and zero elsewhere. From now on, we will freely use this identification.

\begin{thm}\label{thm:rep of C*(S)}
Let $S$ be a core factorable right LCM semigroup such that $S_{ci}\subset S$ is $\cap$-closed. Let $M := \bigoplus_{t \in \CT} C^*(S_c)$ be the direct sum of the standard right Hilbert $C^*(S_c)$-module.
Suppose $\rho$ is a state on $C^*(S_c)$ with GNS-representation $(\pi'_\rho,\CH'_\rho,\xi_\rho)$. Form the Hilbert space $\CH_\rho:= M \otimes_{C^*(S_c)} \CH'_\rho$.
 The linear operator $V_s\colon \CH_\rho\to \CH_\rho$ given by
\[
V_s(\xi_t y \otimes \xi_\rho) := \xi_{i(st)}w_{c(st)}y \otimes \xi_\rho
 \]
 for $t \in \CT, y \in C^*(S_c)$ is adjointable for all $s \in S$. The family $(V_s)_{s \in S}$ gives rise to a representation $\pi_\rho\colon C^*(S) \to \CL(\CH_\rho)$.
\end{thm}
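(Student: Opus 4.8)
The plan is to verify that the formula for $V_s$ indeed defines an adjointable operator, compute an explicit adjoint, and then check that the family $(V_s)_{s\in S}$ satisfies the defining relations of $C^*(S)$, so that the universal property furnishes the desired representation $\pi_\rho$.

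\textbf{Step 1: Each $V_s$ is well-defined and adjointable.} Since $M=\bigoplus_{t\in\CT}C^*(S_c)$ is the standard direct-sum Hilbert module, the map $\xi_t y\mapsto \xi_{i(st)}w_{c(st)}y$ describes a module operator $W_s$ on $M$ sending the $t$-th copy of $C^*(S_c)$ into the $i(st)$-th copy by left multiplication with $w_{c(st)}$. By Lemma~\ref{lem:recovering t and c(st) from s and i(st)} the map $t\mapsto i(st)$ is injective on $\CT$, so distinct source summands land in distinct target summands; hence on each target summand $W_s$ acts as left multiplication by a single $w_{c(st)}$ (an isometric module endomorphism of $C^*(S_c)$, see Remark~\ref{rem:independence of T for inner product value}) on the corresponding source summand, and as $0$ if that target summand is not of the form $i(st)$. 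It follows that $W_s$ is a bounded, adjointable module operator on $M$ with adjoint $W_s^*$ given by $\xi_{i(st)}z\mapsto \xi_t w_{c(st)}^*z$ and $\xi_r z\mapsto 0$ if $r\notin i(s\CT)$. Tensoring with $\CH_\rho'$ over $\pi_\rho'$, the operator $V_s:=W_s\otimes 1$ is then a well-defined bounded operator on $\CH_\rho$ with $V_s^*=W_s^*\otimes 1$, using the standard fact that adjointable module operators induce adjointable Hilbert-space operators under the balanced tensor product construction.

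\textbf{Step 2: The universal relations.} It remains to check that $(V_s)_{s\in S}$ is an isometric representation of $S$ satisfying the two relations characterising $C^*(S)$. First, $V_sV_t=V_{st}$: applying $V_sV_t$ to $\xi_r y\otimes\xi_\rho$ gives $\xi_{i(s\,i(tr))}w_{c(s\,i(tr))}w_{c(tr)}y\otimes\xi_\rho$, and one checks $i(s\,i(tr))=i(str)$ (since $s\,i(tr)\sim str$) and $c(s\,i(tr))c(tr)=c(str)$ (by left cancellation from $s\,i(tr)c(tr)=str=i(str)c(str)$), which equals $V_{st}(\xi_r y\otimes\xi_\rho)$. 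Isometry $V_s^*V_s=1$ follows from Step~1 since $w_{c(st)}^*w_{c(st)}=1$ and $t\mapsto i(st)$ is injective, so $V_s^*V_s$ acts on $\xi_ty\otimes\xi_\rho$ as $\xi_t w_{c(st)}^*w_{c(st)}y\otimes\xi_\rho=\xi_t y\otimes\xi_\rho$. For the relation $v_s^*v_t=0$ when $s\perp t$: $V_s^*V_t(\xi_r y\otimes\xi_\rho)$ involves the summand $i(tr)$, and $V_s^*$ kills it unless $i(tr)\in i(s\CT)$; but $i(tr)=i(su)$ would give $tr\sim su$, forcing $t\not\perp s$ after using $r,u\in S$, contradicting $s\perp t$. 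Finally, for the product relation, when $sS\cap tS=ss'S$ with $ss'=tt'$, one must show $V_s^*V_t=V_{s'}V_{t'}^*$; applying both sides to $\xi_r y\otimes\xi_\rho$ and comparing, the nonzero contributions match precisely when the indices align, and the scalar/module factors agree exactly by Lemma~\ref{lem:technicality for representation}, which was designed to supply the identities $c(s_1t_2)c(s_3r)=c(s_2t_1)c(s_4r)$ and the corresponding intersection of principal right ideals in $S_c$ governing the relevant $w$-word manipulations in $C^*(S_c)$.

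\textbf{Main obstacle.} The routine parts are Step~1 and the isometry and orthogonality relations; the genuinely delicate point is verifying the product relation $V_s^*V_t=V_{s'}V_{t'}^*$, because this is where the interaction between the combinatorics of $i$ and $c$ on $\CT$ and the Hilbert-module structure of $C^*(S_c)$ becomes intricate — one has to track, for each index $r\in\CT$, whether $i(s_3r)$ and $i(s_4r)$ hit the right summands and that the accumulated $w_{c(\cdot)}$ factors on both sides produce the same module operator. This is exactly the content that Lemma~\ref{lem:technicality for representation} isolates, so the strategy is to reduce the verification to a bookkeeping computation on basis vectors $\xi_r y\otimes\xi_\rho$ and invoke that lemma at the crucial step; the remaining care is simply to handle the case distinction between $s\perp t$ (both sides zero) and $s\not\perp t$ cleanly.
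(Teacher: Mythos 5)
Your proposal is correct and follows essentially the same route as the paper: the same adjoint formula for $V_s$ (via injectivity of $t\mapsto i(st)$ from Lemma~\ref{lem:recovering t and c(st) from s and i(st)}), the same computation for $V_{s_1}V_{s_2}=V_{s_1s_2}$, and the same reduction of the delicate relation $V_{s_1}^*V_{s_2}=V_{s_3}V_{s_4}^*$ to Lemma~\ref{lem:technicality for representation}. The only cosmetic difference is that you package $V_s$ as $W_s\otimes 1$ for an adjointable module operator $W_s\in\CL(M)$, whereas the paper verifies adjointability by computing inner products on $\CH_\rho$ directly; both yield the identical adjoint and the remainder of the argument coincides.
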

\begin{proof}
It is clear that $\CH_\rho$ is a Hilbert space, so we start by showing that the operator $V_s$ is adjointable. For $s \in S$ and $t_1,t_2 \in \CT, y,z \in C^*(S_c)$, we have
\[\begin{array}{lcl}
\langle V_s(\xi_{t_1}y \otimes \xi_\rho), \xi_{t_2}z \otimes \xi_\rho \rangle
&=& \langle \xi_\rho,\pi'_\rho(\langle \xi_{i(st_1)}w_{c(st_1)}y, \xi_{t_2}z \rangle) \xi_\rho \rangle \\
&=& \langle \xi_\rho,\pi'_\rho(\delta_{i(st_1), t_2}y^*w_{c(st_1)}^*z) \xi_\rho \rangle \\
&=& \delta_{i(st_1), t_2} \ \rho(z^*w_{c(st_1)}y).
\end{array}\]
Define the linear operator $V'_s$ on $\CH_\rho$ by
\[V'_s(\xi_t y \otimes \xi_\rho) := \chi_{i(s\CT)}(t) \xi_{t'}w_{c(st')}^*y \otimes \xi_\rho,\]
where $t' \in \CT$ is uniquely determined by $i(st')=t$ in the case where $t \in i(s\CT)$, see Lemma~\ref{lem:recovering t and c(st) from s and i(st)}. It is then straightforward to show that
\[\begin{array}{lcl}
\langle \xi_{t_1}y \otimes \xi_\rho, V'_s(\xi_{t_2}z \otimes \xi_\rho) \rangle
&=& \delta_{i(st_1), t_2} \ \rho(z^*w_{c(st_1)}y),
\end{array}\]
so that $V_s$ is adjointable with $V_s^* = V'_s$. It is also clear that each $V_s$ is an isometry. Now let $s_1,s_2 \in S$. To show that $V_{s_1}V_{s_2} = V_{s_1s_2}$, note that when computing $V_{s_1}V_{s_2}(\xi_{t}y \otimes \xi_\rho)$ for some $t\in \CT$ we encounter the term $
\xi_{i(s_1i(s_2t))}w_{c(s_1i(s_2t))}w_{c(s_2t)}y$, which is the same as $\xi_{i(s_1s_2t)}w_{c(s_1s_2t)}y$ entering $V_{s_1s_2}(\xi_{t}y \otimes \xi_\rho)$ because
\[\begin{array}{lcl}
i(s_1i(s_2t))c(s_1i(s_2t))c(s_2t) = s_1i(s_2t)c(s_2t) = s_1s_2t = i(s_1s_2t)c(s_1s_2t)
\end{array}\]
 by Lemma~\ref{lem:recovering t and c(st) from s and i(st)}. So it remains to prove that we have
\[V_{s_1}^*V_{s_2}^{\phantom{*}} =
\begin{cases}
V_{s_3}^{\phantom{*}}V_{s_4}^* &\text{if } s_1S \cap s_2S = s_1s_3S, s_1s_3=s_2s_4, \\
0 &\text{if } s_1 \perp s_2.
\end{cases}\]
We start by observing that since
\begin{align}
\langle V_{s_1}^*V_{s_2}^{\phantom{*}}(\xi_{t_1}y \otimes \xi_\rho), \xi_{t_2}z \otimes \xi_\rho \rangle
&= \langle V_{s_2}^{\phantom{*}}(\xi_{t_1}y \otimes \xi_\rho), V_{s_1}^{\phantom{*}}\xi_{t_2}z \otimes \xi_\rho \rangle \notag\\
&= \langle \xi_\rho,\pi'_\rho(\langle \xi_{i(s_2t_1)}w_{c(s_2t_1)}y, \xi_{i(s_1t_2)}w_{c(s_1t_2)}z \rangle) \xi_\rho \rangle \notag \\
&= \delta_{i(s_2t_1), i(s_1t_2)} \ \rho(z^*w_{c(s_1t_2)}^*w_{c(s_2t_1)}^{\phantom{*}}y),\label{eq:Vs1starVs2nonzero}
\end{align}
the inner product on the left can only be nonzero if $s_1 \not \perp s_2$. Indeed, if $s_1 \perp s_2$, then $i(s_2t_1) \neq i(s_1t_2)$ for all $t_1,t_2 \in \CT$ as $c(s_2t_1),c(s_1t_2) \in S_c$. So suppose there are $s_3,s_4 \in S$ such that $s_1S \cap s_2S = s_1s_3S, s_1s_3=s_2s_4$. If $t_1,t_2 \in \CT$ and $y,z \in C^*(S_c)$ are such that
\[\begin{array}{lcl}
\langle V_{s_3}^{\phantom{*}}V_{s_4}^*(\xi_{t_1}y \otimes \xi_\rho), \xi_{t_2}z \otimes \xi_\rho \rangle \neq 0,
\end{array}\]
then there must be $t_3 \in \CT$ with the property that $t_1 = i(s_4t_3)$ and $t_2=i(s_3t_3)$. In this case, we have
\[\begin{array}{lcl}
\langle V_{s_3}^{\phantom{*}}V_{s_4}^*(\xi_{t_1}y \otimes \xi_\rho), \xi_{t_2}z \otimes \xi_\rho \rangle = \rho(z^*w_{c(s_3t_3)}^{\phantom{*}}w_{c(s_4t_3)}^*y).
\end{array}\]
In addition, we get $s_2t_1c(s_4t_3) = s_2s_4t_3 = s_1s_3t_3 = s_1t_2c(s_3t_3)$ so that $i(s_2t_1) = i(s_1t_2)$, which by \eqref{eq:Vs1starVs2nonzero} also determines when $V_{s_1}^*V_{s_2}^{\phantom{*}}$ is nonzero. Now $w_{c(s_1t_2)}^*w_{c(s_2t_1)}^{\phantom{*}} = w_{r_1}^{\phantom{*}}w_{r_2}^*$ for all $r_1,r_2 \in S_c$ satisfying $c(s_1t_2)S_c \cap c(s_2t_1)S_c = c(s_1t_2)r_1S_c$ and $c(s_1t_2)r_1 = c(s_2t_1)r_2$. Due to Lemma~\ref{lem:technicality for representation}, $c(s_3t_3)$ and $c(s_4t_3)$ have this property. Thus we conclude that $V_{s_1}^*V_{s_2}^{\phantom{*}}=V_{s_3}^{\phantom{*}}V_{s_4}^*$ for $s_1 \not\perp s_2$. This shows that $(V_s)_{s \in S}$ defines a representation $\pi_\rho$ of $C^*(S)$ by the universal property of $C^*(S)$.
\end{proof}

From now on we assume that $S$ is a core factorable right LCM semigroup, $S_{ci}\subset S$ is $\cap$-closed, and $S$ admits a generalised scale.

\begin{lemma}\label{lem:eval gen}
For every state $\rho$ on $C^*(S_c)$ the following assertions hold:
\begin{enumerate}[(i)]
\item If $s,r \in S$ are such that $\langle V_s^{\phantom{*}}V_r^*(\xi_t \otimes \xi_\rho), \xi_t \otimes \xi_\rho \rangle \neq 0$ for some $t\in \CT$, then $s \sim r$.
\item Given $a,b \in S_c$ and $t\in \CT$, then for all $y,z\in C^*(S)$ we have
\[
\langle V_a^{\phantom{*}}V_b^*(\xi_t y\otimes \xi_\rho), \xi_t z\otimes \xi_\rho \rangle
= \chi_{\text{Fix}(\alpha_{a}^{\phantom{1}}\alpha_{b}^{-1})}(t) \ \rho(z^*w_{c(a\alpha_{b}^{-1}(t))}^{\phantom{*}}w_{c(b\alpha_{b}^{-1}(t))}^*y).
\]
\end{enumerate}
\end{lemma}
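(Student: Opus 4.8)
The plan is to unwind both assertions directly from the formula $V_s(\xi_t y \otimes \xi_\rho) = \xi_{i(st)} w_{c(st)} y \otimes \xi_\rho$ together with the adjoint description $V_r^*(\xi_t y \otimes \xi_\rho) = \chi_{i(r\CT)}(t)\, \xi_{t'} w_{c(rt')}^* y \otimes \xi_\rho$ from the proof of Theorem~\ref{thm:rep of C*(S)}, where $t' \in \CT$ is the unique element with $i(rt') = t$. The key reformulation I would use is the one already recorded in Lemma~\ref{lem:thebijectionsg}: for $a \in S_c$ and $t \in \CT$ one has $i(at) = \alpha_a(t)$, and hence $V_a^*(\xi_t y \otimes \xi_\rho) = \xi_{\alpha_a^{-1}(t)} w_{c(a\alpha_a^{-1}(t))}^* y \otimes \xi_\rho$ (the indicator $\chi_{i(a\CT)}(t)$ being identically $1$ since $\alpha_a$ is a bijection of $\CT$).

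For (i), I would compute $V_s^{\phantom{*}}V_r^*(\xi_t \otimes \xi_\rho)$ in two steps. Applying $V_r^*$ sends $\xi_t \otimes \xi_\rho$ to $\xi_{t'} w_{c(rt')}^* \otimes \xi_\rho$ with $i(rt') = t$ (zero unless $t \in i(r\CT)$). Applying $V_s$ then produces a vector supported on the basis index $i(st')$. The inner product with $\xi_t \otimes \xi_\rho$ is therefore nonzero only if $i(st') = t = i(rt')$, which forces $st' \sim rt'$ and, by left cancellation on $\sim$, gives $s \sim r$. (A small point: I should justify why $V_r^*(\xi_t \otimes \xi_\rho) \ne 0$ is needed, i.e.\ that $t \in i(r\CT)$; but since the conclusion $s \sim r$ only needs to be drawn when the inner product is nonzero, this is automatic.)

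For (ii), the computation is cleaner because $a,b \in S_c$, so $i(at) = \alpha_a(t)$ exactly. Applying $V_b^*$ to $\xi_t y \otimes \xi_\rho$ gives $\xi_{\alpha_b^{-1}(t)}\, w_{c(b\alpha_b^{-1}(t))}^* y \otimes \xi_\rho$, and then applying $V_a$ moves this to index $\alpha_a(\alpha_b^{-1}(t)) = (\alpha_a \alpha_b^{-1})(t)$, producing the coefficient $w_{c(a\alpha_b^{-1}(t))}\, w_{c(b\alpha_b^{-1}(t))}^* y$. Pairing against $\xi_t z \otimes \xi_\rho$ forces $(\alpha_a\alpha_b^{-1})(t) = t$, i.e.\ $t \in \mathrm{Fix}(\alpha_a^{\phantom{1}}\alpha_b^{-1})$, which produces the indicator $\chi_{\mathrm{Fix}(\alpha_a\alpha_b^{-1})}(t)$; on that set the inner product equals $\langle \xi_\rho, \pi'_\rho(y^* w_{c(b\alpha_b^{-1}(t))}^{\phantom{*}} \cdots) \xi_\rho\rangle$, and unwinding the module inner product $\langle \xi_t u, \xi_t v\rangle = u^*v$ on $M$ together with $\langle \xi_\rho, \pi'_\rho(\cdot)\xi_\rho\rangle = \rho(\cdot)$ yields $\rho(z^* w_{c(a\alpha_b^{-1}(t))}^{\phantom{*}} w_{c(b\alpha_b^{-1}(t))}^* y)$, exactly the claimed value.

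I do not expect a serious obstacle here: the proof is essentially bookkeeping with the defining formula for $V_s$ and its adjoint, and the only genuinely non-formal inputs are already available — the identity $i(at) = \alpha_a(t)$ for $a \in S_c$ and the bijectivity of each $\alpha_a$ on $\CT$ (Lemma~\ref{lem:thebijectionsg}), plus the standard description of the inner product on $M \otimes_{C^*(S_c)} \CH'_\rho$ and of the GNS vector state. The one place to be careful is to track that when $a$ or $b$ (or $t$) is trivial, or when various compositions collapse, the indicator functions and the arguments $c(a\alpha_b^{-1}(t))$ are still well-defined; this is immediate from Lemma~\ref{lem:maps i and c} but worth a line. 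I would therefore write the proof as two short displayed computations, one for each part, citing Lemma~\ref{lem:thebijectionsg} and the adjoint formula from the proof of Theorem~\ref{thm:rep of C*(S)}.
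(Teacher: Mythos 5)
Your part (ii) is correct and is exactly the paper's argument: compute $V_b^*$ and $V_a$ explicitly using $i(at)=\alpha_a(t)$ and $c(at)$ for $a\in S_c$, observe that the resulting basis index is $\alpha_a^{\phantom{1}}\alpha_b^{-1}(t)$, and unwind the inner product on $M\otimes_{C^*(S_c)}\CH'_\rho$ to land on $\rho(z^*w_{c(a\alpha_b^{-1}(t))}^{\phantom{*}}w_{c(b\alpha_b^{-1}(t))}^*y)$. Part (i) also starts the same way as the paper: nonvanishing of the inner product produces $t'\in\CT$ with $i(st')=t=i(rt')$, hence $st'\sim rt'$.

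However, the last step of your part (i) is a genuine gap. You pass from $st'\sim rt'$ to $s\sim r$ ``by left cancellation on $\sim$'', but the common factor $t'$ sits on the \emph{right} of $s$ and $r$, so left cancellation (which handles $as\sim at\Rightarrow s\sim t$, as in Lemma~\ref{lem:thebijectionsg}) does not apply. Nor does the relation cancel formally on the right: $st'\sim rt'$ means $st'a=rt'b$ for some $a,b\in S_c$, and $t'a,t'b$ are generally not in $S_c$ (indeed $t'\in S_{ci}^1$), so this equation does not exhibit $s\sim r$ directly; $S$ is not assumed right cancellative either. The paper closes this step using the generalised scale, which is in force throughout this part of Section~\ref{sec:construction}: from $N_{st'}=N_{rt'}$ one cancels $N_{t'}$ in the cancellative monoid $\N^\times$ to get $N_s=N_r$, and $st'a=rt'b\in sS\cap rS$ shows $s\not\perp r$; Proposition~\ref{prop:NisaHMofRLCMs}~(ii) then forces $s\sim r$. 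You should replace the cancellation claim by this argument (or an equivalent appeal to Proposition~\ref{prop:NisaHMofRLCMs}~(ii)); as written the step is unjustified.
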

\begin{proof}
For (i), suppose $r,s$ and $t$ are given as in the first assertion of the lemma. Then there is $t' \in \CT$ with $i(st')=t=i(rt')$. Thus we have $st' \sim rt'$. Hence, $N_s = N_r$ and $sS \cap rS \neq \emptyset$. By Proposition~\ref{prop:NisaHMofRLCMs} (ii), we have $s \sim r$.

To prove (ii), note that straightforward calculations using Lemma~\ref{lem:thebijectionsg} show that
\[
V_a(\xi_t y\otimes \xi_\rho)=\xi_{\alpha_a(t)}w_{c(a\alpha_a(t))}y\otimes \xi_\rho\text{ and }
V_b^*(\xi_t y\otimes \xi_\rho)=\xi_{\alpha_{b}^{-1}(t)}(w^*_{c(b\alpha_b^{-1}(t))}y\otimes \xi_\rho).
\]
This immediately gives the claim on $V_a^{\phantom{*}}V_b^*$.
\end{proof}

\begin{proposition}\label{prop:construction of ground states}
For every state $\rho$ on $C^*(S_c)$, the map
\[\psi_\rho(y) := \langle \pi_\rho(y)(\xi_1 \otimes \xi_\rho),\xi_1 \otimes \xi_\rho \rangle\]
defines a ground state on $C^*(S)$ with $\psi_\rho \circ \varphi = \rho$.
\end{proposition}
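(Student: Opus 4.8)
The plan is to verify the two claims in order: first that $\psi_\rho \circ \varphi = \rho$, and then that $\psi_\rho$ is a ground state, using the characterisation from Proposition~\ref{prop:ground states on C*(S)-gen}.

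First I would compute $\psi_\rho \circ \varphi$ on the spanning family $(w_a^{\phantom{*}}w_b^*)_{a,b \in S_c}$ of $C^*(S_c)$. Since $\varphi(w_a) = v_a$, we have $\psi_\rho(\varphi(w_a^{\phantom{*}}w_b^*)) = \langle \pi_\rho(v_a^{\phantom{*}}v_b^*)(\xi_1 \otimes \xi_\rho), \xi_1 \otimes \xi_\rho \rangle = \langle V_a^{\phantom{*}}V_b^*(\xi_1 \otimes \xi_\rho), \xi_1 \otimes \xi_\rho \rangle$. Now apply Lemma~\ref{lem:eval gen}~(ii) with $t = 1$: since $\alpha_a$ and $\alpha_b$ are actions on $\CT$ fixing the distinguished element $1 \in \CT$ (as $1 \in S_c$ implies $i(a) = i(b) = 1$, so $\alpha_a(1) = 1 = \alpha_b(1)$, hence $\alpha_a^{\phantom{1}}\alpha_b^{-1}$ fixes $1$), the indicator $\chi_{\text{Fix}(\alpha_a^{\phantom{1}}\alpha_b^{-1})}(1) = 1$, and $\alpha_b^{-1}(1) = 1$. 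The formula then gives $\langle V_a^{\phantom{*}}V_b^*(\xi_1 \otimes \xi_\rho), \xi_1 \otimes \xi_\rho \rangle = \rho(w_{c(a)}^{\phantom{*}}w_{c(b)}^*)$. Since $a, b \in S_c$ we have $i(a) = i(b) = 1$ and $c(a) = a$, $c(b) = b$, so this equals $\rho(w_a^{\phantom{*}}w_b^*)$. By linearity and continuity, $\psi_\rho \circ \varphi = \rho$; in particular $\psi_\rho$ is a state since $\rho$ is.

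For the ground state property, I would use Proposition~\ref{prop:ground states on C*(S)-gen}: it suffices to show that $\psi_\rho(v_s^{\phantom{*}}v_t^*) \neq 0$ forces $s, t \in S_c$. Write $\psi_\rho(v_s^{\phantom{*}}v_t^*) = \langle V_s^{\phantom{*}}V_t^*(\xi_1 \otimes \xi_\rho), \xi_1 \otimes \xi_\rho \rangle$. Unwinding the definition of $V_t^*$ from the proof of Theorem~\ref{thm:rep of C*(S)}, we have $V_t^*(\xi_1 \otimes \xi_\rho) = \chi_{i(t\CT)}(1)\, \xi_{t'} w_{c(tt')}^* \otimes \xi_\rho$ where $i(tt') = 1$; this is nonzero only if $1 \in i(t\CT)$, i.e. there is $t' \in \CT$ with $tt' \sim 1$, which by the core relation forces $t \sim 1$, hence $N_t = 1$ and $t \in S_c$ by Proposition~\ref{prop:NisaHMofRLCMs}~(i). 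Then $V_t^*(\xi_1 \otimes \xi_\rho) = \xi_1 w_{c(t)}^* \otimes \xi_\rho$ (taking $t' = 1$ since $tt' = t \cdot 1 \sim 1$ and $i$ is injective on $\CT$ after left-multiplying), and applying $V_s$ gives $\xi_{i(s)} w_{c(s)} w_{c(t)}^* \otimes \xi_\rho$. Pairing with $\xi_1 \otimes \xi_\rho$ forces $i(s) = 1$, hence $s \sim 1$, hence $s \in S_c$ by the same argument. Thus $\psi_\rho$ is a ground state.

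I expect no serious obstacle here; the main care-point is bookkeeping with the operators $V_s$, $V_s^*$ and the maps $i, c$ on the distinguished vector $\xi_1 \otimes \xi_\rho$ — in particular checking that $V_t^*(\xi_1 \otimes \xi_\rho)$ has support only in the $t' = 1$ summand when $t \in S_c$, which follows from injectivity of $t' \mapsto i(tt')$ on $\CT$ (Lemma~\ref{lem:recovering t and c(st) from s and i(st)}) together with $i(t) = 1$. Alternatively, one could avoid explicit manipulation of $V_t^*$ by using Lemma~\ref{lem:eval gen}~(i), which says directly that $\langle V_s^{\phantom{*}}V_t^*(\xi_1 \otimes \xi_\rho), \xi_1 \otimes \xi_\rho \rangle \neq 0$ implies $s \sim t$, combined with the observation from part (ii) that the pairing is $\chi_{\text{Fix}(\alpha_s^{\phantom{1}}\alpha_t^{-1})}(1)\,\rho(w_{c(s\alpha_t^{-1}(1))}^{\phantom{*}} w_{c(t\alpha_t^{-1}(1))}^*)$ — but Lemma~\ref{lem:eval gen}~(ii) is stated for $a, b \in S_c$ rather than general $s, t$, so the cleaner route is to first establish $s, t \in S_c$ via the support argument above and only then invoke the evaluation formula.
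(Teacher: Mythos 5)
Your proposal is correct and follows essentially the same route as the paper: the identity $\psi_\rho\circ\varphi=\rho$ is a direct evaluation of $V_a^{\phantom{*}}V_b^*$ on $\xi_1\otimes\xi_\rho$ for $a,b\in S_c$ (the paper attributes this to Theorem~\ref{thm:rep of C*(S)}, you route it through Lemma~\ref{lem:eval gen}~(ii), which amounts to the same computation), and the ground state property is obtained exactly as in the paper by showing that $\langle V_s^{\phantom{*}}V_t^*(\xi_1\otimes\xi_\rho),\xi_1\otimes\xi_\rho\rangle\neq 0$ forces $1\in i(s\CT)\cap i(t\CT)$, hence $s\sim 1\sim t$ and $s,t\in S_c$, and then invoking Proposition~\ref{prop:ground states on C*(S)-gen}. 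Your closing caution about Lemma~\ref{lem:eval gen}~(ii) being stated only for elements of $S_c$ is well taken and correctly resolved.
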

\begin{proof}
The map $\psi_\rho$ is a state on $C^*(S)$ with $\psi_\rho \circ \varphi = \rho$, see Theorem~\ref{thm:rep of C*(S)}. By Lemma~\ref{lem:eval gen}, we see that
\[\langle V_t^*(\xi_1 \otimes \xi_\rho), V_s^*(\xi_1 \otimes \xi_\rho) \rangle \neq 0\]
forces $1 \in i(s\CT) \cap i(t\CT)$ for $s,t \in S$. This implies $s \sim 1 \sim t$, and hence $s,t \in S_c$, see Lemma~\ref{lem:equiv forms of (VII)}. Thus $\psi_\rho$ is a ground state by virtue of Proposition~\ref{prop:ground states on C*(S)-gen}.
\end{proof}

Before we construct $\kms_\beta$-states on $C^*(S)$ from traces $\tau$ on $C^*(S_c)$ with the help of the representation $\pi_\tau$, we make note of an intermediate step which produces states on $C^*(S)$ from traces on $C^*(S_c)$. In terms of notation, note that the finite subsets of $\text{Irr}(N(S))$ form a (countable) directed set when ordered by inclusion.

\begin{lemma}\label{lem:construction of KMS-states}
Let $\beta \geq 1$, $\tau$ a trace on $C^*(S_c)$, and $I \subset \text{Irr}(N(S))$ with $\zeta_I(\beta) < \infty$. Then
\[\begin{array}{lrl}
\psi_{\beta,\tau,I}(y) &:=& \zeta_I(\beta)^{-1} \sum\limits_{t \in \CT_I} N_t^{-\beta} \langle \pi_\tau(y)(\xi_t \otimes \xi_\tau),\xi_t \otimes \xi_\tau \rangle 
\end{array}\]
defines a state on $C^*(S)$ such that $\psi_{\beta,\tau,I} \circ \varphi$ is tracial on $C^*(S_c)$.
\end{lemma}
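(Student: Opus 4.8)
The plan is to treat $\psi_{\beta,\tau,I}$ as a (countable) convex combination of vector states and to reduce traciality on $\varphi(C^*(S_c))$ to an explicit computation in the representation $\pi_\tau$ from Theorem~\ref{thm:rep of C*(S)}, where the trace property of $\tau$ enters exactly once, through $\tau(XY)=\tau(YX)$.

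\emph{Step 1: $\psi_{\beta,\tau,I}$ is a state.} For each $t\in\CT$ the functional $y\mapsto\langle\pi_\tau(y)(\xi_t\otimes\xi_\tau),\xi_t\otimes\xi_\tau\rangle$ is positive, and $\|\xi_t\otimes\xi_\tau\|^2=\langle\pi'_\tau(\langle\xi_t,\xi_t\rangle)\xi_\tau,\xi_\tau\rangle=\langle\pi'_\tau(1)\xi_\tau,\xi_\tau\rangle=1$. Since $\zeta_I(\beta)=\sum_{n\in\langle I\rangle}\sum_{f\in\CT_n}N_f^{-\beta}<\infty$, the series defining $\psi_{\beta,\tau,I}(y)$ converges absolutely with $|\psi_{\beta,\tau,I}(y)|\le\|y\|$, so $\psi_{\beta,\tau,I}$ is a well-defined positive functional, and $\psi_{\beta,\tau,I}(1)=\zeta_I(\beta)^{-1}\sum_{t\in\CT_I}N_t^{-\beta}=1$ because $\CT_I=\bigcup_{n\in\langle I\rangle}\CT_n$ is a disjoint union. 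Hence $\psi_{\beta,\tau,I}$ is a state.

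\emph{Step 2: reduction and the key computation.} As $\psi_{\beta,\tau,I}\circ\varphi$ is bounded and $\mathrm{span}\{w_aw_b^*\mid a,b\in S_c\}$ is a dense $*$-subalgebra of $C^*(S_c)$, it suffices to show $\psi_{\beta,\tau,I}(v_av_b^*v_cv_d^*)=\psi_{\beta,\tau,I}(v_cv_d^*v_av_b^*)$ for all $a,b,c,d\in S_c$. Recall from the proof of Lemma~\ref{lem:eval gen} that for $a\in S_c$ one has $V_a(\xi_ty\otimes\xi_\tau)=\xi_{\alpha_a(t)}w_{c(at)}y\otimes\xi_\tau$ and $V_a^*(\xi_ty\otimes\xi_\tau)=\xi_{\alpha_a^{-1}(t)}w_{c(a\alpha_a^{-1}(t))}^*y\otimes\xi_\tau$, where $\alpha_a$ is the bijection of $\CT$ with $i(at)=\alpha_a(t)$, and that $\alpha_a$ preserves the $N$-grading, hence preserves each finite set $\CT_n$ and therefore $\CT_I$. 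Applying $V_d^*,V_c,V_b^*,V_a$ in turn to $\xi_s\otimes\xi_\tau$ for $s\in\CT_I$ and pairing with $\xi_s\otimes\xi_\tau$, the only surviving contribution is the one whose output lands back in slot $s$, which forces $g(s)=s$ for $g:=\alpha_a\alpha_b^{-1}\alpha_c\alpha_d^{-1}$; writing $s_1:=\alpha_d^{-1}(s)$ and $s_3:=\alpha_b^{-1}\alpha_c\alpha_d^{-1}(s)$ one obtains
\[
\langle V_aV_b^*V_cV_d^*(\xi_s\otimes\xi_\tau),\xi_s\otimes\xi_\tau\rangle=\chi_{\text{Fix}(g)}(s)\,\tau\!\big(w_{c(as_3)}w_{c(bs_3)}^*w_{c(cs_1)}w_{c(ds_1)}^*\big).
\]
When $g(s)=s$ one has $\alpha_a(s_3)=g(s)=s$, so $s_3=\alpha_a^{-1}(s)$; summing over $s\in\CT_I$ against $N_s^{-\beta}$ gives
\[
\zeta_I(\beta)\,\psi_{\beta,\tau,I}(v_av_b^*v_cv_d^*)=\sum_{s\in\text{Fix}(g)\cap\CT_I}N_s^{-\beta}\,\tau(XY),\qquad X:=w_{c(a\alpha_a^{-1}(s))}w_{c(b\alpha_a^{-1}(s))}^*,\ \ Y:=w_{c(c\alpha_d^{-1}(s))}w_{c(d\alpha_d^{-1}(s))}^*.
\]

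\emph{Step 3: reindexing and conclusion.} Running the identical computation with $(a,b,c,d)$ replaced by $(c,d,a,b)$ yields $\zeta_I(\beta)\,\psi_{\beta,\tau,I}(v_cv_d^*v_av_b^*)=\sum_{s\in\text{Fix}(h)\cap\CT_I}N_s^{-\beta}\,\tau\big(w_{c(c\alpha_c^{-1}(s))}w_{c(d\alpha_c^{-1}(s))}^*w_{c(a\alpha_b^{-1}(s))}w_{c(b\alpha_b^{-1}(s))}^*\big)$ with $h:=\alpha_c\alpha_d^{-1}\alpha_a\alpha_b^{-1}$. Since $h=(\alpha_a\alpha_b^{-1})^{-1}g(\alpha_a\alpha_b^{-1})$, the $N$-preserving bijection $s\mapsto\sigma:=\alpha_a\alpha_b^{-1}(s)$ of $\CT_I$ carries $\text{Fix}(h)$ onto $\text{Fix}(g)$ and preserves the weights $N_s^{-\beta}$; using the identity $\alpha_c\alpha_d^{-1}(\sigma)=\alpha_b\alpha_a^{-1}(\sigma)$ valid for $\sigma\in\text{Fix}(g)$, one checks that $\alpha_b^{-1}(s)=\alpha_a^{-1}(\sigma)$ and $\alpha_c^{-1}(s)=\alpha_d^{-1}(\sigma)$, so the second summand becomes exactly $\tau(YX)$ with $X,Y$ as above (now in terms of $\sigma$). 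The two sums therefore have the same index set, the same weights, and summands $\tau(XY)$ versus $\tau(YX)$, which coincide by the trace property of $\tau$. Hence $\psi_{\beta,\tau,I}\circ\varphi$ is tracial.

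The routine part is the four-fold bookkeeping of the compositions $\alpha_a^{\pm1}$; the genuinely load-bearing observations are that $i(at)=\alpha_a(t)$ and that $\alpha_a$ preserves $N$ (so the reindexing leaves $\CT_I$ and the weights intact), plus the fact that at a fixed point of $g$ the four subscripts $c(\cdot)$ rotate cyclically, matching $\tau(XY)$ with $\tau(YX)$. I expect the main obstacle to be presenting that four-fold substitution transparently; it is clarified by recording at the outset that $g(s)=s$ forces $s_3=\alpha_a^{-1}(s)$ while $s_1=\alpha_d^{-1}(s)$ by definition.
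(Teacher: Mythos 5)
Your proof is correct and follows essentially the same route as the paper: both arguments show the summand is supported on the fixed-point set of $\alpha_a\alpha_b^{-1}\alpha_c\alpha_d^{-1}$, transport one fixed-point set onto the other by the $N$-preserving bijection $\alpha_a\alpha_b^{-1}$ (the paper uses its inverse $\alpha_c\alpha_d^{-1}$), and match the resulting summands via $\tau(XY)=\tau(YX)$. The only cosmetic difference is that you compute the four-fold composition $V_aV_b^*V_cV_d^*$ directly on basis vectors, whereas the paper first rewrites $V_b^*V_c$ in the form $V_{b'}V_{c'}^*$ and invokes its Lemma~\ref{lem:eval gen}~(ii).
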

\begin{proof}
From the construction in Theorem~\ref{thm:rep of C*(S)}, it is clear that $\psi_{\beta,\tau,I}$ is a state on $C^*(S)$. We claim that
\begin{equation}\label{eq:trace on level n}
\begin{array}{c}
\sum\limits_{t \in \CT_n} \langle V_a^{\phantom{*}}V_b^*V_d^{\phantom{*}}V_e^*(\xi_t \otimes \xi_\tau),\xi_t \otimes \xi_\tau \rangle = \sum\limits_{t \in \CT_n} \langle V_d^{\phantom{*}}V_e^*V_a^{\phantom{*}}V_b^*(\xi_t \otimes \xi_\tau),\xi_t \otimes \xi_\tau \rangle
\end{array}
\end{equation}
holds for all $a,b,d,e \in S_c$ and $n \in \langle I \rangle$. Since $bS\cap dS\neq\emptyset$, Lemma~\ref{lem:eval gen} (ii) implies that
\[\langle V_a^{\phantom{*}}V_b^*V_d^{\phantom{*}}V_e^*(\xi_t \otimes \xi_\tau),\xi_t \otimes \xi_\tau \rangle\]
vanishes for $t \in \CT_n$, unless $t$ belongs to
\[\CT_n^{(a,b),(d,e)} := \CT_n \cap \text{Fix}(\alpha_{a}^{\phantom{1}}\alpha_{b}^{-1}\alpha_{d}^{\phantom{1}}\alpha_{e}^{-1});
\]
just write $bb'=dd'$ and invoke the inner product formula for $V^{\phantom{*}}_{ab'}V^*_{ed'}$ to decide when it vanishes. The bijection $f:=\alpha_{d}^{\phantom{1}}\alpha_{e}^{-1}$ of $\CT$ restricts to a bijection from $\CT_n^{(a,b),(d,e)}$ to $\CT_n^{(d,e),(a,b)}$ as $t \in \CT_n^{(a,b),(d,e)}$ implies
\[f(t) = \alpha_{d}^{\phantom{1}}\alpha_{e}^{-1}(\alpha_{a}^{\phantom{1}}\alpha_{b}^{-1}\alpha_{d}^{\phantom{1}}\alpha_{e}^{-1}(t)) = \alpha_{d}^{\phantom{1}}\alpha_{e}^{-1}\alpha_{a}^{\phantom{1}}\alpha_{b}^{-1}(f(t)) \in \CT_n^{(d,e),(a,b)}.\]
For $t \in \CT_n^{(a,b),(d,e)}$, Lemma~\ref{lem:eval gen} (ii) yields
\[\langle V_a^{\phantom{*}}V_b^*V_d^{\phantom{*}}V_e^*(\xi_t \otimes \xi_\tau),\xi_t \otimes \xi_\tau \rangle
= \tau(w_{c(ar)}^{\phantom{*}}w_{c(br)}^*w_{c(ds)}^{\phantom{*}}w_{c(es)}^*),\]
where $s,r \in \CT_n$ are given by $s = \alpha_{e}^{-1}(t)$ and $r = \alpha_{b}^{-1}\alpha_{d}^{\phantom{1}}\alpha_{e}^{-1}(t) = \alpha_{a}^{-1}(t)$. Likewise, we have
\[\langle V_d^{\phantom{*}}V_e^*V_a^{\phantom{*}}V_b^*(\xi_{f(t)} \otimes \xi_\tau),\xi_{f(t)} \otimes \xi_\tau \rangle
= \tau(w_{c(ds')}^{\phantom{*}}w_{c(es')}^*w_{c(ar')}^{\phantom{*}}w_{c(br')}^*),\]
where $s',r' \in \CT_n$ are given by
\[r' = \alpha_{b}^{-1}f(t) = \alpha_{b}^{-1}\alpha_{d}^{\phantom{1}}\alpha_{e}^{-1}(t) = \alpha_{a}^{-1}(t) = r,\]
and
\[s' = \alpha_{e}^{-1}\alpha_{a}^{\phantom{1}}\alpha_{b}^{-1}f(t) = \alpha_{e}^{-1}\alpha_{a}^{\phantom{1}}\alpha_{b}^{-1}\alpha_{d}^{\phantom{1}}\alpha_{e}^{-1}(t) = \alpha_{e}^{-1}(t) = s.\]
Since $\tau$ is a trace, we thus have
\[\begin{array}{lcl}
\langle V_d^{\phantom{*}}V_e^*V_a^{\phantom{*}}V_b^*(\xi_{f(t)} \otimes \xi_\tau),\xi_{f(t)} \otimes \xi_\tau \rangle
&=& \tau(w_{c(ds)}^{\phantom{*}}w_{c(es)}^*w_{c(ar)}^{\phantom{*}}w_{c(br)}^*) \vspace*{2mm}\\
&=& \tau(w_{c(ar)}^{\phantom{*}}w_{c(br)}^*w_{c(ds)}^{\phantom{*}}w_{c(es)}^*) \vspace*{2mm}\\
&=& \langle V_a^{\phantom{*}}V_b^*V_d^{\phantom{*}}V_e^*(\xi_t \otimes \xi_\tau),\xi_t \otimes \xi_\tau \rangle
\end{array}\]
as claimed. This establishes \eqref{eq:trace on level n}, and hence that $\psi_{\beta,\tau,I} \circ \varphi$ is a trace on $C^*(S_c)$.
\end{proof}

\begin{proposition}\label{prop:construction of KMS-states}
For $\beta \geq 1$ and every trace $\tau$ on $C^*(S_c)$, there is a sequence $(I_k)_{k \geq 1}$ of finite subsets of $\text{Irr}(N(S))$ such that $\psi_{\beta,\tau,I_k}$ weak*-converges to a $\kms_\beta$-state $\psi_{\beta,\tau}$ on $C^*(S)$ as $I_k \nearrow \text{Irr}(N(S))$. For $\beta \in (\beta_c,\infty)$, the state $\psi_{\beta,\tau}$ is given by $\psi_{\beta,\tau,\text{Irr}N(S)}$.
\end{proposition}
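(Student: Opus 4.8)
The plan is to produce $\psi_{\beta,\tau}$ as a weak* limit and verify it is a $\kms_\beta$-state through the algebraic characterisation in Proposition~\ref{prop:algebraic characterization of KMS states-gen}, which applies because an admissible $S$ is core factorable with $S_{ci}\subset S$ being $\cap$-closed. For the existence of the limit, fix an increasing sequence $I_1\subseteq I_2\subseteq\cdots$ of finite subsets of $\text{Irr}(N(S))$ with union $\text{Irr}(N(S))$ (possible since $\text{Irr}(N(S))$ is countable). For $\beta>1$ each $\zeta_{I_k}(\beta)$ is finite by the product formula in Remark~\ref{rem:product formula for zeta function-gen}, so $\psi_{\beta,\tau,I_k}$ is a well-defined state by Lemma~\ref{lem:construction of KMS-states}; weak* compactness of the state space of the unital $C^*$-algebra $C^*(S)$ then yields a subsequence converging to a state $\psi_{\beta,\tau}$ (after relabelling, $(I_k)$ is still an increasing exhaustion). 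When $\beta>\beta_c$ one has $\zeta_S(\beta)<\infty$, so one may instead take $I_k=\text{Irr}(N(S))$ for all $k$ and get $\psi_{\beta,\tau}=\psi_{\beta,\tau,\text{Irr}(N(S))}$ with no limit needed.

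To see that $\psi_{\beta,\tau}$ is a $\kms_\beta$-state, by Proposition~\ref{prop:algebraic characterization of KMS states-gen} it suffices to check that $\psi_{\beta,\tau}\circ\varphi$ is a trace and that \eqref{eq:alg char of KMS states-gen with i and c} holds. The first is clear because each $\psi_{\beta,\tau,I_k}\circ\varphi$ is a trace by Lemma~\ref{lem:construction of KMS-states} and the tracial identity is weak*-closed. For \eqref{eq:alg char of KMS states-gen with i and c} on a spanning element $v_s^{\phantom{*}}v_t^*$: if $i(s)\neq i(t)$, then $s\not\sim t$, so Lemma~\ref{lem:eval gen}~(i) forces $\langle\pi_\tau(v_s^{\phantom{*}}v_t^*)(\xi_r\otimes\xi_\tau),\xi_r\otimes\xi_\tau\rangle=0$ for every $r\in\CT$, whence $\psi_{\beta,\tau,I_k}(v_s^{\phantom{*}}v_t^*)=0$ for all $k$ and the limit vanishes. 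If $i(s)=i(t)=:p$, write $s=pc(s)$, $t=pc(t)$, so $v_s^{\phantom{*}}v_t^*=v_p^{\phantom{*}}v_{c(s)}^{\phantom{*}}v_{c(t)}^*v_p^*$; the key claim is that
\[
\psi_{\beta,\tau,I_k}\bigl(v_p^{\phantom{*}}v_{c(s)}^{\phantom{*}}v_{c(t)}^*v_p^*\bigr)=N_p^{-\beta}\,\psi_{\beta,\tau,I_k}\bigl(v_{c(s)}^{\phantom{*}}v_{c(t)}^*\bigr)
\]
holds as soon as $N_p\in\langle I_k\rangle$, which is the case for all large $k$ since $N_p$ has finite support in $\text{Irr}(N(S))$ by \ref{cond:A4} and $I_k\nearrow\text{Irr}(N(S))$. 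Granting this and using $N_p=N_s$ (because $c(s)\in S_c=\ker N$ by Proposition~\ref{prop:NisaHMofRLCMs}~(i)), passing to the limit gives $\psi_{\beta,\tau}(v_s^{\phantom{*}}v_t^*)=N_s^{-\beta}\,\psi_{\beta,\tau}\circ\varphi(w_{c(s)}^{\phantom{*}}w_{c(t)}^*)$, which is \eqref{eq:alg char of KMS states-gen with i and c}, and Proposition~\ref{prop:algebraic characterization of KMS states-gen} concludes that $\psi_{\beta,\tau}$ is a $\kms_\beta$-state.

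The main obstacle is the displayed identity. To prove it I would compute $\langle V_p^{\phantom{*}}V_{c(s)}^{\phantom{*}}V_{c(t)}^*V_p^*(\xi_r\otimes\xi_\tau),\xi_r\otimes\xi_\tau\rangle$ directly from the explicit formulas for $V_s$ and $V_s^*$ established in the proofs of Theorem~\ref{thm:rep of C*(S)} and Lemma~\ref{lem:eval gen}: $V_p^*$ sends $\xi_r\otimes\xi_\tau$ to $\chi_{i(p\CT)}(r)\,\xi_{r'}w_{c(pr')}^*\otimes\xi_\tau$ with $i(pr')=r$, and since $p,r'\in\CT\subset S_{ci}^1$ and $S_{ci}\subset S$ is $\cap$-closed we have $pr'\in S_{ci}^1$, hence $c(pr')\in S^*$ by Lemma~\ref{lem:equiv forms of (VII)}. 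After applying $V_{c(t)}^*$, $V_{c(s)}$, $V_p$, a diagonal matrix coefficient survives only when $r'\in\text{Fix}(\alpha_{c(s)}^{\phantom{1}}\alpha_{c(t)}^{-1})$, in which case the two outer copies of $w_{c(pr')}$ cancel because $\tau$ is a trace and $w_{c(pr')}$ is unitary (cf.\ Remark~\ref{rem:range proj behave like 1 for traces}), leaving $\chi_{i(p\CT)}(r)\,\langle V_{c(s)}^{\phantom{*}}V_{c(t)}^*(\xi_{r'}\otimes\xi_\tau),\xi_{r'}\otimes\xi_\tau\rangle$. Since $N$ is a homomorphism of right LCM semigroups, $N_r=N_pN_{r'}$, so the weight splits as $N_r^{-\beta}=N_p^{-\beta}N_{r'}^{-\beta}$; and once $N_p\in\langle I_k\rangle$, condition~\ref{cond:A4} makes $\langle I_k\rangle$ a submonoid of $N(S)$ closed under taking divisors, so $r\mapsto r'$ restricts to a bijection of $\CT_{I_k}\cap i(p\CT)$ onto $\CT_{I_k}$, and summing over $\CT_{I_k}$ reproduces $\zeta_{I_k}(\beta)\,\psi_{\beta,\tau,I_k}(v_{c(s)}^{\phantom{*}}v_{c(t)}^*)$ with prefactor $N_p^{-\beta}$. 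The bookkeeping in this reindexing — in particular tracking the restriction $r\in\CT_{I_k}$, which encodes the defect projections implicit in $\psi_{\beta,\tau,I_k}$ — is where I expect the real effort to lie. Finally, the endpoint $\beta=1$, where $\zeta_I(1)=\infty$ for every non-empty $I$, is handled separately: taking $\beta_j\downarrow 1$ with $\beta_j>1$, the $\kms_{\beta_j}$-states $\psi_{\beta_j,\tau}$ just constructed have a weak* convergent subsequence whose limit $\psi_{1,\tau}$ satisfies the algebraic characterisation at $\beta=1$ (using $N_s^{-\beta_j}\to N_s^{-1}$ in the computation above applied to the $\psi_{\beta_j,\tau}$), hence is a $\kms_1$-state.
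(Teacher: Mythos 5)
Your proof is correct and follows essentially the same route as the paper: weak* compactness of the state space, reduction to the algebraic characterisation of Proposition~\ref{prop:algebraic characterization of KMS states-gen}, and the reindexing $r\mapsto r'$ (the paper's $t\mapsto t''$) in the representation $\pi_\tau$ together with Lemma~\ref{lem:eval gen} and the trace property of $\tau$ to extract the factor $N_p^{-\beta}$; your key displayed identity is exactly the paper's chain of equalities for $\psi_{\beta,\tau}(v_s^{\phantom{*}}v_r^*)$ written at the level of the individual $\psi_{\beta,\tau,I_k}$. The one genuine divergence is your separate treatment of $\beta=1$: you correctly observe that $\zeta_I(1)=\infty$ for every non-empty $I$, so the states $\psi_{1,\tau,I_k}$ of Lemma~\ref{lem:construction of KMS-states} are not defined, and you substitute a weak* limit along $\beta_j\downarrow 1$ of the already-constructed $\kms_{\beta_j}$-states. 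The paper glosses over this point in its proof of the proposition (it runs the $I_k\nearrow\mathrm{Irr}(N(S))$ argument uniformly for all $\beta\geq 1$), but uses precisely your $\beta_j\searrow 1$ device later in Proposition~\ref{prop:min gives unique kms in the crit int}; so your version is arguably the more careful one. The only cost is that the resulting $\kms_1$-state is then no longer tied to the prescribed trace $\tau$ (its restriction along $\varphi$ is the trace $\rho$ of Proposition~\ref{prop:min gives unique kms in the crit int} rather than $\tau$), which is harmless for the existence claim that the main theorem actually needs but means the literal convergence statement of the proposition at $\beta=1$ is not what you prove.
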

\begin{proof}
Due to weak*-compactness of the state space on $C^*(S)$, there is a sequence $(I_k)_{k \geq 1}$ of finite subsets of $\text{Irr}(N(S))$ with $I_k \nearrow \text{Irr}(N(S))$ such that $(\psi_{\beta,\tau,I_k})_{k \geq 1}$ obtained from Lemma~\ref{lem:construction of KMS-states} converges to some state $\psi_{\beta,\tau}$ in the weak* topology. By Proposition~\ref{prop:algebraic characterization of KMS states-gen}, $\psi_{\beta,\tau}$ is a $\kms_\beta$-state if and only if $\psi_{\beta,\tau}\circ \varphi$ defines a trace on $C^*(S_c)$ and \eqref{eq:alg char of KMS states-gen with i and c} holds. Note that $\psi_{\beta,\tau} \circ \varphi$ is tracial on $C^*(S_c)$ because each of the $\psi_{\beta,\tau,I_k}$ has this property by Lemma~\ref{lem:construction of KMS-states}. It remains to prove \eqref{eq:alg char of KMS states-gen with i and c}. For fixed $s,r \in S$, Lemma~\ref{lem:eval gen} (i) shows that $\langle V_s^{\phantom{*}}V_r^*(\xi_t \otimes \xi_\tau),\xi_t \otimes \xi_\tau \rangle$ vanishes for all $t \in \CT$, unless $s \sim r$. Therefore $\psi_{\beta,\tau}(v_s^{\phantom{*}}v_r^*) = 0$ if $s \not \sim r$. Now fix $s,r\in S$ such that $s \sim r$. Write $s=t'a$ and $r=t'b$ for some $t' \in \CT, a,b \in S_c$. Then $\langle V_s^{\phantom{*}}V_r^*(\xi_t \otimes \xi_\tau),\xi_t \otimes \xi_\tau \rangle$ can only be nonzero if $t \in i(t'\CT) \cap \CT$. Note that for $t''\in \CT$ such that $t=i(t't'')$ we have $tc(t't'')=t't''$, so $N_t=N_{t'}N_{t''}$. For large enough $k$, we have $t' \in \CT_{I_k}$. Hence, as we sum over $t\in \CT_{I_k}$, for $k$ large enough also $t''\in \CT_{I_k}$. Thus
\[\begin{array}{lcl}
\psi_{\beta,\tau}(v_s^{\phantom{*}}v_r^*)
&=&\lim\limits_{k\to\infty} \zeta_{I_k}(\beta)^{-1} \sum\limits_{t \in \CT_{I_k}} N_t^{-\beta} \langle V_s^{\phantom{*}}V_r^*(\xi_t \otimes \xi_\tau),\xi_t \otimes \xi_\tau \rangle \vspace*{2mm}\\
&=& \lim\limits_{k\to\infty}\zeta_{I_k}(\beta)^{-1} \sum\limits_{t \in \CT_{I_k}} N_t^{-\beta} \langle V_a^{\phantom{*}}V_b^*V_{t'}^*(\xi_t \otimes \xi_\tau),V_{t'}^*(\xi_t \otimes \xi_\tau) \rangle \vspace*{2mm}\\
&=& \lim\limits_{k\to\infty}\zeta_{I_k}(\beta)^{-1} \sum\limits_{t'' \in \CT_{I_k}} N_{i(t't'')}^{-\beta} \langle V_a^{\phantom{*}}V_b^*(\xi_{t''}w_{c(t't'')}^* \otimes \xi_\tau),\xi_{t''}w_{c(t't'')}^* \otimes \xi_\tau \rangle \\
&=&\lim\limits_{k\to\infty} N_{t'}^{-\beta} \zeta_{I_k}(\beta)^{-1} \sum\limits_{t'' \in \CT_{I_k}} N_{t''}^{-\beta} \langle V_a^{\phantom{*}}V_b^*(\xi_{t''} \otimes \xi_\tau),\xi_{t''} \otimes \xi_\tau \rangle \vspace*{2mm}\\
&=& \lim\limits_{k\to\infty} N_s^{-\beta} \psi_{\beta,\tau,I_k}(v_a^{\phantom{*}}v_b^*),
\end{array}\]
where we used Lemma~\ref{lem:eval gen} (ii) and the trace property of $\tau$ in the penultimate equality. This shows that the limit $\psi_{\beta,\tau}$ satisfies \eqref{eq:alg char of KMS states-gen with i and c}, and hence is a $\kms_\beta$-state. The claim for $\beta \in (\beta_c,\infty)$ follows immediately from $I_k \nearrow \text{Irr}(N(S))$ because the formula from Lemma~\ref{lem:construction of KMS-states} makes sense for $I = \text{Irr}(N(S))$.
\end{proof}

\begin{corollary}\label{cor:KMS-states parametrization surjective}
Let $S$ be an admissible right LCM semigroup. If $\phi$ is a $\kms_\beta$-state on $C^*(S)$ for $\beta \in (\beta_c,\infty)$, then $\psi_{\beta,\phi_{\Irr(N(S))} \circ \varphi} = \phi$.
\end{corollary}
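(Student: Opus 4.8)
The plan is to funnel everything into the reconstruction formula of Lemma~\ref{lem:rec formula-gen}(iv). Put $I:=\Irr(N(S))$; since $\beta>\beta_c$ we have $\zeta_I(\beta)=\zeta_S(\beta)<\infty$, so that lemma applies and produces the state $\phi_I$ on $C^*(S)$ with $\tau:=\phi_I\circ\varphi=\phi_{\Irr(N(S))}\circ\varphi$ a trace on $C^*(S_c)$. Thus the claim to prove is $\psi_{\beta,\tau}=\phi$. Because $\beta>\beta_c$, Proposition~\ref{prop:construction of KMS-states} identifies $\psi_{\beta,\tau}$ with $\psi_{\beta,\tau,\Irr(N(S))}$, namely
\[
\psi_{\beta,\tau}(y)=\zeta_I(\beta)^{-1}\sum_{t\in\CT_I}N_t^{-\beta}\,\langle\pi_\tau(y)(\xi_t\otimes\xi_\tau),\,\xi_t\otimes\xi_\tau\rangle,
\]
where $\CT_I=\CT$ here since $\langle I\rangle=N(S)$, and this is the same index set occurring in Lemma~\ref{lem:rec formula-gen}(iv). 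Now $1\in\CT$, and for $t\in\CT$ one has $\pi_\tau(v_t)(\xi_1\otimes\xi_\tau)=V_t(\xi_1\otimes\xi_\tau)=\xi_{i(t)}w_{c(t)}\otimes\xi_\tau=\xi_t\otimes\xi_\tau$, so the $t$-th summand equals $\psi_\tau(v_t^*yv_t)$, where $\psi_\tau(y):=\langle\pi_\tau(y)(\xi_1\otimes\xi_\tau),\xi_1\otimes\xi_\tau\rangle$ is the ground state attached to $\tau$ in Proposition~\ref{prop:construction of ground states}. Comparing with Lemma~\ref{lem:rec formula-gen}(iv), it therefore suffices to show $\phi_I=\psi_\tau$ as states on $C^*(S)$.

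I would establish $\phi_I=\psi_\tau$ by showing that $\phi_I$ is itself a ground state. By Proposition~\ref{prop:ground states on C*(S)-gen} this reduces to checking $\phi_I(v_s^{\phantom{*}}v_t^*)=0$ whenever $s$ or $t$ lies outside $S_c$, and since $\phi_I(y)=\zeta_I(\beta)\,\tilde{\phi}(Q_I\pi_\phi(y)Q_I)$ with $Q_I=\prod_{n\in I}\pi_\phi(d_n)$, it is enough to prove $Q_I\pi_\phi(v_s)=0$ for every $s\in S$ with $N_s>1$. This is exactly the computation already carried out inside the proof of Lemma~\ref{lem:rec formula-gen}(iii): write $s=i(s)c(s)$, so $i(s)\in S_{ci}$ and $N_{i(s)}=N_s$ by Proposition~\ref{prop:NisaHMofRLCMs}(i); choose an irreducible divisor $n'\in I$ of $N_s$ in $N(S)=\langle I\rangle$ and a transversal $\CT_{n'}\subset S_{ci}$ for $N^{-1}(n')/_\sim$, which is an accurate foundation set by \ref{cond:A3b}; pick $f\in\CT_{n'}$ with $fS\cap i(s)S\neq\emptyset$ and note $fS\cap i(s)S=s''S$ with $s''\in S_{ci}$ by $\cap$-closedness and $N_{s''}=\text{lcm}(n',N_s)=N_s$ by Proposition~\ref{prop:NisaHMofRLCMs}(iv); then Corollary~\ref{cor:dichotomy for S_ci under gen scale} forces $s''\in i(s)S^*$, hence $sS\subset i(s)S=s''S\subset fS$, hence $d_{n'}e_{sS}=0$ and so $Q_I\pi_\phi(e_{sS})=0$ (for infinite $I$, pass to a weak limit over finite subsets of $I$ containing $n'$), giving $Q_I\pi_\phi(v_s)=Q_I\pi_\phi(e_{sS})\pi_\phi(v_s)=0$. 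Consequently $\phi_I$ is a ground state, and $\phi_I\circ\varphi=\tau$.

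To finish, $\psi_\tau$ is also a ground state with $\psi_\tau\circ\varphi=\tau$ by Proposition~\ref{prop:construction of ground states}, and by Proposition~\ref{prop:ground states on C*(S)-gen} a ground state vanishes outside $\varphi(C^*(S_c))$, hence is determined by its restriction there; therefore $\phi_I=\psi_\tau$. Substituting this equality into the two displayed expressions for $\psi_{\beta,\tau}(y)$ and for $\phi(y)$ (the latter being Lemma~\ref{lem:rec formula-gen}(iv)) yields $\psi_{\beta,\tau}=\phi$, as desired. The only step that is not pure bookkeeping is recognizing $\phi_I$ as a ground state, i.e.\ the observation that the defect projection $d_{n'}$ associated with an irreducible $n'\in\Irr(N(S))$ annihilates every range projection $e_{sS}$ with $N_s>1$; this is precisely the mechanism underlying the reconstruction formula, so I do not expect a genuinely new obstacle, but it is the place requiring care.
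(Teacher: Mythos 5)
Your proof is correct. It takes a genuinely different route from the paper's, although both arguments funnel through the reconstruction formula of Lemma~\ref{lem:rec formula-gen}~(iv) and the explicit formula for $\psi_{\beta,\tau}$ from Proposition~\ref{prop:construction of KMS-states}. The paper first invokes Proposition~\ref{prop:algebraic characterization of KMS states-gen} to reduce the claim to the equality $\psi_{\beta,\tau}\circ\varphi=\phi\circ\varphi$ on $C^*(S_c)$, and then verifies this for $y=v_a^{\phantom{*}}v_b^*$ with $a,b\in S_c$ by combining Lemma~\ref{lem:eval gen}~(ii) with the operator identity $\chi_{\text{Fix}(\alpha_a^{\phantom{1}}\alpha_b^{-1})}(t)\,\varphi(w_{c(a\alpha_b^{-1}(t))}^{\phantom{*}}w_{c(b\alpha_b^{-1}(t))}^*)=v_t^*v_a^{\phantom{*}}v_b^*v_t^{\phantom{*}}$, after which the reconstruction formula applied to $v_a^{\phantom{*}}v_b^*$ finishes the argument. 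You instead match the two formulas termwise for arbitrary $y$, which reduces the corollary to the identity $\phi_I=\psi_\tau$ and hence requires the extra observation that the conditioned state $\phi_I$ is a ground state, i.e.\ that $Q_I\pi_\phi(v_s)=0$ whenever $N_s>1$. That observation is not isolated anywhere in the paper, but your verification is exactly the mechanism already used in the proof of Lemma~\ref{lem:rec formula-gen}~(iii): an irreducible divisor $n'$ of $N_s$ produces $f\in\CT_{n'}$ with $sS\subset fS$, whence $d_{n'}e_{sS}=0$; combined with Corollary~\ref{cor:dichotomy for S_ci under gen scale} and Proposition~\ref{prop:NisaHMofRLCMs} the computation is sound, and the identification of the summand with $\psi_\tau(v_t^*yv_t^{\phantom{*}})$ via $V_t(\xi_1\otimes\xi_\tau)=\xi_t\otimes\xi_\tau$ is correct. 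Your route buys a cleaner conceptual statement --- the state obtained by conditioning $\phi$ on $Q_{\Irr(N(S))}$ coincides with the ground state attached to $\tau=\phi_{\Irr(N(S))}\circ\varphi$ by Proposition~\ref{prop:construction of ground states} --- and avoids both Proposition~\ref{prop:algebraic characterization of KMS states-gen} and the operator identity; the paper's route avoids having to bring ground states into the picture at all. One small point worth making explicit in your write-up: identifying the index sets of the two displayed sums presumes that the same transversal $\CT$ underlies Lemma~\ref{lem:rec formula-gen} and Theorem~\ref{thm:rep of C*(S)}, which is harmless since the relevant quantities are independent of this choice by Lemma~\ref{lem:basic defect projection-gen} and Corollary~\ref{cor:dichotomy for S_ci under gen scale}.
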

\begin{proof}
As $\phi_{\Irr(N(S))} \circ \varphi$ is a trace on $C^*(S_c)$ by Proposition~\ref{prop:algebraic characterization of KMS states-gen}, $\psi_{\beta,\phi_{\Irr(N(S))} \circ \varphi}$ is a $\kms_\beta$-state, see Proposition~\ref{prop:construction of KMS-states}. Due to Proposition~\ref{prop:algebraic characterization of KMS states-gen}, it suffices to show that $\psi_{\beta,\phi_{\Irr(N(S))} \circ \varphi} \circ \varphi = \phi \circ \varphi$. For $a,b \in S_c$, Lemma~\ref{lem:eval gen}~(ii) and $\varphi(w_aw_b^*)=v_av_b^*$ give
\[\begin{array}{ll}
\psi_{\beta,\phi_{\Irr(N(S))} \circ \varphi}(v_a^{\phantom{*}}v_b^*) = \zeta_S(\beta)^{-1} \sum\limits_{t \in \CT} N_t^{-\beta} \langle V_a^{\phantom{*}}V_b^*(\xi_t \otimes \xi_{\phi_{\Irr(N(S))} \circ \varphi}),\xi_t \otimes \xi_{\phi_{\Irr(N(S))} \circ \varphi} \rangle \vspace*{2mm}\\
&\hspace*{-120mm}= \zeta_S(\beta)^{-1} \sum\limits_{n \in N(S)} \sum\limits_{t \in \CT_n} n^{-\beta} \chi_{\text{Fix}(\alpha_a^{\phantom{1}}\alpha_b^{-1})}(t) \ \phi_{\Irr(N(S))} \circ \varphi(w_{c(a\alpha_b^{-1}(t))}^{\phantom{*}}w_{c(b\alpha_b^{-1}(t))}^*).
\end{array}\]
We claim that $\chi_{\text{Fix}(\alpha_a^{\phantom{1}}\alpha_b^{-1})}(t) \ \varphi(w_{c(a\alpha_b^{-1}(t))}^{\phantom{*}}w_{c(b\alpha_b^{-1}(t))}^*) = v_t^*v_a^{\phantom{*}}v_b^*v_t^{\phantom{*}}$. Indeed, the expression $v_t^*v_a^{\phantom{*}}v_b^*v_t^{\phantom{*}}$ vanishes unless $t \in \text{Fix}(\alpha_a^{\phantom{1}}\alpha_b^{-1})$. But if $t \in \text{Fix}(\alpha_a^{\phantom{1}}\alpha_b^{-1})$, then $a\alpha_b^{-1}(t) = tc(a\alpha_b^{-1}(t))$ and $b\alpha_b^{-1}(t) = tc(b\alpha_b^{-1}(t))$, see the proof of Lemma~\ref{lem:thebijectionsg}, hence
\[\begin{array}{lcl}
v_t^*v_a^{\phantom{*}}v_b^*v_t^{\phantom{*}}
&=& v_t^*e_{a\alpha_b^{-1}(t)S}v_a^{\phantom{*}}v_b^*e_{b\alpha_b^{-1}(t)S}v_t^{\phantom{*}} \\
&=& v_t^*v_{ tc(a\alpha_b^{-1}(t))}^{\phantom{*}}v_{a\alpha_b^{-1}(t)}^*v_a^{\phantom{*}}v_b^*v_{b\alpha_b^{-1}(t)}^{\phantom{*}}v_{ tc(b\alpha_b^{-1}(t))}^*v_t^{\phantom{*}} \\
&=& v_{c(a\alpha_b^{-1}(t))}^{\phantom{*}}v_{\alpha_b^{-1}(t)}^*v_{\alpha_b^{-1}(t)}^{\phantom{*}}v_{c(b\alpha_b^{-1}(t))}^*\\
&=& v_{c(a\alpha_b^{-1}(t))}^{\phantom{*}}v_{c(b\alpha_b^{-1}(t))}^*.
\end{array}\]
Therefore we conclude that $\psi_{\beta,\phi_{\Irr(N(S))} \circ \varphi} = \phi$ by appealing to Lemma~\ref{lem:rec formula-gen}~(iv).
\end{proof}

Corollary~\ref{cor:KMS-states parametrization surjective} shows that every $\kms_\beta$-state on $C^*(S)$ for $\beta \in (\beta_c,\infty)$ arises from a trace on $C^*(S_c)$. In other words, we have obtained a surjective parametrisation. We now show that this mapping is also injective.

\begin{proposition}\label{prop:KMS-states parametrization injective}
 Let $S$ be an admissible right LCM semigroup. For every $\beta \in (\beta_c,\infty)$ and every trace $\tau$ on $C^*(S_c)$, let $\psi_{\beta, \tau}$ be the $\kms_\beta$-state obtained in Proposition~\ref{prop:construction of KMS-states}. Then $(\psi_{\beta,\tau})_{\text{Irr}(N(S))} \circ \varphi = \tau$.
\end{proposition}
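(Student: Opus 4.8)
The plan is to show that the trace $\tau$ on $C^*(S_c)$ is recovered from $\psi_{\beta,\tau}$ by applying the conditioning construction of Lemma~\ref{lem:rec formula-gen}~(ii) with $I=\Irr(N(S))$. Writing $\phi:=\psi_{\beta,\tau}$, the claim is that $\phi_{\Irr(N(S))}\circ\varphi=\tau$. Since traces on $C^*(S_c)$ are determined by their values on the spanning family $\{w_a^{\phantom{*}}w_b^*\mid a,b\in S_c\}$, it suffices to compute $\phi_{\Irr(N(S))}(\varphi(w_a^{\phantom{*}}w_b^*))=\phi_{\Irr(N(S))}(v_a^{\phantom{*}}v_b^*)$ and check it equals $\tau(w_a^{\phantom{*}}w_b^*)$. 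Here I would use the definition $\phi_I(y)=\zeta_I(\beta)\,\tilde\phi(Q_I\pi_\phi(y)Q_I)$, where $\phi=\psi_{\beta,\tau}$ has a concrete description: it is the weak$^*$-limit (equal, for $\beta>\beta_c$, to the actual value) of the states $\psi_{\beta,\tau,I_k}$, and in fact $\phi=\psi_{\beta,\tau,\Irr(N(S))}$, so $\phi(y)=\zeta_S(\beta)^{-1}\sum_{t\in\CT}N_t^{-\beta}\langle\pi_\tau(y)(\xi_t\otimes\xi_\tau),\xi_t\otimes\xi_\tau\rangle$.

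First I would identify the projection $Q_I$ inside the representation $\pi_\tau$ of Theorem~\ref{thm:rep of C*(S)}. One checks that $\pi_\tau(d_n)$ is the orthogonal projection onto the closed span of $\{\xi_t y\otimes\xi_\tau\mid t\in\CT,\ N_t\notin n\N,\ y\in C^*(S_c)\}$, because $e_{fS}$ for $f$ in a transversal $\CT_n\subset S_{ci}^1$ acts as the projection onto the summands indexed by $t\in i(f\CT)$, and $d_n=1-\sum_{f\in\CT_n}e_{fS}$ kills exactly those $t$ which are divisible (in the scale) by $n$. Taking the product over all $n\in\Irr(N(S))$, the projection $Q_{\Irr(N(S))}=\prod_n\pi_\tau(d_n)$ is the projection onto the summand indexed by $t=1$, i.e. onto $C^*(S_c)\otimes_{C^*(S_c)}\CH'_\tau\cong\CH'_\tau$; this is consistent with $\tilde\phi(Q_{\Irr(N(S))})=\zeta_S(\beta)^{-1}$ from Lemma~\ref{lem:rec formula-gen}~(i), since the $t=1$ summand carries weight $N_1^{-\beta}\zeta_S(\beta)^{-1}=\zeta_S(\beta)^{-1}$ in the vector state $\tilde\phi$. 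Then for $y\in\varphi(C^*(S_c))$, say $y=v_a^{\phantom{*}}v_b^*$ with $a,b\in S_c$, the operator $Q_{\Irr(N(S))}\pi_\tau(v_a^{\phantom{*}}v_b^*)Q_{\Irr(N(S))}$ acts on the $t=1$ summand: using $V_a(\xi_1 y\otimes\xi_\tau)=\xi_{\alpha_a(1)}w_{c(a\alpha_a(1))}y\otimes\xi_\tau$ and $\alpha_a(1)=1$ (since $[a\cdot 1]=[a]=[1]$ as $a\in S_c$), together with $c(a)=a$, one finds $Q_{\Irr(N(S))}V_aV_b^*Q_{\Irr(N(S))}$ is, under the identification of the $t=1$ summand with $\CH'_\tau$, the operator $\pi'_\tau(w_a^{\phantom{*}}w_b^*)$. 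Hence $\tilde\phi(Q_{\Irr(N(S))}\pi_\tau(v_a^{\phantom{*}}v_b^*)Q_{\Irr(N(S))})=\zeta_S(\beta)^{-1}\langle\pi'_\tau(w_a^{\phantom{*}}w_b^*)\xi_\tau,\xi_\tau\rangle=\zeta_S(\beta)^{-1}\tau(w_a^{\phantom{*}}w_b^*)$, and multiplying by $\zeta_S(\beta)=\zeta_{\Irr(N(S))}(\beta)$ gives $\phi_{\Irr(N(S))}(v_a^{\phantom{*}}v_b^*)=\tau(w_a^{\phantom{*}}w_b^*)$, as required.

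An alternative, perhaps cleaner, route avoids describing $Q_I$ abstractly: apply the reconstruction formula of Lemma~\ref{lem:rec formula-gen}~(iv) to $\phi=\psi_{\beta,\tau}$ with $I=\Irr(N(S))$, which reads $\phi(y)=\zeta_S(\beta)^{-1}\sum_{s\in\CT}N_s^{-\beta}\phi_{\Irr(N(S))}(v_s^*yv_s^{\phantom{*}})$. Comparing this with the explicit formula $\phi(y)=\zeta_S(\beta)^{-1}\sum_{t\in\CT}N_t^{-\beta}\langle\pi_\tau(y)(\xi_t\otimes\xi_\tau),\xi_t\otimes\xi_\tau\rangle$ for $\phi=\psi_{\beta,\tau,\Irr(N(S))}$, and using that $v_s^*yv_s^{\phantom{*}}$ followed by the state $\phi_{\Irr(N(S))}$ corresponds termwise to the $t=s$ summand in the vector-state expansion, one reads off $\phi_{\Irr(N(S))}(v_a^{\phantom{*}}v_b^*)=\langle\pi_\tau(v_a^{\phantom{*}}v_b^*)(\xi_1\otimes\xi_\tau),\xi_1\otimes\xi_\tau\rangle$ for $a,b\in S_c$, which is $\tau(w_a^{\phantom{*}}w_b^*)$ by the GNS identity together with $V_a(\xi_1\otimes\xi_\tau)=\xi_1 w_a\otimes\xi_\tau$. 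In either approach, the main obstacle is the bookkeeping that pins down precisely how $Q_{\Irr(N(S))}$ and the $d_n$ act in the concrete module picture of Theorem~\ref{thm:rep of C*(S)}—in particular verifying that the infinite product $\prod_n\pi_\tau(d_n)$ genuinely collapses onto the single summand $t=1$ (a weak-limit argument, using $I_k\nearrow\Irr(N(S))$ and the commuting family $(d_n)$, exactly as in the proof of Lemma~\ref{lem:rec formula-gen}), and correctly tracking the identification $C^*(S_c)\otimes_{C^*(S_c)}\CH'_\tau\cong\CH'_\tau$ so that $w_c$-prefactors land on the right side. Once this identification is in hand the computation is a short unwinding of definitions, and combined with Corollary~\ref{cor:KMS-states parametrization surjective} it completes the proof that $\tau\mapsto\psi_{\beta,\tau}$ and $\phi\mapsto\phi_{\Irr(N(S))}\circ\varphi$ are mutually inverse, establishing assertion~(3) of Theorem~\ref{thm:KMS results-gen}.
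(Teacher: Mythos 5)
Your first (main) argument is essentially the paper's proof: both reduce $(\psi_{\beta,\tau})_{\Irr(N(S))}\circ\varphi$ to the observation that in the concrete representation $\pi_\tau$ of Theorem~\ref{thm:rep of C*(S)} the product $\prod_{n}\pi_\tau(d_n)$ annihilates every summand $\xi_t\otimes\xi_\tau$ with $t\neq 1$ (via Corollary~\ref{cor:dichotomy for S_ci under gen scale} and accuracy of $\CT_n\CT_{n'}$) and fixes the $t=1$ summand, on which $V_a^{\phantom{*}}V_b^*$ for $a,b\in S_c$ acts as $\pi'_\tau(w_a^{\phantom{*}}w_b^*)$, so that only $\tau(y)$ survives after normalising by $\zeta_S(\beta)$. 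The "bookkeeping" you defer is precisely the computation the paper carries out, so the proposal is correct and follows the same route (only note that the right divisibility condition is $N_t\in nN(S)$ rather than $N_t\in n\N$, which does not affect the conclusion).
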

\begin{proof}
Recall from Lemma~\ref{lem:d_p commutes with v_s-gen} that $\varphi(y)d_n = d_n\varphi(y)$ for all $y \in C^*(S_c)$ and $n \in N(S)$. Using Lemma~\ref{lem:rec formula-gen}, Proposition~\ref{prop:construction of KMS-states} and Lemma~\ref{lem:construction of KMS-states}, we have
\[\begin{array}{ll}
(\psi_{\beta,\tau})_{\text{Irr}(N(S))} \circ \varphi (y) \vspace*{2mm}\\
&\hspace*{-20mm}= \zeta_S(\beta) \tilde{\psi}_{\beta,\tau}(Q_{\text{Irr}(N(S))}\pi_{\psi_{\beta,\tau}}(\varphi(y)) Q_{\text{Irr}(N(S))}) \vspace*{2mm}\\
&\hspace*{-20mm}= \zeta_S(\beta) \tilde{\psi}_{\beta,\tau}(\pi_{\psi_{\beta,\tau}}(\varphi(y)) \prod\limits_{n \in \text{Irr}(N(S))} \pi_{\psi_{\beta,\tau}}(d_n)) \vspace*{2mm}\\
&\hspace*{-20mm}= \zeta_S(\beta) \lim\limits_{\substack{I_k \nearrow \text{Irr}(N(S))\\ \lvert I_k \rvert < \infty}} \psi_{\beta,\tau}(\varphi(y) \prod\limits_{n \in I_k} d_n) \vspace*{2mm}\\
&\hspace*{-20mm}= \lim\limits_{\substack{I_k \nearrow \text{Irr}(N(S))\\ \lvert I_k \rvert < \infty}} \sum\limits_{m \in N(S)}m^{-\beta}\sum\limits_{t \in \CT_m} \langle \pi_\tau(\varphi(y)\prod\limits_{n \in I_k} d_n)(\xi_t \otimes \xi_\tau),\xi_t \otimes \xi_\tau \rangle.
\end{array}\]
Noting that $\CT_1=\{1\}$ and $\pi_\tau(\prod_{n \in I_k} d_n)(\xi_1 \otimes \xi_\tau) = \xi_1 \otimes \xi_\tau$ as $1 \notin tS$ for all $t \in \CT_n$, where $n \in I_k \subset \text{Irr}(N(S))$, the term at $m=1$ is given by
\[\begin{array}{lclcl}
\langle \pi_\tau(\varphi(y)\prod\limits_{n \in I_k} d_n)(\xi_1 \otimes \xi_\tau),\xi_1 \otimes \xi_\tau \rangle
&=& \langle \pi_\tau(\varphi(y))(\xi_t \otimes \xi_\tau),\xi_t \otimes \xi_\tau \rangle 
 &=& \tau(y).
\end{array}\]
For $m \in N(S)$ with $m>1$, there is $k$ so that there is $n \in I_\ell$ with $m \in nN(S)$ for all $\ell \geq k$. We claim that $\pi_\tau(d_n)(\xi_t \otimes \xi_\tau) = 0$ for all $t \in \CT_m$. To prove the claim, we recall that $d_n = 1 - \sum_{f \in \CT_n} e_{fS}$ and that $\pi_\tau(e_{fS})(\xi_t \otimes \xi_\tau) = \chi_{i(fS)}(t) \ \xi_t \otimes \xi_\tau$. Let $n' \in N(S)$ such that $m=nn'$. Let $\CT_{n'}$ be any transversal for $N^{-1}(n')/_\sim$ contained in $S_{ci}^1$. According to Corollary~\ref{cor:dichotomy for S_ci under gen scale}, we either have $ff' \perp t$ or $ff' \in tS^*$ for every pair $(f,f') \in \CT_n \times \CT_{n'}$ as $\CT_n\CT_{n'} \subset N^{-1}(m)\cap S_{ci}$. However, since $\CT_n\CT_{n'}$ is an accurate foundation set, there is exactly one such pair $(f,f')$ satisfying $ff' \in tS^*$. This forces $\pi_\tau(d_n)(\xi_t \otimes \xi_\tau) = 0$ and thus $\langle \pi_\tau(\varphi(y)\prod_{n \in I_\ell} d_n)(\xi_t \otimes \xi_\tau),\xi_t \otimes \xi_\tau \rangle = 0$ for all $\ell \geq k$. Thus we get $(\psi_{\beta,\tau})_{\text{Irr}(N(S))} \circ \varphi (y) = \tau(y)$.
\end{proof}

\section{Uniqueness within the critical interval}\label{sec:uniqueness}
This section deals with the proof of the uniqueness assertions in Theorem~\ref{thm:KMS results-gen}~(2). For the purpose of this section, we assume throughout that $S$ is an admissible right LCM semigroup, and write $\tau$ for the canonical trace $\tau(w_s^{\phantom{*}}w_t^*) = \delta_{s, t}$ on $C^*(S_c)$. We begin with the case that $\alpha$ is an almost free action, where uniqueness will be an application of the reconstruction formula in Lemma~\ref{lem:rec formula-gen}~(iv):

\begin{proposition}\label{prop:strong min gives unique kms in the crit int}
Let $S$ be an admissible right LCM semigroup, and $\beta \in [1,\beta_c]$. If $\alpha\colon S_c \curvearrowright S/_\sim$ is almost free, then the only $\kms_\beta$-state on $C^*(S)$ is $\psi_\beta := \psi_{\beta,\tau}$ determined by $\psi_\beta \circ \varphi = \tau$.
\end{proposition}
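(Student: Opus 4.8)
The plan is to combine the reconstruction formula in Lemma~\ref{lem:rec formula-gen}~(iv) with the almost freeness of $\alpha$ to show that any $\kms_\beta$-state $\phi$ must satisfy $\phi\circ\varphi = \tau$, the canonical trace; uniqueness then follows because, by Proposition~\ref{prop:algebraic characterization of KMS states-gen}, a $\kms_\beta$-state is completely determined by its restriction $\phi\circ\varphi$ to $\varphi(C^*(S_c))$ via \eqref{eq:alg char of KMS states-gen}. Since $\psi_\beta = \psi_{\beta,\tau}$ constructed in Proposition~\ref{prop:construction of KMS-states} is a $\kms_\beta$-state with $\psi_\beta\circ\varphi = \tau$ (this is part of what Proposition~\ref{prop:construction of KMS-states} and Lemma~\ref{lem:construction of KMS-states} give us, or can be read off from Lemma~\ref{lem:eval gen}~(ii) evaluated at $a,b\in S_c$), it remains only to prove that \emph{every} $\kms_\beta$-state restricts to $\tau$ on the core.

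\textbf{Key steps.} First I would fix a $\kms_\beta$-state $\phi$ and $a,b\in S_c$ with $a\neq b$ (the case $a=b$ being trivial since $\phi\circ\varphi(w_a^{\phantom{*}}w_a^*)=\phi(1)=1$), and aim to show $\phi\circ\varphi(w_a^{\phantom{*}}w_b^*)=0$. The strategy is to pick a subset $I\subset\Irr(N(S))$ with $\zeta_I(\beta)<\infty$ — such $I$ exist for $\beta$ in the critical interval; when $\beta<\beta_c$ one takes $I = \Irr(N(S))$, when $\beta=\beta_c$ or $\beta_c=\infty$ one takes a suitable infinite subset with convergent $\zeta_I$, and for $\beta=\beta_c=1$ one needs $I$ finite or empty, but then $\zeta_I$ is a finite product and the argument degenerates gracefully — and apply the reconstruction formula. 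Actually the cleanest route is: apply Lemma~\ref{lem:rec formula-gen}~(iv) with $y = v_a^{\phantom{*}}v_b^*$, giving
\[
\phi(v_a^{\phantom{*}}v_b^*) = \zeta_I(\beta)^{-1}\sum_{s\in\CT_I} N_s^{-\beta}\,\phi_I(v_s^*v_a^{\phantom{*}}v_b^*v_s^{\phantom{*}}).
\]
Now $v_s^*v_a^{\phantom{*}}v_b^*v_s^{\phantom{*}}$ is nonzero only when $s\in\mathrm{Fix}(\alpha_a^{\phantom{1}}\alpha_b^{-1})$, by the computation in the proof of Corollary~\ref{cor:KMS-states parametrization surjective} showing $v_t^*v_a^{\phantom{*}}v_b^*v_t^{\phantom{*}} = \chi_{\mathrm{Fix}(\alpha_a\alpha_b^{-1})}(t)\,v_{c(a\alpha_b^{-1}(t))}^{\phantom{*}}v_{c(b\alpha_b^{-1}(t))}^*$. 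Since $\phi_I$ is a state, each term $\phi_I(v_s^*v_a^{\phantom{*}}v_b^*v_s^{\phantom{*}})$ is bounded in modulus by $1$, and the terms vanish outside $\CT_I\cap\mathrm{Fix}(\alpha_a^{\phantom{1}}\alpha_b^{-1})$. By almost freeness this fixed-point set is \emph{finite}, hence $\CT_I\cap\mathrm{Fix}(\alpha_a^{\phantom{1}}\alpha_b^{-1})$ is a finite set, say of cardinality $M_{a,b}$. Thus
\[
\lvert\phi(v_a^{\phantom{*}}v_b^*)\rvert \leq \zeta_I(\beta)^{-1}\,M_{a,b}\cdot\max_{s\in\CT_I}N_s^{-\beta} \leq \zeta_I(\beta)^{-1}M_{a,b}.
\]
Then I would let $I$ exhaust $\Irr(N(S))$ (when $\beta_c>1$, or along an exhausting sequence when $\beta=\beta_c$ finite): as $I\nearrow\Irr(N(S))$, we have $\zeta_I(\beta)\to\zeta_S(\beta)$, and crucially when $\beta$ lies \emph{strictly below} $\beta_c$ this is finite, so that doesn't immediately kill the bound — instead the right trick is to also exhaust so that $\zeta_I(\beta)\to\infty$, which happens precisely at $\beta = \beta_c$ (finite) along any exhaustion, or at any $\beta\le\beta_c$ if one is careful. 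Let me restructure: the correct observation is that for $\beta$ in the critical interval $[1,\beta_c]$, one has $\zeta_S(\beta)=\infty$ (that is what "critical interval" means together with $\zeta$ being nonincreasing and $\zeta(\beta_c^+)$ finite but $\zeta(\beta_c)$ possibly infinite — at $\beta=\beta_c$ itself, and all $\beta<\beta_c$, the series diverges). Hence choosing $I_k\nearrow\Irr(N(S))$ gives $\zeta_{I_k}(\beta)\to\infty$, while $M_{a,b}$ stays fixed, so $\lvert\phi(v_a^{\phantom{*}}v_b^*)\rvert\le\lim_k\zeta_{I_k}(\beta)^{-1}M_{a,b}=0$.

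\textbf{The main obstacle.} The delicate point is the bookkeeping at the endpoint $\beta=\beta_c$ and the borderline case $\beta_c=1$, where $\Irr(N(S))$ may be a single element, $I$ is forced to be small, and $\zeta_I$ need not diverge — one must check that the argument still produces the conclusion, presumably by observing that when $\beta_c=1$ finite-propagation or the structure forces a direct verification, or that the critical interval reduces to $\{1\}$ and the claim about $\phi\circ\varphi$ can be extracted from the $\kms_1$-relation $\phi(v_f^{\phantom{*}}v_f^*)=N_f^{-1}$ summing to $1$ over an accurate foundation set, which pins down $\phi$ on the defect projections and iteratively on all of $\varphi(C^*(S_c))$ via almost freeness. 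A secondary subtlety is justifying the interchange of the limit $I_k\nearrow\Irr(N(S))$ with the reconstruction formula, but Lemma~\ref{lem:rec formula-gen}~(iv) is stated for all such $I$ individually, so for each fixed $k$ the bound $\lvert\phi(v_a^{\phantom{*}}v_b^*)\rvert\le\zeta_{I_k}(\beta)^{-1}M_{a,b}$ holds, and then one simply takes $k\to\infty$ on the right-hand side — no interchange is actually needed. Once $\phi\circ\varphi(w_a^{\phantom{*}}w_b^*)=\delta_{a,b}=\tau(w_a^{\phantom{*}}w_b^*)$ for all $a,b\in S_c$, linearity and continuity give $\phi\circ\varphi=\tau$, and Proposition~\ref{prop:algebraic characterization of KMS states-gen} finishes the proof by identifying $\phi$ with $\psi_{\beta,\tau}$.
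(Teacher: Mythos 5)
Your proposal follows essentially the same route as the paper's proof: apply the reconstruction formula of Lemma~\ref{lem:rec formula-gen}~(iv) to $v_a^{\phantom{*}}v_b^*$, observe that a summand $\phi_I(v_t^*v_a^{\phantom{*}}v_b^*v_t^{\phantom{*}})$ can be nonzero only for $t \in \text{Fix}(\alpha_a^{\phantom{1}}\alpha_b^{-1})$, use almost freeness to bound the sum by the (finite) cardinality of this fixed-point set, and let $\zeta_{I_k}(\beta) \to \infty$ as $I_k \nearrow \Irr(N(S))$ to force $\phi(v_a^{\phantom{*}}v_b^*)=0$ for $a \neq b$, concluding via Proposition~\ref{prop:algebraic characterization of KMS states-gen}. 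Aside from some self-correcting detours about the choice of $I$, this is the paper's argument.
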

\begin{proof}
By Proposition~\ref{prop:construction of KMS-states}, $\psi_\beta$ is a $\kms_\beta$-state. Let $\phi$ be an arbitrary $\kms_\beta$-state. Due to Proposition~\ref{prop:algebraic characterization of KMS states-gen}, it suffices to show that $\psi_\beta \circ \varphi = \phi \circ \varphi$ on $C^*(S_c)$. Since $\phi \circ \varphi$ is a trace, we have $\psi_\beta \circ \varphi (w_a^{\phantom{*}}w_a^*)=1$ for all $a \in S_c$. Let $a,b \in S_c$ with $a \neq b$. Lemma~\ref{lem:rec formula-gen}~(iv) gives
\[\begin{array}{lcl}
\phi (v_a^{\phantom{*}}v_b^*) &=& \zeta_I(\beta)^{-1} \sum\limits_{t \in \CT_I} N_t^{-\beta} \ \phi_I(v_t^*v_a^{\phantom{*}}v_b^*v_t^{\phantom{*}})
\end{array}\]
for every finite subset $I$ of $\text{Irr}(N(S))$. Note that $|\phi_I(v_t^*v_a^{\phantom{*}}v_b^*v_t^{\phantom{*}})| \leq 1$. We claim that a summand can only be nonzero if $\alpha_a^{\phantom{1}}\alpha_b^{-1}(t)=t$. Indeed, $v_t^*v_a^{\phantom{*}} = v_{a_1}^{\phantom{*}}v_{a_2}^*v_{\alpha_a^{-1}(t)}^*$ for some $a_1,a_2 \in S_c$, and similarly for $v_b^*v_t^{\phantom{*}}$. Thus a summand is nonzero if $v_{\alpha_a^{-1}(t)}^*v_{\alpha_b^{-1}(t)}^{\phantom{*}} \neq 0$, which is equivalent to $\alpha_a^{-1}(t) = \alpha_b^{-1}(t)$, that is $t \in \text{Fix}(\alpha_a^{\phantom{1}}\alpha_b^{-1})$.

As $\alpha$ is almost free, the set $\text{Fix}(\alpha_a^{\phantom{1}}\alpha_b^{-1}) \subset \CT$ is finite, so that
\[\begin{array}{c}
\bigl\lvert\sum\limits_{t \in F_I} N_t^{-\beta} \ \phi_I(v_t^*v_a^{\phantom{*}}v_b^*v_t^{\phantom{*}})\bigr\rvert \leq \lvert \text{Fix}(\alpha_a^{\phantom{1}}\alpha_b^{-1}) \rvert
\end{array}\]
while $\beta \in [1,\beta_c]$ forces $\zeta_I(\beta) \to \infty$ as $I \nearrow \text{Irr}(N(S))$. Thus we conclude that $\phi (v_a^{\phantom{*}}v_b^*) = 0$ for all $a,b \in S_c$ with $a \neq b$. In particular, there is only one such state $\psi_\beta$.
\end{proof}

The case where $\beta_c=1$, $\alpha$ is faithful, and $S$ has finite propagation requires more work. The underlying strategy is based on \cite{LRRW}*{Section~7}. Given $n \in N(S)$, let $\CT_n$ be a transversal for $N^{-1}(n)/_\sim$ contained in $S_{ci}^1$. Note that we may take $\CT_n$ of the form $\CT\cap N^{-1}(n)$, where $\CT$ is a transversal for $S/_\sim$ contained in $S_{ci}^1$. For $a,b \in S_c$, let
\[
\CT_n^{a,b} := \{ f \in \CT_n \mid af=bf \} \quad \text{and}\quad \kappa_{a,b,n} := \lvert \CT_n^{a,b}\rvert/n.
\]
Note that by Corollary~\ref{cor:dichotomy for S_ci under gen scale}, any other choice of transversal for $N^{-1}(n)/_\sim$ contained in $S_{ci}^1$ will give the same $\kappa_{a,b,n}$. If moreover $m\in N(S)$, then
since $i(\CT_m\CT_n) = \CT_{mn}$ and since $f \sim f'$ for core irreducible elements $f,f'$ implies $f \in f'S^*$, see Lemma~\ref{lem:equiv forms of (VII)}, we have $\CT_{mn}^{a,b} \supset i(\CT_m^{a,b}\CT_n)$ so that $(\kappa_{a,b,n})_{n \in N(S)}$ is increasing for the natural partial order $n' \geq n :\Leftrightarrow n' \in nN(S)$. As $N(S)$ is directed, we get a limit $\kappa_{a,b} = \lim_{n \in N(S)} \kappa_{a,b,n} \in [0,1]$ for all $a,b \in S_c$.

\begin{proposition}\label{prop:min gives unique kms in the crit int}
Let $S$ be an admissible right LCM semigroup and $\beta_c=1$. Then there exists a $\kms_1$-state $\psi_1$ characterized by $\psi_1 \circ \varphi = \rho$, where $\rho(w_a^{\phantom{*}}w_b^*) = \kappa_{a,b}$ for $a,b \in S_c$. In particular, $\rho$ is a trace on $C^*(S_c)$. If $\alpha$ is faithful and $S$ has finite propagation, then $\psi_1$ is the only $\kms_1$-state on $C^*(S)$.
\end{proposition}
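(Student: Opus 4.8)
The plan is to split Proposition~\ref{prop:min gives unique kms in the crit int} into an existence part and a uniqueness part, and to treat them by rather different means.

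\textbf{Existence.} First I would verify that $\rho$, defined on the spanning elements by $\rho(w_a^{\phantom{*}}w_b^*)=\kappa_{a,b}$, really extends to a trace on $C^*(S_c)$. The natural way is not to build $\rho$ by hand but to extract it from a limit of the states $\psi_{1,\tau,I_k}$ constructed in Lemma~\ref{lem:construction of KMS-states} and Proposition~\ref{prop:construction of KMS-states}: by Proposition~\ref{prop:construction of KMS-states} there is a weak*-convergent subnet producing a $\kms_1$-state $\psi_1=\psi_{1,\tau}$ with $\psi_1\circ\varphi$ a trace. Then I would compute $\psi_1\circ\varphi(w_a^{\phantom{*}}w_b^*)$ directly using Lemma~\ref{lem:eval gen}~(ii): the value is $\lim_k\zeta_{I_k}(1)^{-1}\sum_{t\in\CT_{I_k}}\chi_{\mathrm{Fix}(\alpha_a^{\phantom{1}}\alpha_b^{-1})}(t)\,\tau(w_{c(a\alpha_b^{-1}(t))}^{\phantom{*}}w_{c(b\alpha_b^{-1}(t))}^*)$. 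Since $\tau$ is the canonical trace, the summand is $1$ exactly when $c(a\alpha_b^{-1}(t))=c(b\alpha_b^{-1}(t))$, i.e.\ when the representative $f:=\alpha_b^{-1}(t)$ lies in $\CT^{a,b}_{N_f}$; so the sum over $\CT_n$ counts $\lvert\CT_n^{a,b}\rvert=n\kappa_{a,b,n}$, and $\zeta_{I_k}(1)^{-1}\sum_{n\in\langle I_k\rangle}n\kappa_{a,b,n}\cdot n^{-1}$ is a Cesàro-type average of the increasing net $(\kappa_{a,b,n})$, which converges to $\kappa_{a,b}$ because $\zeta_{I_k}(1)\to\infty$. (Here one uses $\beta=\beta_c=1$ so that $\zeta_{I_k}(1)=\lvert\langle I_k\rangle\cap(\text{truncated support})\rvert$ diverges, together with monotonicity of $\kappa_{a,b,n}$ in $n$.) This both shows $\psi_1\circ\varphi=\rho$ and that $\rho$ is a trace, and by Proposition~\ref{prop:algebraic characterization of KMS states-gen} such a $\psi_1$ is then unique \emph{among states extending $\rho$}.

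\textbf{Uniqueness.} Assume now $\alpha$ faithful and $S$ of finite propagation. By Proposition~\ref{prop:algebraic characterization of KMS states-gen} it suffices to show every $\kms_1$-state $\phi$ satisfies $\phi\circ\varphi=\rho$, i.e.\ $\phi(v_a^{\phantom{*}}v_b^*)=\kappa_{a,b}$ for all $a,b\in S_c$. The engine is the reconstruction formula Lemma~\ref{lem:rec formula-gen}~(iv): for each finite $I\subset\Irr(N(S))$,
\[
\phi(v_a^{\phantom{*}}v_b^*)=\zeta_I(1)^{-1}\sum_{t\in\CT_I}N_t^{-1}\,\phi_I(v_t^*v_a^{\phantom{*}}v_b^*v_t^{\phantom{*}}),
\]
and (as in the proof of Proposition~\ref{prop:strong min gives unique kms in the crit int}) the summand vanishes unless $t\in\mathrm{Fix}(\alpha_a^{\phantom{1}}\alpha_b^{-1})$, in which case $v_t^*v_a^{\phantom{*}}v_b^*v_t^{\phantom{*}}=v_{c(a\alpha_b^{-1}(t))}^{\phantom{*}}v_{c(b\alpha_b^{-1}(t))}^*=\varphi(w_{c(a f)}^{\phantom{*}}w_{c(b f)}^*)$ with $f=\alpha_b^{-1}(t)$. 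So $\phi(v_a^{\phantom{*}}v_b^*)=\zeta_I(1)^{-1}\sum_{n\in\langle I\rangle}n^{-1}\sum_{f\in\CT_n\cap\mathrm{Fix}}\phi\circ\varphi(w_{c(af)}^{\phantom{*}}w_{c(bf)}^*)$, using that $\phi_I\circ\varphi$ and $\phi\circ\varphi$ agree on the defect-subalgebra by Lemma~\ref{lem:d_p commutes with v_s-gen} together with the $\kms_1$-property (this is the step that needs a short argument: $\phi_I(v_c^{\phantom{*}}v_{c'}^*)=\zeta_I(1)\phi(\prod d_m\, v_c^{\phantom{*}}v_{c'}^*\prod d_n)=\phi(v_c^{\phantom{*}}v_{c'}^*\prod d_n)$ after commuting the $d$'s past $v_c^{\phantom{*}}v_{c'}^*$, whose $\phi$-value then telescopes back to $\phi(v_c^{\phantom{*}}v_{c'}^*)$ since $\phi$ kills each $1-\sum e_{fS}$ at $\beta=1$). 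The point of finite propagation is that $\{c(af):f\in\CT\}$ and $\{c(bf):f\in\CT\}$ are \emph{finite} for a suitable witness transversal, so the $\phi$-values $\phi\circ\varphi(w_{c(af)}^{\phantom{*}}w_{c(bf)}^*)$ take only finitely many values; faithfulness of $\alpha$ (hence right cancellativity of $S_c$ by Corollary~\ref{cor:S_c action faithful implies right cancellation for S_c}) lets one identify $c(af)=c(bf)\iff af=bf\iff f\in\CT^{a,b}_{N_f}$, so the sum is a finite linear combination of terms $\phi\circ\varphi(w_{c}^{\phantom{*}}w_{c'}^*)$ weighted by asymptotic densities of $\{f\in\CT_n:c(af)=c,\ c(bf)=c'\}$. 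A Cesàro/Abel averaging argument as $I\nearrow\Irr(N(S))$, using $\zeta_I(1)\to\infty$, then collapses the off-diagonal contributions (their densities are handled by the finiteness of the propagation set, which bounds how the fibres grow) and forces
\[
\phi(v_a^{\phantom{*}}v_b^*)=\lim_{n}\frac{\lvert\CT_n^{a,b}\rvert}{n}=\kappa_{a,b}=\rho(w_a^{\phantom{*}}w_b^*),
\]
independently of $\phi$; in particular $\phi=\psi_1$.

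\textbf{Main obstacle.} The delicate point is the last averaging step: from the reconstruction formula one gets $\phi(v_a^{\phantom{*}}v_b^*)$ as a $\zeta_I(1)^{-1}$-weighted sum over $n\in\langle I\rangle$ of the quantities $n^{-1}\sum_{f\in\CT_n\cap\mathrm{Fix}(\alpha_a\alpha_b^{-1})}\phi\circ\varphi(w_{c(af)}^{\phantom{*}}w_{c(bf)}^*)$, and one must show this average converges to $\kappa_{a,b}$ \emph{for every} $\kms_1$-state $\phi$. Finite propagation guarantees the inner sums are controlled (only finitely many $C^*(S_c)$-matrix units appear, with bounded $\phi$-values), and one needs a monotonicity/self-improving estimate for the densities $\kappa_{a,b,n}$ and for the ``defect'' densities of $\{f:af\neq bf\}$ that shows the contribution of the complement of $\CT_n^{a,b}$ inside $\mathrm{Fix}(\alpha_a\alpha_b^{-1})\cap\CT_n$ becomes asymptotically negligible after $\zeta_I(1)^{-1}$-averaging. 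This is the genuine analytic content of the proposition and is exactly where the strategy of \cite{LRRW}*{Section~7} has to be adapted; I would isolate it as a lemma stating that for $a,b\in S_c$ the normalized counting functions $n\mapsto n^{-1}\lvert\{f\in\CT_n: c(af)=c,\ c(bf)=c'\}\rvert$ converge along $N(S)$, and that the limits are supported on the diagonal in a suitable sense, so that the only surviving term is $\kappa_{a,b}$.
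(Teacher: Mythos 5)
Your overall architecture (existence via a limit of constructed states, uniqueness via an averaging identity plus control of off-diagonal terms) parallels the paper's, but both halves of your argument rest on objects that do not exist at $\beta=1$. The $I$-restricted $\zeta$-function satisfies $\zeta_I(1)=\sum_{n\in\langle I\rangle}1=\infty$ for every non-empty $I$, finite or not, because $\langle I\rangle$ is the infinite free abelian monoid on $I$. Consequently the states $\psi_{1,\tau,I_k}$ of Lemma~\ref{lem:construction of KMS-states} are not defined for non-empty finite $I_k$, and your ``C\`esaro-type average'' $\zeta_{I_k}(1)^{-1}\sum_{n\in\langle I_k\rangle}\kappa_{a,b,n}$ is literally $\infty^{-1}\cdot\infty$. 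The paper instead approaches the critical value from above: it takes $\beta_k\searrow 1$ with $\beta_k>1=\beta_c$, where $\psi_{\beta_k,\tau}$ is honestly defined, extracts a weak$^*$ limit $\psi_1$ (a $\kms_1$-state by \cite{BRII}*{Proposition~5.3.23}), and then needs the Abel-summation estimate of Lemma~\ref{lem:convergence under series perturbation} --- which exploits the free abelian structure of $N(S)$ from \ref{cond:A4} --- to show $\zeta_S(\beta_k)^{-1}\sum_n n^{1-\beta_k}\kappa_{a,b,n}\to\kappa_{a,b}$. That convergence is not a formal consequence of monotonicity of $(\kappa_{a,b,n})$ together with divergence of the normalizer; it is a separate lemma you would have to prove.

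The same issue defeats your uniqueness argument: the reconstruction formula of Lemma~\ref{lem:rec formula-gen}~(iv) is only available when $\zeta_I(\beta)<\infty$, hence never at $\beta=1$ for non-empty $I$. The paper avoids it entirely and instead makes Proposition~\ref{prop:KMS-state factoring through Q_p must have beta=1} (which you mention only as an auxiliary step) the main engine: a $\kms_1$-state factors through $\CQ_p(S)$, so $\sum_{f\in\CT_n}e_{fS}$ acts as the identity and one gets the exact, unnormalized identity
\[
\phi(v_a^{\phantom{*}}v_b^*)=\sum_{f\in\CT_n}\phi(v_{af}^{\phantom{*}}v_{bf}^*)=\kappa_{a,b,n}+n^{-1}\sum_{f\in G_n^{a,b}\setminus\CT_n^{a,b}}\phi(v_{c(af)}^{\phantom{*}}v_{c(bf)}^*)
\]
for each single $n$, with no $\zeta$-weighting needed. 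Finally, the step you explicitly defer --- that the off-diagonal contribution is asymptotically negligible --- is precisely where faithfulness and finite propagation do their work, and it is not a routine density argument: the paper's Lemma~\ref{lem:minimality keeps G without F small} proves $\lvert G_n^{a,b}\setminus\CT_n^{a,b}\rvert/n\to 0$ by using faithfulness to produce, for each of the finitely many pairs $(d,e)$ of distinct elements coming from the propagation sets $C_a\times C_b$, a level $n(d,e)$ with a definite deficiency $\lvert G_{n(d,e)}^{d,e}\rvert\le n(d,e)-1$, and then iterating the multiplicative relation \eqref{eq:product rule for G and G without F} to obtain geometric decay along powers. Without that lemma your proof is incomplete, and with the $\zeta_I(1)=\infty$ problem the framework surrounding it also has to be replaced.
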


The idea of the proof is as follows: Take $(\beta_k)_{k \geq 1}$ with $\beta_k > 1$ and $\beta_k \searrow 1$. Then a subsequence of the sequence of $\kms_{\beta_k}$-states $(\psi_{\beta_k,\tau})_{k \geq 1}$ obtained from Proposition~\ref{prop:construction of KMS-states} weak*-converges to a state $\psi_1$ on $C^*(S)$. For simplicity, we relabel and then assume that the original sequence converges. It is well-known that $\psi_1$ is a $\kms_1$-state in this case, see for example \cite{BRII}*{Proposition~5.3.23}. We then use Proposition~\ref{prop:KMS-state factoring through Q_p must have beta=1} to deduce that every $\kms_1$-state $\phi$ satisfies $\phi \circ \varphi = \rho$.

We start with two preparatory lemmas that will streamline the proof of Proposition~\ref{prop:min gives unique kms in the crit int}.

\begin{lemma}\label{lem:convergence under series perturbation}
Suppose $S$ is an admissible right LCM semigroup, $\beta_c=1$, and $\beta_k \searrow 1$. Then $\zeta_S(\beta_k)^{-1} \sum_{n \in N(S)} n^{-\beta_k+1} \kappa_{a,b,n}$ converges to $\kappa_{a,b}$ as $k \to \infty$ for all $a,b \in S_c$.
\end{lemma}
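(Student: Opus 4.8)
The plan is to recognize this as a standard Abelian/Tauberian-type limit statement: we are averaging the bounded sequence $(\kappa_{a,b,n})_{n \in N(S)}$, which converges to $\kappa_{a,b}$ along the directed set $N(S)$, against the weights $n^{-\beta_k+1}$ normalized by $\zeta_S(\beta_k) = \sum_{n \in N(S)} n^{-\beta_k+1}$, and letting $\beta_k \searrow 1$ so that $\zeta_S(\beta_k) \to \infty$ (this divergence is exactly the statement $\beta_c = 1$). First I would fix $\varepsilon > 0$ and, using that $\kappa_{a,b,n} \nearrow \kappa_{a,b}$ along the partial order $n' \ge n :\Leftrightarrow n' \in nN(S)$, choose $n_0 \in N(S)$ with $\kappa_{a,b} - \kappa_{a,b,n_0} < \varepsilon$; by monotonicity, $\kappa_{a,b} - \kappa_{a,b,n} < \varepsilon$ for all $n \ge n_0$. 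Split the sum $\sum_{n \in N(S)} n^{-\beta_k+1}\kappa_{a,b,n}$ into the finite part over $\{n : n \not\ge n_0\}$ — actually, it is cleaner to split over $n \in n_0 N(S)$ versus its complement, since $n_0 N(S)$ is a cofinal subset on which the tail estimate holds uniformly.

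For the main term, over $n \in n_0 N(S)$ write $n = n_0 m$ with $m \in N(S)$ and use the product structure from \ref{cond:A4} (Remark~\ref{rem:product formula for zeta function-gen}): $\sum_{n \in n_0 N(S)} n^{-\beta_k+1} = n_0^{-\beta_k+1}\zeta_S(\beta_k)$, so the weights restricted to $n_0 N(S)$ and renormalized by the full $\zeta_S(\beta_k)$ have total mass $n_0^{-\beta_k+1} \to n_0^0 = 1$ as $\beta_k \to 1$. Combined with $0 \le \kappa_{a,b} - \kappa_{a,b,n} < \varepsilon$ on this set and $\kappa_{a,b,n} \le 1$ everywhere, the main term is within $\varepsilon + o(1)$ of $\kappa_{a,b}$. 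For the error term, the complement $N(S) \setminus n_0 N(S)$ contributes a sum whose weights, after normalization by $\zeta_S(\beta_k)$, have total mass $1 - n_0^{-\beta_k+1} \to 0$; since $\kappa_{a,b,n} \in [0,1]$, this piece vanishes as $k \to \infty$. Letting $k \to \infty$ and then $\varepsilon \to 0$ gives the claim.

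I do not expect a serious obstacle here; the only point requiring mild care is making sure the normalization $\zeta_S(\beta_k)$ is genuinely finite for each $k$ (true since $\beta_k > 1 = \beta_c$) and genuinely diverges as $k \to \infty$ (true since $\beta_c = 1$ means $\zeta_S(1) = +\infty$, so by monotone convergence $\zeta_S(\beta_k) \to \infty$), and keeping track of the fact that the monotonicity $\kappa_{a,b,n'} \ge \kappa_{a,b,n}$ only holds for $n' \in nN(S)$, which is why the decomposition along the cofinal sub-monoid $n_0 N(S)$ rather than along an arbitrary finite truncation is the right move. The product formula for $\zeta$ is what makes the renormalized weight of $n_0 N(S)$ computable exactly as $n_0^{-(\beta_k-1)}$, which is the crux of the estimate.
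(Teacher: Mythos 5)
Your argument is correct, and its overall shape matches the paper's: fix $\varepsilon>0$, pick a threshold element of $N(S)$ past which $\kappa_{a,b}-\kappa_{a,b,n}<\varepsilon$, split the normalised sum along the cofinal set of multiples of that threshold, and show the complement carries vanishing normalised weight as $\beta_k\searrow 1$. Where you genuinely diverge is in how the complement is handled, and your route is simpler. The paper covers $N(S)\setminus\tilde nN(S)$ (with $\tilde n=\prod n_i^{m_i}$) by finitely many translates $n_i^m\langle \Irr(N(S))\setminus\{n_i\}\rangle$ and estimates each piece via the Euler product from Remark~\ref{rem:product formula for zeta function-gen}, choosing $k$ large enough that each contributes at most $\varepsilon/2Cm_i$. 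You instead compute the normalised weight of $n_0N(S)$ exactly: since $m\mapsto n_0m$ is a bijection of $N(S)$ onto $n_0N(S)$ (cancellativity in $\N^\times$ suffices here; the full force of \ref{cond:A4} is not needed for this particular identity, though it is what guarantees $N(S)$ is directed and the net $(\kappa_{a,b,n})$ converges), one gets $\zeta_S(\beta_k)^{-1}\sum_{n\in n_0N(S)}n^{-\beta_k+1}=n_0^{-(\beta_k-1)}\to 1$, so the complement has mass $1-n_0^{-(\beta_k-1)}\to 0$ and, with $\kappa_{a,b}-\kappa_{a,b,n}\in[0,1]$, the error term dies. This one-line computation replaces the paper's covering argument entirely. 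The points you flag as needing care are exactly the right ones: finiteness of $\zeta_S(\beta_k)$ for $\beta_k>1=\beta_c$, and the fact that monotonicity of $\kappa_{a,b,\cdot}$ only holds along the divisibility order, which is why the split must be along $n_0N(S)$ rather than a naive finite truncation. (The divergence $\zeta_S(\beta_k)\to\infty$ that you mention is actually not used anywhere in your estimate.)
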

\begin{proof}
Let $a,b \in S_c$ and $\varepsilon > 0$. As $(\kappa_{a,b,n})_{n \in N(S)}$ is increasing and converges to $\kappa_{a,b}$, and $N(S) \cong \bigoplus_{n \in \text{Irr}(N(S))} \N$, there are $C \in \N, n_1,\ldots,n_C \in \text{Irr}(N(S)),m_1,\ldots,m_C \geq 1$ such that $n \in \tilde{n}N(S)$ implies $0 \leq \kappa_{a,b} -\kappa_{a,b,n} < \varepsilon/2$, where $\tilde{n} := \prod_{1 \leq i \leq C}n_i^{m_i}$. Next, note that
\begin{enumerate}[a)]
\item $N(S)\setminus \tilde{n}N(S) \subset \bigcup\limits_{1 \leq i \leq C}\bigcup\limits_{0 \leq m \leq m_i-1} n_i^m\{n \in N(S) \mid n \notin n_iN(S)\}$, and\vspace*{2mm}
\item $\zeta_S(\beta_k) = \prod\limits_{n \in \text{Irr}(N(S))} (1-n^{-\beta_k+1})^{-1}$.
\end{enumerate}
In particular, b) implies that
\[\begin{array}{lcl}
\zeta_S(\beta_k)^{-1}\sum\limits_{\substack{n \in N(S) \\ n \notin n_iN(S)}} n^{-\beta_k+1} &=& (1-n_i^{-\beta_k+1})^{-1}\zeta_{\text{Irr}(N(S))\setminus \{n_i\}}(\beta_k)^{-1}\sum\limits_{n \in \langle \text{Irr}(N(S))\setminus \{n_i\}\rangle} n^{-\beta_k+1} \\
&=& (1-n_i^{-\beta_k+1})^{-1} \stackrel{k \to \infty}{\longrightarrow} 0
\end{array}\]
for each $1 \leq i \leq C$ because $\beta_k \searrow 1$. Now choose $\ell$ large enough so that
\begin{enumerate}
\item[c)] $\zeta_S(\beta_k)^{-1}\sum_{n \in N(S)\setminus n_iN(S)} n^{-\beta_k+1} \leq \varepsilon/2Cm_i$ for all $1 \leq i \leq C$ and $k \geq \ell$.
\end{enumerate}
By definition, $\zeta_S(\beta_k) = \sum_{n \in N(S)} n^{-\beta_k+1}$, so we get
\[\begin{array}{lcl}
\bigl|\kappa_{a,b} - \zeta_S(\beta_k)^{-1} \sum\limits_{n \in N(S)} n^{-\beta_k+1} \kappa_{a,b,n}\bigr| \vspace*{1mm}\\
&\hspace*{-60mm}\stackrel{\kappa_{a,b} \geq \kappa_{a,b,n}}{=}& \zeta_S(\beta_k)^{-1} \sum\limits_{n \in N(S)} n^{-\beta_k+1} (\kappa_{a,b} - \kappa_{a,b,n}) \vspace*{2mm}\\
&\hspace*{-60mm}\stackrel{a)}{\leq}& \zeta_S(\beta_k)^{-1} \sum\limits_{n \in \tilde{n}N(S)} n^{-\beta_k+1} \underbrace{(\kappa_{a,b} - \kappa_{a,b,n})}_{< \varepsilon/2} \vspace*{2mm}\\
&\hspace*{-60mm}&+ \sum\limits_{i=1}^{C}\sum\limits_{m=0}^{m_i-1} \zeta_S(\beta_k)^{-1} \sum\limits_{\substack{n \in N(S)\\ n \notin n_iN(S)}} n^{-\beta_k+1}\underbrace{n_i^{m(-\beta_k+1)} (\kappa_{a,b} - \kappa_{a,b,nn_i^m})}_{\in [0,1]} \vspace*{2mm}\\
&\hspace*{-60mm}\stackrel{c)}{<}& \frac{\varepsilon}{2} + \sum\limits_{i=1}^{C}\sum\limits_{m=0}^{m_i-1} \frac{\varepsilon}{2Cm_i} = \varepsilon
\end{array}\]
for all $k \geq \ell$.
\end{proof}

\begin{remark}\label{rem:uniqueness absorption of core difference modulo core}
In analogy to $\CT_n^{a,b} = \{ f \in \CT_n \mid af=bf\} = \{ f \in \CT_n \mid i(af)=i(bf) \text{ and } c(af)=c(bf)\}$, let $G_n^{a,b} := \{ f \in \CT_n \mid i(af)=i(bf)\}$ for $n \in N(S)$ and $a,b \in S_c$. Note that $i(af)=i(bf)$ is equivalent to $\alpha_a([f]) = \alpha_b([f])$. By Corollary~\ref{cor:dichotomy for S_ci under gen scale}, the number $|G_n^{a,b}|$ does not depend on $\CT_n$. Also, for $m,n \in N(S)$, we have $\CT_{mn} = i(\CT_m\CT_n)$ and $aff' = i(af)i(c(af)f')c(c(af)f')$. By using Corollary~\ref{cor:dichotomy for S_ci under gen scale}, this allows us to deduce
\begin{equation}\label{eq:product rule for G and G without F}
G_{mn}^{a,b}\setminus \CT_{mn}^{a,b} = \{ i(ff') \mid f \in G_m^{a,b}\setminus \CT_m^{a,b}, f' \in G_n^{c(af),c(bf)}\setminus \CT_n^{c(af),c(bf)} \}.
\end{equation}
\end{remark}

\begin{lemma}\label{lem:minimality keeps G without F small}
 Let $S$ be an admissible right LCM semigroup. If $\alpha$ is faithful and $S$ has finite propagation, then $(\lvert G_n^{a,b} \setminus \CT_n^{a,b} \rvert/n)_{n \in N(S)}$ converges to $0$ for all $a,b \in S_c$.
\end{lemma}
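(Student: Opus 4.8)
The plan is to work with the single integer $p^{a,b}_n:=\lvert G_n^{a,b}\setminus\CT_n^{a,b}\rvert$ for $n\in N(S)$, which by Corollary~\ref{cor:dichotomy for S_ci under gen scale} is independent of the transversal $\CT_n\subset S_{ci}^1$ used to compute it, to show that the net $(p^{a,b}_n/n)_{n\in N(S)}$ is non-increasing for the divisibility order, and then to produce geometric decay along a cofinal sequence of levels. The case $a=b$ is trivial since then $G_n^{a,b}=\CT_n^{a,b}=\CT_n$, so I would assume $a\neq b$.

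First I would record the structural tool, namely the product rule \eqref{eq:product rule for G and G without F} of Remark~\ref{rem:uniqueness absorption of core difference modulo core}. Since $c(xy)\in S^*$ whenever $x,y\in\CT$ (the products $xy$ and $i(xy)$ are core-equivalent elements of $S_{ci}^1$, so Lemma~\ref{lem:equiv forms of (VII)} applies) and $t\mapsto i(st)$ is injective on $\CT$ by Lemma~\ref{lem:recovering t and c(st) from s and i(st)}, the assignment $(f,f')\mapsto i(ff')$ is injective on $\bigsqcup_{f\in\CT_m}\{f\}\times\CT_n$ and hence bijects onto $\CT_{mn}$; consequently \eqref{eq:product rule for G and G without F} upgrades to the counting identity $p^{a,b}_{mn}=\sum_{f\in G_m^{a,b}\setminus\CT_m^{a,b}}p^{c(af),c(bf)}_n$. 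Together with the trivial bound $p^{c,d}_n\le n$ this yields $p^{a,b}_{mn}\le n\,p^{a,b}_m$, i.e.\ $p^{a,b}_{mn}/(mn)\le p^{a,b}_m/m$. So it suffices to exhibit, for each $\varepsilon>0$, one level $n$ with $p^{a,b}_n<\varepsilon n$, because $nN(S)$ is cofinal in $(N(S),\mid)$.

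The role of finite propagation is then to confine the ``data pairs'' on the right of the counting identity to a fixed finite set. I would fix a transversal $\CT\subset S_{ci}^1$ that is a witness of finite propagation at $(a,b)$, so that $C_a=\{c(af)\mid f\in\CT\}$ and $C_b=\{c(bf)\mid f\in\CT\}$ are finite; since $\CT=\bigcup_m\CT_m$, for every level $m$ and every bad $f\in G_m^{a,b}\setminus\CT_m^{a,b}$ the pair $(c(af),c(bf))$ lies in the finite set $\mathcal R:=\{(c,d)\in C_a\times C_b\mid c\neq d\}\cup\{(a,b)\}$ of pairs of distinct core elements. For each $(c,d)\in\mathcal R$, faithfulness of $\alpha$ provides a class moved differently by $\alpha_c$ and $\alpha_d$; passing to its representative in $\CT$ gives a level $n_0(c,d)$ and an $f\in\CT_{n_0(c,d)}$ with $i(cf)\neq i(df)$, hence $G_{n_0(c,d)}^{c,d}\subsetneq\CT_{n_0(c,d)}$ and $p^{c,d}_{n_0(c,d)}\le n_0(c,d)-1$. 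Choosing $L\in N(S)$ divisible by all $n_0(c,d)$ and setting $\theta:=\max_{(c,d)\in\mathcal R}\bigl(1-1/n_0(c,d)\bigr)<1$, the monotonicity already established gives $p^{c,d}_L\le\theta L$ for every $(c,d)\in\mathcal R$. Feeding this back into the counting identity along the powers $L^k$ — each bad cell at level $L^k$ carries a data pair in $\mathcal R$ and so has at most $\theta L$ bad descendants at level $L^{k+1}$, while $p^{a,b}_L\le\theta L$ as $(a,b)\in\mathcal R$ — yields $p^{a,b}_{L^k}\le\theta^k L^k$ by induction, so $p^{a,b}_{L^k}/L^k\to0$ and therefore $p^{a,b}_n/n\to0$ along $(N(S),\mid)$.

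The step I expect to be the main obstacle is the second one: the definition of finite propagation only furnishes a witness transversal at each individual pair, so one must be careful to work with a single witness at $(a,b)$ and observe that this already pins every iterated core-remainder pair into the one finite set $C_a\times C_b$ — this is precisely what substitutes for the fact, available in the self-similar situation of \cite{LRRW}, that the set of cocycle values is closed under further restriction. The remaining ingredients — transversal-independence of $p^{c,d}_n$, the promotion of \eqref{eq:product rule for G and G without F} to an honest counting formula, and the elementary submultiplicativity — are routine consequences of Corollary~\ref{cor:dichotomy for S_ci under gen scale}, Lemma~\ref{lem:recovering t and c(st) from s and i(st)} and Lemma~\ref{lem:equiv forms of (VII)}.
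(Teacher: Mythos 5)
Your proposal is correct and follows essentially the same route as the paper's proof: fix a witness of finite propagation at $(a,b)$ to confine all iterated remainder pairs to a finite set, use faithfulness to find for each such pair a level with a strict deficiency, take a common multiple of these levels, and run an induction along its powers via the product rule \eqref{eq:product rule for G and G without F} to get geometric decay. The only (welcome) difference is that you make explicit the submultiplicativity $p^{a,b}_{mn}\le n\,p^{a,b}_m$, i.e.\ monotonicity of $p^{a,b}_n/n$ along the divisibility order, which the paper leaves implicit when passing from the subsequence of powers to convergence of the whole net.
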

\begin{proof}
Let $a,b \in S_c$. If $a=b$, then $G_n^{a,b} = \CT_n^{a,b} = \CT_n$ for all $n \in N(S)$, so there is nothing to show. Thus, we may suppose $a \neq b$. Since $S$ has finite propagation, there exists a transversal $\CT$ for $S/_\sim$ that witnesses finite propagation for $(a,b)$, i.e.~the sets $C_a = \{ c(at) \mid t \in \CT\}$ and $C_b$ are finite. In particular, the set $C_{a,b} := \{ (d,e) \in C_a \times C_b \mid d \neq e\} \subset S_c \times S_c$ is finite. We claim that for each $(d,e) \in C_{a,b}$, faithfulness of $\alpha$ gives $n(d,e) \in N(S)$ such that $\lvert G_{n(d,e)}^{d,e} \rvert\leq n(d,e) -1$. Indeed, we get $r \in S$ with $dr \not\sim er$. This forces $n(d,e) := N_r>1$ and $di(r) \perp ei(r)$ by Proposition~\ref{prop:NisaHMofRLCMs}~(ii). In particular, we have $i(di(r)) \neq i(ei(r))$.

With $n := \prod_{(d,e) \in C_{a,b}} n(d,e)$ and $\gamma := \max_{(d,e) \in C_{a,b}} (n(d,e)-1)/n(d,e) \in (0,1)$, we claim that $n^{-k} \lvert G_{n^k}^{a,b} \setminus \CT_{n^k}^{a,b} \rvert \leq \gamma^k$ holds for all $k \geq 1$. We proceed by induction. For $k=1$, we use $(a,b) \in C_{a,b}$ and \eqref{eq:product rule for G and G without F} for $n(a,b)$ to get
\[\begin{array}{c}
n^{-1} \lvert G_{n}^{a,b} \setminus \CT_{n}^{a,b} \rvert \leq \frac{(n(a,b)-1)}{n(a,b)} \leq \gamma.
\end{array}\]
For $k \mapsto k+1$, let us consider \eqref{eq:product rule for G and G without F} for $m:=n^k$ and $m'=n$. The induction hypothesis gives $\lvert G_{n^k}^{a,b} \setminus \CT_{n^k}^{a,b} \rvert \leq (\gamma n)^k$, so it suffices to have $\lvert G_n^{c(af),c(bf)}\setminus \CT_n^{c(af),c(bf)} \rvert \leq \gamma n$ for every $f \in G_{n^k}^{a,b}\setminus \CT_{n^k}^{a,b}$. As before, if $c(af)=c(bf)$, then the set in question is empty. But if $c(af)\neq c(bf)$, then the choice of $n$ guarantees that $\lvert G_n^{c(af),c(bf)}\setminus \CT_n^{c(af),c(bf)} \rvert \leq \gamma n$ by applying \eqref{eq:product rule for G and G without F} to $m=n(c(af),c(bf))$ and $m' = n(c(af),c(bf))^{-1}n$). This establishes the claim. In turn, $n^{-k} \lvert G_{n^k}^{a,b} \setminus \CT_{n^k}^{a,b} \rvert \leq \gamma^k$ for all $k \geq 1$ forces $\lvert G_m^{a,b} \setminus \CT_m^{a,b} \rvert/m \to 0$ as $m \to \infty$ in $N(S)$ with respect to the natural partial order.
\end{proof}

\begin{proof}[Proof of Proposition~\ref{prop:min gives unique kms in the crit int}]
We know that $\psi_1$ is a $\kms_1$-state, so we start by showing that $\psi_1 \circ \varphi (w_a^{\phantom{*}}w_b^*)= \rho(w_a^{\phantom{*}}w_b^*) = \kappa_{a,b}$ for $a,b \in S_c$. Note that $\psi_{\beta_k,\tau}(v_a^{\phantom{*}}v_b^*)=\psi_{\beta_k,\tau}(v_b^*v_a^{\phantom{*}})$ since $\psi_{\beta_k,\tau} \circ \varphi$ is tracial. The reconstruction formula from Lemma~\ref{lem:rec formula-gen}~(iv) and Proposition~\ref{prop:KMS-states parametrization injective} imply that
\[\begin{array}{lcl}
\psi_{\beta_k,\tau}(v_b^*v_a^{\phantom{*}}) &=& \zeta_S(\beta_k)^{-1} \sum\limits_{n \in N(S)} n^{-\beta_k}\sum\limits_{f \in \CT_n} (\psi_{\beta_k,\tau})_{\text{Irr}(N(S))}(v_{bf}^*v_{af}^{\phantom{*}}) \vspace*{2mm}\\
&=& \zeta_S(\beta_k)^{-1} \sum\limits_{n \in N(S)} n^{-\beta_k}\sum\limits_{f \in \CT_n} \delta_{i(af), i(bf)}(\psi_{\beta_k,\tau})_{\text{Irr}(N(S))}(v_{c(bf)}^*v_{c(af)}^{\phantom{*}}) \vspace*{2mm}\\
&\stackrel{\ref{prop:KMS-states parametrization injective}}{=}& \zeta_S(\beta_k)^{-1} \sum\limits_{n \in N(S)} n^{-\beta_k}\sum\limits_{f \in \CT_n} \delta_{i(af), i(bf)}\tau(w_{c(bf)}^*w_{c(af)}^{\phantom{*}}) \vspace*{2mm}\\
&=& \zeta_S(\beta_k)^{-1} \sum\limits_{n \in N(S)} n^{-\beta_k} \lvert \CT_n^{a,b}\rvert.
\end{array}\]
Since $n^{-\beta_k} \lvert \CT_n^{a,b}\rvert = n^{-\beta_k+1} \kappa_{a,b,n}$, Lemma~\ref{lem:convergence under series perturbation} and $\psi_{\beta_k,\tau} \to \psi_1$ jointly imply $\psi_1 \circ \varphi(w_a^{\phantom{*}}w_b^*) = \kappa_{a,b} = \rho(w_a^{\phantom{*}}w_b^*)$. According to Proposition~\ref{prop:algebraic characterization of KMS states-gen}, we conclude that $\rho$ is a trace on $C^*(S_c)$.

Now suppose $\phi$ is any $\kms_1$-state on $C^*(S)$. Again appealing to Proposition~\ref{prop:algebraic characterization of KMS states-gen}, it suffices to show that $\phi \circ \varphi = \rho$ on $C^*(S_c)$. Since $\phi$ is a $\kms_1$-state, it factors through $\CQ_p(S)$ (or $\CQ_(S)$), see Proposition~\ref{prop:KMS-state factoring through Q_p must have beta=1}. As $\pi_p(\sum_{f \in \CT_n} v_f^{\phantom{*}}v_f^*) = 1$ for all $n \in N(S)$, we get
\[\begin{array}{lclclcl}
\phi(v_a^{\phantom{*}}v_b^*) &=& \sum\limits_{f \in \CT_n} \phi(v_{af}^{\phantom{*}}v_{bf}^*) 
&=& \sum\limits_{f \in \CT_n} n^{-1} \delta_{i(af) i(bf)} \phi(v_{c(af)}^{\phantom{*}}v_{c(bf)}^*) \vspace*{2mm}\\
&&&=& \kappa_{a,b,n} + n^{-1}\sum\limits_{f \in G_n^{a,b}\setminus \CT_n^{a,b}}\phi(v_{c(af)}^{\phantom{*}}v_{c(bf)}^*).
\end{array}\]
Since $|\phi(v_{c(af)}^{\phantom{*}}v_{c(bf)}^*)| \leq 1$, Lemma~\ref{lem:minimality keeps G without F small} shows that $\phi(v_a^{\phantom{*}}v_b^*) = \lim_{n \in N(S)} \kappa_{a,b,n} = \rho(w_a^{\phantom{*}}w_b^*)$.
\end{proof}

While Proposition~\ref{prop:strong min gives unique kms in the crit int} and Proposition~\ref{prop:min gives unique kms in the crit int} provide sufficient criteria for uniqueness of $\kms_\beta$-states for $\beta$ in the critical interval, we will now prove that faithfulness of $\alpha$ is necessary, at least under the assumption that $S_c$ is abelian. If $S_c$ is abelian, it is also right cancellative, and hence a right Ore semigroup. It therefore embeds into a group $G_c = S_c^{\phantom{1}}S_c^{-1}$, and the action $\alpha\colon S_c \curvearrowright S/_\sim$ induces an action $\alpha'\colon G_c \curvearrowright S/_\sim$ by $\alpha'_{ab^{-1}} = \alpha_a^{\phantom{1}}\alpha_b^{-1}$ for all $a,b \in S_c$. We let $G_c^\alpha := \{ ab^{-1} \mid \alpha_a=\alpha_b\}$ be the subgroup of $G_c$ for which $\alpha'$ is trivial, and we write $u_g$ for the generating unitaries in $C^*(G_c^\alpha)$.

\begin{proposition}\label{prop:uniqueness: alpha faithful necessary}
Let $S$ be an admissible right LCM semigroup with $S_c$ abelian. Suppose that there exist $\beta \geq 1$ and distinct states $\phi,\phi'$ on $C^*(G_c^\alpha)$ such that there exist $ab^{-1} \in G_c^\alpha$ and a sequence $(I_k)_{k \in \N} \subset \text{Irr}(N(S))$ with $I_k$ finite and $I_k \nearrow \text{Irr}(N(S))$ satisfying
\begin{equation}\label{eq:separation for non-uniqueness}
\begin{array}{c}
\lim\limits_{k\to\infty} \zeta_{I_k}(\beta)^{-1} \sum\limits_{t \in \CT_{I_k}} N_t^{-\beta} \phi(u_{c(at)c(bt)^{-1}}) \neq \lim\limits_{k\to\infty} \zeta_{I_k}(\beta)^{-1} \sum\limits_{t \in \CT_{I_k}} N_t^{-\beta} \phi'(u_{c(at)c(bt)^{-1}}).
\end{array}
\end{equation}
Then $C^*(S)$ has at least two distinct $\kms_\beta$-states. If, for any given pair of distinct states on $C^*(G_c^\alpha)$, there exists such a sequence $(I_k)_{k \in \N}$, then there exists an affine embedding of the state space of $C^*(G_c^\alpha)$ into the $\kms_\beta$-states on $C^*(S)$. In particular, $C^*(S)$ does not have a unique $\kms_\beta$-state unless $\alpha$ is faithful under these assumptions.
\end{proposition}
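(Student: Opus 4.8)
The plan is to manufacture, from each state $\phi$ on $C^*(G_c^\alpha)$, a tracial state $\tau_\phi$ on $C^*(S_c)$, and then to run Proposition~\ref{prop:construction of KMS-states} on $\tau_\phi$ to obtain a $\kms_\beta$-state $\psi_\phi$ on $C^*(S)$; the separation hypothesis \eqref{eq:separation for non-uniqueness} will be used exactly to tell these states apart. To build $\tau_\phi$, recall that $S_c$ abelian makes it right cancellative and Ore with enveloping group $G_c=S_c^{\phantom{1}}S_c^{-1}$, and that any unitary representation of $S_c$ extends to $G_c$ and satisfies the defining right LCM relations of $C^*(S_c)$; hence the universal property of $C^*(S_c)$ yields a quotient $q_c\colon C^*(S_c)\to C^*(G_c)$, $w_a\mapsto u_a$. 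Since $G_c$ is abelian, $C^*(G_c)$ is commutative and carries the canonical conditional expectation $E\colon C^*(G_c)\to C^*(G_c^\alpha)$ with $E(u_g)=u_g$ for $g\in G_c^\alpha$ and $E(u_g)=0$ otherwise. I would then set $\tau_\phi:=\phi\circ E\circ q_c$: as the pullback of a state on a commutative $C^*$-algebra it is a tracial state on $C^*(S_c)$, the map $\phi\mapsto\tau_\phi$ is affine and weak$^*$-continuous, and $\tau_\phi(w_a^{\phantom{*}}w_b^*)=\phi(u_{ab^{-1}})$ when $ab^{-1}\in G_c^\alpha$ and $0$ otherwise.

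The crucial point is the value of $\psi_\phi:=\psi_{\beta,\tau_\phi}$ at $v_a^{\phantom{*}}v_b^*$ when $ab^{-1}\in G_c^\alpha$, i.e.\ when $\alpha_a=\alpha_b$ on $S/_\sim$. Then $\alpha_a^{\phantom{1}}\alpha_b^{-1}=\id$, so $\text{Fix}(\alpha_a^{\phantom{1}}\alpha_b^{-1})=\CT$, and Lemma~\ref{lem:eval gen}~(ii) together with Lemma~\ref{lem:construction of KMS-states}, after reindexing $\CT_I$ by the $N$-preserving bijection $\alpha_a$ (Lemma~\ref{lem:thebijectionsg}), gives $\psi_{\beta,\tau_\phi,I}(v_a^{\phantom{*}}v_b^*)=\zeta_I(\beta)^{-1}\sum_{t\in\CT_I}N_t^{-\beta}\tau_\phi(w_{c(at)}^{\phantom{*}}w_{c(bt)}^*)$. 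I would then insert a short structural lemma: $\alpha_a=\alpha_b$ forces $i(at)=i(bt)$ for every $t\in\CT$ (both represent $\alpha_a[t]$), and moreover $\alpha_{c(at)}=\alpha_{c(bt)}$ — indeed, for $s\in\CT$ one has $[ats]=\alpha_a[ts]=\alpha_b[ts]=[bts]$, and since $ats=i(at)c(at)s$ and $bts=i(at)c(bt)s$, left cancellation of $i(at)$ yields $[c(at)s]=[c(bt)s]$. Hence $c(at)c(bt)^{-1}\in G_c^\alpha$, so $\tau_\phi(w_{c(at)}^{\phantom{*}}w_{c(bt)}^*)=\phi(u_{c(at)c(bt)^{-1}})$ and $\psi_{\beta,\tau_\phi,I}(v_a^{\phantom{*}}v_b^*)=\zeta_I(\beta)^{-1}\sum_{t\in\CT_I}N_t^{-\beta}\phi(u_{c(at)c(bt)^{-1}})$, which is precisely the quantity appearing in \eqref{eq:separation for non-uniqueness}; this also shows that formula is well-posed.

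For the two assertions I would proceed as follows. Given distinct $\phi,\phi'$ together with $ab^{-1}\in G_c^\alpha$ and the sequence $(I_k)$ from the statement, form $\psi_\phi$ and $\psi_{\phi'}$ via Proposition~\ref{prop:construction of KMS-states} along a subsequence of $(I_k)$; this is permissible since for $\beta>\beta_c$ the resulting state is the full sum $\psi_{\beta,\tau,\Irr(N(S))}$, while for $\beta$ in the critical interval the limits in \eqref{eq:separation for non-uniqueness} are assumed to exist along $(I_k)$, so every convergent subsequence returns that value at $v_a^{\phantom{*}}v_b^*$. By the previous paragraph $\psi_\phi(v_a^{\phantom{*}}v_b^*)\neq\psi_{\phi'}(v_a^{\phantom{*}}v_b^*)$, whence $C^*(S)$ has two distinct $\kms_\beta$-states. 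If such a sequence exists for every pair, then $\phi\mapsto\psi_\phi$ is injective; it is affine because a $\kms_\beta$-state is determined by its restriction $\psi_\phi\circ\varphi$ to $\varphi(C^*(S_c))$ through \eqref{eq:alg char of KMS states-gen with i and c}, and $\phi\mapsto\psi_\phi\circ\varphi$ is a weak$^*$-limit of the affine maps $\phi\mapsto\psi_{\beta,\tau_\phi,I_k}\circ\varphi$ — yielding the asserted affine embedding of the state space of $C^*(G_c^\alpha)$ into the $\kms_\beta$-states of $C^*(S)$. Finally, if $\alpha$ is not faithful there are $a\neq b$ in $S_c$ with $\alpha_a=\alpha_b$, so $1\neq ab^{-1}\in G_c^\alpha$ and $C^*(G_c^\alpha)\cong C(\widehat{G_c^\alpha})$ carries at least two states; under the standing hypothesis this produces at least two $\kms_\beta$-states, so $C^*(S)$ has no unique $\kms_\beta$-state.

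The step requiring the most care is the affineness (and continuity) of $\phi\mapsto\psi_\phi$ in the critical interval: the weak$^*$-limit in Proposition~\ref{prop:construction of KMS-states} is a priori only along a subsequence that may vary with $\phi$, so one must verify that the limits defining $\psi_\phi\circ\varphi$ on the relevant spanning elements are independent of the exhausting sequence of finite subsets of $\Irr(N(S))$ — this is exactly the role of \eqref{eq:separation for non-uniqueness} for the elements $v_a^{\phantom{*}}v_b^*$ with $ab^{-1}\in G_c^\alpha$, and it should be supplemented by a diagonal argument over a cofinal sequence of finite $I$ for the remaining generators. The auxiliary facts used above — existence of $q_c\colon C^*(S_c)\to C^*(G_c)$ for abelian Ore $S_c$, and the structural lemma placing $c(at)c(bt)^{-1}$ in $G_c^\alpha$ — are routine but should be recorded explicitly, the latter being what makes \eqref{eq:separation for non-uniqueness} meaningful in the first place.
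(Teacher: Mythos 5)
Your proposal is correct and follows essentially the same route as the paper: extend a state on $C^*(G_c^\alpha)$ to a trace on $C^*(S_c)$ (the paper via an arbitrary state extension using amenability, you via the canonical conditional expectation, which has the advantage of making $\phi\mapsto\tau_\phi$ manifestly affine and continuous), feed it into Proposition~\ref{prop:construction of KMS-states}, and use Lemma~\ref{lem:eval gen}~(ii) together with \eqref{eq:separation for non-uniqueness} to separate the resulting $\kms_\beta$-states. The extra details you supply — the reindexing by $\alpha_b$, the observation that $c(at)c(bt)^{-1}\in G_c^\alpha$, and the caveat about subsequence-independence in the affineness argument — are all implicit in (and in the last case glossed over by) the paper's proof, and are worth recording.
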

\begin{proof}
Since $S_c$ is abelian, so are the groups $G_c$ and $G_c^\alpha$. In particular, $G_c$ and $G_c^\alpha$ are amenable, so that $C^*(G_c^\alpha) \subset C^*(G_c)$ as a unital subalgebra, and states are traces on $C^*(G_c)$. So if $\phi$ is a state on $C^*(G_c^\alpha)$, it extends to a state on $C^*(G_c)$, which then corresponds to a trace $\tau$ on $C^*(S_c)$. Note that we have $\tau(w_a^{\phantom{*}}w_b^*) = \phi(u_{ab^{-1}})$ for all $ab^{-1} \in G_c^\alpha$ by construction.

Now let $\beta \geq 1$, and $\phi,\phi'$ distinct states on $C^*(G_c^\alpha)$ such that there exists a sequence $(I_k)_{k \in \N} \subset \text{Irr}(N(S))$ with $I_k$ finite, $I_k \nearrow \text{Irr}(N(S))$, and \eqref{eq:separation for non-uniqueness}. Denote by $\tilde{\tau}$ and $\tau'$ the corresponding traces on $C^*(S_c)$. Following the proof of Proposition~\ref{prop:construction of KMS-states}, we obtain $\kms_\beta$-states $\psi_{\beta,\phi} := \psi_{\beta,\tilde{\tau}}$ and $\psi_{\beta,\phi'} :=\psi_{\beta,\tau'}$ by passing to subsequences $(I_{i(k)})_{k \in \N}$ and $(I_{j(k)})_{k \in \N}$ of $(I_k)_{k \in \N}$, if necessary. The state $\psi_{\beta,\phi}$ is given by
\[\begin{array}{lcl}
\psi_{\beta,\phi}(v_a^{\phantom{*}}v_b^*)
&=&\lim\limits_{k\to\infty} \zeta_{I_{i(k)}}(\beta)^{-1} \sum\limits_{t \in \CT_{I_{i(k)}}} N_t^{-\beta} \langle V_a^{\phantom{*}}V_b^*(\xi_t \otimes \xi_{\tilde{\tau}}),\xi_t \otimes \xi_{\tilde{\tau}} \rangle \vspace*{2mm}\\
&\stackrel{\ref{lem:eval gen}~(b)}{=}& \lim\limits_{k\to\infty} \zeta_{I_{i(k)}}(\beta)^{-1} \sum\limits_{t \in \CT_{I_{i(k)}}} \chi_{\text{Fix}(\alpha_a^{\phantom{1}}\alpha_b^{-1})}(t) N_t^{-\beta} \tilde{\tau}(w_{c(a\alpha_b^{-1}(t))}^{\phantom{*}}w_{c(b\alpha_b^{-1}(t))}^*) \vspace*{2mm}\\
\end{array}\]
for $a,b \in S_c$. But if $ab^{-1} \in G_c^\alpha$, then this reduces to
\[\begin{array}{lcl}
\psi_{\beta,\phi}(v_a^{\phantom{*}}v_b^*)
&=& \lim\limits_{k\to\infty} \zeta_{I_{i(k)}}(\beta)^{-1} \sum\limits_{t \in \CT_{I_{i(k)}}} N_t^{-\beta} \phi(u_{c(at)c(bt)^{-1}}) \vspace*{2mm}\\
\end{array}\]
The analogous computation applies to $\psi_{\beta,\phi'}$. Since \eqref{eq:separation for non-uniqueness} implicitly assumes existence of the displayed limits, passing to subsequences leaves the values invariant, so that $\psi_{\beta,\phi}(v_a^{\phantom{*}}v_b^*) \neq \psi_{\beta,\phi'}(v_a^{\phantom{*}}v_b^*)$ for some $ab^{-1} \in G_c^\alpha$. This shows that $\phi$ and $\phi'$ yield distinct $\kms_\beta$-states $\psi_{\beta,\phi}$ and $\psi_{\beta,\phi'}$. The map $\phi \mapsto \psi_{\beta,\phi}$ is affine, continuous, and injective by the assumption. Hence it is an embedding. Finally, the state space of $C^*(G_c^\alpha)$ is a singleton if and only if $G_c^\alpha$ is trivial, which corresponds to faithfulness of $\alpha$.
\end{proof}

\section{Open questions}\label{sec:questions}
The present work naturally leads to some questions for further research, and we would like to take the opportunity to indicate a few directions we find intriguing.

\begin{questions}\label{que:admissibility}
Can the prerequisites for Theorem~\ref{thm:KMS results-gen} be weakened? This leads to:
\begin{enumerate}[(a)]
\item Are there right LCM semigroups that are not core factorable?
\item Are there right LCM semigroups $S$ for which $S_{ci}\subset S$ is not $\cap$-closed?
\item How does a violation of \ref{cond:A4} affect the $\kms$-state structure?
\end{enumerate}
\end{questions}

\begin{question}\label{que:uniqueness vs alpha faithful}
Suppose $S$ is an admissible right LCM semigroup. Faithfulness of $\alpha\colon S_c\curvearrowright S/_\sim$ is assumed in both parts of Theorem~\ref{thm:KMS results-gen}~(2), see also Remark~\ref{rem:almost free actions}, and it is necessary under additional assumptions, see Proposition~\ref{prop:uniqueness: alpha faithful necessary}. The extra assumptions in both directions might be artifacts of the proofs, so it is natural to ask: Is faithfulness of $\alpha$ necessary or sufficient for uniqueness of $\kms_\beta$-states for $\beta$ in the critical interval?
\end{question}

If the answer to Question~\ref{que:uniqueness vs alpha faithful} is affirmative for both parts, then, for right cancellative $S_c$, the group $G_c^\alpha$ would correspond to the periodicity group $\text{Per} \Lambda$ for strongly connected finite $k$-graphs $\Lambda$, see \cite{aHLRS}.

Using self-similar actions $(G,X)$, see Proposition~\ref{prop:self-similar action}, one can give examples $S= X^*\bowtie G$ of admissible right LCM semigroups with $\beta_c=1$, $\alpha$ faithful, but not almost free, and without finite propagation. The following problem seems to be more difficult:

\begin{question}\label{que:non finite state with finite propagation}
Is there an example of a self-similar action $(G,X)$ that is not finite state such that $S= X^*\bowtie G$ has finite propagation?
\end{question}

In \cite{aHLRS}*{Section~12}, the authors showed that their findings can be obtained via \cite{Nes}*{Theorem~1.3} since their object of study admits a canonical groupoid picture.

\begin{questions}\label{que:non-A1}
As indicated in \cite{Sta3}, the boundary quotient diagram ought to admit a reasonable description in terms of $C^*$-algebras associated to groupoids. Assuming that this gap can be bridged, we arrive at the following questions:
\begin{enumerate}[(a)]
\item Does \cite{Nes}*{Theorem~1.3} apply to the groupoids related to the boundary quotient diagram for (admissible) right LCM semigroups?
\item Which groupoid properties do \ref{cond:A1}--\ref{cond:A4} correspond to?
\item What is the meaning of $\alpha\colon S_c \curvearrowright S/_\sim$ for the associated groupoids? Which groupoid properties do faithfulness and almost freeness of $\alpha$ correspond to?
\item Which groupoid property does finite propagation correspond to?
\end{enumerate}
\end{questions}

\section*{References}
\begin{biblist}
\bibselect{bib}
\end{biblist}

\end{document}